        \pgfplotsset{compat=1.6}
\newcommand{\mO}{\mathcal{O}}
\newcommand{\lcm}{\operatorname{lcm}}
\DeclareMathOperator{\res}{res}
\newcommand\CC{{\mathbb C}}
\newcommand\PP{{\mathbb P}}
\newcommand\calH{{\mathcal H}}
\newcommand\calR{{\mathcal R}}
\newcommand\calE{{\mathcal E}}
\newcommand\ZZ{{\mathbb Z}}
\newcommand\calF{{\mathcal F}}
\DeclareMathOperator\MS{MS}
\DeclareMathOperator\Res{Res}
\DeclareMathOperator\LG{LG}
\newcommand{\Id}{{\mathrm{Id}}}
\DeclareDocumentCommand{\rec}{ O{a} O{n}}{\left[ #1 \right]^{#2}}
\newtheorem{thm}{Theorem}[section]
\newtheorem{cor}[thm]{Corollary}
\newtheorem{prop}[thm]{Proposition}
\newtheorem{lem}[thm]{Lemma}
\newtheorem{opprob}[thm]{Open Problem}
\newtheorem{conj}[thm]{Conjecture}
\theoremstyle{definition}
\newtheorem{defn}[thm]{Definition}
\theoremstyle{remark}
\newtheorem{rmk}[thm]{Remark}
\theoremstyle{definition}
\theoremstyle{definition}
\newtheorem{ex}[thm]{Example}
\theoremstyle{definition}
\numberwithin{equation}{section}
\title{Isoresidual curves}
\author{Dawei Chen} 
\address[Dawei Chen]{Department of Mathematics, Boston College, Chestnut Hill, MA 02467, USA}
\email{dawei.chen@bc.edu}
\author{Quentin Gendron}
\address[Quentin Gendron]{Instituto de Matem\'{a}ticas de la UNAM,
Ciudad Universitaria, Tenochtitlán, 04510,
M\'{e}xico}
\email{quentin.gendron@im.unam.mx}
\author{Miguel Prado} 
\address[Miguel Prado]{Goethe University, Frankfurt am Main, Hessen 60325, Germany}
\email{prado@math.uni-frankfurt.de}
\author{Guillaume Tahar}
\address[Guillaume Tahar]{Beijing Institute of Mathematical Sciences and Applications, Huairou District, Beijing, China}
\email{guillaume.tahar@bimsa.cn}
\date{\today}
\keywords{
Isoresidual fibration, Translation surfaces, Multi-scale compactification, Resonance arrangement, Gauss--Manin connection}
\begin{document}
\begin{abstract}
Given a partition $\mu$ of $-2$, the stratum $\mathcal{H}(\mu)$ parametrizes meromorphic differential one-forms on the Riemann sphere $\mathbb{CP}^{1}$ with~$n$ zeros and $p$ poles of orders prescribed by $\mu$. The isoresidual fibration is defined by assigning to each differential in $\mathcal{H}(\mu)$ its configuration of residues at the poles. In the case of differentials with $n=2$ zeros, generic isoresidual fibers are complex curves endowed with a canonical translation structure, which we describe extensively in this paper. Quantitative characteristics of the translation structure on isoresidual fiber curves, including the orders of the singularities and a period central charge encapsulating the linear dependence of periods on the underlying configuration of residues, provide rich discrete invariants for these fibers. We also determine the Euler characteristic of generic isoresidual fiber curves from intersection-theoretic computations, relying on the multi-scale compactification of strata of differentials. In particular, we describe a wall and chamber structure for the Euler characteristic of generic isoresidual fiber curves in terms of the partition $\mu$. Additionally, we classify the connected components of generic isoresidual fibers for strata in genus zero with an arbitrary number of zeros.
\end{abstract}
\maketitle
\setcounter{tocdepth}{1}
\tableofcontents

\section{Introduction}

For any family of positive integers $b_{1},\dots,b_{p}$, we denote by $\Omega(b_{1},\dots,b_{p})$ the moduli space of meromorphic one-forms on the Riemann sphere $\mathbb{CP}^{1}$ with (labeled) poles of orders $b_{1},\dots,b_{p}$ (up to biholomorphisms). The moduli space $\Omega(b_{1},\dots,b_{p})$ is stratified according to the orders $a_{1},\dots,a_{n}$ of the (labeled) zeros of differential forms.
Each stratum $\mathcal{H}(\mu)$ is characterized by a partition $\mu=(a_{1},\dots,a_{n},-b_{1},\dots,-b_{p})$ of $-2$ with $a_{1},\dots,a_{n} \geq 1$. It is known that $\mathcal{H}(\mu)$ is a complex-analytic orbifold of dimension $n+p-2$, whose underlying coarse space is a quasi-projective variety.
\par
For each $p \geq 2$, we define the \textit{residual space} $\mathcal{R}_{p}$ to be the complex vector space formed by the vectors $\lambda = (\lambda_{1},\dots,\lambda_{p})$ such that $\sum\limits_{j=1}^{p} \lambda_{j} = 0$. Each moduli space $\Omega(b_{1},\dots,b_{p})$ is endowed with a \textit{residual map}
$$\res\colon \Omega(b_{1},\dots,b_{p}) \to \mathcal{R}_{p} : \omega \mapsto (\Res_{1}\omega,\dots,\Res_{p}\omega)$$
that assigns to each differential $\omega$ the sequence of its residues at the poles. This map defines the \textit{isoresidual fibration} for the entire moduli space and restricts to each stratum $\mathcal{H}(\mu)$.
\smallskip
\par
The  in-depth  study of these maps began in \cite{getaab}, where the image of $\res$ is described for every stratum $\mathcal{H}(\mu)$ and in every genus. Then, in  \cite{GeTaIso}, the case of the {\em minimal strata} in genus zero, i.e., strata with a unique zero, was studied from a flat geometric perspective by two of the authors. The other two authors examined the same cases in \cite{ChPrIso} from an algebro-geometric point of view. In that case, these works show that the isoresidual fibration of such strata is a finite cover ramified over the \textit{resonance arrangement} of the residual space.
\begin{defn}\label{defn:resonanceintro}
Let $I$ be a nonempty proper subset of $\{1,\dots,p\}$. The \textit{resonance hyperplane} $A_{I}$ is the subspace defined by the equation $\sum\limits_{i \in I} \lambda_{i} = 0$ in the residual space $\mathcal{R}_{p}$. The union of all the resonance hyperplanes is the \textit{resonance arrangement} $\mathcal{A}_{p} \subset \mathcal{R}_{p}$.
\end{defn}
In the case of the minimal stratum, the generic degree of the isoresidual cover has been computed in Theorem~1.2 of \cite{GeTaIso} as 
\begin{equation}\label{eq:degree}
 f(a,p):=\frac{a!}{(a+2-p)!}\,,
\end{equation}
where $a = -2+\sum\limits_{j=1}^{p}b_{j}$ is the order of the unique zero. The cardinality of an arbitrary fiber over the resonance arrangement in $\mathcal{R}_{p}$ is given in Theorem 1.2 of \cite{ChPrIso}.
\par 
It is worth noting that this isoresidual cover has recently appeared in various other contexts, such as the dynamics of polynomial maps in \cite{Sugiyama}, the topology of configuration spaces in \cite{SaBraid}, and the Kadomtsev--Petviashvili hierarchy in \cite{BuRoCoun}. In higher genus, the analog of the isoresidual fibration is the isoperiodic foliation which has been extensively described for the stratum $\mathcal{H}(1,1,-2)$ in \cite{FaTaZa} (see \cite{KLS,CMDisoper,BGPA} for related questions).
\par
The goal of the present paper is to combine flat geometric and algebro-geometric approaches to study the isoresidual fiber in the strata of differentials on the Riemann sphere with $n=2$ zeros.
\par
\subsection{Main results}
\par 
We fix a stratum $\mathcal{H}(\mu)$ of differentials on $\mathbb{CP}^1$ with $n$ zeros and $p$ poles, where the orders of the singularities are prescribed by $\mu$. For a given residue configuration $\lambda=(\lambda_{1},\dots,\lambda_{p}) \in \mathcal{R}_{p}$, we denote by $\mathcal{F}_{\lambda}$ the isoresidual fiber in the stratum $\mathcal{H}(\mu)$ parametrizing differentials with the residue configuration $\lambda$. Note that $\mathcal{F}_{\lambda}$ depends on $\mu$, but we do not indicate it in order to keep the notation simple. We further denote by~$\overline{\mathcal{F}}_{\lambda}$ the closure of  $\mathcal{F}_{\lambda}$ in the multi-scale compactification of $\mathcal{H}(\mu)$ (see Section~\ref{sec:multi-scale}).
\par
Deformation of a differential in the stratum $\mathcal{H}(\mu)$ means changing the periods of the differential along (relative) homology classes. Inside an isoresidual fiber, the absolute periods (i.e., the residues at the poles) are fixed, so the only degree of freedom is from the relative periods between the zeros. In the case of $n=2$ zeros, this relative period serves as a local coordinate defining the translation structure on the isoresidual fiber,  and its differential gives rise to the canonical one-form $\omega_{\lambda}$ on $\calF_{\lambda}$.
\par
Our first main result describes the translation structure of the one-dimensional isoresidual fibers of the strata with $n=2$ zeros outside the resonance arrangement $\mathcal{A}_{p}$. We denote by $a = a_{1}+a_{2}$ the total order of the two zeros.
\par 
\begin{thm}\label{thm:MAIN1}
For a stratum $\mathcal{H}(a_{1},a_{2},-b_{1},\dots,-b_{p})$ and a configuration $\lambda \in \mathcal{R}_{p} \setminus \mathcal{A}_{p}$, the closure $\overline{\mathcal{F}}_{\lambda}$ of the isoresidual fiber $\mathcal{F}_{\lambda}$ is a (possibly disconnected) compact Riemann surface endowed with a meromorphic one-form $\omega_{\lambda}$. Additionally, $\omega_{\lambda}$ induces a translation structure on $\overline{\mathcal{F}}_{\lambda}$ that satisfies the following properties:
\begin{enumerate}
    \item $\mathcal{F}_{\lambda}$ coincides with the locus where $\omega_{\lambda}$ has neither a zero nor a pole; 
    \item $\omega_{\lambda}$ has $\frac{a!}{(a+2-p)!}$ zeros, each of order $a$;
    \item $\omega_{\lambda}$ always has poles whose orders and residues are described in Sections~\ref{subsub:SimplePoles} and~\ref{subsub:HigherPoles};
    \item $(\overline{\mathcal{F}}_{\lambda},\omega_{\lambda})$ has finitely many saddle connections;
    \item each saddle connection of $(\overline{\mathcal{F}}_{\lambda},\omega_{\lambda})$ has a period of the form $\sum\limits_{i \in I} \lambda_{i}$, where $I$ is a nonempty proper subset of $\{1,\dots,p\}$;
    \item if $\mathcal{F}_{\lambda}$ is disconnected, then all the components of $(\overline{\mathcal{F}}_{\lambda},\omega_{\lambda})$ belong to the same stratum component; 
    \item the stratum component containing $(\overline{\mathcal{F}}_{\lambda},\omega_{\lambda})$ is independent of $\lambda$.
\end{enumerate}
\end{thm}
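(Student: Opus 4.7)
The natural object to construct is a relative-period one-form on $\mathcal{F}_\lambda$. For each differential $\omega \in \mathcal{F}_\lambda$ with labeled zeros $z_1, z_2$, set $\tau(\omega) = \int_{z_1}^{z_2} \omega$. Different path choices alter $\tau$ by an integer combination of the residues, so $\tau$ takes values in $\mathbb{C}/\Lambda_\lambda$, where $\Lambda_\lambda$ is the $\mathbb{Z}$-lattice generated by $\lambda_1, \dots, \lambda_p$. The differential $\omega_\lambda := d\tau$ is therefore well defined on $\mathcal{F}_\lambda$. Since $\mathcal{F}_\lambda$ is one-dimensional and the relative period is the unique free deformation parameter once residues are fixed (standard period coordinates on strata), $\omega_\lambda$ is holomorphic and nowhere vanishing on $\mathcal{F}_\lambda$, proving~(1). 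That $\overline{\mathcal{F}}_\lambda$ is a compact Riemann surface follows from its being a closed analytic subvariety of the (compact) multi-scale compactification, with smoothness at boundary points verified by the local analysis below.

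The core of the argument is a case analysis of the boundary points of $\overline{\mathcal{F}}_\lambda$, each corresponding to a level graph $\Gamma$ of the multi-scale compactification. The genericity condition $\lambda \notin \mathcal{A}_p$ excludes any level graph whose lower-level component carries a proper pole subset with vanishing residue sum, via the global residue condition. The surviving boundary points split into two families. First, \emph{collision} points, where the two zeros bubble off together onto a lower-level component in the three-singularity stratum $\mathcal{H}(a_1, a_2, -(a+2))$, attached to a top-level differential in the minimal stratum $\mathcal{H}(a, -b_1, \dots, -b_p)$ with residue profile $\lambda$. By the minimal-stratum degree formula~(\ref{eq:degree}), there are exactly $f(a,p)$ such points, and a plumbing computation at the rescaled node gives $\tau \sim c\, u^{a+1}$ in a local uniformizer $u$, so $\omega_\lambda$ has a zero of order $a$, yielding~(2). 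Second, \emph{pole-splitting} points, where a proper subset $I \subset \{1,\dots,p\}$ of poles detaches onto a lower level; the relative period escapes to infinity along the direction $\sum_{i \in I} \lambda_i$, and $\omega_\lambda$ acquires a pole whose residue equals this partial sum. The precise pole orders and residues, indexed by the combinatorial type of $\Gamma$, are the content of Sections~\ref{subsub:SimplePoles} and~\ref{subsub:HigherPoles}, establishing~(3).

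Items~(4) and~(5) follow formally once the boundary is classified: $\tau$-values at singularities lift to partial residue sums, so any saddle connection from $p$ to $q$ has period $\tau(q)-\tau(p) \bmod \Lambda_\lambda$, again of the form $\sum_{i \in I} \lambda_i$, and discreteness of this finite period set gives the finiteness in~(4). For~(7), I deform $\lambda$ through the connected complement $\mathcal{R}_p \setminus \mathcal{A}_p$ of a complex hyperplane arrangement; the pairs $(\overline{\mathcal{F}}_\lambda, \omega_\lambda)$ form a holomorphic family in which the stratum component, a discrete invariant, is locally and hence globally constant. Statement~(6) then says all components of a given fiber realize this single stratum component, obtained by a monodromy argument as $\lambda$ varies. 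The main difficulty is the local boundary analysis feeding~(2)--(3): for each permitted level graph one must extract the precise plumbing exponent of $\tau$, which determines both the order of $\omega_\lambda$ at the boundary point and, in the pole case, its residue. This combinatorial bookkeeping is where the detailed classifications of Sections~\ref{subsub:SimplePoles}--\ref{subsub:HigherPoles} are required.
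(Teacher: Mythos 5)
Your construction of $\omega_{\lambda}=d\tau$, the exclusion of resonant level graphs, and the local plumbing analysis at collision and pole-splitting boundary points all match the paper's route for the compactness statement and items (1)--(3); your treatment of (6)--(7) by monodromy and discreteness of the stratum component is also essentially the paper's argument. The genuine gap is in items (4) and (5), which you dismiss as ``formal'' consequences of the boundary classification.

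For (5), the period of a saddle connection is the integral of $\omega_{\lambda}$ along a specific geodesic path; since $\omega_{\lambda}$ has poles with nonzero residues, this is only determined by the endpoints modulo the lattice of absolute periods of $\omega_{\lambda}$. Knowing that the $\tau$-values at the endpoints are partial residue sums (modulo $\Lambda_{\lambda}$) tells you at best that the period is congruent to an integer combination of the $\lambda_{i}$, not that it literally equals a partial sum $\sum_{i\in I}\lambda_{i}$. The paper itself flags that this is not formal: the remark after Proposition~\ref{prop:SCperiod} points out that if two sides of a triangle of saddle connections have partial-sum periods, there is no a priori reason the third does. The actual proof requires (a) the analysis of the real locus (Lemma~\ref{lem:RealLocus}, Proposition~\ref{prop:REALlocus}) showing all saddle connections are horizontal with partial-sum lengths for real $\lambda$, (b) analytic continuation to get integrality (Proposition~\ref{prop:IntegerWeights}), and (c) a $\mathrm{GL}_{2}^{+}(\mathbb{R})$ renormalization to reduce a general $\lambda$ to a nearly real one (Proposition~\ref{prop:SCperiod}). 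None of this appears in your proposal.

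For (4), your derivation ``finite period set $\Rightarrow$ finitely many saddle connections'' would be valid \emph{if} (5) were already known, since bounded length implies finiteness. But the paper establishes finiteness \emph{first} (Proposition~\ref{prop:cylinder} and Corollary~\ref{cor:SCFinite}), via a dynamical argument showing every finite-area cylinder in $\overline{\mathcal{F}}_{\lambda}$ would force a degeneration, and then \emph{uses} that finiteness as an input to the proof of (5) (to isolate a slope of saddle connections before applying the $\mathrm{GL}_{2}^{+}(\mathbb{R})$ contraction). Your proposed logical order (5)$\Rightarrow$(4) is therefore circular unless you supply an independent proof of (5), and you would still need a separate argument that $\omega_{\lambda}$ actually has at least one pole (the paper gets this from the horizontality of all saddle connections in the real case).
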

\par 
Let us explain some parts of the theorem in the language of flat geometry. The zeros of $\omega_\lambda$ correspond to elements of the minimal stratum when the two zeros merge to form a single zero of order $a$. Hence, all the zeros of $\omega_\lambda$ have the same order, and there are $f(a,p)$ of them. The poles of $\omega_\lambda$ correspond to degenerations where the two zeros move infinitely far from each other in the flat metric of $\omega$.
The description of the number, orders, and residues of all the poles of a given isoresidual curve requires combinatorial work, which is done in Section~\ref{sub:Discretecombinatorics}. The list of possible poles of $\omega_{\lambda}$ crucially depends on the location of the partition $\mu$ in the following \textit{singularity pattern space}~$\mathcal{SP}_{p}$.
\par 
\begin{defn}\label{defn:SPS}
Given $p \geq 1$, the \textit{singularity pattern space} is the positive orthant $$\mathcal{SP}_{p}=\left\{ (x_{1},x_{2},y_{1},\dots,y_{p}) \in \mathbb{R}_{>0}^{p+2}~\mid~x_{1}+x_{2}= \sum\limits_{j=1}^p y_{j} \right\}$$ of real dimension $p+1$, endowed with the family $\mathcal{W}_{p}$ of hyperplanes 
\begin{equation}\label{eq:hyptype1}
 W_{1}(I)=\left\{\beta_{1}:=x_{1} - \sum\limits_{i \in I} y_{i} = 0\right\}, W_{2}(I)=\left\{\beta_{2}:=x_{2} - \sum\limits_{i \in I} y_{i} = 0\right\}
\end{equation}
and 
\begin{equation}\label{eq:hyptype2}
 W_{3}(I,K,L)=\left\{\beta_{3}:=x_{1} - \sum\limits_{i \in I \cup L} y_{i} -|K \setminus L| = 0\right\}
\end{equation}
and
\begin{equation}\label{eq:hyptype3}
 W_{4}(J,K,M)=\left\{\beta_{4}:=x_{2} - \sum\limits_{i \in J \cup M} y_{i} -|K \setminus M| = 0\right\}
\end{equation}
where $I \sqcup J \sqcup K$ is a partition of the index set of the poles $\lbrace{1,\dots,p \rbrace}$ into three disjoint subsets, and $L$ and~$M$ are (possibly intersecting) arbitrary subsets of $K$.
\end{defn}
\par 
Each partition $\mu$ determines a point $v_{\mu}=(a_{1}+1,a_{2}+1,b_{1},\dots,b_{p})\in \mathcal{SP}_{p}$. The sum of the orders of the poles of~$\omega_{\lambda}$ is given by a complicated combinatorial formula that depends on the chamber of $(\mathcal{SP}_{p},\mathcal{W}_{p})$ containing $v_{\mu}$. Nevertheless, the Euler characteristic of $\overline{\mathcal F}_\lambda$ possesses the following structure. 
\par 
\begin{thm}\label{thm:chambers}
For $\lambda \in \mathcal \mathcal{R}_{p} \setminus \mathcal{A}_{p}$ and $(a_1+1, a_2+1, b_1, \ldots, b_p)$ in each chamber of $\mathcal{SP}_p$, the Euler characteristic of $\overline{\mathcal F}_\lambda$ is a sum of homogeneous components of degree from $0$ up to $p-1$ in terms of the variables $a_{1}+1,a_{2}+1,b_{1},\dots,b_{p}$. 
\end{thm}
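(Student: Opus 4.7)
The plan is to reduce the Euler characteristic computation to the divisor of the canonical meromorphic one-form $\omega_\lambda$ on $\overline{\mathcal{F}}_\lambda$ provided by Theorem~\ref{thm:MAIN1}. Since $\overline{\mathcal{F}}_\lambda$ is a (possibly disconnected) compact Riemann surface and $\omega_\lambda$ is a non-zero meromorphic one-form, the degree of its divisor equals $-\chi(\overline{\mathcal{F}}_\lambda)$, so
\[
\chi(\overline{\mathcal{F}}_\lambda) \;=\; \sum_{P} \mathrm{ord}_P(\omega_\lambda) \;-\; \sum_{Z} \mathrm{ord}_Z(\omega_\lambda),
\]
where the first sum runs over poles and the second over zeros of $\omega_\lambda$. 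It suffices to show that each of these two sums is polynomial of total degree at most $p-1$ in the variables $(a_{1}+1, a_{2}+1, b_{1}, \ldots, b_{p})$ on each chamber of $(\mathcal{SP}_p, \mathcal{W}_p)$.

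The zero contribution is immediate from Theorem~\ref{thm:MAIN1}(2): there are exactly $f(a,p) = \frac{a!}{(a+2-p)!}$ zeros of $\omega_\lambda$, each of order $a = a_{1}+a_{2}$. Hence the total zero order equals
\[
a \cdot f(a,p) \;=\; a \cdot \prod_{j=0}^{p-3}(a-j),
\]
which is a polynomial of degree exactly $p-1$ in $a = (a_{1}+1) + (a_{2}+1) - 2$, hence of degree $p-1$ in the full variable set, and it is valid on all of $\mathcal{SP}_p$ without chamber dependence.

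For the pole contribution, I would use the explicit classification of poles of $\omega_\lambda$ from Sections~\ref{subsub:SimplePoles} and~\ref{subsub:HigherPoles}. Each pole is indexed by a combinatorial datum, namely a partition $I \sqcup J \sqcup K = \{1, \ldots, p\}$ together with auxiliary subsets $L, M \subseteq K$, and its order is one of the affine linear functions $\beta_1, \beta_2, \beta_3, \beta_4$ that define the hyperplanes of $\mathcal{W}_p$. The crucial feature is that the existence of a pole of a given combinatorial type is detected precisely by the signs of these $\beta_i$ at $v_\mu$, so within a fixed chamber the set of combinatorial types contributing poles is constant; this is the source of the chamber dependence announced in the statement. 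The pole contribution is then a finite sum over combinatorial types of terms of the form (multiplicity of poles of that type) times (the linear order $\beta_i$).

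The main obstacle is to bound the total degree of this pole sum by $p-1$. The strategy is to analyze, combinatorial type by combinatorial type, the multiplicities counting admissible isoresidual configurations that realize a pole of each type at the boundary of the multi-scale compactification, and to establish that these multiplicities are themselves polynomial in $(a_{1}+1, a_{2}+1, b_{1}, \ldots, b_{p})$ of degree at most $p-2$, in direct analogy with $f(a,p)$ being of degree $p-2$ in $a$ on the zero side. Once this polynomial bound on the multiplicities is extracted from the combinatorial enumeration of Section~\ref{sub:Discretecombinatorics}, multiplication by the linear $\beta_i$ yields the desired $p-1$ bound for the pole sum, and combining with the zero contribution delivers $\chi(\overline{\mathcal{F}}_\lambda)$ as a polynomial of degree at most $p-1$ on each chamber, equivalently a sum of homogeneous components of degrees $0, 1, \ldots, p-1$. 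The delicate technical step is precisely this multiplicity bound, which requires careful bookkeeping of the level graphs and their combinatorial invariants parametrizing boundary strata of $\overline{\mathcal{F}}_\lambda$ inside the multi-scale compactification.
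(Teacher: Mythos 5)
Your overall strategy is the same as the paper's: express $\chi(\overline{\mathcal F}_\lambda)$ through the divisor of $\omega_\lambda$, enumerate the zeros and the three types of boundary poles (merging of zeros, horizontal degenerations, cherry graphs indexed by $I\sqcup J\sqcup K$), observe that the set of contributing combinatorial types is constant on each chamber, and bound the degree of each contribution. The zero contribution $a\cdot f(a,p)$ is handled correctly. However, there are two points where the writeup falls short of a proof. First, you assert that the order of each pole of $\omega_\lambda$ is one of the affine linear functions $\beta_i$; this is false for the generic cherry type with $I$ and $J$ both nonempty, where Proposition~\ref{prop:flat-boundary} gives order $1+\lcm(c_1,c_2)$ and multiplicity containing a factor $\gcd(c_1,c_2)$ --- neither of which is polynomial. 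The statement survives only because $\gcd$ and $\lcm$ are each homogeneous of degree one and because $\gcd(c_1,c_2)\cdot\lcm(c_1,c_2)=c_1c_2$ turns the product of the two non-polynomial factors into a genuine quadratic, leaving a residual $\gcd$ term of strictly lower degree; this bookkeeping is essential (it is why the theorem speaks of ``homogeneous components'' rather than polynomials) and is absent from your argument.

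Second, the ``delicate technical step'' you defer --- that each multiplicity has degree at most $p-2$ --- is not actually open: the multiplicities are written explicitly in Propositions~\ref{prop:zeros}, \ref{prop:simple-pole}, and~\ref{prop:flat-boundary} as products of factors $f(a_i,p_I+1)$ (degree $p_I-1$), the $\gcd$ factor (degree $1$), and the residueless count $\Xi$ on the $p_K$ top-level poles, which Proposition~\ref{prop:residueless} shows is a piecewise polynomial of degree $p_K-1$ whose chamber walls are exactly the $\beta_3,\beta_4$ hyperplanes of $\mathcal W_p$. Reading off $\,(p_I-1)+(p_J-1)+1+(p_K-1)=p-2$ closes the gap; as written, your proposal names the right ingredients but does not carry out the one computation the theorem actually requires.
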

In Proposition~\ref{prop:chambers}, we provide a more detailed description of the above theorem. Moreover, in  Conjecture~\ref{conj:Euler}, 
we speculate that the main term in the formula of the Euler characteristic is always a homogeneous polynomial of degree exactly $p-1$ in the variables $a_{1}+1,a_{2}+1,b_{1},\dots,b_{p}$. We check this conjecture in a specific chamber in  Proposition~\ref{prop:one-chamber}.
\par 
In the case where all poles are simple, i.e., $b_i = 1$ for all $i$, we denote by $\mu = (a_1, a_2, \rec[-1][a+2])$ the partition $(a_{1},a_{2},-1,\dots,-1)$. The Euler characteristic of the generic isoresidual fiber is given as follows in that case.
\par 
\begin{thm}\label{thm:simple}
For $\mu = (a_1, a_2, \rec[-1][a+2])$ with $a=a_{1}+a_{2}$ and $\lambda\in \mathcal{R}_{p} \setminus \mathcal{A}_{p}$, we have
\begin{equation}\label{eq:genussimples}
 2g\left(\overline{\mathcal{F}}_{\lambda}\right)-2 = 
a! \left(a  - \frac{(a+2)(a+1)}{(a_1+1)(a_2+1)} \right)\,. 
\end{equation}
\end{thm}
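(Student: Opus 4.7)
The plan is to compute $\deg(\omega_\lambda)=2g(\overline{\mathcal{F}}_\lambda)-2$ directly from the divisor of the meromorphic one-form $\omega_\lambda$ provided by Theorem~\ref{thm:MAIN1}. Since $p=a+2$ in the simple-pole case, Theorem~\ref{thm:MAIN1}(2) gives $\frac{a!}{(a+2-p)!}=a!$ zeros of $\omega_\lambda$, each of order $a$, hence contributing $a\cdot a!$ to $\deg(\omega_\lambda)$. The task is therefore reduced to showing that the total pole order of $\omega_\lambda$ equals $\frac{(a+2)!}{(a_1+1)(a_2+1)}$, after which~\eqref{eq:genussimples} follows by subtraction.

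To enumerate the poles, I would analyze the boundary points of $\overline{\mathcal{F}}_\lambda$ lying on codimension-one strata of the multi-scale compactification of $\mathcal{H}(\mu)$. By Theorem~\ref{thm:MAIN1}(1) these are precisely the zeros and poles of $\omega_\lambda$, and by the flat-geometric description the poles correspond to degenerations in which the two zeros separate at infinite flat distance. The underlying stable curve is then a chain of two projective lines meeting at a single node, with $z_1$ and a subset $I\subseteq\{1,\dots,p\}$ of the simple poles on one component and $z_2$ and $J:=\{1,\dots,p\}\setminus I$ on the other. The condition $\deg K_{\mathbb{CP}^1}=-2$ on each component forces $\omega$ to have a pole of order $m_I=a_1+2-|I|$ at the $I$-side of the node and $m_J=a_2+2-|J|$ at the $J$-side, so $m_I+m_J=2$. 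If either $m_I\le 0$ or $m_J\le 0$, then the residues on that component would have to sum to zero without any contribution from the node, forcing $\sum_{i\in I'}\lambda_i=0$ for some proper subset $I'$ and thus $\lambda\in\mathcal{A}_p$, contradicting genericity. Hence the only admissible partitions are those with $m_I=m_J=1$, i.e., $|I|=a_1+1$ and $|J|=a_2+1$, of which there are $\binom{a+2}{a_1+1}$.

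For each admissible partition, each component is a differential in the minimal stratum $\mathcal{H}(a_i,\rec[-1][a_i+2])$ whose residues are entirely prescribed by $\lambda$ together with the node residue $\mp\sum_{i\in I}\lambda_i$ determined by the residue theorem. By formula~\eqref{eq:degree} applied on each side, the number of such configurations is $f(a_i,a_i+2)=a_i!$, giving $a_1!\cdot a_2!$ boundary points per partition. A local analysis near each boundary point, using that the smoothing parameter $t$ of a node carrying simple poles of opposite residue $\pm r$ is related to the relative-period coordinate $\tau=\int_{z_1}^{z_2}\omega$ by $t\sim e^{-\tau/r}$, shows that $\omega_\lambda=d\tau$ extends as a simple pole $-r\,dt/t$ at that point. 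Summing over partitions yields total pole order $\binom{a+2}{a_1+1}\cdot a_1!\cdot a_2!=\frac{(a+2)!}{(a_1+1)(a_2+1)}$, which combined with the zero contribution gives~\eqref{eq:genussimples}.

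The main obstacle is making the reduction in the second paragraph fully rigorous. One must verify that no boundary stratum of $\overline{\mathcal{H}}(\mu)$ other than those enumerated contributes a pole of $\omega_\lambda$ for generic $\lambda$ — in particular ruling out multi-level configurations with the two zeros on different levels, as well as bubble-off components arising from pole collisions. This rests on the full structure of the multi-scale compactification, in particular the level-graph and prong-matching constraints governing how boundary divisors sit in $\overline{\mathcal{H}}(\mu)$, together with the fact that each alternative degeneration either enforces a resonance condition on $\lambda$ or has codimension too high to appear in a generic one-dimensional fiber.
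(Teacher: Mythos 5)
Your computation is correct, but it takes a genuinely different route from the paper's own proof. You compute $2g(\overline{\mathcal F}_\lambda)-2$ as the degree of the divisor of the canonical one-form $\omega_\lambda$, i.e.\ (total zero order) minus (total pole order), using the flat-geometric classification of boundary points. The paper instead proves Theorem~\ref{thm:simple} in Section~\ref{sec:simple} by intersection theory on $\MS(\mu)$: it identifies $\overline{\mathcal F}_\lambda$ with the complete intersection $D_1\cap\cdots\cap D_{p-2}$ of residue-condition divisors (Corollary~\ref{cor:boundary-simple}), applies adjunction with the canonical class formula of \cite{CMZ}, and evaluates the resulting $\eta$-intersection numbers, including the recursion $N(a_1,a_2)=a_1N(a_1-1,a_2)+a_2N(a_1,a_2-1)$ showing $(-\eta)^{a+1}=0$. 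Your route is the one the paper itself sets up in Sections~\ref{sub:boundary}--\ref{subsub:HigherPoles} and explicitly flags as determining the Euler characteristic: the ``main obstacle'' you identify in your last paragraph is precisely Lemma~\ref{lem:residues} together with Propositions~\ref{prop:F-boundary} and~\ref{prop:F-horizontal}, which show that the only codimension-one degenerations are the merging of the two zeros (the zeros of $\omega_\lambda$), cherry graphs (which require a residueless top-level component and hence cannot occur when all $b_i=1$, since a simple pole cannot be residueless), and the horizontal graphs you enumerate; Proposition~\ref{prop:simple-pole} then supplies both the count $a_1!\,a_2!$ per partition and the simple-pole local model $\omega_\lambda=\tfrac{r_J}{t}dt$. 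So your proof is complete modulo citing those results, and it is arguably the more direct argument; what the paper's intersection-theoretic computation buys is an independent cross-check of the boundary analysis and a template that extends to arbitrary pole orders (Proposition~\ref{prop:T=F}). One small imprecision: in ruling out $m_I\le 0$ you should invoke Corollary~\ref{cor:reszero} rather than only the residue theorem on a single component, since in a two-level degeneration the top-level residues vanish rather than being prescribed by $\lambda$; the conclusion is unaffected.
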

\par
Note that, in general, we will show in part (2) of Theorem~\ref{thm:MAIN5} that the generic isoresidual fiber of $\calH(a_1, a_2, \rec[-1][a+2])$ is disconnected if and only if $a_1$ and $a_2$ are both even. In that case, the generic isoresidual fiber has two connected components, and~\eqref{eq:genussimples} gives the arithmetic genus of the disjoint union of these two components. 
\par 
For general $\mu$, although there is no simple formula to deduce the pole orders of $\omega_\lambda$ from $\mu$ for the corresponding generic isoresidual fiber, we can still describe some general properties. 
\par 
\begin{thm}\label{thm:MAIN3}
All the poles of $(\overline{\mathcal{F}}_{\lambda},\omega_{\lambda})$ are simple if and only if $\mu=(a_1, a_2, \rec[-1][a+2])$.

For $p \geq 3$, all the singularities of $(\overline{\mathcal{F}}_{\lambda},\omega_{\lambda})$ are of even orders if and only if $\mu=(2a_{1},2a_{2},-2b_{1},\dots,-2b_{p})$.
\end{thm}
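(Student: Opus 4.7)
The plan is a direct case analysis using the explicit enumeration of poles of $\omega_\lambda$ and their orders established in Sections~\ref{subsub:SimplePoles} and~\ref{subsub:HigherPoles}. Those sections catalog every pole of $\omega_\lambda$ in terms of admissible combinatorial data $(I,J,K,L,M)$ and record the order of the associated pole as an explicit affine form in the variables $x_i = a_i+1$ and $y_j = b_j$, together with integer constants of the shape $|K \setminus L|$ or $|K \setminus M|$ coming from the hyperplanes of $\mathcal{W}_p$. The theorem then reduces to inspecting these finitely many linear expressions under the two specialisations of $\mu$.

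For the first assertion, the forward implication is obtained by setting $b_j = 1$ for every $j$: every sum $\sum_{i\in S} y_i$ collapses to the cardinality $|S|$, and the admissibility constraints on $(I,J,K,L,M)$ force every pole-order formula to evaluate to $1$. Conversely, if some $b_j \geq 2$, I would exhibit a minimal combinatorial datum (typically a type-$W_1$ singleton $I=\{j\}$, or a datum with $K=\emptyset$) for which the admissibility hypotheses are automatic and the resulting pole-order expression visibly contains $b_j$ as a summand, hence strictly exceeds $1$. For the second assertion, the forward implication is a parity check. Under $\mu = (2a_1,2a_2,-2b_1,\ldots,-2b_p)$ one has $y_j$ even and $x_i$ odd; the zeros of $\omega_\lambda$ have order $2(a_1+a_2)$, which is even, and one verifies chamber by chamber that every order formula from the classification is likewise even once the combinatorial constants $|K\setminus L|$ and $|K\setminus M|$ are absorbed. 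Conversely, if some $a_i$ or some $b_j$ is odd, a small combinatorial datum is constructed whose pole order inherits the odd parity; the hypothesis $p\geq 3$ enters here to ensure enough freedom in choosing a nontrivial partition $I \sqcup J \sqcup K$ of the pole indices.

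The main obstacle, in both parts, is the converse direction: one must ensure that any combinatorial datum used to expose a pole of the wrong order actually satisfies the admissibility hypotheses of Sections~\ref{subsub:SimplePoles} and~\ref{subsub:HigherPoles}, so that the candidate pole genuinely appears on $\overline{\mathcal{F}}_\lambda$ rather than being a virtual entry excluded by the classification. This is likely to require first identifying the chamber of $\mathcal{SP}_p$ containing $v_\mu$ and then tailoring the combinatorial data to that chamber before invoking the relevant order formula.
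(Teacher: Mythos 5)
Your high-level strategy is the same as the paper's: both parts are proved by a case analysis on the explicit classification of the poles of $\omega_{\lambda}$ from Sections~\ref{subsub:SimplePoles} and~\ref{subsub:HigherPoles}, and your forward directions are essentially correct (for the second part, the parity check that $c_{i}=a_{i}+1-\sum b_{j}$ is odd, hence $1+\lcm(c_{1},c_{2})$ is even, is exactly Proposition~\ref{prop:Sec7Direct}; for the first part the correct mechanism is not that the order formulas ``evaluate to $1$'' but that no cherry degeneration exists at all, since a residueless top-level component cannot carry a simple pole, so $\Xi$ vanishes whenever $K\neq\emptyset$). The problem is that both converse directions --- which carry all the content of the theorem --- are left as a plan, and the specific configurations you sketch do not work. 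A datum with $K=\emptyset$ is a horizontal degeneration and by Proposition~\ref{prop:simple-pole} produces a \emph{simple} pole of $\omega_{\lambda}$, so it can never witness a higher-order pole. A singleton $I=\{j\}$ with $b_{j}\geq 2$ is admissible only if $c_{1}=a_{1}+1-b_{j}\geq 1$ and $\Xi(c_{1}-1,a_{2};\{b_{k}\}_{k\in K})>0$; the latter fails whenever $K$ contains a simple pole, and the former fails whenever $b_{j}>a_{1}$ (e.g.\ in $\mathcal H(1,1,-1,-3)$ the only admissible cherries have the order-$3$ pole in $K$, not in $I$). So the ``admissibility is automatic'' claim is false, and you have correctly identified but not resolved the main obstacle.

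What the paper actually does is quite different in the details and is worth internalizing. For the first part it does not construct a bad configuration from scratch: it assumes all poles of $\omega_{\lambda}$ are simple, notes $\omega_{\lambda}$ must have at least one pole (a real-residue fiber has all periods real, impossible for a holomorphic form), hence a horizontal bipartition $I\sqcup J$ with $a_{1}+1=\sum_{i\in I}b_{i}$ exists, and then \emph{perturbs} it by transferring one or two poles into $K$, with a case split on whether $I$ contains a simple pole; the transfers are designed so that the new $c_{1},c_{2}$ are visibly positive and the relevant $\Xi$ is visibly nonzero. For the second part the converse occupies Lemma~\ref{lem:nosimplepole}, Proposition~\ref{prop:EvenEven1} and a further case analysis: one must first exclude simple poles from $\mu$, then show all $b_{j}$ share a parity, then eliminate the all-odd case via the relation $a_{1}+a_{2}-\sum b_{j}=-2$, treating $a_{1}\leq p-2$ and $a_{1}\geq p-1$ separately. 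Note also that an odd-order pole $1+\lcm(c_{1},c_{2})$ requires at least one $c_{i}$ even, and arranging this simultaneously with admissibility is precisely where the work lies; a single ``small combinatorial datum'' does not suffice. Your proposal, as written, has a genuine gap at exactly these steps.
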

\par 
If all the orders of singularities of the differential $\omega_{\lambda}$ are even, then the stratum containing it is usually disconnected (as shown in \cite{Bo}). In such a case, it would be interesting to determine which connected component of the stratum contains the isoresidual fiber.
\smallskip
\par 
Every stratum $\mathcal{H}(\mu)$ of meromorphic differentials in genus zero is connected. However, the generic isoresidual fibers can be disconnected in certain cases. There are two families of such disconnected isoresidual fibers, related to the topological invariants of strata of translation and dilation surfaces of higher genus (see \cite{Bo,ABW}). 
\begin{thm}\label{thm:MAIN5}
In strata of meromorphic one-forms on the Riemann sphere with $n \geq 2$ zeros, the generic isoresidual fibers are connected except for the following two families of strata:
\begin{enumerate}
    \item $\mathcal{H}(ka_{1},\dots,ka_{n},-kb_{1},\dots,-kb_{p-2},-1,-1)$ for some $k \geq 2$ and $a_{1},\dots,a_{n},b_{1},\dots,b_{p-2}$ being positive coprime integers, in which case the generic isoresidual fibers have $k$ connected components;
    \item $\mathcal{H}(2a_{1},\dots,2a_{n},-2b_{1},\dots,-2b_{p-2g},\rec[-1][2g])$ with $g \geq 2$, where generic isoresidual fibers have two connected components.
\end{enumerate}
\end{thm}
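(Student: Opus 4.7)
The plan is to analyze the Stein factorization of the residual map $\res\colon \mathcal{H}(\mu)\to \mathcal{R}_p\setminus\mathcal{A}_p$. Since every genus-zero stratum $\mathcal{H}(\mu)$ is connected (a fact invoked in the paragraph preceding the theorem), $\mathcal{H}(\mu)$ is irreducible, and the number of connected components of a generic fiber $\mathcal{F}_\lambda$ equals the generic degree of the finite factor in this Stein factorization. The proof then has two sides: for each exceptional family I produce a surjective locally constant invariant with values in a cyclic group, and outside these families I argue that no such invariant exists.

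For family~(1), the construction exploits the cyclic $\mathbb{Z}/k$-symmetry latent in $\mu$: since every singularity of $\omega$ other than the two labeled simple poles has order divisible by $k$, $\omega$ determines, on the $k$-fold cyclic cover $\widetilde X\to\mathbb{CP}^1$ branched at the two simple poles, a $\mathbb{Z}/k$-torsor of auxiliary data. Concretely this torsor records the ambiguity in extracting a global $k$-th root of $\omega$ compatible with the cover. The coprimality of $(a_1,\ldots,a_n,b_1,\ldots,b_{p-2})$ guarantees that the $\mathbb{Z}/k$-action on this torsor is free and that the induced map $\mathcal{F}_\lambda\to\mathbb{Z}/k$ is surjective, producing exactly $k$ components. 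For family~(2), I would use a parallel double-cover construction: the Riemann surface $\widetilde X\to\mathbb{CP}^1$ doubly branched at the $2g$ simple poles has genus $g-1\geq 1$, and the pullback of $\omega$ carries a canonical spin-type structure whose Arf parity gives a $\mathbb{Z}/2$-valued invariant, surjective precisely when $g\geq 2$.

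For the non-exceptional cases, my strategy is to combine the $n=2$ case from Theorem~\ref{thm:MAIN1} with the structure of the multi-scale compactification of Section~\ref{sec:multi-scale}. For $n=2$, parts~(6) and~(7) of Theorem~\ref{thm:MAIN1} show that all components of $\overline{\mathcal{F}}_\lambda$ belong to the same stratum component, which is independent of $\lambda$; combined with the Euler characteristic formula (Theorem~\ref{thm:simple} in the simple-pole case, and the general chamber structure of Theorem~\ref{thm:chambers}), this determines the number of components for $n=2$. For $n\geq 3$, I would degenerate $(X,\omega)\in\mathcal{H}(\mu)$ along boundary strata in the multi-scale compactification that merge pairs of zeros or split clusters of poles, thereby relating connected components of $\mathcal{F}_\lambda$ to those of isoresidual fibers in strata with fewer zeros and invoking the already-settled cases inductively.

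The main obstacle will be the non-exceptional direction, where I must rule out all partial divisibility or parity obstructions, such as those coming from proper divisors of $k$ or from parity structures supported on only some subsets of the simple poles. The cleanest route I envision is to transport the problem to higher-genus strata via the canonical cover constructions above and to invoke the classifications of connected components of strata of translation and dilation surfaces from \cite{Bo} and \cite{ABW}. The precise arithmetic conditions defining families~(1) and~(2) will then emerge as exactly those under which the monodromy of $\res$ over $\mathcal{R}_p\setminus\mathcal{A}_p$ fails to act transitively on the set of potential components.
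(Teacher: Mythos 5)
Your framing (monodromy acts transitively on the set of components because the stratum is connected, so it suffices to build complete locally constant invariants) matches the paper's, and your inductive reduction from $n\geq 3$ to $n=2$ by merging zeros is essentially the paper's Lemma~\ref{lem:incidence}. Your invariants for the two exceptional families are constructed differently — via $k$-fold and double cyclic covers branched at the simple poles — whereas the paper glues the infinite cylinders at the simple poles by homotheties to produce a \emph{dilation} surface of positive genus and then imports the rotation number and Arf invariant from \cite{Bo} and \cite{ABW}. These are plausibly equivalent, but your version is only sketched and in particular you do not verify that the invariant is independent of the choices made in the construction or that it is locally constant on $\mathcal{F}_\lambda$, both of which the paper checks explicitly.

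The genuine gap is the upper bound on the number of components, i.e.\ the claim that the fiber is connected in the non-exceptional cases and has \emph{exactly} $k$ (resp.\ $2$) components in the exceptional ones. Your proposal offers two tools for this and neither suffices. First, the Euler characteristic of $\overline{\mathcal{F}}_\lambda$ together with the fact that all components lie in the same stratum component does not determine the number of components: a total Euler characteristic $\chi$ is compatible with many decompositions into homeomorphic pieces, so Theorems~\ref{thm:simple} and~\ref{thm:chambers} cannot settle connectivity for $n=2$. Second, invoking the classification of connected components of strata of translation or dilation surfaces only bounds where the fiber can sit; a fiber contained in a single connected stratum component can still be disconnected, so no transitivity of monodromy follows from that classification. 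The paper closes this gap with the substantial combinatorial core of Section~\ref{sub:ConnectedUni}: for real residue configurations with a single positive residue, the zeros of $\omega_\lambda$ are encoded by decorated trees, the saddle connections of $(\overline{\mathcal{F}}_\lambda,\omega_\lambda)$ are classified by decorated graphs with one incoherently oriented loop (Lemma~\ref{lem:Sec8SaddleConnection}), and an explicit sequence of graph moves shows that any two such trees are joined by a chain of saddle connections exactly when the arithmetic obstructions vanish (Lemmas~\ref{lem:MAIN5nosimple}--\ref{lem:MAIN51big}). Nothing in your proposal substitutes for this argument, so as written the proof establishes at best a lower bound on the number of components.
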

\par 
Besides the singularity pattern of $(\overline{\mathcal{F}}_{\lambda},\omega_{\lambda})$, the dependency of the periods of $\omega_{\lambda}$ on the underlying configuration of the residue tuple $\lambda = (\lambda_{1},\ldots,\lambda_{p})$ provides an additional discrete invariant. It is shown in Sections~\ref{subsub:SimplePoles} and~\ref{subsub:HigherPoles} that the poles of $\omega_{\lambda}$ (whose orders are already discrete invariants) have residues that are linear combinations of $\lambda_{1},\dots,\lambda_{p}$ with integer coefficients. These integer coefficients are also discrete invariants.
\par
The same statement holds, in fact, for the period of every relative homology class in $H_{1}(X \setminus P_{\omega},Z_{\omega})$, where $Z_{\omega}$ and $P_{\omega}$ are the sets of the zeros and poles of $\omega$, respectively. It follows that the monodromy of the isoresidual fibration around resonance hyperplanes is encapsulated in a \textit{Gauss--Manin system}, whose fiber is the relative homology group of the isoresidual fiber. The integer coefficients appearing in the formula of the periods form a \textit{period central charge} that commutes with the monodromy of the fibration (see Section~\ref{sec:GaussManin} for details).
\par 
\begin{rmk}
In the case of strata with $n \geq 3$ zeros, relative periods between the zeros still provide convenient local coordinates on isoresidual fibers and define a geometric structure that can serve as a reasonable higher-dimensional analog of a translation surface structure. Similarly to our computation of the Euler characteristic of one-dimensional isoresidual fibers, a geometric structure on higher-dimensional isoresidual fibers can provide a way to compute Chern and Hodge numbers of the (suitably compactified) fibers in terms of local invariants of the singular locus.
\par
As far as we know, there is no universally accepted notion of translation manifolds yet. Polyhedral K\"{a}hler manifolds introduced in \cite{Pan} have good geometric properties, but they are usually defined inductively by gluing polyhedra. In contrast, isoresidual fibers directly arise from complex-analytic data. What is still missing is a natural category of objects that possess the two-faceted quality of translation surfaces: a complex-analytic definition (complex structure with a differential) and an equivalent geometric definition (translation atlas with local models for singularities).
\end{rmk}

\subsection{Organization of the paper}

\begin{itemize}
    \item In Section~\ref{sec:Background}, we review the standard background on translation structures, meromorphic differentials, period coordinates on strata of differentials, and the classification of connected components of strata. We also review known counting formulas for differentials with zero residues and recall the results of \cite{GeTaIso} and \cite{ChPrIso} regarding the isoresidual fibration in the case of strata of differentials in genus zero with a unique zero.

    \item In Section~\ref{sec:multi-scale}, we review the multi-scale compactification of strata and describe the closure of subspaces given by linear equations between the residues. In particular, we describe the boundary  points of the generic isoresidual curves.
    
    \item In Section~\ref{sec:Translation}, we describe the canonical differential on the closure of the generic isoresidual curves whose translation structure is induced by the period atlas. In particular, we detail the local invariants of the singularities of the translation structure of isoresidual fibers in terms of the degeneration of the parametrized objects in the multi-scale compactification. We also provide a more precise description of their geometry in the case of isoresidual fibers over configurations of real residues. These results are summarized in Theorem~\ref{thm:MAIN1}, which is formally proved in Section~\ref{sub:formalMain1}. 
    
    \item In Section~\ref{sec:Euler}, 
    we analyze the wall and chamber structure for the expression of the Euler characteristic of generic isoresidual fibers. In particular, we prove Theorem~\ref{thm:chambers} which shows that the Euler characteristic consists of homogeneous components with respect to the chamber structure described in Definition~\ref{defn:SPS}. 

    \item In Section~\ref{sec:intersection},
    we provide an alternative computation for the Euler characteristic of generic isoresidual fibers through intersection theory on the multi-scale compactification. We  explicitly compute the Euler characteristic in the case where all poles are simple, proving Theorem~\ref{thm:simple}.

    \item In Section~\ref{sec:GaussManin}, we construct a Gauss--Manin system associated with the isoresidual fibration. We discuss the linear dependence of the periods of the translation structure in terms of the underlying configuration of residues, encapsulated in a period central charge, which serves as an arithmetic invariant of the fibers and commutes with the monodromy of the fibration.
    
    \item In Section~\ref{sec:Connected}, we provide a  classification of the connected components of one-dimensional generic isoresidual fibers, using rotation numbers and parity of spin structures. Drawing on the incidence relations between strata, we prove in Theorem~\ref{thm:MAIN5} the complete classification of connected components of generic isoresidual fibers for strata in genus zero with an arbitrary number of zeros.
    
    \item In Section~\ref{sec:Global}, we establish  arithmetic relations between the singularity pattern of a stratum $\mathcal{H}(\mu)$ and the singularity pattern of its generic isoresidual fibers ($\overline{\mathcal{F}}_{\lambda},\omega_\lambda)$, with a particular focus on proving Theorem~\ref{thm:MAIN3}.\newline
\end{itemize}
\par 
\paragraph{\bf Acknowledgements.} Research by D.C. is supported in part by NSF grant DMS-2301030, Simons Travel Support for Mathematicians, and a Simons Fellowship under Record ID SFI-MPS-SFM-00005694. D.C. thanks BIMSA and ICBS2024 for their invitation and hospitality,  where part of the work was completed. Research by Q.G. is supported in part by the Grant PAAPIT UNAM-DFG DA100124 "Conectividad y conectividad simple de los estratos". Research by M.P. is supported by the DFG-UNAM project MO 1884/3-1 and the Collaborative Research Centre TRR 326 ``Geometry and Arithmetic of Uniformized Structures.'' Research by G.T. is supported by the Beijing Natural Science Foundation (Grant IS23005) and the French National Research Agency under the project TIGerS (ANR-24-CE40-3604). The authors thank Gianluca Faraco, Samuel Grushevsky, Myeongjae Lee, Martin M\"oller, and Scott Mullane for inspiring discussions on relevant topics. The authors are also grateful to the anonymous referee for their valuable comments, which have contributed to improving the rigor and clarity of the paper.
\par 
\section{Meromorphic differentials and translation structures}\label{sec:Background}
\par
\subsection{Translation structures}
\par 
The details of the constructions presented in this section and the following one can be found in \cite{Zo,AMTransSurf,BCGGM3}.
\par 
\subsubsection{The regular locus}
\par 
On a compact Riemann surface $X$ endowed with a meromorphic one-form $\omega$, we denote by $X^{\ast}$ the complement of the zeros and poles of $\omega$ in $X$. Local primitives of~$\omega$ are injective around each point of $X^{\ast}$, providing an atlas of $\mathbb{C}$-valued local coordinates. Since local primitives of the same differential differ by a constant, the transition maps of this atlas are translations of the complex plane.
\par
Conversely, such an atlas defines a complex structure on $X^{\ast}$,  and the pullbacks of the one-form $dz$ on $\mathbb{C}$ via local charts globalize to a holomorphic differential $\omega$ on $X^{\ast}$.
\par
In a translation surface, for each slope $\theta$ in the circle $\mathbb{S}^{1}$ of directions, there is a foliation ${\rm Fol}_{\theta}$ of the surface, where the leaves are locally conjugated to oriented lines of slope $\theta$ in the translation charts.
\par 
\subsubsection{Local models for singularities}
\par 
At a zero of order $a$, a differential is locally represented  by pulling back $dz$ via a branched cover of degree $a+1$ of a disk, where the cover is totally ramified over the center of the disk. Consequently, the metric induced by the differential exhibits a conical singularity of angle $(a+1)2\pi$.
\par 
Before presenting local models for the poles of a meromorphic differential, we recall the following convention. The \textit{residue} of a differential $\omega$ at a pole is defined as the period of $\omega$ over a positively oriented simple loop around the pole. This convention differs from the usual one by a factor of $2\pi {\rm i}$. Since our approach emphasizes periods rather than coefficients, this convention is more suitable. It is important to note that the residue is a local invariant of a pole. In particular, for a differential on $\mathbb{CP}^{1}$, if the residue at every pole is real, then the period of any closed loop is also real.
\par 
In a neighborhood of a simple pole, the differential $\omega$ is of the form $\dfrac{rdz}{z}$, where $2\pi {\rm i} r$ is the residue at the pole. Its geometric interpretation is that of a semi-infinite cylinder, where the waist curves have a period equal to the residue $2\pi {\rm i} r$ (see Section 2.2 of \cite{Bo}). 
\par 
A {\em flat cone of type $a\geq0$} is a flat surface associated with the differential $\omega$ on $\PP^{1}$, which has a unique zero of order $a$ and a unique pole of order $a+2$. In the case where $a=0$, the flat cone has no conical singularity and features a unique pole of order $2$, corresponding to the flat plane where the pole of order two is located at infinity.
\par
The neighborhood of a pole of order $b>1$ with trivial residue is the complement of a compact neighborhood around the conical singularity of the flat cone of type $a-2$.
\par
To construct local models for poles of order $b>1$ with nontrivial residue, we start with a flat cone of type $a-2$ and remove an $\epsilon$-neighborhood of a semi-infinite line extending from the conical singularity, along with a neighborhood of the conical singularity itself. We then  identify the resulting boundaries of the neighborhood using an isometry. By rotating and rescaling, we obtain a pole of order $b$ with any nonzero residue. This construction is also explained in detail in~\cite{Bo}.
\smallskip
\par 
\subsubsection{Topological index of a loop in a translation surface}\label{sub:rotation}

Directions are preserved by translations, so the usual winding number can be generalized from the flat plane to translation surfaces. 
\begin{defn}
Let $\gamma$ be a smooth oriented loop in a translation surface punctured at the singularities. Assuming that $\gamma$ is parametrized by arc length, $\frac{\gamma'(t)}{|\gamma'(t)|}$ defines a continuous map from $\mathbb{S}^{1}$ to $\mathbb{S}^{1}$. As such, it has a well-defined \textit{topological index} ${\rm Ind}_{\gamma}$, which depends only on the homotopy class of $\gamma$.
\end{defn}
\par 
The topological index of a positively oriented loop around a singularity of order $a$ is $a+1$.
\par 
\subsection{Period coordinates for strata}\label{sec:periodcoor}
\par 
\subsubsection{The period atlas}
\par 
Denote by $Z_{\omega}$ the set of zeros of $\omega$ and by $P_{\omega}$ the set of poles. Any smooth path $\gamma$ joining two zeros of $\omega$ and avoiding the poles of $\omega$ represents a \textit{relative homology class} $[\gamma]$ in $H_{1}(X \setminus P_{\omega},Z_{\omega})$. The integrals of $\omega$ along an integral basis of $H_{1}(X \setminus P_{\omega},Z_{\omega})$ provide local complex-analytic coordinates for the corresponding stratum of differentials with the same orders of singularities as $\omega$, which are called {\em period coordinates}.
\par
For a stratum $\mathcal{H}(a_{1},\dots,a_{n},-b_{1},\dots,-b_{p})$ with $\sum\limits_{i=1}^n a_{i} - \sum\limits_{j=1}^p b_{j} =2g-2$ and $n,p \geq 1$, the \textit{period atlas} endows the stratum with the structure of a complex-analytic orbifold of complex dimension $2g+n+p-2$.
\par 
\subsubsection{Saddle connections}
\par 
In a translation surface $(X,\omega)$, a \textit{saddle connection} is an arc joining two zeros, whose interior does not contain any singularities and is locally conjugated to a segment of constant slope in the translation charts. In the flat metric induced by $\omega$, a saddle connection is a geodesic segment. A \textit{closed saddle connection} is a saddle connection where the two ends coincide.
\par
Every saddle connection represents a class in the relative homology group $H_{1}(X \setminus P_{\omega}, Z_{\omega})$.
\par 
\subsubsection{Cylinders}
\par 
It is well-known that translation surfaces of finite area (corresponding to holomorphic one-forms) have infinitely many saddle connections. However, meromorphic one-forms can have either finitely or infinitely many saddle connections, depending on the existence of invariant components of finite area in the directional foliation (see Section~5 in \cite{tahar} for details).
\par 
\begin{defn}
In a translation surface $(X,\omega)$, cutting along all the saddle connections sharing a given direction $\theta \in \mathbb{S}^{1}$ decomposes $X$ into connected components that we refer to as \textit{invariant components} because they are invariant under the directional flow in the direction $\theta$.
\end{defn}
\par 
Here, we are only interested in the case of meromorphic one-forms on $\mathbb{CP}^{1}$, whose characterization is simpler.
\par
We recall that in a translation surface, closed geodesics (loops locally conjugated to straight segments of constant slope in the translation charts) that are disjoint from the singularities form one-parameter families known as  \textit{cylinders}. An end of a cylinder can be either a simple pole (in the case of an end at infinity) or consist of a chain of parallel saddle connections.
\par 
\begin{prop}\label{prop:INFINITE-SC-Chara}
For a translation surface $(\mathbb{CP}^{1},\omega)$, where $\omega$ is a meromorphic one-form, one of the following statements holds:
\begin{itemize}
    \item $\omega$ admits finitely many saddle connections;
    \item $\omega$ admits infinitely many saddle connections,  and the accumulation points of the directions of these saddle connections in $\mathbb{S}^{1}$ coincide with the directions of closed geodesics in the cylinders of finite area in $(\mathbb{CP}^{1},\omega)$.
\end{itemize}
\end{prop}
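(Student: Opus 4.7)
The plan is to prove the dichotomy by combining a discreteness property of holonomy vectors with a local analysis of the directional foliation near any accumulation direction. The key tool is the standard fact that for any meromorphic one-form $\omega$ on $\mathbb{CP}^{1}$, the set of holonomy vectors of saddle connections is a discrete subset of $\mathbb{C}$: saddle connections have a uniform positive lower bound on their length on any compact part of the surface, and a saddle connection is determined by its starting zero together with its holonomy vector. In particular, for each $L>0$ only finitely many saddle connections have length at most~$L$.

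Assume now that $(\mathbb{CP}^{1},\omega)$ admits infinitely many saddle connections. By compactness of $\mathbb{S}^{1}$, I can extract a sequence $(\gamma_{n})$ of distinct saddle connections whose directions $\theta_{n}$ converge to some $\theta_{0}\in\mathbb{S}^{1}$, and the discreteness above forces $\ell(\gamma_{n})\to\infty$. The main claim is that $\theta_{0}$ must be the direction of a cylinder of finite area in $(\mathbb{CP}^{1},\omega)$. To prove it I would analyze $\mathrm{Fol}_{\theta_{0}}$: cutting $\mathbb{CP}^{1}$ along the (finitely many) $\theta_{0}$-separatrices and $\theta_{0}$-saddle connections yields invariant components, each of which is either a finite-area cylinder bounded by chains of parallel saddle connections, or a strip-like region containing at least one pole in which every trajectory escapes to a pole in finite time. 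Minimal components in the Keane sense do not arise here because on the genus-zero surface $\mathbb{CP}^{1}$ there are no transverse cycles available to support recurrence outside of cylinders; this is the genus-zero specialization of the standard Maier decomposition for measured foliations.

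If no cylinder existed in direction $\theta_{0}$, every trajectory issued from a zero would enter a prescribed small neighborhood of some pole within a uniformly bounded time $T>0$. By continuity of the directional flow with respect to $\theta$, uniform on the complement of small disks around the singularities, the same property persists for every $\theta$ sufficiently close to $\theta_{0}$, which would bound uniformly the length of saddle connections whose direction lies in a small neighborhood of $\theta_{0}$. This contradicts $\ell(\gamma_{n})\to\infty$, so $\theta_{0}$ must be a cylinder direction. Conversely, if $C$ is a cylinder of finite area in direction $\theta_{0}$ bounded by chains of parallel $\theta_{0}$-saddle connections, a direction $\theta$ slightly tilted away from $\theta_{0}$ and rational with respect to the affine structure of $C$ produces a closed geodesic winding around $C$ and interacting with its boundary to form a saddle connection; as $\theta\to\theta_{0}$, the number of windings can be made arbitrarily large, yielding saddle connections of direction approaching $\theta_{0}$ and length tending to infinity.

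The main obstacle is the rigorous exclusion of minimal components on $\mathbb{CP}^{1}$ and the continuous-dependence argument needed to propagate the escape-to-pole property from $\theta_{0}$ to nearby directions in a uniform way, since $\mathrm{Fol}_{\theta}$ is only continuous away from the singularities. I expect both points to be handled through standard estimates on flat surfaces after removing small disks around the zeros and poles, drawing on the foundational material of \cite{Zo,AMTransSurf}.
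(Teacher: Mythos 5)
Your strategy is genuinely different from the paper's. The paper disposes of the hard analytic content by citation: it invokes Proposition~5.10 of \cite{tahar} (accumulation directions of saddle connections are exactly the directions of finite-area invariant components), observes that minimal components require genus at least one and so cannot occur on $\mathbb{CP}^{1}$, handles the converse by producing long saddle connections crossing a finite-area cylinder, and uses Proposition~7.6 of \cite{tahar} to identify ``infinitely many saddle connections'' with ``infinitely many slopes.'' You instead attempt a self-contained re-derivation of the cited result in the genus-zero case. That is a legitimate route, and several of your ingredients are sound: saddle connections lie in the finite-area core (being geodesics between zeros), so only finitely many have length below any bound, which forces $\ell(\gamma_{n})\to\infty$ along any sequence with converging directions; and the exclusion of minimal components on the sphere is indeed classical. (One small correction: a saddle connection is \emph{not} determined by its starting zero and holonomy vector --- parallel saddle connections of equal holonomy from the same zero occur, e.g., on cylinder boundaries --- but the finiteness you need follows from the finite area of the core rather than from injectivity.)

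The genuine gap is in the propagation step. You assert that if direction $\theta_{0}$ carries no finite-area cylinder, then every trajectory issued from a zero enters a polar neighborhood within uniformly bounded time, and that this persists for nearby $\theta$ by continuity of the flow away from the singularities. But a direction $\theta_{0}$ without cylinders can (and at an accumulation direction typically will) still carry saddle connections, i.e., separatrices ending at zeros rather than escaping to poles. For $\theta$ close to $\theta_{0}$, a separatrix shadows a $\theta_{0}$-separatrix until the latter hits a zero, then continues by shadowing one of the outgoing $\theta_{0}$-separatrices at that zero, and so on through a chain; naive continuity gives no a priori bound on the length of this chain, and bounding it requires showing the chain cannot recur --- which is exactly where the absence of a finite-area invariant component must be used, and is the nontrivial content of Proposition~5.10 of \cite{tahar} that you are trying to reprove. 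Your converse direction is also slightly muddled (a tilted direction does not give a closed geodesic in the cylinder, rational or otherwise; the correct statement is that a slightly tilted separatrix winds many times across the cylinder before meeting a boundary zero, producing long saddle connections of direction tending to $\theta_{0}$), but that is easily repaired. As written, the proposal should either fill in the shadowing-chain argument carefully or, as the paper does, cite the decomposition result directly.
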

\par 
\begin{proof}
It is proved in Proposition~5.10 of \cite{tahar} that the accumulation points of the directions of saddle connections are the directions of invariant components of the directional flow that have finite area. These components are either cylinders or minimal components. The latter only appears in genus at least one, so it cannot occur for $(\mathbb{CP}^{1},\omega)$. Conversely, in any cylinder of finite area, we can find arbitrarily long saddle connections joining the two boundary components, and their slope approaches the slope of the closed geodesics of the cylinder. 
\par
Finally, having infinitely many saddle connections and infinity many slopes of saddle connections in $\mathbb{S}^{1}$ are equivalent, as there is a topological upper bound on the maximal number of saddle connections sharing a given direction (this is a special case of Proposition~7.6 in \cite{tahar}).
\end{proof}
\par 
In this paper, we primarily focus on translation surfaces of genus zero with exactly $n=2$ conical singularities. In such  surfaces, there is at most one cylinder of finite area.
\par 
\begin{prop}
For a translation surface $(\mathbb{CP}^{1},\omega)$, where $\omega$ is a meromorphic one-form with exactly two zeros, there is at most one cylinder of finite area in $(\mathbb{CP}^{1},\omega)$.
\end{prop}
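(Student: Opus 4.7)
The plan is to argue by contradiction: suppose $(\mathbb{CP}^{1},\omega)$ admits two distinct finite-area cylinders $C_1$ and $C_2$ with core curves $\gamma_i$ in directions $\theta_i$. The first step is to record that each boundary component of a finite-area cylinder is a closed chain of saddle connections whose vertices are zeros of $\omega$. Applying Gauss--Bonnet to such a cylinder, viewed as a flat annulus ($\chi=0$) with piecewise-geodesic boundary of vanishing geodesic curvature, gives $\sum_v \alpha_v = \pi V$ over all boundary vertex visits, where $\alpha_v \geq \pi$ is the cylinder interior angle at each visit and $V$ is the total vertex count; since $\alpha_v \geq \pi$, equality forces $\alpha_v = \pi$ everywhere. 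Hence each cylinder consumes exactly $\pi$ of cone angle at every vertex visit, so the joint presence of $C_1$ and $C_2$ uses $\pi(V_1+V_2)$ out of the total budget $2\pi(a_1+a_2+2)=2\pi\sum_j b_j$ at the two zeros.

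The principal case is $\theta_1=\theta_2$: then $C_1$ and $C_2$ are disjoint parallel annuli in $\mathbb{CP}^{1}$, and $\mathbb{CP}^{1}\setminus(\overline{C_1}\cup\overline{C_2})$ decomposes into two disks and an annulus. I would apply Gauss--Bonnet to each complementary region $R$: a direct computation using $\alpha_v=\pi$ on the cylinder side shows
\[
\sum_{p\in R}b_p \;=\; \chi(R) + \sum_{v\in\partial R\cap Z_\omega}\bigl(a_v+1-k_v(R)\bigr),
\]
where $k_v(R)$ is the multiplicity of the visit of $v$ on $\partial R$. Summing these identities over the three complementary components and using $\chi(\mathbb{CP}^{1})=2$ together with $\sum_j b_j=a_1+a_2+2$ yields an equation in the multiplicities $k_v$ and the region-containment counts $|\mathcal{R}_v|$. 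Combining this with the trivial lower bound $V_1,V_2\geq 2$ (each cylinder has two boundary loops, each carrying at least one saddle connection) and with the angular bound $\pi(V_1+V_2)\leq 2\pi(a_1+a_2+2)$, the numerical constraints should become incompatible, since only two zeros must absorb all $V_1+V_2$ visits under the rigidity $\alpha_v=\pi$.

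The non-parallel case $\theta_1\neq\theta_2$ splits into two further subcases. If $C_1\cap C_2=\emptyset$, the same Gauss--Bonnet counting applies verbatim to the four separating boundary loops and yields the same obstruction. If $C_1\cap C_2\neq\emptyset$, I would pick $p\in C_1\cap C_2$ and consider the closed $\theta_2$-leaf through $p$: it is contained in $C_2$ but, viewed in the flat chart of the $\theta_1$-cylinder $C_1$, it has nonzero slope relative to horizontal and therefore exits $C_1$ after a bounded length $h/|\sin(\theta_2-\theta_1)|$. Its global closure thus forces repeated crossings of $\partial C_1$, each crossing traversing an angular sector at one of the two zeros, and tallying these crossings against the angular budget of the first step produces the contradiction.

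The main difficulty is the combinatorial bookkeeping when a given zero is visited several times by $\partial C_1$ or $\partial C_2$: one must carry the Gauss--Bonnet identities through all nested, disjoint, and adjacent arrangements, and verify that the hypothesis $n=2$ is what forces the failure of every admissible configuration, since it pins the total visit count against only two angular budgets $2\pi(a_1+1)$ and $2\pi(a_2+1)$.
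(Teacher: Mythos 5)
There is a genuine gap here, in both halves of your case analysis.

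For the disjoint (in particular, parallel) case, you never actually derive a contradiction: the claim that ``the numerical constraints should become incompatible'' is left unverified, and I do not believe the angle budget is the right mechanism. The total cone angle $2\pi(a_1+a_2+2)$ is in general far larger than $\pi(V_1+V_2)$, so nothing forces the counts to clash. The obstruction is topological, not metric: in genus zero the core curve of a finite cylinder \emph{separates} $\mathbb{CP}^{1}$ into two disks, each containing one boundary component of the cylinder and hence at least one zero; since the two disks are disjoint, the two boundary circles of a single cylinder carry \emph{disjoint} nonempty sets of zeros. If a second finite cylinder is disjoint from the first, it sits inside one of these two disks, and its own two boundary circles then carry two further disjoint nonempty sets of zeros inside that disk, forcing at least three zeros in total --- contradicting $n=2$. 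No Gauss--Bonnet bookkeeping is needed, and the identity you write for the complementary regions, even if correct, is not visibly contradictory (e.g.\ it is consistent when the middle region is empty or contains poles).

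For the intersecting case your mechanism is wrong: a closed leaf of $C_2$ that exits $C_1$ crosses the \emph{interiors} of the saddle connections making up $\partial C_1$, not the zeros, so these crossings do not ``traverse angular sectors'' at the singularities and consume nothing from the angular budget. The correct argument (the one the paper uses) is again separation plus orientation: pick a point of $C_1\cap C_2$ and take the closed geodesics $\gamma\subset C_1$ and $\gamma'\subset C_2$ through it. Since $\gamma$ separates $\mathbb{CP}^{1}$ into $X_1\sqcup X_2$, the closed curve $\gamma'$ must cross $\gamma$ once positively and once negatively to return to its starting side. But two straight-line trajectories of distinct constant slopes on a translation surface intersect everywhere with the same sign. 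This is the contradiction, and it does not involve any counting of visits at the zeros. I recommend replacing both of your case arguments with these two short topological observations; the Gauss--Bonnet setup in your first paragraph can be discarded entirely.
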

\par 
\begin{proof}
In genus zero, each closed geodesic of a cylinder of finite area disconnects the surface into two components. Each component contains one boundary component of the cylinder and therefore exactly one conical singularity.
\par
If $(\mathbb{CP}^{1},\omega)$ contains two such cylinders, they cannot be disjoint because $\omega$ would then have at least three zeros. In the case where these two cylinders intersect, we have two closed geodesics $\gamma$ and $\gamma'$, that intersect each other. This situation is also impossible because $\gamma$ decomposes $\mathbb{CP}^{1}$ into two connected components,~$X_{1}$ and $X_{2}$. Since $\gamma'$ is a periodic trajectory, it should cross $\gamma$ from $X_{1}$ to $X_{2}$ and then from $X_{2}$ to $X_{1}$. In particular, if $\gamma'$ enters $X_{2}$ by crossing $\gamma$ positively (resp. negatively), then it has to leave $X_{2}$ by crossing $\gamma$ negatively (positively). However, in a translation surface, the direction of a trajectory cannot change, so all intersections between the two trajectories have the same sign. This is a contradiction so there is no such pair of geodesics $\gamma$ and $\gamma'$
\end{proof}
\par 
\subsubsection{The action of ${\rm GL}_{2}^{+}(\mathbb{R})$}\label{subsub:action}
\par 
Given a translation surface $(X,\omega)$, elements of ${\rm GL}_{2}^{+}(\mathbb{R})$ act by composition with the coordinate functions induced by $\omega$. For a translation surface $(X,\omega)$ obtained by identifying parallel sides of a collection of (possibly infinite) polygons $P_{1},\dots,P_{t}$ in the real plane $\mathbb{R}^{2}$, given $g\in {\rm GL}_{2}^{+}(\mathbb{R})$, the image $g\cdot(X,\omega)$ is obtained by identifying the corresponding sides of the polygons $g(P_i)$,  where $g$ acts as a linear transformation on $\mathbb{R}^{2}$.
\par
Similarly, as $\mathbb{C}$ identifies with $\mathbb{R}^{2}$, the group ${\rm GL}_{2}^{+}(\mathbb{R})$ acts on the residual space $\mathcal{R}_p$. It is easy to check that the action of ${\rm GL}_{2}^{+}(\mathbb{R})$ preserves strata of meromorphic differentials and commutes with the residual map.
\par 
\subsection{Core}\label{sub:core}
\par 
In a translation surface induced by a meromorphic differential, every neighborhood of a pole has infinite area. However, the essential information of the translation structure is contained within a domain of finite area called the \textit{core}.
\par 
\begin{defn}\label{defn:core}
A subset $E$ of a translation surface $(X,\omega)$ is \textit{convex} if and only if every geodesic segment joining two points of $E$ lies entirely in~$E$.
\par
The \textit{convex hull} of a subset $F$ of a translation surface $(X,\omega)$ is the smallest closed convex subset of $X$ that contains~$F$.
\par
The \textit{core} of $(X,\omega)$ is the convex hull $core(X)$ of the zeros of $\omega$.
\end{defn}
\par 
The core separates the poles from one another. The following result demonstrates that the complement of the core has as many connected components as the number of poles (see Proposition~4.4 and Lemma~4.5 of \cite{tahar}).  
\par 
\begin{prop}\label{prop:decomppoles}
For a translation surface $(X,\omega)$, the boundary of the core $\partial\mathcal{C}(X)$ is a finite union of saddle connections. Moreover, each connected component of $X \setminus core(X)$ is a topological disk that contains a unique pole.
\end{prop}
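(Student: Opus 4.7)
My approach is to analyze $\partial core(X)$ locally using the flat structure of $\omega$ and then to combine convexity with the local models for poles recalled earlier in the section to control the topology of $X \setminus core(X)$.

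First I would show that $\partial core(X)$ is a finite union of saddle connections. Away from the zeros of $\omega$, any point of $\partial core(X)$ has a neighborhood that embeds isometrically into $\mathbb{R}^{2}$, so convexity of $core(X)$ forces its intersection with this neighborhood to be a planar convex set whose boundary is a straight segment. Such a segment can only terminate at a point where the developing map fails to be locally injective, i.e., a zero of $\omega$; hence $\partial core(X)$ decomposes into geodesic arcs between zeros, which are by definition saddle connections. Finiteness follows because at each zero of order $a_{i}$ only finitely many such arcs can be incident, since they must occupy pairwise disjoint angular sectors within the total cone angle $(a_{i}+1)\cdot 2\pi$.

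Next I would show that each connected component $U$ of $X \setminus core(X)$ contains at least one pole. Otherwise $U$ would be a relatively compact flat surface with geodesic boundary and no interior conical singularities; applying Gauss--Bonnet to $\overline{U}$, the only curvature contributions would come from the corner angles along $\partial U$, but each such corner angle is forced to be at most $\pi$ on the side of $U$ by convexity of $core(X)$, producing a contradiction with $\chi(U) \geq 0$.

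The main obstacle is to show that $U$ contains a \emph{unique} pole and is a topological disk. For this I would exploit the local models from Section~\ref{sec:Background}: near a pole of order $b_{j}$ there is a nested family of open neighborhoods whose boundaries are closed curves of bounded length surrounding only that pole. Choosing such a neighborhood $V$ small enough that $\overline{V}\subset U$, I would apply Gauss--Bonnet to $U\setminus V$, whose boundary is geodesic on the $\partial core(X)$ side and of controlled total turning on the $\partial V$ side, to conclude that $U\setminus V$ has vanishing Euler characteristic, hence is an annulus. Consequently $U$ retracts onto $\partial V$ and, with the pole filled in, becomes a disk, so no second pole can lie in $U$. The delicate case is higher-order poles with nonzero residue, whose local model involves a slit gluing rather than a rotationally symmetric one; there one has to carry out the nested exhaustion on a universal cover of a punctured neighborhood, as in Proposition~4.4 and Lemma~4.5 of \cite{tahar}, to ensure that the bounding loops truly separate the pole from the rest of the surface.
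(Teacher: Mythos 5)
The paper does not actually prove this proposition; it is recalled as Proposition~4.4 and Lemma~4.5 of \cite{tahar}, so your attempt is a reconstruction rather than a variant of an argument in the text. As a reconstruction it has a genuine error at its central step. You claim that convexity of $core(X)$ forces the corner angle of a complementary component $U$ to be \emph{at most} $\pi$ on the side of $U$, and that Gauss--Bonnet then contradicts $\chi(U)\geq 0$. The inequality goes the other way: if the sector of $U$ between two consecutive boundary saddle connections at a zero had angle $<\pi$, the straight chord joining two nearby points of those saddle connections would be a geodesic segment between two points of the (closed) core passing through $U$, violating convexity. Hence the interior angles of $U$ at its corners are $\geq\pi$, and Gauss--Bonnet yields $2\pi\chi(\overline{U})=\sum_i(\pi-\theta_i)\leq 0$. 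Note also that even under your (reversed) bound, the conclusion $\chi(U)\geq 0$ is not a contradiction --- a flat convex polygon is a disk with $\chi=1$ --- so the step fails twice: the angle estimate is wrong and the claimed contradiction does not follow from it. With the corrected bound one rules out a pole-free disk, but one must still separately exclude pole-free annular components (e.g., by the separation argument available in genus zero: a flat cylinder component would separate zeros of $\omega$, and a geodesic joining them would have to cross $U$ while staying in the core).

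Two further points need repair. First, in your opening step, bare convexity does not force $\partial core(X)$ to be locally straight away from the zeros (a Euclidean disk is convex with curved boundary); you need the \emph{minimality} of the convex hull --- if the boundary were strictly curved at a regular point, one could shave off a small cap by a chord and obtain a strictly smaller closed convex set still containing the zeros. Second, in the last step the assertion that $U\setminus V$ ``has vanishing Euler characteristic, hence is an annulus'' is not justified: the total turning of $\partial V$ around a pole of order $b_j$ contributes $2\pi(1-b_j)$ to Gauss--Bonnet, and with corner angles $\geq\pi$ the resulting inequality does not by itself pin down $\chi(U\setminus V)=0$. The uniqueness of the pole and the disk topology are usually obtained by a global Euler characteristic count over all complementary components (each containing at least one of the $p$ poles, with the total forced by $\chi(\mathbb{CP}^1)=2$), not by a local computation in a single component.
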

We refer to these connected components as \textit{polar domains}.
\smallskip
\par
\subsection{Graphs for translation surfaces with real periods}\label{sub:RealGraph}
In this section we introduce two graphs associated to a translation surface with real period. Both of them could be useful depending to the context.  First we introduce a version of ribbon graphs suited for the description of the translation surfaces with real periods. Its definition is quite simple.
\begin{defn}
 Given a meromorphic differential $\omega$ with real periods on the Riemann sphere, its \textit{associated graph} is the embedded graph in the sphere whose vertices are its zeros and edges are its saddle connections, oriented in the increasing real direction. Moreover, we draw an half edge for every horizontal half infinite ray starting from a zero of $\omega$.
\end{defn}
Note that each face of the graph corresponds to a pole. Moreover, by convention, we will draw the graph in such a way that the unbounded face corresponds to a pole with positive residue. The order of the zero is half of the number of ray starting at its corresponding vertex minus one. The order of the pole is minus half of the number of half edges contained in the corresponding face minus one.

In the case of a single zero, this graph corresponds to the dual of the graph introduced in \cite{GeTaIso}.
\par
To conclude, the merging zeros of $\omega$ by shrinking a saddle connection corresponds to the shrinking of the corresponding edge in the associated graph.
\smallskip
\par
We now introduce the \textit{decorated graphs} of a translation surface with real periods, which is the dual notion of the ribbon graphs.  This generalizes the concept of \textit{decorated trees} from  \cite{GeTaIso}, to provide a combinatorial description of these translation surfaces. 
\par 
\begin{defn}\label{defn:abstractdecoratedtgraph}
A \textit{decorated graph} is an embedded graph in the topological sphere such that:
\begin{itemize}
    \item every face is labeled;
    \item every face is a topological disk;
    \item every vertex is labeled;
    \item edges are oriented;
    \item to every vertex is attached a nonnegative even number of unoriented half-edges;
    \item at a vertex, there is a nonnegative even number of half-edges between two adjacent edges with the same orientation and an odd number of half-edges between two edges of opposite orientation.
\end{itemize}
\end{defn}
\par 
Now, we show how to construct a decorated graph corresponding to a translation surface of genus zero with real periods.
\par
For any one-form $\omega$ on $\mathbb{CP}^{1}$ in a stratum $\mathcal{H}(a_{1},\dots,a_{n},-b_{1},\dots,-b_{p})$ with real periods, all the saddle connections are horizontal. Consequently, the \textit{vertical directional foliation} induced by $\omega$ on $\mathbb{CP}^{1}$ is straightforward to describe (see Proposition~5.5 in \cite{tahar} for more details). Vertical trajectories (oriented from bottom to top) can be:
\begin{itemize}
    \item \textit{Generic trajectories}: These are infinitely long and extend from one pole to another. 
    \item One of the $\sum\limits_{i=1}^n (a_{i}+1)$ \textit{critical trajectories} that go from a pole to a zero. 
    \item One of the $\sum\limits_{i=1}^n (a_{i}+1)$ \textit{critical trajectories} that go from a zero to a pole.
\end{itemize}
\par 
Generic trajectories assemble into one-parameter families that  sweep through three kinds of subsurfaces:
\begin{itemize}
    \item \textit{Open left half-planes} with a unique conical singularity on the right vertical boundary. 
    \item \textit{Open right half-planes} with a unique conical singularity on the left vertical boundary. 
    \item \textit{Infinite vertical strips} with a unique conical singularity at the same height on each boundary line.
\end{itemize}
\par 
We construct the \textit{decorated graph} $\mathfrak{gr}(\omega)$ in the following way:
\begin{itemize}
    \item The vertices are the poles of $\omega$. 
    \item For each infinite vertical strip, we draw one of the vertical trajectories (oriented from bottom to top). 
    \item For each open left or right half-plane, we draw a half-edge attached to the corresponding vertex.
\end{itemize}
\par 
We can immediately verify that:
\begin{itemize}
    \item There is a complete correspondence between the oriented edges of $\mathfrak{gr}(\omega)$, infinite vertical strips, and the horizontal saddle connections joining their sides. 
    \item Each face of $\mathfrak{gr}(\omega)$ is a topological disk containing exactly one zero of $\omega$. 
    \item To each pole of order $b_{j}$, there are $2b_{j}-2$ half-edges attached (half of them correspond to left half-planes, while the other half correspond to right half-planes). 
    \item The gluing of the boundaries of half-planes and strips is consistent with the orientation of the half-planes. Hence, there is a nonnegative even number of half-edges between two adjacent edges with the same orientation (at the vertex), and an odd number of half-edges between two edges of opposite orientation.
\end{itemize}

In a translation surface with only real periods, saddle connections are horizontal and can only meet at their endpoints. The union of the saddle connections cuts out $p$ polar domains, each of which is a topological disk containing a unique pole (see, for example, Lemma~4.10 of \cite{tahar}). The computation of the Euler characteristic formula shows that there are $n+p-2$ saddle connections, as well as the same number of infinite vertical strips. Each of these contains four sides of critical trajectories.
\par
Since the local model of a pole of order $b_{j}>1$ is the cyclic gluing of $b_{j}-1$ planes, there are exactly $\sum\limits_{j=1}^p (2b_{j}-2)$ left or right half-planes, each containing two sides of critical trajectories. Therefore, the total number of critical trajectories is $2n-4+ 2\sum\limits_{j=1}^p b_{j}$.
\par
Since a zero of order $a_{i}$ corresponds to a conical singularity with an angle of $(2a_{i}+2)\pi$, there are $a_{i}+1$ incoming and $a_{i}+1$ outgoing critical trajectories at this zero. Therefore, the total number of critical trajectories is equal to $2n+2\sum\limits_{i=1}^n a_{i}$. This is consistent with the identity $\sum\limits_{i=1}^n a_{i} - \sum\limits_{j=1}^p b_{j} = -2$.

\begin{ex}
A differential in $\calH(2,1,\rec[-1][5])$ is represented on the left side of Figure~\ref{fig:exgraph}. The associated ribbon and decorated graphs  are shown on the right side of the same figure, respectively on the top and on the bottom. Similarly, a differential in $\mathcal{H}(1,4,-1,-2,-4)$ and its associated graphs are presented in Figure~\ref{fig:exgraph2}.
\begin{figure}[ht]
 \centering
 \begin{tikzpicture}[,decoration={
    markings,
    mark=at position 0.4 with {\arrow[very thick]{>}}}]
\begin{scope}[]
\coordinate (a) at (-1,1);
\coordinate (b) at (0,1);

    \fill[fill=black!10] (a)  -- (b)coordinate[pos=.5](f) -- ++(0,-1) --++(-1,0) -- cycle;
 \draw  (a) -- (b);
 \draw (a) -- ++(0,-.9) coordinate (d)coordinate[pos=.5](h);
 \draw (b) -- ++(0,-.9) coordinate (e)coordinate[pos=.5](i);
 \draw[dotted] (d) -- ++(0,-.2);
 \draw[dotted] (e) -- ++(0,-.2);
\node[below] at (f) {$1$};
    \fill (a)  circle (1.5pt);
\fill[] (b) circle (1.5pt);
\node at (-.5,0) {$p_{2}$};
\end{scope}

\begin{scope}[xshift=1.2cm]
\coordinate (a) at (-1,1);
\coordinate (b) at (0,1);

    \fill[fill=black!10] (a)  -- (b)coordinate[pos=.5](f) -- ++(0,-1) --++(-1,0) -- cycle;
 \draw  (a) -- (b);
 \draw (a) -- ++(0,-.9) coordinate (d)coordinate[pos=.5](h);
 \draw (b) -- ++(0,-.9) coordinate (e)coordinate[pos=.5](i);
 \draw[dotted] (d) -- ++(0,-.2);
 \draw[dotted] (e) -- ++(0,-.2);
\node[below] at (f) {$2$};
    \fill (a)  circle (1.5pt);
\fill[] (b) circle (1.5pt);
\node at (-.5,0) {$p_{3}$};
\end{scope}

\begin{scope}[xshift=2.4cm]
\coordinate (a) at (-1,1);
\coordinate (b) at (0,1);

    \fill[fill=black!10] (a)  -- (b)coordinate[pos=.5](f) -- ++(0,-1) --++(-1,0) -- cycle;
 \draw  (a) -- (b) coordinate[pos=.2](p1)coordinate[pos=.1](q1)coordinate[pos=.6](q2);
 \draw (a) -- ++(0,-.9) coordinate (d)coordinate[pos=.5](h);
 \draw (b) -- ++(0,-.9) coordinate (e)coordinate[pos=.5](i);
 \draw[dotted] (d) -- ++(0,-.2);
 \draw[dotted] (e) -- ++(0,-.2);
\node[below] at (q1) {$3$};
\node[below] at (q2) {$3'$};
    \fill (a)  circle (1.5pt);
\fill[] (b) circle (1.5pt);
    \filldraw[fill=white] (p1)  circle (1.5pt);

    \node at (-.5,0) {$p_{4}$};
\end{scope}

\begin{scope}[xshift=3.6cm]
\coordinate (a) at (-1,1);
\coordinate (b) at (0,1);

    \fill[fill=black!10] (a)  -- (b)coordinate[pos=.5](f) -- ++(0,-1) --++(-1,0) -- cycle;
 \draw  (a) -- (b);
 \draw (a) -- ++(0,-.9) coordinate (d)coordinate[pos=.5](h);
 \draw (b) -- ++(0,-.9) coordinate (e)coordinate[pos=.5](i);
 \draw[dotted] (d) -- ++(0,-.2);
 \draw[dotted] (e) -- ++(0,-.2);
\node[below] at (f) {$4$};
    \filldraw[fill=white] (a)  circle (1.5pt);
    \filldraw[fill=white] (b) circle (1.5pt);

    \node at (-.5,0) {$p_{5}$};
\end{scope}

\begin{scope}[xshift=1.3cm,yshift=.4cm]
\coordinate (a) at (-2,1);
\coordinate (b) at (2,1);

    \fill[fill=black!10] (a)  -- (b)-- ++(0,1) --++(-4,0) -- cycle;
 \draw  (a) -- (b) coordinate[pos=.25](p1)coordinate[pos=.5](p2)coordinate[pos=.55](p3)coordinate[pos=.8](p4) coordinate[pos=.125](q1)coordinate[pos=.375](q2)coordinate[pos=.525](q3)coordinate[pos=.675](q4) coordinate[pos=.9](q5);
 \draw (a) -- ++(0,.9) coordinate (d)coordinate[pos=.5](h);
 \draw (b) -- ++(0,.9) coordinate (e)coordinate[pos=.5](i);
 \draw[dotted] (d) -- ++(0,.2);
 \draw[dotted] (e) -- ++(0,.2);
\node[above] at (q1) {$1$};
\node[above] at (q2) {$2$};
\node[above] at (q3) {$3$};
\node[above] at (q4) {$4$};
\node[above] at (q5) {$3'$};
    \fill (a)  circle (1.5pt);
\fill[] (b) circle (1.5pt);
    \fill (p1)  circle (1.5pt);
\fill[] (p2) circle (1.5pt);
    \filldraw[fill=white] (p3)  circle (1.5pt);
\filldraw[fill=white] (p4) circle (1.5pt);
\node at (0,2) {$p_{1}$};
\end{scope}

\begin{scope}[xshift=8cm,yshift=2.1cm]
\coordinate (a) at (-1,0);
\coordinate (b) at (1,0);

\draw (a)[postaction={decorate}]  .. controls ++(-110:1.4) and ++(-170:1.4) ..node[left]{$1$}  (a);
\draw (a)[postaction={decorate}]  .. controls ++(170:1.4) and ++(110:1.4) ..node[left]{$2$}  (a);
\draw (a)[postaction={decorate}]  .. controls ++(60:1.4) and ++(120:1.4) ..node[above]{$3$}  (b);
\draw (b)[postaction={decorate}]   .. controls ++(-30:1.4) and ++(30:1.4) .. node[right]{$4$} (b);
\draw (b)[postaction={decorate}]  .. controls ++(-120:1.4) and ++(-60:1.4) ..node[below]{$3'$}  (a);

\fill[] (a) circle (2pt);
\filldraw[fill=white] (b)  circle (2pt);
\end{scope}

\begin{scope}[xshift=8cm,yshift=-.8cm]
 \node[circle,draw] (a) at  (-2,1) {$p_{3}$};
\node[circle,draw] (b) at  (0,0) {$p_{1}$};
\node[circle,draw] (c) at  (-2,-1) {$p_{2}$};
 \node[circle,draw] (d) at  (1.5,0) {$p_{5}$};
  \node[circle,draw] (e) at  (3,0) {$p_{4}$};
\draw [postaction={decorate}]  (a) --node[above right] {$2$}  (b);
\draw[postaction={decorate}] (c) --node[below right] {$1$} (b);
\draw[postaction={decorate}] (d) --node[below] {$4$} (b);
\draw[postaction={decorate}] (e)  .. controls ++(120:1.4) and
++(60:1.4) ..node[below] {$3$}  (b);
\draw[postaction={decorate}] (e)  .. controls ++(-120:1.4) and
++(-60:1.4) ..node[below left] {$3'$}  (b);

\fill[] (-1,0) circle (1.5pt);
    \filldraw[fill=white] (2.25,0)  circle (1.5pt);
\end{scope}
\end{tikzpicture}
    \caption{A differential $\omega$ and the associated ribbon and decorated graphs  are shown, with extra labels on the edges given by the label of the corresponding saddle connection.}\label{fig:exgraph}
\end{figure}
\smallskip
\par
 \begin{figure}[ht]
\begin{tikzpicture}[scale=1,decoration={
    markings,
    mark=at position 0.4 with {\arrow[very thick]{>}}}]

    \begin{scope}[xshift=-.5cm,yshift=.5cm]
    \draw (-.25,0) coordinate (a) -- (.25,0) coordinate[pos=.5] (c) coordinate (b);

    \fill[fill=black!10] (a) -- (b) -- ++(0,-1) --++(-.5,0) -- cycle;
    \draw (a) -- (b);
    \draw  (a) -- ++(0,-1)  coordinate[pos=.5] (d);
    \draw  (b) -- ++(0,-1)  coordinate[pos=.5] (e);
\filldraw[fill=white](a) circle (2pt);
\filldraw[fill=white] (b) circle (2pt);

 \node[above] at (c) {$1$};
\node[left] at (d) {$5$};
\node[right] at (e) {$5$};

\node at (0,-1) {$p_{3}$};
    \end{scope}

\begin{scope}[xshift=2.3cm,yshift=0cm]
      \fill[fill=black!10] (0,0) circle (1cm);
      \draw (-.5,0) coordinate (a) -- (.5,0) coordinate[pos=.25] (c) coordinate[pos=.5] (d) coordinate[pos=.75] (e) coordinate (b);
\fill (a) circle (2pt);
\fill (b) circle (2pt);
\filldraw[fill=white] (d) circle (2pt);
\node[below] at (c) {$2$};
\node[below] at (e) {$2'$};
    \fill[white] (a) -- (b) -- ++(0,1) --++(-1,0) -- cycle;
 \draw  (b) --  (a);
 \draw (a) -- ++(0,.9) coordinate[pos=.5] (d);
 \draw (b) -- ++(0,.9)coordinate[pos=.5] (e);
\node[left] at (d) {$4$};
\node[right] at (e) {$4$};

\node at (0,-1) {$p_{2}$};
    \end{scope}

    \begin{scope}[yshift=3cm]
      \fill[fill=black!10] (0,0) circle (1.5cm);
 \draw (-1,0) coordinate (a) -- (.5,0) coordinate[pos=.17] (c) coordinate[pos=.5] (d) coordinate[pos=.83] (e) coordinate[pos=.33] (f) coordinate[pos=.66] (g) coordinate (b);
 \node[above] at (c) {$2$};
  \node[above] at (d) {$1$};
   \node[above] at (e) {$2'$};
  \fill[] (a) circle (2pt);
\fill[] (b) circle (2pt);
\filldraw[fill=white] (f) circle (2pt);
\filldraw[fill=white] (g) circle (2pt);
    \fill[white] (a) -- (b) -- ++(0,-1.5) --++(-1.5,0) -- cycle;
 \draw  (b) --  (a);
 \draw (a) -- ++(0,-1.4) coordinate[pos=.5] (f);
 \draw (b) -- ++(0,-1.4)coordinate[pos=.5] (g);
  \draw  (b) --++  (1,0) coordinate[pos=.5] (h) ;

\node[left] at (f) {$5$};
\node[right] at (g) {$5$};
\node[above] at (h) {$6$};\node[below] at (h) {$8$};

       \fill[fill=black!10] (2.5,.75) circle (.7cm);
  \fill (2.5,.75) circle (2pt);
  \draw (2.5,.75) --++(.7,0)  coordinate[pos=.5] (i) ;
\node[below] at (i) {$6$};\node[above] at (i) {$7$};

       \fill[fill=black!10] (2.5,-.75) circle (.7cm);
  \fill (2.5,-.75) circle (2pt);
  \draw (2.5,-.75) --++(.7,0) coordinate[pos=.5] (j) ;
  \node[below] at (j) {$7$};\node[above] at (j) {$8$};

  \node at (0,1) {$p_{1}$};
    \end{scope}
\begin{scope}[xshift=7.2cm,yshift=3cm]
\coordinate (a) at (-1,0);
\coordinate (b) at (1,0);

\foreach \i in {0,1,...,5}
\draw (a) --++(90+36*\i:.3);
\draw (a) [postaction={decorate}]  .. controls ++(60:1.4) and ++(120:1.4) .. node[above]{$2$}  (b);
\draw (b)[postaction={decorate}]   .. controls ++(30:1.4) and ++(-30:1.4) ..node[right]{$1$}   (b);
\draw (b)[postaction={decorate}]   .. controls ++(-120:1.4) and ++(-60:1.4) .. node[below]{$2'$}  (a);
\foreach \i in {0,1}
\draw (a) --++(-15+30*\i:.3);

\fill[] (a) circle (2pt);
\filldraw[fill=white] (b)  circle (2pt);
\end{scope}

\begin{scope}[xshift=6.5cm,yshift=.3cm]
\node[circle,draw] (b) at  (0,0) {$p_{1}$};
 \node[circle,draw] (d) at  (1.5,0) {$p_{3}$};
  \node[circle,draw] (e) at  (3,0) {$p_{2}$};
\draw[postaction={decorate}] (d) -- (b);
\draw[postaction={decorate}] (e)  .. controls ++(120:1.4) and
++(60:1.4) .. (b);
\draw[postaction={decorate}] (e)  .. controls ++(-120:1.4) and
++(-60:1.4) ..  (b);
\draw (e) --++(60:.5);
\draw (e) --++(-60:.5);
   \foreach \i in {0,1,...,5}
\draw (b) --++(90+36*\i:.5);
  
\fill[] (-1,0) circle (1.5pt);
    \filldraw[fill=white] (2.25,0)  circle (1.5pt);
\end{scope}
\end{tikzpicture}
 \caption{A differential in $\mathcal{H}(1,4,-1,-2,-4)$ and its associated graphs are presented.} \label{fig:exgraph2}
\end{figure}
\end{ex}
\par 
To conclude, we describe the effect of merging zeros of $\omega$ in terms of the associated graph $\mathfrak{gr}(\omega)$.
\begin{lem}\label{lem:defengraph}
 Given a differential $\omega\in\mathcal{H}(a_{1},a_{2},-b_{1},\dots,-b_{p})$ with real periods and its associated graph $\mathfrak{gr}(\omega)$, if we shrink a saddle connection between two zeros, the associated graph of the limit is obtained by eliminating the corresponding edge of $\mathfrak{gr}(\omega)$.  
\end{lem}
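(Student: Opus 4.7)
The plan is to analyze what happens to the horizontal directional decomposition when a single saddle connection is shrunk, and to match the resulting combinatorial data with the associated graph of the limit differential.

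First, I would fix a horizontal saddle connection $s$ joining the two zeros $z_{1}$ and $z_{2}$ of $\omega$, of orders $a_{1}$ and $a_{2}$, and consider a one-parameter family $\omega_{t}$ in $\mathcal{H}(a_{1},a_{2},-b_{1},\dots,-b_{p})$ with real periods in which only the period of $s$ varies, shrinking to zero as $t\to 0$. Since the residues at the poles are untouched, the limit $\omega_{0}$ lies in the stratum $\mathcal{H}(a_{1}+a_{2},-b_{1},\dots,-b_{p})$ obtained by merging the two zeros into a single zero $z_{0}$ of order $a_{1}+a_{2}$. The key local statement to establish is that for $t$ small enough, every other horizontal saddle connection of $\omega_{t}$ is uniformly bounded away from zero in length and persists continuously to a horizontal saddle connection of $\omega_{0}$, and likewise every horizontal half-infinite ray from $z_{1}$ or $z_{2}$ persists to a horizontal half-infinite ray from $z_{0}$.

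Next, I would translate this into the ribbon graph description. On the source side, removing the edge associated to $s$ from $\mathfrak{gr}(\omega)$ and identifying its two endpoints produces an embedded graph on $\mathbb{CP}^{1}$ whose vertex set, after identification, has one vertex of combined valence. On the target side, $\mathfrak{gr}(\omega_{0})$ has a single vertex at $z_{0}$ with $2(a_{1}+a_{2})+2$ adjacent half-edges and edges. I would check that the cyclic order of the rays leaving $z_{1}$ and $z_{2}$ in $\omega_{t}$ converges, as $t\to0$, to the cyclic order around $z_{0}$ predicted by the local model of a zero of order $a_{1}+a_{2}$; this is the usual statement that collapsing a saddle connection between two conical points of angles $(a_{1}+1)2\pi$ and $(a_{2}+1)2\pi$ produces a conical point of angle $(a_{1}+a_{2}+1)2\pi$, and the incoming rays concatenate in the obvious way, with the two half-rays formerly corresponding to $s$ itself disappearing.

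Finally, I would verify that the faces of the contracted graph correspond to the polar domains of $\omega_{0}$. Since the saddle connection $s$ is a single oriented edge bounding (at most) two faces, its removal either merges two faces or leaves the face structure unchanged up to homotopy; in either case, the face containing the pole $p_{j}$ in $\mathfrak{gr}(\omega)$ still contains the same pole $p_{j}$ in the contracted graph, because shrinking $s$ affects the flat geometry only in an arbitrarily small neighborhood of $s$ by Proposition~\ref{prop:decomppoles} applied to the polar domains. Combining the vertex analysis of the previous step with this face-preservation statement shows that the contracted graph and $\mathfrak{gr}(\omega_{0})$ agree as embedded decorated graphs on the sphere. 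The main obstacle is to rule out simultaneous degeneration of any other saddle connection during the shrinking of $s$, which follows from the assumption that we shrink only one period while keeping all residues (and hence all other periods that depend only on residues) fixed, together with the fact that the configuration of saddle connections is locally constant on the open set of differentials with all saddle connection lengths bounded away from zero.
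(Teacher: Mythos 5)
Your argument is correct in outline, but it takes a genuinely different route from the paper, and it proves the dual formulation of the statement. The paper reads $\mathfrak{gr}(\omega)$ as the \emph{decorated} graph (vertices are the poles, edges are the vertical strips dual to the saddle connections), and its proof is a two-line counting argument: this graph has $p$ vertices and $p$ edges, hence a unique cycle; the cycle's edges are exactly the saddle connections joining the two distinct zeros; shrinking one of them leaves $p-1$ saddle connections connecting the same polar domains, so the limit graph is the tree obtained by deleting that edge. You instead work with the ribbon graph (vertices are the zeros), so ``eliminating the edge'' becomes edge \emph{contraction}, and you justify it by a local degeneration analysis: persistence of the remaining saddle connections and rays, the concatenation of cyclic orders at the merged cone point of angle $2\pi(a_1+a_2+1)$, and preservation of the faces/polar domains. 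Both routes are valid and the two graphs are planar duals, so the statements are equivalent; what the paper's approach buys is brevity (no persistence argument is needed because the combinatorics is forced by the vertex/edge count), while yours gives more geometric information, essentially re-deriving the local zero-merging model of Corollary~\ref{cor:breakingup}.

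One justification in your sketch is not quite right as stated. You claim the other saddle connections stay bounded away from zero because their periods ``depend only on residues'' and are fixed; but the other saddle connections joining $z_1$ to $z_2$ (the other edges of the unique cycle) have periods differing from that of $s$ by absolute periods, so their lengths \emph{do} change as $s$ shrinks, converging to partial sums $\sum_{j\in J}\lambda_j$. Ruling out a simultaneous collapse therefore requires $\lambda\notin\mathcal{A}_p$ (no vanishing partial sum), which is the standing genericity hypothesis in the paper but should be invoked explicitly. Similarly, the assertion that shrinking $s$ ``affects the flat geometry only in a small neighborhood of $s$'' is only true for the local surgery model, not for the path in period coordinates you set up; the correct statement is that the combinatorial decomposition into strips, half-planes, and polar domains is unchanged so long as no other saddle connection degenerates.
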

\par 
\begin{proof}
The decorated graph $\mathfrak{gr}(\omega)$ has $p$ vertices and $p$ edges,  and therefore a unique cycle that decomposes the underlying sphere into two connected components. The edges of the cycle correspond to the saddle connections joining the two zeros. Shrinking one of these saddle connections produces a translation surface $\omega_{0}$ with one unique zero and the $p-1$ remaining saddle connections. Then, $\mathfrak{gr}(\omega_{0})$ is a decorated tree where edges connect the same polar domains as in $\mathfrak{gr}(\omega)$. In other words,  $\mathfrak{gr}(\omega_{0})$ is obtained from $\mathfrak{gr}(\omega)$ by removing the edge corresponding to the saddle connection we have shrunk.
\end{proof}

\par 
\subsection{Classification of the connected components of strata}\label{sub:CCClassification}
\par 
A complete classification of connected components of strata of meromorphic differentials with prescribed orders of singularities has been provided in \cite{Bo}. Note that in the case of genus zero, every stratum of meromorphic differentials on $\mathbb C\mathbb P^1$ is connected.
\par
We first review the classification in genus one in terms of the rotation number of a translation surface (see Section~\ref{sub:rotation} for the definition of rotation numbers).
\par 
\begin{defn}
Let $(X,\omega)$ be a translation surface in a stratum $\mathcal{H}(a_{1},\dots,a_{n},-b_{1},\dots,-b_{p})$ of differentials in genus one, with $n,p \geq 1$. Let $(\alpha,\beta)$ be two simple loops forming a symplectic basis of the homology of the underlying torus. The \textit{rotation number} of $(X,\omega)$ is defined as  $\gcd(a_{1},\dots,a_{n},b_{1},\dots,b_{p},{\rm Ind}_{\alpha},{\rm Ind}_{\beta})$.
\end{defn}
\par 
\begin{thm}\label{thm:BoissyGenusOne}
Let $\mathcal{H}=\mathcal{H}(a_{1},\dots,a_{n},-b_{1},\dots,-b_{p})$ be a stratum of translation surfaces of genus one. In each connected component of $\mathcal{H}$, the rotation numbers of the translation surfaces are the same. Conversely, there is exactly one connected component for each positive divisor of $\gcd(a_{1},\dots,a_{n},b_{1},\dots,b_{p})$, with one exception: in the stratum $\mathcal{H}(a,-a)$ (where $n=p=1$),  there is no connected component corresponding to the rotation number $a$.
\end{thm}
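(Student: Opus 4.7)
The plan is to follow the strategy of Boissy in~\cite{Bo}, adapting the Kontsevich--Zorich classification of connected components of strata of holomorphic abelian differentials to the meromorphic setting. The argument decomposes into three main steps: (i) the rotation number is a well-defined invariant of a translation surface and is locally constant on the stratum; (ii) each positive divisor of $d:=\gcd(a_{1},\dots,a_{n},b_{1},\dots,b_{p})$ is realized by some surface in $\mathcal{H}$, except in the advertised exceptional case; (iii) two translation surfaces in $\mathcal{H}$ with the same rotation number lie in the same connected component.

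For step (i), I would first check that the definition is independent of the choice of symplectic basis $(\alpha,\beta)$ and of the specific representatives used to compute ${\rm Ind}_\alpha$ and ${\rm Ind}_\beta$. Invariance under change of symplectic basis follows from the $\mathrm{SL}_2(\Z)$-invariance of the gcd of a family of integers. Invariance under isotopy reduces to computing how ${\rm Ind}$ changes when a loop is pushed across a cone point or pole: these local modifications shift ${\rm Ind}$ by quantities controlled by the local orders $a_{i}$ or $-b_{j}$ (using the formula from Section~\ref{sub:rotation}), which combine with the orders already appearing in the gcd to give a canonical quantity. Local constancy is then automatic, since ${\rm Ind}$ is an integer-valued function that varies continuously with period data.

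For step (ii), I would exhibit an explicit representative of each admissible rotation number. A natural construction stacks horizontal cylinders of prescribed circumferences, tuning the heights and the combinatorics of the saddle-connection boundaries so that ${\rm Ind}_\alpha$ and ${\rm Ind}_\beta$ attain specified values. For a divisor $r$ of $d$, one arranges the configuration so that $r$ divides both ${\rm Ind}_\alpha$ and ${\rm Ind}_\beta$ while $\gcd(r,d,{\rm Ind}_\alpha,{\rm Ind}_\beta)$ is exactly $r$, producing a surface of rotation number $r$. For the exceptional stratum $\mathcal{H}(a,-a)$, a direct analysis of the flat cone of type $a-2$ and its closing-up into a torus shows that any symplectic basis forces $\gcd(a,{\rm Ind}_\alpha,{\rm Ind}_\beta)$ to be a proper divisor of $a$, ruling out rotation number $a$.

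For step (iii), which is the main obstacle, I would use the surgery toolbox: \emph{breaking up a zero} (splitting a cone point of order $a_{i}+a_{j}$ into two cone points of orders $a_{i}$ and $a_{j}$ joined by a short saddle connection), the inverse \emph{merging} operation, and the analogous surgeries on poles built from the flat-cone construction recalled in Section~\ref{sec:Background}. These surgeries preserve the rotation number and connect any surface to a normal form in a simpler stratum (with fewer zeros or fewer poles). One then verifies combinatorially that surfaces in the same normal form with matching rotation numbers are path-connected in the stratum. The main difficulty, and the novel content of~\cite{Bo} compared to Kontsevich--Zorich, is the treatment of higher-order poles: one must check that each pole surgery preserves the rotation number, is reversible without jumping components, and can be combined with zero surgeries in sufficient flexibility to reach every normal form. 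Once this is in place, the rotation number is shown to be a complete invariant, with the exceptional case $\mathcal{H}(a,-a)$ being precisely the locus where the surgical flexibility fails.
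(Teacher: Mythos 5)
This statement is quoted in the paper as background from \cite{Bo}; the paper gives no proof of its own, so there is nothing internal to compare against, and the only fair benchmark is Boissy's original argument. Your proposal correctly identifies that argument's architecture (invariance and local constancy of the rotation number, realizability of each divisor, completeness of the invariant via zero/pole surgeries), but as written it is a plan rather than a proof: every substantive step is announced in the conditional (``I would check\ldots'', ``one then verifies combinatorially\ldots'') and none is executed. Step (iii) is the entire content of the theorem --- showing that two surfaces with the same rotation number can be connected --- and you defer it wholesale to ``a combinatorial verification'' after reduction to a normal form that is never specified. That reduction (which surgeries, in which order, to which minimal stratum, and why the rotation number survives each one) is precisely where Boissy's work lies, and it cannot be waved at.

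Two more localized gaps. In step (i), the claim that basis-independence ``follows from the $\mathrm{SL}_2(\Z)$-invariance of the gcd'' is too quick: the topological index is not additive under concatenation of loops (there is a $\pm 1$ correction when smoothing the concatenation of two simple closed curves), so $\gamma \mapsto \mathrm{Ind}_\gamma$ does not descend to a homomorphism on $H_1$ and one must check that the gcd with the singularity orders nevertheless absorbs these corrections; this is a short but genuine verification. In step (ii), the exclusion of rotation number $a$ in $\mathcal{H}(a,-a)$ is asserted (``a direct analysis\ldots shows'') without any argument; the actual reason is that rotation number $a$ would force the divisor relation $z-p=0$ in the Jacobian of the torus, i.e.\ the zero and the pole would coincide, and some version of this obstruction has to appear explicitly. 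In summary: right strategy, correctly attributed, but no step is actually proved.
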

\par 
For strata parameterizing translation surfaces of genus $g \geq 2$, their connected components are distinguished by \textit{hyperellipticity} and \textit{parity of spin structures}.
\par 
\begin{defn}
For a stratum $\mathcal{H}(\mu)$ of translation surfaces of genus $g \geq 2$, the signature $\mu$ is said to be:
\begin{itemize}
    \item \textit{hyperelliptic type} if $\mu$ is of the form $(a,a,-b,-b)$, $(2a,-b,-b)$, $(a,a,-2b)$, or $(a,a,-b,-b)$,  where~$a$ and $b$ are positive integers;
    \item \textit{even type} if $\mu$ is either of the form $(a_{1},\dots,a_{n},-b_{1},\dots,-b_{p})$ or   $(a_{1},\dots,a_{n},-1,-1)$, where the orders $a_{1},\dots,a_{n},b_{1},\dots,b_{p}$ are even.
\end{itemize}
\end{defn}
\par 
\begin{defn}
A stratum component $\mathcal{C}$ is said to be  \textit{hyperelliptic} if every translation surface $(X,\omega)$ in $\mathcal{C}$ admits a nontrivial involution $\tau$ of $X$ such that $\tau^{\ast} \omega = -\omega$ and $X / \tau$ is isomorphic to $\mathbb{CP}^{1}$.    
\end{defn}
\par 
\begin{defn}
In a stratum $\mathcal{H}(\mu)$ where $\mu$ is of even type, the \textit{parity of spin structure} of a translation surface $(X,\omega)$ of genus $g \geq 2$ is defined as the parity of $\sum\limits_{i=1}^{g} ({\rm Ind}_{\alpha_{i}}+1)({\rm Ind}_{\beta_{i}}+1)$, where $(\alpha_{1},\beta_{1},\dots,\alpha_{g},\beta_{g})$ is a symplectic basis of the homology of the underlying topological surface of genus $g$. This parity is a topological invariant of each connected component of $\mathcal{H}(\mu)$.    
\end{defn}
\par 
\begin{thm}\label{thm:BoissyHigherGenus}
Let $\mathcal{H}(\mu)=\mathcal{H}(a_{1},\dots,a_{n},-b_{1},\dots,-b_{p})$ be a stratum of translation surfaces of genus $g \geq 2$. We distinguish several cases as follows:\newline
If $\sum\limits_{j=1}^p b_{j}$ is odd, then $\mathcal{H}(\mu)$ is connected.\newline
If $\sum\limits_{j=1}^p b_{j} =2$ and $g=2$, then:
\begin{itemize}
    \item if $\mu$ is of hyperelliptic type, then
there are two components: one is hyperelliptic while the other is not (in this case, these two components are also distinguished by the parity of the spin structure);
    \item if $\mu$ is not of hyperelliptic type, then $\mathcal{H}(\mu)$ is connected.
\end{itemize}
If $\sum\limits_{j=1}^p b_{j} >2$ or $g>2$, then:
\begin{itemize}
    \item if $\mu$ is of hyperelliptic type, then $\mathcal{H}(\mu)$ has exactly one hyperelliptic  component and either one or two non-hyperelliptic components, depending on whether $\mu$ is of even type or not;   
    \item if $\mu$ is of even type, then $\mathcal{H}(\mu)$ has two non-hyperelliptic components distinguished by the parity of the spin structure;
    \item if $\mu$ is neither of hyperelliptic nor even type, then $\mathcal{H}(\mu)$ is connected.
\end{itemize}
\end{thm}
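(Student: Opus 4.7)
The plan is to follow the general Kontsevich--Zorich strategy, adapted to the meromorphic setting, in three conceptual steps: first confirm that hyperellipticity and spin parity are well-defined invariants of connected components; then upper-bound the number of components via these invariants; and finally exhibit continuous paths inside each alleged component to show the upper bound is sharp.

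For the first step I would argue that hyperellipticity is clearly closed and open (hyperelliptic involutions deform uniquely in families, and the quotient being $\mathbb{CP}^{1}$ is rigid under continuous deformation). For parity of spin structure, when $\mu$ is of even type I would extend the classical construction of the spin structure on the underlying surface to the meromorphic setting: the divisor $\sum \tfrac{a_{i}}{2}z_{i} - \sum \tfrac{b_{j}}{2}p_{j}$ defines a theta characteristic, and the mod-$2$ invariant $\sum ({\rm Ind}_{\alpha_{i}}+1)({\rm Ind}_{\beta_{i}}+1)$ can be shown to equal $h^{0}$ of this theta characteristic modulo $2$, hence is deformation invariant. The case $\sum b_{j}$ odd has no spin obstruction, which is why connectedness should follow directly there.

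For the second step, I would set up a surgery calculus on translation surfaces in $\mathcal{H}(\mu)$: (i) \emph{breaking up a zero} of order $a=a'+a''$ into two zeros of orders $a',a''$, parametrized by a small segment; (ii) the reverse \emph{merging} operation along a short saddle connection; (iii) \emph{bubbling a handle} by cutting along parallel saddle connections and regluing; and (iv) a controlled \emph{pole creation/removal} for simple poles in the $(-1,-1)$ slots. Using these moves, I would reduce any surface in $\mathcal{H}(\mu)$ to a canonical model in a minimal stratum, tracking how hyperellipticity and parity transform at each step. The delicate point is to verify that each surgery is available within a fixed component provided the invariants are preserved, which requires a local analysis of small saddle connections and small handles as in the proofs of Theorem 3.1 of \cite{Bo}.

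Step three is the explicit case distinction in the statement. When $\sum b_{j}$ is odd, breaking-up suffices to connect any two surfaces to a common minimal model, yielding connectedness. When $\mu$ is of hyperelliptic type, I would show that non-hyperelliptic surfaces form a single component (or two, distinguished by spin) by breaking the hyperelliptic symmetry with a generic bubble, and that the hyperelliptic locus is itself connected (being parametrized by configurations of marked points on $\mathbb{CP}^{1}$ up to the involution). When $\mu$ is of even type, parity of spin gives the only remaining invariant, and each parity class is connected by a bubble-handle argument. The small-genus exception $g=2$, $\sum b_{j}=2$ arises because the hyperelliptic and even-type classifications coincide there, merging the hyperelliptic and spin invariants into a single $\mathbb{Z}/2$.

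The main obstacle is step three's sharpness: proving that breaking-up and bubble-handle surgeries can genuinely connect any two surfaces sharing the listed invariants. This requires showing that short saddle connections in arbitrary directions can be produced in any surface (an openness/density argument for the Teichm\"uller or $\mathrm{GL}_{2}^{+}(\mathbb{R})$ action), and that once produced, the local surgery model realizes the required move without leaving the stratum. Handling the poles complicates the standard holomorphic arguments because neighborhoods of poles have infinite area; I would work instead with the core (Definition~\ref{defn:core}) to confine all surgeries to a compact region and keep the polar behavior untouched.
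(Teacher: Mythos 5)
You are attempting to prove a statement that the paper does not prove at all: Theorem~\ref{thm:BoissyHigherGenus} is quoted verbatim as background from \cite{Bo} (the paper states explicitly at the start of Section~\ref{sub:CCClassification} that the classification ``has been provided in \cite{Bo}''), so there is no in-paper argument to compare against. Your outline does follow the correct strategy — it is essentially Boissy's adaptation of the Kontsevich--Zorich/Lanneau method (invariance of hyperellipticity and spin parity, surgeries of breaking up zeros and bubbling handles, reduction to minimal strata) — but as written it is a plan rather than a proof: the entire content of the theorem lies in the ``sharpness'' step that you defer in your final paragraph, namely showing that any two surfaces with the same invariants can actually be joined by a path of surgeries, and in verifying the exceptional low-complexity cases where the general surgeries are unavailable.

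Beyond the acknowledged incompleteness, there is one concrete flaw. Your proposed proof that spin parity is deformation-invariant rests on the theta characteristic $\sum \tfrac{a_i}{2} z_i - \sum \tfrac{b_j}{2} p_j$, which requires every $b_j$ to be even. But the paper's definition of \emph{even type} (and the hypotheses of the theorem) explicitly includes signatures of the form $(a_1,\dots,a_n,-1,-1)$ with all $a_i$ even; there the divisor $-\tfrac{1}{2}p_1-\tfrac{1}{2}p_2$ is not integral and your argument does not apply. For that family the invariance of $\sum ({\rm Ind}_{\alpha_i}+1)({\rm Ind}_{\beta_i}+1)$ must be established by other means (e.g.\ directly from the topological index, or by a plumbing/degeneration argument pairing the two simple poles), and this is precisely one of the cases where the classification is delicate. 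A second, smaller issue: your surgery (iv), ``pole creation/removal,'' changes the signature $\mu$ and hence leaves the stratum; it cannot be used as a move \emph{within} $\mathcal{H}(\mu)$, only as an incidence relation between different strata, and the argument must be phrased accordingly. Since this is a cited external result, the appropriate course here is to keep the citation rather than attempt a self-contained proof.
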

\par 
\subsection{Differentials with zero residues}\label{sub:counting}

For our later computations, we need to count the number of meromorphic differentials whose residues vanish.
\begin{defn}\label{def:residuless}
 The number of meromorphic differentials in $\calH(a_{1},a_{2},-b_{1},\dots,-b_{p})$
whose residues at the poles are all zero is denoted by $\Xi(a_{1},a_{2};b_{1},\dots,b_{p})$.
\end{defn}
 This number was initially studied in \cite{EMZ}, and an expression in terms of coefficient extraction from a generating series was given in Proposition~2.1 of \cite{CMSZ} as
$$(p-1)!\ \big[t^{a_1+1}\big]\prod_{i=1}^p \big(t + \cdots + t^{b_i-1}\big)\,,$$
where $\big[t^{j}\big]P(t)$ denotes the coefficient of degree $j$ of the polynomial $P$.
Equivalently, up to the factor $(p-1)!$, the above formula  
counts the number of tuples $(c_1, \ldots, c_p)$ such that 
$\sum\limits_{i=1}^p c_i = a_1 + 1$ and $1\leq c_i\leq b_i-1$ for all $i$. 
\par 
In the following, we prove that $\Xi$ has a piecewise polynomial structure, where the walls that separate the polynomial chambers are defined by identities of the form $a_1+1-b_{i_1}-\cdots -b_{i_k}-(p-k)= 0$.
\par 
\begin{prop}\label{prop:residueless}
For any positive integers $a_{1},a_{2},b_{1},\dots,b_{p}$ such that $\sum\limits_{j=1}^p b_j = a_{1}+a_{2}+2$, the number $\Xi(a_{1},a_{2};b_{1},\dots,b_{p})$ is given by the following piecewise polynomial formula:
\begin{equation}
    \label{eqn:residueless}
    (p-1)!\sum_{k=0}^{p-1} (-1)^k \sum_{a_1+1-b_{i_1}-\cdots -b_{i_k}-(p-k)\geq 0} \binom{a_1-b_{i_1}-\cdots-b_{i_k}+k}{p-1}\,. 
\end{equation}
In particular, $\Xi$ is a polynomial of degree $p-1$ in every chamber.
\end{prop}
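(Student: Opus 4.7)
The plan is to prove the formula via a standard inclusion--exclusion argument applied to the combinatorial reformulation recalled from \cite{CMSZ}. That reformulation expresses $\Xi(a_1, a_2; b_1, \ldots, b_p)$ as $(p-1)!$ times the number of tuples $(c_1, \ldots, c_p) \in \Z^p$ satisfying $\sum_{i=1}^p c_i = a_1 + 1$ with $1 \le c_i \le b_i - 1$. My first step would be to perform the shift $c'_i := c_i - 1$, reducing the problem to counting nonnegative integer tuples $(c'_1, \ldots, c'_p)$ with $\sum c'_i = a_1 + 1 - p$ subject to $c'_i \le b_i - 2$.

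Without the upper bounds, the count is $\binom{a_1}{p-1}$ by stars and bars. To impose the bounds I would apply inclusion--exclusion with respect to the bad events $A_i = \{c'_i \ge b_i - 1\}$. For a subset $S = \{i_1, \ldots, i_k\} \subseteq \{1, \ldots, p\}$, the further substitution $c''_i := c'_i - (b_i - 1)$ for $i \in S$ turns $\bigl|\bigcap_{i \in S} A_i\bigr|$ into the number of nonnegative integer solutions of $\sum c''_i = a_1 + 1 - p + k - \sum_{j=1}^k b_{i_j}$, namely $\binom{a_1 - b_{i_1} - \cdots - b_{i_k} + k}{p-1}$. Summing over $S$ with sign $(-1)^{|S|}$ and multiplying by $(p-1)!$ yields the proposed formula, once one observes that the binomial $\binom{a_1 - b_{i_1} - \cdots - b_{i_k} + k}{p-1}$ is nonzero exactly when $a_1 + 1 - b_{i_1} - \cdots - b_{i_k} - (p-k) \ge 0$, which is precisely the condition restricting the range of summation.

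For the polynomiality assertion, I would note that within a fixed chamber of the hyperplane arrangement cut out by the equations $a_1 + 1 - \sum_{i \in I} b_i - (p - |I|) = 0$ for $I \subseteq \{1, \ldots, p\}$, the collection of contributing subsets $S$ is constant, and each summand $\binom{a_1 - \sum_{i \in S} b_i + |S|}{p-1}$ is a polynomial of degree $p - 1$ in the variables $a_1, b_1, \ldots, b_p$; hence the sum defines a polynomial of degree at most $p-1$ on that chamber. The main obstacle I anticipate is confirming that the degree is exactly $p-1$ rather than smaller due to cancellation among the leading homogeneous parts; this amounts to checking that the top-degree contribution $\sum_{S \text{ active}} (-1)^{|S|}(a_1 - \sum_{i \in S} b_i)^{p-1}/(p-1)!$ does not vanish identically in the chamber. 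I would approach this by isolating the $S = \emptyset$ term, whose leading part is $a_1^{p-1}/(p-1)!$, and arguing that the remaining terms, which involve the variables $b_i$ nontrivially, cannot combine to cancel this pure $a_1^{p-1}$ monomial.
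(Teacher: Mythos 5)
Your derivation of the formula is correct and is essentially the paper's proof: the paper extracts the coefficient $\big[t^{a_1+1}\big]\prod_{i=1}^p(t+\cdots+t^{b_i-1})$ by inclusion--exclusion on the factors $t^{b_i}/(1-t)$ versus $t/(1-t)$, which is exactly your stars-and-bars count of tuples with $1\le c_i\le b_i-1$ after shifting and excluding the bad events $c_i\ge b_i$. The identification of the active range of summation with the condition $a_1+1-b_{i_1}-\cdots-b_{i_k}-(p-k)\ge 0$ is the same in both arguments.

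One concrete flaw in the part you flagged as your anticipated obstacle: your proposed argument that the terms with $S\neq\emptyset$ ``cannot combine to cancel the pure $a_1^{p-1}$ monomial'' is false. Expanding $(a_1-\sum_{i\in S}b_i)^{p-1}$ produces the monomial $a_1^{p-1}$ with coefficient $1$, so the coefficient of $a_1^{p-1}$ in the leading homogeneous form is $\sum_{S\ \mathrm{active}}(-1)^{|S|}$, which vanishes already in the chamber where exactly $\emptyset$ and one singleton are active (there one gets $\binom{a_1}{p-1}-\binom{a_1-b_1+1}{p-1}$, whose top-degree part is $(p-1)a_1^{p-2}b_1+\cdots$, still of total degree $p-1$ but with no $a_1^{p-1}$ term). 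What actually needs to be shown is that the full degree-$(p-1)$ homogeneous component $\frac{1}{(p-1)!}\sum_{S\ \mathrm{active}}(-1)^{|S|}\big(a_1-\sum_{i\in S}b_i\big)^{p-1}$ is not identically zero as a polynomial in $a_1,b_1,\dots,b_p$. This is a statement about linear independence of powers of the pairwise non-proportional linear forms $a_1-\sum_{i\in S}b_i$ (together with the fact that the full alternating sum over all $2^p$ subsets, which would vanish as a $p$-fold finite difference of a degree-$(p-1)$ polynomial, never occurs because $S=\{1,\dots,p\}$ is never active); the paper itself states the degree claim without proof, so you are not behind it, but your specific non-cancellation argument would not survive scrutiny and should be replaced.
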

\par 
\begin{proof}
To obtain the coefficient of $\big[t^{a_1+1}\big]\prod_{i=1}^p \big(t + \cdots + t^{b_i-1}\big)$, note the inclusion-exclusion relation: 
\begin{equation*}
     \big[t^{a_1+1}\big]\prod_{i=1}^p \big(t + \cdots + t^{b_i-1}\big) = \sum_{k=0}^{p-1} (-1)^k  \sum_{a_{1}+1-b_{i_1}-\cdots-b_{i_k}-(p-k)\geq 0} \big[t^{a_1+1}\big]\left(\prod_{j=1}^k \frac{t^{b_{i_j}}}{1-t}\prod_{i\not= i_j}^p\frac{t}{1-t}\right)\,.
\end{equation*}
\par 
The right-hand side first counts the coefficient of $\big[t^{a_1+1}\big]\prod_{i=1}^p \frac{t}{1-t}$ whenever $a_1+1-p\geq 0$. However, it can occur that we took more than $b_i-1$ copies of $t$ in the factor $(t+\cdots+t^{b_{i}-1}+\cdots)$ for $k$ different factors corresponding to $b_{i_1},\ldots, b_{i_k}$. This happens whenever $a_1+1-(b_{i_1}-1)-\cdots-(b_{i_k}-1)-p\geq 0$, and the corresponding extra coefficient is given by $\big[t^{a_1+1}\big]\left(\prod_{j=1}^k \frac{t^{b_{i_j}}}{1-t}\prod_{i\not= i_j}^p\frac{t}{1-t}\right)$. However, this case was also subtracted for every proper subset of $\{b_{i_1},\ldots, b_{i_k}\}$, so the number of times we have  already subtracted it is $\sum_{i=0}^{k-1}(-1)^i\binom{k}{i}=(-1)^{k-1}\binom{k}{k}$. Therefore, we need to add an extra term with coefficient $(-1)^k$ to ensure that this case is completely subtracted from the total sum. 
\par 
Finally, observe that
\begin{equation*}\label{eq:Counting}
    \big[t^{a_1+1}\big]\left(\prod_{j=1}^k \frac{t^{b_{i_j}}}{1-t}\prod_{i\not=i_j}^p\frac{t}{1-t}\right) = \binom{a_1-b_{i_1}-\cdots - b_{i_k}+k}{p-1}\,.
\end{equation*}
This expression counts the number of ways we can distribute $a_1+1$ objects into $p$ distinct boxes, where the boxes indexed by $b_{i_1},\ldots, b_{i_{k}}$ must contain at least $b_{i_j}$ objects respectively, and the $p-k$ remaining boxes must contain at least one object each.
\end{proof}
\par 
\subsection{The isoresidual fibration of the minimal strata}\label{sub:Minimal}
\par 
For the strata of meromorphic differentials on $\mathbb{CP}^{1}$ with a unique zero, the isoresidual fibration is a ramified cover. Its degree and ramification locus are described in the following statement, which is proved as Theorem~1.2 in \cite{GeTaIso}. 
\par 
\begin{thm}\label{thm:numerofibre}
For every stratum $\mathcal{H}(a,-b_{1},\dots,-b_{p})$ of meromorphic differentials on $\mathbb{CP}^{1}$, the isoresidual fibration is an unramified cover of degree $f(a,p):=\frac{a!}{(a+2-p)!}$ over $\mathcal{R}_{p} \setminus \mathcal{A}_{p}$.
\end{thm}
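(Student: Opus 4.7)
The plan is to prove the theorem in two conceptually separate parts: (a) the residual map restricted to $\mathcal{H}(a,-b_1,\ldots,-b_p)$ is a finite étale cover over $\mathcal{R}_p \setminus \mathcal{A}_p$, and (b) the degree of this cover equals $f(a,p) = a!/(a+2-p)!$.

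For part (a), I would first observe that both source and target have the same complex dimension: the stratum has dimension $n+p-2 = p-1$ by Section~\ref{sec:periodcoor}, matching $\dim_{\mathbb{C}} \mathcal{R}_p = p - 1$. Working in period coordinates, the differential of $\res$ at a point $\omega$ is essentially the projection onto the absolute-period subspace of $H_1(X \setminus P_\omega, Z_\omega)^\vee$. When the zero is unique ($n=1$), there are no relative cycles between distinct zeros, so the full local coordinates are absolute periods and this projection has full rank. To conclude that the map is a finite étale cover over $\mathcal{R}_p \setminus \mathcal{A}_p$, I would argue properness using the multi-scale compactification of Section~\ref{sec:multi-scale}: a divergent sequence of differentials with residues staying bounded in $\mathcal{R}_p \setminus \mathcal{A}_p$ must acquire, in the limit, a degeneration whose residues satisfy a linear relation $\sum_{i \in I} \lambda_i = 0$, placing them in $\mathcal{A}_p$.

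For part (b), the degree is locally constant on $\mathcal{R}_p \setminus \mathcal{A}_p$, so I would compute it over real residue configurations via the $\mathrm{GL}_2^+(\mathbb{R})$-action of Section~\ref{subsub:action}, which preserves the stratum and commutes with $\res$. For a generic real tuple $\lambda$, every preimage has all saddle connections horizontal and is encoded by the decorated graph $\mathfrak{gr}(\omega)$ of Section~\ref{sub:RealGraph}; when $n=1$ this graph is a decorated \emph{tree} with $p$ vertices (poles) and $p-1$ oriented edges (saddle connections), and the partial sums of residues determine the position of every vertex on the real line. The counting problem thus reduces to enumerating decorated trees whose combinatorics are compatible with the generic $\lambda$.

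The main obstacle is the combinatorial enumeration itself. My preferred route is induction on $p$, proving the recursion $f(a,p) = a \cdot f(a-1, p-1)$. The base case $p=2$ gives $f(a,2) = 1$, matching the unique flat cone of type $a$ with two poles of the prescribed orders. For the inductive step, I would identify a distinguished pole of $\lambda$ (for instance, the pole of largest positive real part among those appearing as extremal polar domains) and show that removing the unique saddle connection bounding its polar domain collapses it onto the adjacent polar domain, yielding a differential in the minimal stratum $\mathcal{H}(a-1,-b_1,\ldots,\widehat{b_j},\ldots,-b_p,-(b_i+b_j-1))$ with the residues suitably summed. Reconstructing the original differential amounts to re-inserting the saddle connection at one of the $a$ available angular slots around the unique zero of the collapsed surface, giving the factor of $a$. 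Verifying that every such insertion yields a valid (non-self-crossing, genericity-preserving) decorated tree for $\lambda \notin \mathcal{A}_p$, and that distinct insertions produce non-isomorphic differentials, is the delicate point and requires the genericity of $\lambda$ to rule out coincidental identifications.
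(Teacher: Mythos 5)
First, a point of reference: this paper does not actually prove Theorem~\ref{thm:numerofibre}; it is recalled verbatim from Theorem~1.2 of \cite{GeTaIso}, so there is no in-paper proof to compare against. Judged on its own merits, your part~(a) is sound in outline and is the standard argument: for $n=1$ in genus zero the relative homology $H_1(X\setminus P_\omega, Z_\omega)$ is generated by loops around the poles, so the period coordinates \emph{are} the residues, $\res$ is a local biholomorphism, and properness over $\mathcal{R}_p\setminus\mathcal{A}_p$ follows because any multi-scale degeneration of a minimal-stratum differential forces a vanishing partial sum of residues via the global residue condition. (Minor quibble: your base case $p=2$ is not a ``flat cone,'' which by the paper's Definition has a single pole; but the fiber of $\mathcal{H}(a,-b_1,-b_2)$ over $\lambda_1\neq 0$ is indeed a single point, since $\omega = c\,z^a(z-1)^{-b_2}dz$ in normalized coordinates and the residue at $1$ is $c\binom{a}{b_2-1}\neq 0$.)

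The gap is in the inductive step of part~(b). First, the operation you describe is not a degeneration in moduli: the closed saddle connection bounding a leaf polar domain has period $\pm\lambda_j\neq 0$ and cannot be shrunk within the fiber; colliding the two poles instead produces a pole of order $b_i+b_j$, not $b_i+b_j-1$. What you are really doing is a combinatorial edge-contraction on the decorated tree (which does correctly produce a decorated tree for zero order $a-1$ and a merged pole of order $b_i+b_j-1$), and this must be stated as a pure bijection of combinatorial types, not a flat surgery. Second, and more seriously, the target of the contraction is not well defined as a single stratum: which pole $q_i$ the distinguished leaf $q_j$ is adjacent to depends on the individual tree, i.e., varies over the fiber, so ``contract the leaf'' maps the fiber of $\mathcal{H}(a,-b_1,\dots,-b_p)$ over $\lambda$ into a \emph{union} of fibers of different labeled strata with different merged residues $\lambda_i+\lambda_j$. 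The clean product recursion $f(a,p)=a\,f(a-1,p-1)$ therefore does not follow unless you can force the adjacency by a canonical choice depending only on $\lambda$, which your ``pole of largest positive real part among extremal polar domains'' does not obviously do for every element of the fiber. Third, the assertion that exactly $a$ of the $2a$ corners of the contracted face admit a valid re-insertion compatible with the orientation parity rules of Definition~\ref{defn:abstractdecoratedtgraph} is precisely the combinatorial heart of the count and is left unproved. Until these three points are resolved, the degree computation is incomplete; a safer route is to fix a configuration with a single positive residue and count the admissible decorated trees directly, or to set up the recursion as a sum over possible neighbors and verify it still telescopes to $a!/(a+2-p)!$.
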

\par 
Elements of the generic isoresidual fibers of the strata $\mathcal{H}(a,-b_{1},\dots,-b_{p})$ over configurations of real residues are classified by the \textit{decorated graphs} introduced in Section~\ref{sub:RealGraph}. Since the corresponding translation surfaces have exactly $p-1$ saddle connections, these graphs are decorated trees.
\par 
\section{Multi-scale compactification of the isoresidual loci}\label{sec:multi-scale}
\par 
In this section, we recall the basics of the multi-scale compactification introduced in \cite{BCGGM3} and describe the closure of the loci of differentials with residues satisfying some linear relations. Moreover, we describe some properties for multi-scale differentials contained in the closure of the isoresidual loci within this compactification. Since the main Theorem~\ref{thm:closureMSD} of this section applies to any genus, we denote by $\mathcal H_{g}(\mu)$ a stratum of differentials of genus $g\geq0$ with signature $\mu$ when the results apply to such general cases.
\par 
\subsection{The multi-scale compactification of strata}\label{sub:multi-scale}
\par 
The multi-scale compactification $\MS(\mu)$ of the projectivized strata $\PP\mathcal H_{g}(\mu) := \mathcal H_{g}(\mu) / \mathbb C^*$ is constructed in \cite{BCGGM2}. Briefly speaking, a {\em multi-scale differential} $(X,\omega)$ is defined on an underlying stable pointed nodal Riemann surface $X$, where $\omega$ consists of a non-identically-zero differential $\omega_{i}$ on each irreducible component $X_{i}$ of $X$. Moreover, there is a total order that compares any two irreducible components of $X$, which encodes the information about vanishing rates when differentials from nearby smooth surfaces degenerate to these sub-surfaces $X_i$. In this sense, the total order induces a level structure on the dual graph of the nodal surface, which is called a {\em level graph}. Additionally, there are compatibility conditions that relate the zero and pole orders of the $\omega_{i}$ at the two branches of every node, as well as local and global residue conditions at the nodes, which we will review below. 

If a node has two simple poles at its branches, the corresponding edge in the dual graph is called {\em horizontal}; otherwise, it is called {\em vertical}. At a vertical edge, the multi-scale differential has a zero of order $k\geq0$ at the upper nodal point and a pole of order $-k-2\leq-2$ at the lower nodal point.
Moreover, the number $k+1$ is called the {\em prong number} of the edge. It encodes the number of ways to locally open up the node under the induced flat metric. 
\par
Next, we discuss the global residue condition. Consider a level $L$ and a component $Y$ of the part $\Gamma_{>L}$ of sub-surfaces lying strictly above $L$ in the level graph $\Gamma$. We consider the edges that connect $Y$ to the vertices of $\Gamma$ at level $L$, and denote by $e_{1},\dots,e_{b}$ the (lower) endpoints of these edges. 
If $Y$ contains a marked pole, we do not impose any condition to $Y$. Now, suppose $Y$ does {\em not} contain any marked poles. Then, we say that $Y$ satisfies the (usual) {\em global residue condition} if the following condition holds:
\begin{itemize}
 \item[{\bf GRC}:]  $\sum_{i=1}^{b}\Res_{e_{i}}(\omega) = 0$. 
\end{itemize}
We remark that the residue theorem for each vertex (i.e., the total sum of residues on a component is equal to zero) still needs to hold, which we implicitly impose as a preexisting condition.    
\par
Finally, the multi-scale compactification has a system of local coordinates, which extends the period coordinates recalled in Section~\ref{sec:periodcoor} to the boundary. These coordinates are called perturbed period coordinates and a variation of them corresponds to log period coordinates. These are defined and discussed in Sections~9.2 and~13.3 of \cite{BCGGM2} and Section~5 of \cite{BenBound}, respectively. These coordinates consist of two parts: one part is given by (usual) relative periods on the nodal curve and the other part is given by the parameters used to smooth the horizontal nodes and the nodes crossing a level of the multi-scale differential.

\subsection{The closure of loci of differentials with linear residue conditions}\label{sub:closure}

One can consider a similar residual map from the multi-scale compactification $\MS(\mu)$ to the projective residue space $\mathbb{P} \mathcal{R}_{p} = (\mathcal{R}_{p} \setminus \{ 0 \}) / \mathbb{C}^{*}$. Note that, in general, it is only a rational map; for example, it is undefined on the locus of residueless differentials. Indeed, in Corollary~\ref{cor:reszeros} below, we will show that in our case the closure of every isoresidual curve contains the zero-dimensional locus of residueless differentials.
\par
Following \cite[Section 4.1]{CMZ}, we now describe the closure of the isoresidual fibers in the moduli space of multi-scale differentials. This discussion can generally be applied to arbitrary genera and partition~$\mu$. Additionally, we treat the more general setting where we fix a linear subspace of the residues (i.e., not just a point in the projectivized residue space by fixing all residues).
\par 
Consider a stratum $\PP\mathcal H_{g}(\mu)$ of meromorphic differentials of genus $g$ with signature $\mu$. Let $\Lambda$ be a linear subspace in the residual space $\mathcal{R}_{p}$, and consider the subspace $\mathcal{F}_{\Lambda}$ of differentials whose residues lie in $\Lambda$. We define ${\Lambda}^{\vee}$ in the dual vector space $\mathcal{R}_{p}^{\vee}$ of $\mathcal{R}_{p}$ to be the vector space of homogeneous linear equations satisfied by all residue tuples in $\Lambda$.

We want to characterize when a multi-scale differential $(X,\omega)$ is contained in the closure of $\mathcal F_\Lambda$. To this aim, we will state the {\em generalized global residue condition} imposed to $(X,\omega)$ by $\Lambda^{\vee}$, called $\mathcal{E}_{\Lambda}$-GRC. 
Denote by $q_1,\ldots, q_p$ the marked poles in $\Gamma$. For each $q_i$, add a new vertex at level $\infty$ with a marked pole $q'_i$ 
and turn the original $q_i$ into an edge that connects to the new vertex. One can think of this new vertex as a semistable rational component with a pole at $q'_i$ of order equal to that of $q_i$. Denote by $\Gamma'$ the resulting level graph. For every level $L < \infty$ in $\Gamma'$, let $Y_1,\ldots, Y_s$ be the connected components of $\Gamma'_{>L}$. 
Then, the following conditions hold: 

\begin{itemize}
 \item[{\bf $\mathcal{E}_{\Lambda}$-GRC}:] 
\begin{enumerate}
\item Each $Y_i$ that does not contains any $q'_{j}$ satisfies the usual GRC. 

\item For every equation $f\in \Lambda^{\vee}$ that can be written of the form 
$$f = \sum_{i=1}^s a_i \left(\sum_{q'_j\in Y_i} {\rm Res}_{q'_j}\right)\,,$$ 
we require that 
the equation
$$\sum_{i=1}^s a_i \left(\sum_{e_j \in Y_{i, L}} \Res_{e_j} (\omega)\right) = 0$$
holds, where the inner summation ranges over the lower endpoints of the edges of $Y_i$ that connect to vertices at level $L$. 
 \end{enumerate}
 \end{itemize}
\par 
The idea behind the $\mathcal{E}_{\Lambda}$-GRC is the following. Suppose a one-parameter family of differentials $\{\omega_t\}$ in~$\mathcal F_{\Lambda}$ degenerates to the multi-scale differential $\omega$ as $t\to 0$, with the  vanishing rate $t^{\ell}$ as they approach the limit components on level $L$ for some $\ell \geq 0$. Then, given an equation $f$ in $\Lambda^{\vee}$, if it satisfies the description in condition (2), it can descend to impose a relation for the residues at level $L$ by applying Stokes' theorem to the limit differential $\lim\limits_{t\to 0}t^{-\ell }\omega_t$ on the subsurfaces $Y_i$ with boundary punctures at the poles. Note that those terms in $f$ whose corresponding poles $q_j$ connect to a vertex strictly below level $L$ do not appear in the $\mathcal{E}_{\Lambda}$-GRC for level $L$, as their vanishing rates are faster than $t^{\ell}$. Moreover, if a component $Y_{i}$ contains a pole whose residue term does not appear in $f$, then condition (2) can be non-empty only if $a_{i}=0$.
\par 
Additionally, we remark that up to a linear combination, there are indeed only finitely many non-trivial conditions imposed by the $\mathcal{E}_{\Lambda}$-GRC. To see this, if there are two independent relations in condition (2) that involve the same subset of $Y_i$ with nonzero coefficients, we can use their combination to produce another relation that involves a smaller subset of $Y_i$. In the end, we can reduce to a finite set of relations that form an echelon form for each level. 
\par
We illustrate the $\mathcal{E}_{\Lambda}$-GRC through the following example. 
\par
\begin{ex}
 Consider a stratum with four poles. We impose the residue relations $r_{1}+r_{2}+2r_{3}=0$ and $2r_{2} + r_3=0$ (together with the residue theorem relation $r_1+r_2+r_3+r_4 = 0$). In other words, the given subspace of residue tuples $\Lambda$ is spanned by $(3,1,-2,-2)$. 
 
 Consider a multi-scale differential $(X,\omega)$ in the boundary of this isoresidual fiber whose level graph $\Gamma$ is given on the left of Figure~\ref{fig:exclosure} and the associated level graph $\Gamma'$ is given on the right, where the marked poles of $\Gamma$ become the edges connecting to level $\infty$ in $\Gamma'$. 
    \begin{figure}[htb]
   \centering
\begin{tikzpicture}[scale=1]

\coordinate (a1) at (0,0);\fill (a1) circle (2pt);
\coordinate (a2) at (-.8,1);\fill (a2) circle (2pt);
\coordinate (a4) at (1,1);\fill (a4) circle (2pt);
\coordinate (a3) at (0,1);\fill (a3) circle (2pt);
\draw (a1) --node[left]{$e_{1}$} (a2);
\draw (a1) --node[right]{$e_{2}$} (a3);
\draw (a1) --node[right]{$e_{3}$} (a4);
\draw (a2) --++ (90:.3) node[above] {$q_{1}$};
\draw (a3) --++ (50:.3)node[above] {$q_{3}$};
\draw (a3) --++ (130:.3)node[above] {$q_{2}$};
\draw (a4) --++ (90:.3)node[above] {$q_{4}$};

\begin{scope}[xshift=5cm]
\coordinate (a1) at (0,0);\fill (a1) circle (2pt);
\coordinate (a2) at (-.8,1);\fill (a2) circle (2pt);
\coordinate (a4) at (1,1);\fill (a4) circle (2pt);
\coordinate (a3) at (0,1);\fill (a3) circle (2pt);
\coordinate (b1) at (-.8,2);
\coordinate (b2) at (-.3,2);
\coordinate (b3) at (.3,2);
\coordinate (b4) at (1,2);
\draw (a1) -- (a2);
\draw (a1) -- (a3);
\draw (a1) -- (a4);
\draw (a2) -- (b1);
\draw (a3) -- (b2);
\draw (a3) -- (b3);
\draw (a4) -- (b4);

\filldraw[fill=white] (b1) circle (2pt);
\filldraw[fill=white] (b2) circle (2pt);
\filldraw[fill=white] (b3) circle (2pt);
\filldraw[fill=white] (b4) circle (2pt);

\node at (3,0) {$L=-1$};
\node at (3,1) {$L=0$};
\node at (3,2) {$L=\infty$};
\end{scope}
 \end{tikzpicture}
\caption{The level graphs $\Gamma$ and $\Gamma'$,  illustrating the $\mathcal E_{\Lambda}$-GRC. The vertices at level infinity are pictured in white.}
\label{fig:exclosure}
\end{figure}

Note that all the poles $q_i$ are at level zero. Hence, 
the $\mathcal E_{\Lambda}$-GRC for level zero implies that the imposed residue relations still hold at level zero. Combining with the residue theorem at each vertex, we conclude that $r_1 = r_{2}=r_{3}= r_4 = 0$ for the residues of $\omega$ at each $q_i$ at level zero of $\Gamma$.  

Next, denote by $e_{i}$ for $i=1,2,3$ the residues of the multi-scale differential $\omega$ at the lower endpoints of the edges joining the bottom component of $\Gamma$ to level zero. Then, the $\mathcal E_{\Lambda}$-GRC at level $-1$ implies 
that there exists some $r\in\CC$ such that $(e_{1},e_{2},e_{3}) = (r,-r/3,-2r/3)$. To see this, suppose that $e_{1}=r$. Combining the two given equations gives $r_1 + 3(r_2 + r_3) + 0\ r_4 = 0$, which descends to the relation $e_1 + 3e_2 = 0$ by the $\mathcal E_{\Lambda}$-GRC. Therefore, $e_2 = - r/3 $, which further implies $e_3 = -2r/3$ by the residue theorem on the bottom vertex. 

Finally, note that the equations $2r_1 + 3r_4 = 0$ and 
$2(r_2+r_3) - r_4 = 0$ are also in $\Lambda^{\vee}$. If we run the $\mathcal E_{\Lambda}$-GRC to them at level $-1$, we still obtain the same conclusion $2e_1 + 3e_3 = 0$ and  
$2e_2 - e_3 = 0$ as before. 
\end{ex}

After the above preparation, we can state the main result of this section as follows. 
\begin{thm}\label{thm:closureMSD}
 A multi-scale differential is in the closure $\overline{\mathcal{F}}_{\Lambda}$
 of $\mathcal{F}_{\Lambda}$ if and only if it satisfies the $\mathcal{E}_{\Lambda}$-GRC at every level.
 Moreover, $\overline{\mathcal{F}}_{\Lambda}$ is a smooth orbifold with normal crossing boundary. Finally, the codimension of a boundary stratum in $\overline{\mathcal{F}}_{\Lambda}$ is equal to the number of horizontal edges plus the number of levels below zero.  
\end{thm}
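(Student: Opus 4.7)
My approach is to split the characterization of $\overline{\mathcal{F}}_{\Lambda}$ into the two implications (necessity and sufficiency of the $\mathcal{E}_{\Lambda}$-GRC), and then to extract the smoothness, the normal crossing structure, and the codimension formula from the perturbed period coordinates of \cite{BCGGM2}. The strategy follows the template of \cite[Section~4.1]{CMZ}, where the case of a single residue configuration (that is, $\Lambda$ a point in $\mathbb{P}\mathcal{R}_{p}$) is treated; extending to an arbitrary linear subspace $\Lambda$ is mostly formal once one sets up the descent of linear relations through the level structure of $\Gamma'$.

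For necessity, I would take a smoothing family $\{\omega_{t}\}$ in $\mathcal{F}_{\Lambda}$ degenerating to $(X,\omega)$ and an equation $f \in \Lambda^{\vee}$ of the form appearing in condition~(2) of the $\mathcal{E}_{\Lambda}$-GRC. Applying $f$ to $\omega_{t}$ gives zero for every $t \neq 0$, so I rescale $\omega_{t}$ by the appropriate power $t^{-\ell}$ corresponding to the vanishing rate at a fixed level~$L$ to obtain a non-trivial limit on the sub-surfaces $Y_{i}$ lying above level~$L$. Terms in $f$ corresponding to marked poles that have migrated strictly below level~$L$ vanish faster in $t$ and therefore drop out; applying the residue theorem on each $Y_{i}$ to the rescaled limit converts the surviving part of $f$ into the residue sum at the lower endpoints of the edges of $Y_{i}$ on level~$L$. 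This is precisely the required $\mathcal{E}_{\Lambda}$-GRC relation at that level.

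For sufficiency, I would use the plumbing smoothing construction of \cite{BCGGM2}. Given a multi-scale differential $(X,\omega)$ satisfying the $\mathcal{E}_{\Lambda}$-GRC, one constructs a smoothing family parameterised by the perturbed period coordinates and verifies that every relation in $\Lambda^{\vee}$ remains satisfied along the family. I expect the main obstacle here to be the consistency of the smoothing across different levels: one must show that a finite reduced set of relations of the form appearing in condition~(2) suffices to generate every equation in $\Lambda^{\vee}$ that becomes non-trivial in the limit. This reduces, via an echelon-type reduction across the components $Y_{1}, \ldots, Y_{s}$ of $\Gamma'_{>L}$, to showing that the obstruction to smoothing inside $\mathcal{F}_{\Lambda}$ lies exactly in the span of the relations tested by the $\mathcal{E}_{\Lambda}$-GRC, so that no additional equation creates an obstruction unaccounted for by the multi-scale data.

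For smoothness and the codimension count, the perturbed period coordinates express $\mathcal{F}_{\Lambda}$ locally as the zero locus of a finite set of $\mathbb{C}$-linear equations on the absolute period part, namely the equations defining $\Lambda^{\vee}$. The closure $\overline{\mathcal{F}}_{\Lambda}$ is cut out in the multi-scale compactification by the same linear conditions, now interpreted via the $\mathcal{E}_{\Lambda}$-GRC along boundary strata, together with the smoothing parameters for horizontal nodes and level transitions that vanish on the boundary stratum under consideration. Linearity and transversality of these conditions with respect to the boundary parameters yield both smoothness and the normal crossing structure. The codimension of a boundary stratum in $\overline{\mathcal{F}}_{\Lambda}$ is then the number of smoothing parameters that vanish on it, which is exactly one per horizontal edge plus one per level below zero, giving the stated formula.
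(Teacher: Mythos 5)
Your overall architecture matches the paper's: necessity via degeneration of a family in $\mathcal{F}_{\Lambda}$, sufficiency via level-by-level smoothing, and smoothness/normal crossings/codimension from the linearity of the defining equations in the (log) period coordinates, exactly as in the extension of \cite[Section 4.1]{CMZ}. The necessity argument you give is the rescaling-plus-Stokes argument that the paper also presents informally (the paper additionally records a cohomological version following \cite[Proposition 6.3]{BCGGM3}, identifying the residue assignment with an element of $H^1(\mathcal{V})$ and requiring that the image of $f$ under $H^1(\Sigma\setminus P, Z)\to H^1(\mathcal{V})$ contain it), and your treatment of smoothness and the codimension count is essentially the paper's.

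However, there is a genuine gap in the sufficiency direction. You correctly identify the statement that must be proved --- that the only obstructions to smoothing within $\mathcal{F}_{\Lambda}$ are those detected by the $\mathcal{E}_{\Lambda}$-GRC --- but you do not prove it; the phrases ``I expect the main obstacle to be'' and ``this reduces to showing that'' leave the core of the argument unsupplied. The paper's proof of this step is concrete: at each level $L$ one must produce residue assignments $r_j$ at the poles $q'_j$ lying in the components $Y_i$ of $\Gamma'_{>L}$ satisfying simultaneously (a) all equations in $\Lambda^{\vee}$ and (b) the affine constraints $\sum_{q'_j\in Y_i} r_j = \sum_{e_j\in Y_{i,L}}\Res_{e_j}(\omega)$ for each $Y_i$; these $r_j$ are then realized by modifying differentials on the $Y_i$ so that the level can be smoothed with residues proportional to the $r_j$, and linearity of $\Lambda^{\vee}$ preserves the equations after smoothing. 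The existence of such assignments is established by induction on the dimension of a subspace $V\subset\Lambda^{\vee}$: one shows the solution locus $\mathcal{R}_V$ stays a nonempty affine subspace when a new equation $f=0$ is added, splitting into the case where $f$ increases the rank of the system (then the intersection is a nonempty codimension-one affine subspace) and the case where $f+\sum\lambda_i f_i=\sum a_i y_i$ for the constraint functionals $y_i$, in which case condition (2) of the $\mathcal{E}_{\Lambda}$-GRC forces $\sum a_i w_i=0$, so the hyperplane $f=0$ already contains $\mathcal{R}_V$. Without this (or an equivalent) argument, your ``echelon-type reduction'' is an assertion rather than a proof, and the sufficiency direction is incomplete. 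A secondary omission: you should also note, as the paper does, that horizontal nodes can be smoothed locally and independently because the defining equations of $\mathcal{F}_{\Lambda}$ are supported on loops around the marked poles and do not cross horizontal nodes, which is what allows the reduction to purely vertical level graphs.
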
 

We remark that the statement of Theorem~\ref{thm:closureMSD} generalizes  \cite[Proposition 4.2]{CMZ}, where partial sum residue conditions therein are replaced by arbitrary linear residue conditions in our setting. 

\begin{proof}
 Note that the $\mathcal{E}_{\Lambda}$-GRC condition contains the usual GRC. In particular, such multi-scale differentials can be smoothed into the entire stratum interior. Therefore, we only need to verify the extra residue conditions imposed by $\mathcal{E}_{\Lambda}$. First, we remark that every  horizontal node can be smoothed locally and independently. This is because the defining linear equations of $\mathcal F_{\Lambda}$ consists of loops around the marked poles, which do not cross any horizontal node (see \cite{BDG}). 
 We can thus assume that the concerned multi-scale differential has only vertical nodes.
 
 The necessity of the $\mathcal{E}_{\Lambda}$-GRC condition has already been explained in the paragraph below the statement of the $\mathcal{E}_{\Lambda}$-GRC condition. Here we provide a more formal argument, 
 following \cite[Proposition 6.3]{BCGGM3}. Take a pointed topological surface $\Sigma$ with $\mathcal{V}\subset \Sigma$ as a disjoint union of simple closed curves such that the degenerate surface with dual graph $\Gamma$ is obtained by
pinching the curves in $\mathcal{V}$ to form the corresponding nodes. The residue assignment $\rho$ can be identified with an element of $H^1(\mathcal{V})$.  It satisfies the usual GRC if and only if it factors through level quotients.  Now, consider an imposed linear residue relation $f$ as an element of $H^1(\Sigma \setminus P, Z)$. In order to smooth the multi-scale differential while preserving $f$, we need its image under $H^1(\Sigma \setminus P, Z) \to H^1(\mathcal{V})$ to contain $\rho$. This shows that the $\mathcal{E}_{\Lambda}$-GRC condition is necessary.
\smallskip
\par
 Conversely, we will show that the $\mathcal{E}_{\Lambda}$-GRC condition is also sufficient for smoothing the multi-scale differential while preserving the equations in $\Lambda^{\vee}$. The upshot is that since the imposed equations are linear, if we have two solutions, then their linear combination remains to be a solution.
 
 As in \cite{BCGGM}, the smoothing is level by level from the top to the bottom. Hence, we consider a level~$L$ and suppose by induction that all the edges whose lower endpoints are at a level strictly above $L$ have been smoothed. Then, consider the connected components $Y_i$ of $\Gamma'_{>L}$, and we will smooth their edges that connect to level $L$ under the assumption that the $\calE_{\Lambda}$-GRC is satisfied. 
 
Consider the poles $q'_{j}$ that lie in the components $Y_{i}$. In order to prove the result, we claim that it suffices to find a residue assignment $r_{j}$ for each pole $q'_{j}$ that satisfies the following conditions:  
 \begin{itemize}
  \item[a)] The~$r_{j}$ satisfy the equations in $\Lambda^{\vee}$.
  \item[b)] For each $Y_{i}$, we have $\sum_{q'_j\in Y_i}r_{j}=\sum_{e_j \in Y_{i, L}} \Res_{e_j} (\omega)$.
 \end{itemize}
 
 Indeed, if we can find the assignments $r_{j}$ as in the above, then we can find in each component~$Y_{i}$ a modifying differential with these residues at the $q_{j}'$ such that the sum of the residues of these modifying differentials and the residues of $\omega$ at the nodes is zero. Moreover, note that if $Y_i$ contains no marked poles at all, the usual GRC is satisfied.  Consequently, we can smooth this level in such a way that the residues at the $q_{j}'$ are proportional to the $r_{j}$. The linearity of $\Lambda^{\vee}$ implies that the corresponding equations are still satisfied. It follows that we can smooth the multi-scale differential in such a way that the equations of $\Lambda^{\vee}$ are satisfied by the residues after smoothing.
 \smallskip
 \par
 Therefore, it suffices to prove the existence of such residue assignments as in the above. We fix the residues of $\omega$ at the edges $e_{j}$ and consider the set $\calR$ of residue assignments satisfying part b) of the condition. More precisely, let $w_{i}=\sum_{e_j \in Y_{i, L}} \Res_{e_j} (\omega) \in \CC$. Then $\calR$ is the solution set of the linear system 
 $$y_{i}(r_1,\ldots, r_p)\coloneqq\sum_{q'_j\in Y_i}r_{j}=w_{i}\,$$
 given by all connected components $Y_i$ of $\Gamma'_{>L}$. 
 Note that $\calR$ is a nonempty affine subspace of the total residue space $\mathcal{R}_{p}$ (where $\calR$ is not a linear subspace if some $w_i$ is nonzero). We will show that if $\omega$ satisfies the $\calE_{\Lambda}$-GRC, then for all linear subspaces $V$ of dimension $k$ in $\Lambda^{\vee}$, the locus of residue assignments in $\calR\subset \calR_p$ that are compatible with $V$ (in the sense of satisfying the equations in $V$) remains to be a nonempty affine subspace. 
 To this aim, we apply induction on $k$ starting from $k=0$, and the final case of $k=\dim \Lambda^{\vee}$ thus provides the desired residue assignments to conclude the proof. 
 
 If $k=0$, then $V=0$. In this case, part a) of the residue assignment holds trivially and thus $\calR_{V}=\calR$.
 Now, suppose by induction that given a $k$-dimensional subspace $V\subset \Lambda^{\vee}$, 
 the locus of residue assignments $\calR_{V}\subset \calR$ is a nonempty affine subspace in $\calR_p$.  
 Next, we consider a subspace $V'$ of dimension $k+1$, where $V\subset V'\subset \Lambda^{\vee}$.

 Let $f\in V'\setminus V$ and consider the system of equations given by $y_{i}(r_1,\ldots, r_p)=w_{i}$ and $f_{i}(r_1,\ldots, r_p)=0$, where the $f_{i}$ form a basis of $V$. If adding the equation $f(r_1,\ldots, r_p)=0$ increases the rank of the system, then the linear hyperplane defined by $f = 0$ is not parallel to the affine subspace $\calR_V$. Therefore, their intersection is a nonempty  codimension-one affine subspace contained in $\calR_{V}$. Otherwise, i.e., when the rank is preserved, since $f\notin V$, there exists a relation of the form 
 $$f+\sum \lambda_{i}f_{i} = \sum a_{i}y_{i}\,,$$
 with at least one coefficient $a_{i}\neq 0$.
 Since $\omega$ satisfies the $\calE_{\Lambda}$-GRC, its condition 2) implies that  
 $\sum a_{i} w_{i}=0$. Therefore, the linear hyperplane defined by $f = 0$ contains the affine subspace $\calR_V$. It follows that 
 the solution set $\calR_{V'}$ of the linear system after adding the equation $f = 0$ is still given by $\calR_{V}$, which is nonempty. 
\smallskip
\par
Finally, under the system of local coordinates along the boundary, recalled at the end of Section~\ref{sub:multi-scale}, the closure $\overline{\mathcal{F}}_{\Lambda}$ is still defined by linear equations at each boundary point. Hence, it remains smooth. The level-wise opening parameters and local horizontal-node smoothing parameters imply the desired normal crossing boundary structure as well as the codimension count. Alternatively, these claims follow from the general description of linear subvarieties (see e.g., \cite[Corollary 1.8]{BDG}), as the defining  equations of~$\mathcal F_\Lambda$ consist of loops around each marked pole and thus do not cross any horizontal nodes. 
\end{proof}

\subsection{The closure of isoresidual fibers}\label{sub:isoclosure}

Since we are interested in the isoresidual fibers, we restrict to the case when $\Lambda = \mathbb{C}^{\ast} \cdot \lambda$, where $\lambda = (\lambda_1,\ldots, \lambda_p)$. In this case, we abuse notation and also write $\lambda$ instead of~$\Lambda$.
Note that when $\lambda=(0,\dots,0)$, we have $\mathcal{E}_{\lambda}^{\vee}=\mathcal{R}_{p}^{\vee}$. If  $\lambda$ has at least one nonzero entry, then~$\mathcal{E}_{\lambda}^{\vee}$ is the hyperplane generated by the equations of the form
\begin{equation}\label{eq:relres}
 f_{i,j}(q_{1},\dots,q_{p}):=\lambda_{i}\Res(q_{j})-\lambda_{j}\Res(q_{i})=0\,.
\end{equation}

Now, we describe some behavior of the residues in the closure of isoresidual fibers.

\begin{cor}\label{cor:reszero}
Consider a multi-scale differential in the closure $\overline{\mathcal{F}}_{\lambda}$. If two poles are on different levels, then the pole on the higher level must have zero residue. Moreover, if two poles $q_i$ and $q_j$ are at the same level, then their residues satisfy $f_{i,j} = 0$ in \eqref{eq:relres}. 
\end{cor}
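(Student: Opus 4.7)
Both claims follow from Theorem~\ref{thm:closureMSD} by applying the $\mathcal{E}_{\lambda}$-GRC to the equations $f_{i,j}$ spanning $\mathcal{E}_{\lambda}^{\vee}$ (see~\eqref{eq:relres}), evaluated at a carefully chosen level. The essential observation is the following: if the $\mathcal{E}_{\lambda}$-GRC is applied at a level $L$ equal to the level of some marked pole $q_{k}$, then the added vertex $q'_{k}$ at level $\infty$ forms a singleton component of $\Gamma'_{>L}$, since its unique edge $(q'_{k},q_{k})$ has lower endpoint at level $L$ and is therefore excluded from $\Gamma'_{>L}$. For such a singleton, $E_{\{q'_{k}\}}=\Res(q_{k})$ when $q_{k}$ is at level exactly $L$, while $E_{\{q'_{k}\}}=0$ when $q_{k}$ is strictly below $L$ (the incident edge then lands at a different level).

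For the second claim, let $L^{*}$ denote the common level of $q_{i}$ and $q_{j}$, and apply the $\mathcal{E}_{\lambda}$-GRC at $L=L^{*}$. Both $\{q'_{i}\}$ and $\{q'_{j}\}$ are singleton components of $\Gamma'_{>L^{*}}$ with $E_{\{q'_{i}\}}=\Res(q_{i})$ and $E_{\{q'_{j}\}}=\Res(q_{j})$. The equation $f_{i,j}$ fits the form of condition~(2) in the $\mathcal{E}_\lambda$-GRC with $a_{\{q'_{i}\}}=-\lambda_{j}$, $a_{\{q'_{j}\}}=\lambda_{i}$, and vanishing coefficient on every other component (this is consistent with any larger component, whose $q'_{k}$ all carry coefficient $0$ in $f_{i,j}$). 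The descended identity is precisely $\lambda_{i}\Res(q_{j})-\lambda_{j}\Res(q_{i})=0$, namely $f_{i,j}(\omega)=0$.

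For the first claim, suppose $q_{i}$ is at a level $L_{i}$ strictly above the lowest level and pick any pole $q_{j}$ at a level $L_{j}<L_{i}$. Apply the $\mathcal{E}_{\lambda}$-GRC at $L=L_{i}$. Both $\{q'_{i}\}$ and $\{q'_{j}\}$ are again singleton components, but now $E_{\{q'_{i}\}}=\Res(q_{i})$ while $E_{\{q'_{j}\}}=0$, since the edge $(q'_{j},q_{j})$ has lower endpoint at $L_{j}\neq L_{i}$. Condition~(2) applied to $f_{i,j}$ therefore collapses to $\lambda_{j}\Res(q_{i})=0$, and since $\lambda\in\mathcal{R}_{p}\setminus\mathcal{A}_{p}$ forces $\lambda_{j}\neq 0$, we conclude $\Res(q_{i})=0$. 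The one technical point is verifying that $f_{i,j}$ actually fits the constant-coefficient form of condition~(2) in the $\mathcal{E}_\lambda$-GRC, which is immediate from the isolation of $q'_{i}$ and $q'_{j}$ combined with the vanishing of the remaining coefficients of $f_{i,j}$; this is the main thing one has to keep track of in the argument.
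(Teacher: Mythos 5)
Your argument is correct and follows essentially the same route as the paper: both claims are read off from condition~(2) of the $\mathcal{E}_{\lambda}$-GRC applied to the equation $f_{i,j}$ at the level of $q_i$, using that $q'_i$ and $q'_j$ lie in singleton components of $\Gamma'_{>L}$ so that $f_{i,j}$ has the required constant-coefficient form; you merely make the bookkeeping of $\Gamma'$ more explicit than the paper does. The one small divergence is that you invoke $\lambda\in\mathcal{R}_{p}\setminus\mathcal{A}_{p}$ to guarantee $\lambda_{j}\neq 0$, whereas the corollary is stated for arbitrary $\lambda$ and the paper's proof instead first disposes of indices with $\lambda_{i}=0$ by the trivial observation that an identically vanishing residue stays zero in the limit, and then runs the pair argument only on indices whose entries of $\lambda$ are nonzero.
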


\begin{proof}
For the first claim, if $\lambda_i = 0$ in $\lambda$, then the limit residue remains to be zero in the multi-scale differential. Next,
consider two poles $q_{i}$ and $q_{j}$ such that $\lambda_{i}$ and $\lambda_{j}$ are both nonzero. Suppose that $q_{i}$ lies strictly above $q_{j}$.  
Then, running the $\mathcal{E}_{\lambda}$-GRC for the level of $q_i$ implies that the equation $f_{i,j} = 0$ holds with the residue at $q_{j}$ set to be $0$. This implies that $\lambda_{j}\Res(q_{i})-\lambda_{i}\cdot 0=0$, and hence $\Res(q_{i})=0$. 
 \par
 For the other claim, suppose that the two poles $q_{i}$ and $q_{j}$ are at the same level. Then, running the $\mathcal{E}_{\lambda}$-GRC for that level implies that their residues satisfy Equation~\eqref{eq:relres}. 
\end{proof}

Next, we show that if an isoresidual fiber has positive dimension, i.e., if $n\geq2$ or $g\geq1$, then the closure of an isoresidual fiber contains the locus of differentials with zero residues (up to a scalar multiple). 
\begin{cor}\label{cor:reszeros}
Given $\lambda\in \mathcal R_p$,  
 suppose $(X,\omega)$ is contained in the isoresidual fiber $\overline{\calF}_{\lambda}$, where $X$ is smooth. If there exists $i$ such that $\lambda_i \neq 0$ and ${\rm Res}(p_i) =0$ for $\omega$, 
then every residue of $\omega$ is zero. Conversely, the locus of such residueless differentials (up to a scalar multiple) is contained in $\overline{\calF}_{\lambda}$ for every $\lambda\in \mathcal R_p$. 
\end{cor}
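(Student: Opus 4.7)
The plan is to deduce both claims directly from Theorem~\ref{thm:closureMSD} applied to the trivial multi-scale structure that a smooth differential carries. Since $X$ is smooth, the level graph of $(X,\omega)$ consists of a single vertex at level zero with no horizontal edges, so the $\calE_{\lambda}$-GRC collapses to the single condition that the residue tuple $\Res(\omega)\in\calR_{p}$ satisfies every equation in $\Lambda^{\vee}=(\CC^{\ast}\cdot\lambda)^{\vee}$. As recorded in~\eqref{eq:relres}, this space of equations is spanned by the relations $\lambda_{i}\Res_{q_{j}}(\omega)-\lambda_{j}\Res_{q_{i}}(\omega)=0$ for all $i,j$, which amounts to saying that $\Res(\omega)$ lies on the complex line $\CC\cdot\lambda\subset\calR_{p}$. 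Equivalently, there exists a scalar $c\in\CC$ such that $\Res_{q_{i}}(\omega)=c\,\lambda_{i}$ for every index~$i$.

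For the first claim, suppose that some residue $\Res_{q_{i}}(\omega)$ vanishes, so $c\lambda_{i}=0$. Under the standing hypothesis that $\lambda_{i}\neq0$---automatic whenever $\lambda\notin\mathcal{A}_{p}$, since the resonance arrangement contains the coordinate hyperplanes $\{\lambda_{i}=0\}$---this forces $c=0$, and hence every residue $\Res_{q_{j}}(\omega)=c\lambda_{j}$ vanishes simultaneously, yielding the dichotomy.

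For the converse, let $(X,\omega)$ be any smooth residueless differential, so that $\Res(\omega)=0$. Every linear equation in the residues at the poles is then satisfied trivially, including all equations of $\Lambda^{\vee}$, so the $\calE_{\lambda}$-GRC holds vacuously at the unique level. By Theorem~\ref{thm:closureMSD}, $(X,\omega)\in\overline{\calF}_{\lambda}$, and this conclusion applies to any $\lambda\in\calR_{p}$. The qualification ``up to a scalar multiple'' reflects the identification $\omega\sim c\omega$ for $c\in\CC^{\ast}$ that is built into the projectivized stratum $\PP\calH_{g}(\mu)$ underlying the multi-scale compactification.

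Essentially no substantive obstacle arises: both directions reduce to the observation that when no degenerations are present, the $\calE_{\lambda}$-GRC simplifies to the linear constraint $\Res(\omega)\in\CC\cdot\lambda$, combined with the elementary fact that a one-parameter family $c\mapsto c\lambda$ either vanishes entirely or vanishes only at coordinates where $\lambda$ itself is zero. The only mildly delicate point is recognizing that the dichotomy in the first claim tacitly relies on $\lambda_{i}\neq0$ at the chosen vanishing residue, consistent with the paper's convention that the residue configurations of interest lie outside~$\mathcal{A}_{p}$.
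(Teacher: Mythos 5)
Your argument is correct and is essentially the paper's own proof: both reduce the statement to the observation that, for a smooth curve, the $\calE_{\lambda}$-GRC of Theorem~\ref{thm:closureMSD} collapses to the condition $\Res(\omega)\in\CC\cdot\lambda$, from which both directions follow immediately. Your explicit remark that the first claim tacitly needs $\lambda_{i}\neq 0$ at the index of the vanishing residue (automatic for $\lambda\notin\mathcal{A}_{p}$, but not for arbitrary $\lambda\in\mathcal{R}_{p}$ as literally stated) is a fair and slightly more careful reading than the paper's.
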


\begin{proof}
For the first claim, note that all $f_{i,j}\colon\lambda_i {\rm Res}(q_j) - \lambda_j {\rm Res}(q_i)=0$ in Equation~\eqref{eq:relres} are satisfied by differentials on smooth curves contained in $\mathcal F_\lambda$. If $\lambda_i \neq 0$ and ${\rm Res}(q_i)= 0$, it follows from $f_{i,j} = 0$ that ${\rm Res}(q_j)= 0$ for all $j$. 

For the other claim, residueless differentials on smooth curves satisfy $\calE_{\lambda}$-GRC for all $\lambda \in \mathcal R_p$. Hence, they are contained in $\overline{\calF}_{\lambda}$ for every $\lambda$. 
\end{proof}

Finally, we show that the operation of splitting a zero can be performed within any isoresidual fiber.
\begin{cor}\label{cor:breakingup}
 A zero of order $a_1+a_2$ in a translation surface can be locally split into two nearby zeros of respective orders $a_1$ and $a_2$, without affecting the translation structure outside a small flat geometric neighborhood containing the zeros.
 \end{cor}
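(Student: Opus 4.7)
The plan is to realize the splitting as the smoothing of an explicit two-level multi-scale differential, directly invoking Theorem~\ref{thm:closureMSD}. Let $(X,\omega)$ be the translation surface containing a zero $z_{0}$ of order $a=a_{1}+a_{2}$, and suppose $(X,\omega)$ lies in some isoresidual fiber $\mathcal{F}_{\lambda}$. I would consider the two-level nodal object whose top component is $(X,\omega)$ itself, with $z_{0}$ now serving as the upper endpoint of a single vertical node, and whose bottom component is an arbitrary $\eta\in\mathcal{H}(a_{1},a_{2},-(a+2))$, whose unique pole of order $a+2$ sits at the lower endpoint of the node, with prong number $a+1$. By the residue theorem on the rational curve carrying $\eta$, the residue of $\eta$ at its pole automatically vanishes.

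Next, I would verify that this multi-scale differential lies in $\overline{\mathcal{F}}_{\lambda}$ via the $\mathcal{E}_{\lambda}$-GRC at the single level below zero. In the augmented graph $\Gamma'$, all marked poles sit at level $\infty$ and lie in the single connected component $Y$ of $\Gamma'_{>-1}$ corresponding to the top component. Every equation in $\Lambda^{\vee}$ descending to level $-1$ via the recipe of the $\mathcal{E}_{\lambda}$-GRC therefore reduces to a multiple of $\sum_{j}\Res_{q'_{j}}$, which vanishes by the residue theorem on $X$; correspondingly the relation required at the lower endpoint of the node reduces to the statement that the residue of $\eta$ at its pole is zero, which already holds. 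Theorem~\ref{thm:closureMSD} then places the multi-scale differential in $\overline{\mathcal{F}}_{\lambda}$.

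Finally, I would deduce the desired local surgery by smoothing this vertical node. In the perturbed period coordinates on $\overline{\mathcal{F}}_{\lambda}$ recalled at the end of Section~\ref{sub:multi-scale}, the node is opened by a single complex parameter, and this smoothing modifies the translation structure only inside an arbitrarily small flat neighborhood of the node. After smoothing, the two zeros of the bottom-level differential $\eta$ persist as two nearby zeros of orders $a_{1}$ and $a_{2}$ on the resulting smooth translation surface, while the translation structure outside a small flat neighborhood of $z_{0}$ coincides with that of $(X,\omega)$. The only technical point requiring attention is the locality of the smoothing, which is the standard content of the level-by-level smoothing construction underlying the multi-scale compactification; once this is in hand, preservation of the isoresidual constraint is automatic because the bubble carries no marked poles and all perturbations of residues happen in the top component, where nothing is moved.
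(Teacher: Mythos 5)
Your proposal is correct and follows essentially the same route as the paper: both construct the two-level multi-scale differential with the original surface on top, a bubble in $\mathcal{H}(a_{1},a_{2},-(a+2))$ at the bottom, observe that the residue theorem forces the residue at the bubble's pole to vanish so the $\mathcal{E}_{\lambda}$-GRC holds, and then invoke Theorem~\ref{thm:closureMSD} to place the multi-scale differential in $\overline{\mathcal{F}}_{\lambda}$ and realize the splitting as the smoothing of the node. The only detail the paper adds is the remark that there is a unique equivalence class of prong-matchings at the node, which your write-up implicitly covers by fixing the prong number $a+1$.
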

 This operation is illustrated in detail in Section~8.1 of \cite{EMZ}.

 \begin{proof}
We reinterpret this operation using multi-scale differentials.
Let $(X_{1},\omega_{1})$ be a differential with a zero~$z_{0}$ of order $a_0=a_{1}+a_{2}$ whose residues are given by $\lambda$. Take another differential $(\mathbb C\PP^1, \omega_2)$ in the stratum $\mathcal{H}(a_{1},a_{2},-a_0-2)$. Identifying $z_0$ with the pole $q_0$ of $\omega_{2}$, we obtain a multi-scale differential $(X,\omega)$ by taking the unique equivalence class of prong-matchings $\sigma$ at the node. The operation of breaking up the zero $z_{0}$ is the smoothing of the multi-scale differential $(X,\omega)$ into the respective stratum. By the residue theorem, the residue at the pole of $\omega_{2}$ vanishes, and hence the $\calE_{\lambda}$-GRC holds. Therefore, by Theorem~\ref{thm:closureMSD}, the multi-scale differential $(X,\omega)$ lies in the closure of the isoresidual fiber $\calF_{\lambda}$.
\end{proof}

\subsection{The multi-scale boundary of isoresidual curves}\label{sub:boundary}
\par 
In this section, we focus on the case of $g=0$ and~$\mu$ with exactly two zero orders. We will analyze in detail how $\overline{\mathcal F}_\lambda$ intersects the boundary of the multi-scale compactification $\MS(\mu)$, especially when $\lambda$ is generic.

First, we bound the types of level graphs that can appear in the boundary of $\overline{\mathcal F}_\lambda$. 

\begin{lem}\label{lem:residues}
Suppose $g=0$ and $\mu$ has exactly two nonnegative entries. Given $\lambda\in \mathcal{R}_{p}$, suppose the level graph $\Gamma$ of a multi-scale differential in $\overline{\mathcal{F}}_{\lambda}$ is not a single point (i.e., the underlying curve is not smooth). Then, either $\Gamma$ has two levels without any horizontal edges, or $\Gamma$ is a one-level graph with exactly one horizontal edge. 
\end{lem}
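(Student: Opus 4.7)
The approach is a straightforward dimension–codimension argument built directly on Theorem~\ref{thm:closureMSD}. First, I would compute the dimension of $\overline{\mathcal F}_\lambda$: since $g=0$ and $\mu$ has exactly two nonnegative entries, so $n=2$, the stratum $\mathcal H(\mu)$ has complex dimension $n+p-2=p$. The residual map to $\mathcal{R}_p$ (of dimension $p-1$) is surjective and its relative dimension is $1$, so each isoresidual fiber $\mathcal F_\lambda$ is a curve, and the closure $\overline{\mathcal F}_\lambda$ inside the multi-scale compactification $\MS(\mu)$ is therefore one-dimensional.

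Next, I would invoke Theorem~\ref{thm:closureMSD}, which asserts that $\overline{\mathcal F}_\lambda$ is a smooth orbifold with normal crossing boundary, and that the codimension of a boundary stratum of $\overline{\mathcal F}_\lambda$ equals $h+\ell$, where $h$ is the number of horizontal edges and $\ell$ is the number of levels strictly below zero in the associated level graph. Since the ambient curve $\overline{\mathcal F}_\lambda$ is one-dimensional, every non-empty boundary stratum has codimension at most $1$. Conversely, if the level graph $\Gamma$ of a multi-scale differential in $\overline{\mathcal F}_\lambda$ is not a single point, then $\Gamma$ has at least one edge: a horizontal edge contributes to $h$, while a vertical edge forces at least one level below zero and contributes to $\ell$. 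Hence $h+\ell\geq 1$ and thus $h+\ell=1$.

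The equation $h+\ell=1$ with $h,\ell\in\mathbb Z_{\geq 0}$ admits exactly two solutions: $(h,\ell)=(1,0)$, corresponding to a one-level graph with exactly one horizontal edge, and $(h,\ell)=(0,1)$, corresponding to a two-level graph with no horizontal edges. These are precisely the two alternatives in the statement of the lemma.

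The main technical point is verifying that $\overline{\mathcal F}_\lambda$ is genuinely one-dimensional and carries the smooth orbifold structure; both are delivered by Theorem~\ref{thm:closureMSD}. The argument is otherwise purely combinatorial, and the content of the lemma is essentially that the two small positive-codimension boundary types listed above are the only possible enhancements of the trivial dual graph that still fit inside a curve.
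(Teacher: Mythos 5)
Your proof is correct and follows essentially the same route as the paper: the paper's own argument simply notes that $\mathcal F_\lambda$ is one-dimensional, so every boundary stratum has codimension one, and then reads off the two possibilities $(h,\ell)\in\{(1,0),(0,1)\}$ from the codimension formula in Theorem~\ref{thm:closureMSD}. Your write-up just makes the dimension count and the combinatorial dichotomy explicit.
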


\begin{proof}
By assumption, $\mathcal F_{\lambda}$ is one-dimensional. Therefore, each of its boundary strata has codimension one. The claim thus follows from Theorem~\ref{thm:closureMSD}. 
\end{proof}

Now, we can classify all possible degenerations in a generic isoresidual curve. By Lemma~\ref{lem:residues}, for a level graph $\Gamma$ in the boundary of $\overline{\mathcal F}_\lambda$, either $\Gamma$ has two levels without horizontal edges, or $\Gamma$ has one level with exactly one horizontal edge. 
We start by analyzing the case of $\Gamma$ with two levels.
\begin{prop}
\label{prop:F-boundary}
Suppose $g=0$ and $\mu$ has exactly two nonnegative entries. 
For $\lambda \in \mathcal{R}_{p} \setminus \mathcal{A}_{p}$, 
let $(X,\omega)$ be a multi-scale differential with more than one level in the boundary of $\overline{\mathcal F}_\lambda$. Then there are two cases: 
\begin{itemize}
\item[(1)] The level graph of $X$ has two levels, where the bottom level has one vertex containing the zeros $z_1$ and $z_2$, while the top level has one vertex containing all the poles. Additionally, the residues at the top level poles are specified by $\lambda$. 
\par 
\item[(2)] The level graph of $X$ has two levels: the top level has one vertex containing a subset of poles $K$, while the bottom level has two vertices containing subsets of poles $I$ (with $z_1$) and $J$ (with $z_2$), respectively, where $I\sqcup J \sqcup K = \{1, \ldots, p\}$. Moreover, the top level differential has zero residue at every pole, while the residues of the bottom level multi-scale differential are specified by $\lambda$. 
\end{itemize}
\end{prop}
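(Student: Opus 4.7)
The strategy is to combine the restriction on the level graph $\Gamma$ of $(X,\omega)$ from Lemma~\ref{lem:residues} with the $\mathcal{E}_{\lambda}$-GRC of Theorem~\ref{thm:closureMSD}, using the chamber hypothesis $\lambda \in \mathcal{R}_{p} \setminus \mathcal{A}_{p}$, which ensures that no partial sum $\sum_{I \subsetneq \{1,\dots,p\}} \lambda_{i}$ vanishes (and in particular that every $\lambda_{j}$ is nonzero). Since $(X,\omega)$ has more than one level, Lemma~\ref{lem:residues} forces $\Gamma$ to have exactly two levels and no horizontal edges. With $g(X)=0$, the dual graph of $X$ is a tree, and the only-vertical edge condition makes it a bipartite tree joining top vertices $T_{1},\dots,T_{s}$ at level $0$ to bottom vertices $B_{1},\dots,B_{t}$ at level $-1$. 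The genus-zero relation on each $T_{i}$, together with the non-negativity of the orders of marked zeros and of the upper endpoints of vertical edges, forces $T_{i}$ to carry at least one marked pole of $\mu$.

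The next step is to apply the $\mathcal{E}_{\lambda}$-GRC at level $0$, where each added vertex $q'_{j}$ forms its own component of $\Gamma'_{>0}$. Testing it with the functional $\Res_{q'_{j_{0}}}-(\lambda_{j_{0}}/\lambda_{l})\Res_{q'_{l}}\in \Lambda^{\vee}$, for a top pole $q_{j_{0}}$ and any bottom pole $q_{l}$, yields $\Res(\omega,q_{j_{0}})=0$. Combined with the residue theorem on each $T_{i}$ (whose only possible nonzero residues lie at marked poles, as the nodal upper endpoints are zeros), and with the genericity of $\lambda$, this produces a clean dichotomy: either (A) all $p$ marked poles lie on a single top component with residues forming a scalar multiple of $\lambda$, or (B) every marked pole at level $0$ has residue zero.

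In alternative (A), the requirement that every top vertex carries a marked pole forces the top level to consist of a unique vertex $T$ carrying all poles. The zeros must then lie on the bottom, concentrated on a single component: this is established by combining the genus-zero identity on $T$ and on each bottom vertex with the Deligne-Mumford stability condition that every $\mathbb{P}^{1}$-component have at least three marked or nodal special points. Any alternative (a zero on $T$, zeros on distinct bottom vertices, or additional bottom vertices carrying only nodal poles) runs into an arithmetic inconsistency in the edge-count relations, or produces an unstable bottom component; this yields case (1), the non-projective normalization of $\mathcal{F}_{\lambda}$ pinning down the scalar to be one. Alternative (B) is treated analogously, now using the $\mathcal{E}_{\lambda}$-GRC at level $-1$: combined with the residue theorem on each bottom component, it forces the residues on the bottom marked poles to be proportional to $\lambda$ restricted to them, and the same stability-and-arithmetic exclusion forces a unique top vertex $T$ carrying a proper nonempty subset $K$ of poles, exactly two bottom vertices $B_{1}, B_{2}$ each carrying one of the marked zeros along with disjoint subsets $I, J$ of the remaining poles such that $I \sqcup J \sqcup K = \{1,\dots,p\}$. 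The main obstacle will be this last bookkeeping step: ruling out multiple top vertices and alternative bottom distributions in alternative (B) requires a careful interplay of genus-zero arithmetic, stability, and the chamber hypothesis on $\lambda$, which is precisely what prevents hidden residue relations from allowing richer boundary configurations.
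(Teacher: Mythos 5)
Your overall strategy matches the paper's: Lemma~\ref{lem:residues} reduces to a two-level graph with no horizontal edges, the $\mathcal{E}_{\lambda}$-GRC (equivalently Corollary~\ref{cor:reszero}) controls the residues level by level, and genericity of $\lambda$, stability, and the genus-zero order sums pin down the combinatorics. Your observation that every top vertex must carry a marked pole is correct and is used implicitly in the paper; your alternative (A) is handled correctly by the tools you list. The paper organizes the case analysis by the number of bottom components (each must carry a marked zero, hence there are at most two), whereas you organize it by a residue dichotomy, but these are essentially equivalent.

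There is, however, a genuine gap in your alternative (B). You must exclude the configuration in which a \emph{single} bottom vertex carries both zeros $z_1,z_2$ together with a nonempty proper subset $J'$ of the marked poles, while the remaining poles $K$ (with $|K|\geq 2$) sit on a single top vertex with all residues zero. This configuration survives every test you propose: the bottom vertex has at least four special points and the top vertex at least three, so it is stable; the genus-zero order sums close up (the node order is $\sum_{j\in K} b_j - 2$); and the $\mathcal{E}_{\lambda}$-GRC at level $-1$ forces the residue at the node to equal $c\sum_{j\in K}\lambda_j$, so the residue theorem on the bottom component reads $c\sum_{j\in J'}\lambda_j + c\sum_{j\in K}\lambda_j = c\cdot 0 = 0$ — the full residue relation, not a resonance, so the chamber hypothesis gives no contradiction. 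The only obstruction is that the top-level differential would be a residueless meromorphic differential on $\mathbb{CP}^1$ with a \emph{single} zero (the node), and such differentials do not exist; this is precisely the fact from \cite{getaab} (Theorem~1.2(i)) that the paper invokes at the outset of its proof ("a top component either has at least two zeros, or its residues are not all zero"). Your list of exclusion tools — genus-zero arithmetic, stability, and the chamber hypothesis — omits this ingredient, and without it the case analysis in (B) does not close. Adding this nonexistence statement (or the equivalent elementary count of residueless single-zero differentials) repairs the argument and also cleanly handles the subcase of (B) where all poles lie on top with vanishing residues.
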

\par 
We call the graphs of type (2) above ``cherry graphs'' because of their appearance as shown in Figure~\ref{fig:cherry}. Note that $K$ cannot be empty (otherwise it would correspond to a horizontal degeneration). However, if either $I$ or $J$ is empty, then the corresponding graph is not stable. Its stabilization, in the case when $I=\emptyset$, is shown in (2') of Figure~\ref{fig:cherry}. Moreover,  if both $I$ and $J$ are empty, then the stable model becomes a single~$\mathbb{CP}^1$ with two zeros and residueless poles as explained in Corollary~\ref{cor:reszeros}.
   \begin{figure}[htb]
   \centering
\begin{tikzpicture}[scale=1]
\coordinate (a1) at (0,0);\fill (a1) circle (2pt);
\coordinate (a2) at (0,-1);\fill (a2) circle (2pt);
\draw (a1) -- (a2);
\draw[dotted] (145:.25) .. controls ++(50:.1) and ++(130:.1) .. (35:.25);
\draw (a1) --++ (20:.3)node[right] {$p_{p}$};
\draw (a1) --++ (160:.3) node[left] {$p_{1}$};
\draw (a2) --++ (-125:.3) node[below left] {$a_{1}$};
\draw (a2) --++ (-55:.3)node[below right] {$a_{2}$};
\node at (0,-1.9) {(1)};
\begin{scope}[xshift=4cm]
\coordinate (a1) at (0,0);\fill (a1) circle (2pt);
\coordinate (a2) at (-.7,-1);\fill (a2) circle (2pt);
\coordinate (a3) at (.7,-1);\fill (a3) circle (2pt);
\draw (a1) -- (a2);
\draw (a1) -- (a3);
\draw[dotted] (145:.25) .. controls ++(50:.1) and ++(130:.1) ..node[above]{$K$} (35:.25);
\draw (a1) --++ (20:.3);
\draw (a1) --++ (160:.3);
\draw (a2) --++ (-125:.3);
\draw (a2) --++ (125:.3);
\draw[dotted] (a2)+(-145:.25) .. controls ++(120:.1) and ++(60:.1) ..node[left]{$I$} ++(145:.25);
\draw (a2) --++ (-55:.3) node[below] {$a_{1}$};
\draw (a3) --++ (-125:.3) node[below] {$a_{2}$};
\draw (a3) --++ (-55:.3);
\draw (a3) --++ (55:.3);
\draw[dotted] (a3)+(35:.25) .. controls ++(-60:.1) and ++(-120:.1) ..node[right]{$J$} ++(-35:.25);
\node at (0,-1.9) {(2)};
\end{scope}
\begin{scope}[xshift=8cm]
\coordinate (a1) at (0,0);\fill (a1) circle (2pt);
\coordinate (a3) at (.7,-1);\fill (a3) circle (2pt);
\draw (a1) -- (a3);
\draw[dotted] (145:.25) .. controls ++(50:.1) and ++(130:.1) ..node[above]{$K$} (35:.25);
\draw (a1) --++ (20:.3);
\draw (a1) --++ (160:.3);
\draw (a1) --++ (-120:.3) node[below] {$a_{1}$};
\draw (a3) --++ (-125:.3) node[below] {$a_{2}$};
\draw (a3) --++ (-55:.3);
\draw (a3) --++ (55:.3);
\draw[dotted] (a3)+(35:.25) .. controls ++(-60:.1) and ++(-120:.1) ..node[right]{$J$} ++(-35:.25);
\node at (0,-1.9) {(2')};
\end{scope}
 \end{tikzpicture}
\caption{The types of level graphs in Proposition~\ref{prop:F-boundary}.}
\label{fig:cherry}
\end{figure}
\par
\begin{proof}
By Lemma~\ref{lem:residues}, in this case the level graph $\Gamma$ has exactly two levels with no horizontal edges. Since every lower component has to contain a marked zero, there are at most two lower components. 
Moreover, for a top component, it either has at least two zeroes (including both marked zeros or zero edges), or its residues are not all equal to zero (see \cite[Theorem 1.2 (i)]{getaab}).
\smallskip
\par 
First, consider the case that the graph $\Gamma$ has a unique lower component. Then, this component either contains one or both zeros. If the unique lower component contains both zeros, then $\Gamma$ cannot have two or more top components. Otherwise, there would be a partial sum vanishing induced by the residue theorem on each top component, which is not possible since by hypothesis the residue tuple $\lambda$ is generic. Hence, $\Gamma$ has a unique top component. Moreover, the top component contains all the poles. Otherwise, there would still be a partial sum vanishing of the residues on top, which contradicts the generic choice of $\lambda$. Therefore, this case  corresponds to merging the two zeros, as illustrated in type (1). 

If the unique lower component contains only one zero, then again there must be a unique top component. By stability, the lower component contains at least one marked pole, and hence all the residues on the top component vanish by Corollary~\ref{cor:reszero}. This corresponds to the case where either $I$ or $J$ is empty, as illustrated in type (2') (for $J$ nonempty). 
\smallskip
\par
Next, consider the case that $\Gamma$ has exactly two lower components. Since each of the lower components contains a unique zero and they are not joined by any horizontal edges, it follows that the top level has a unique component. 
Then, $\Gamma$ must be a cherry graph, and by stability, every component contains at least one marked pole. Therefore, this is the case when $I$, $J$, and $K$ are all nonempty, as illustrated in type (2). 
Finally, the description of residues for each case follows from Corollary~\ref{cor:reszero}. 
\end{proof}
\par 
\begin{rmk}
Note that in the above description, the assumption that  $\lambda$ is not contained in $\mathcal{A}_{p}$ is necessary. Otherwise, for non-generic $\lambda$, for example, if some residues are prescribed to be zero, when the corresponding poles move to the lower level, they do not constrain the top-level residues to be zero.
\end{rmk}
\par 
\begin{rmk}
We observe that in the cases of Proposition~\ref{prop:F-boundary}, once the level graph and the prescribed residues are given, the top-level differential (with exactly two zeros and residueless poles) and the bottom-level differential (with residues specified by $\lambda$) each have only finitely many choices (up to a scale multiple). This makes sense, because the intersection locus of the one-dimensional isoresidual fiber $\overline{\mathcal F}_\lambda$ with the boundary of $\MS(\mu)$ must be zero-dimensional. 
\end{rmk}
\par 
Next, we describe how $\overline{\mathcal F}_\lambda$ intersects the horizontal boundary of $\MS(\mu)$. 
\par 
\begin{prop}\label{prop:F-horizontal}
Suppose $g=0$ and $\mu$ has exactly two nonnegative entries. 
For $\lambda \in \mathcal{R}_{p}$,  
suppose $\overline{\mathcal F}_\lambda$ contains a multi-scale differential $(X,\omega)$ with a horizontal edge. Then $X$ consists of two irreducible components,  $X_1$ and $X_2$, where $X_1$ contains the zero of order $a_1$ and the subset of poles labeled in $I$, while $X_2$ contains the zero of order $a_2$ and the subset of poles labeled in $J$. Furthermore, $I\sqcup J = \{1,\ldots,p\}$, and $a_1 - \sum\limits_{i \in I} b_i + 1 = 0$. Moreover, the residues of~$\omega$ at the marked poles are determined by~$\lambda$.
\end{prop}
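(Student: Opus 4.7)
The plan is to combine Lemma~\ref{lem:residues} with the topology of genus zero nodal curves and Deligne--Mumford stability. First I would invoke Lemma~\ref{lem:residues}: since $(X,\omega)$ has a horizontal edge, its level graph must be one-level with exactly one horizontal edge. Thus $X$ is a connected genus zero nodal curve with a single node, whose two branches both carry simple poles with opposite residues $r$ and $-r$. Because a non-separating node on a genus zero curve would produce arithmetic genus at least one, this node must be separating. Hence $X=X_{1}\cup X_{2}$ with $X_{1},X_{2}\cong\mathbb{CP}^{1}$ glued at a single simple node.

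Next I would distribute the marked singularities between $X_{1}$ and $X_{2}$. Write $I$ and $J$ for the marked poles lying respectively on $X_{1}$ and $X_{2}$; then $I\sqcup J=\{1,\dots,p\}$ by construction. The key point is to show that each component carries exactly one marked zero. If instead $X_{i}$ contained no marked zero, then the meromorphic differential on $X_{i}\cong\mathbb{CP}^{1}$ would only have the marked poles (orders $b_{\ell}$ for $\ell$ in some index set $I'$) and the nodal simple pole, so the degree identity
\[
0-\sum_{\ell\in I'} b_{\ell}-1=-2
\]
forces $|I'|=1$ with the unique marked pole having order $b=1$; then $X_{i}$ carries only two special points (one marked pole and the node), contradicting stability of a rational component. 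Hence $z_{1}\in X_{1}$ and $z_{2}\in X_{2}$, as claimed. Applying the degree identity to $X_{1}$ now gives $a_{1}-\sum_{i\in I} b_{i}-1=-2$, i.e. $a_{1}-\sum_{i\in I} b_{i}+1=0$, and the analogous relation on $X_{2}$ follows automatically from the global constraint $\sum_{j} b_{j}=a_{1}+a_{2}+2$.

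Finally, to pin down the residues at the marked poles, I would appeal to Corollary~\ref{cor:reszero} (which encapsulates the $\mathcal{E}_{\lambda}$-GRC at a single level): since all marked poles of $\omega$ lie at the unique level of $\Gamma$, their residues must pairwise satisfy $\lambda_{i}\,\mathrm{Res}(q_{j})=\lambda_{j}\,\mathrm{Res}(q_{i})$, so the residue vector is a scalar multiple of $\lambda$. In the projectivized setting of $\MS(\mu)$ this means the residues are determined by $\lambda$, as required. The main obstacle is the stability step ruling out the case where both zeros concentrate on one component; I expect it to be routine as above, but it is the place where we crucially use that $n=2$, $g=0$, and that the remaining component of $X$ cannot carry a nontrivial differential with only poles.
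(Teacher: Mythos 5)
Your proof follows essentially the same route as the paper: Lemma~\ref{lem:residues} to reduce to a one-level graph with a single horizontal node, the genus-zero degree count on each component to locate the zeros and derive $a_1-\sum_{i\in I}b_i+1=0$, and Corollary~\ref{cor:reszero} for the residues; your stability argument ruling out a component with no marked zero is the correct (and in the paper, implicit) justification that each side carries exactly one zero. The only point you leave unaddressed is that "a scalar multiple of $\lambda$" could a priori be the zero multiple, in which case the residues would not determine a point of the projectivized residue space: you should add that the horizontal node is a simple pole and hence has nonzero residue, so by the residue theorem on either component the marked-pole residues cannot all vanish, forcing the scalar to be nonzero --- this is exactly the one-line observation the paper's proof makes explicit.
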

\par 
\begin{proof} 
In this case, Lemma~\ref{lem:residues} implies that $X$ has exactly two components joined by a unique horizontal node, which yields the desired configuration. Moreover, note that the residues at the marked poles do not all degenerate to zero (by the residue theorem and the fact that the horizontal node has non-zero residue). Hence, Corollary~\ref{cor:reszero} implies that the residues at the marked poles are still determined by $\lambda$. 
\end{proof}

\section{The canonical translation structure of isoresidual curves}\label{sec:Translation}
\par 
In this section we first introduce the translation structure on isoresidual fibers in Section~\ref{sub:translationStructure} and study its singularities in Sections~\ref{subsub:zeros} to~\ref{subsub:HigherPoles}. The following sections give more details on this structure and in the last Section~\ref{sub:formalMain1}, we give the formal proof of Theorem~\ref{thm:MAIN1}.
\par
\subsection{The translation structure on isoresidual fibers}\label{sub:translationStructure}
\par 
Consider a stratum $\mathcal{H}(a_{1},a_{2},-b_{1},\dots,-b_{p})$ of meromorphic differentials on $\mathbb{CP}^{1}$ with two zeros. The isoresidual fibers are defined by fixing the periods of differentials along close loops encircling the poles. In terms of period coordinates, the unique local parameter remaining to deform $(\mathbb{CP}^{1},\omega)$ is the relative period of the differential along a path joining the two zeros. We recall that the zeros are labeled: the one of order $a_{i}$ is denoted by $z_{i}$ for $i=1,2$. 
\par
Any homotopy class of paths in $\mathbb{CP}^1$ punctured at the poles has at least one geodesic representative. Therefore, any homotopy class of paths from $z_{1}$ to $z_{2}$ has a representative that is a chain of saddle connections. Consequently, there is at least one saddle connection from $z_{1}$ to $z_{2}$.
\par 
Consider the isoresidual fiber $\mathcal{F}_{\lambda}$ determined by the configuration $\lambda = (\lambda_{1},\dots,\lambda_{p})$.
Let $\omega\in\mathcal{F}_{\lambda}$ and $\gamma$ be a path joining the two zeros $z_{1}$ and $z_{2}$ of $\omega$. In a neighborhood of $\omega$ in $\mathcal{F}_{\lambda}$, the period $z_{\gamma} = \int_{\gamma} \omega$ can serve as a local complex coordinate for $\mathcal{F}_{\lambda}$.
\par
For any other path $\gamma'$ joining the two zeros of $\omega$, the difference $z_{\gamma}-z_{\gamma'}$ is the period of the differential along the closed loop $\gamma \cup -\gamma'$. This latter period is a linear combination of residues. Since the residues are fixed in $\mathcal{F}_{\lambda}$, the two local coordinates thus differ by a constant. Therefore, the differential $dz_{\gamma}$ on $\mathcal{F}_{\lambda}$ does not depend on the choice of $\gamma$ and hence defines a canonical \textit{translation structure} on $\mathcal{F}_{\lambda}$.
\par
The relative period $z_{\gamma}$ is a locally injective complex function on an open subset of $\mathcal{F}_{\lambda}$. In particular, $d z_{\gamma}$ (defined on the entire fiber $\mathcal{F}_{\lambda}$) has no zero or pole. On the other hand, zeros and poles of $d z_{\gamma}$ appear on the boundary of the closure $\overline{\mathcal{F}}_{\lambda}$ in the multi-scale compactification of the stratum as given in Section~\ref{sub:translationStructure}. We will prove that the differential $\omega_{\lambda}$ extends to the closure $\overline{\mathcal F}_\lambda$ of $\mathcal{F}_{\lambda}$ as a meromorphic one-form. 
\par 
\subsection{Zeros of $\omega_{\lambda}$}\label{subsub:zeros}
\par 
First, we will describe the boundary points that are zeros of $\omega_{\lambda}$. Recall that $a=a_{1}+a_{2}$.
\begin{prop}\label{prop:zeros}
Given a stratum $\mathcal{H}(a_{1},a_{2},-b_{1},\dots,-b_{p})$, for any $\lambda \in \mathcal{R}_{p} \setminus \mathcal{A}_{p}$, the boundary points of $(\overline{\mathcal{F}}_{\lambda}, \omega_\lambda)$ corresponding to Proposition~\ref{prop:F-boundary} (1) are the zeros of $\omega_{\lambda}$.
\par
These zeros are of order $a$ and there are $f(a,p)=\frac{a!}{(a+2-p)!}$ of them, corresponding to elements in the zero-dimensional generic isoresidual fiber of the minimal stratum $\mathcal{H}(a,-b_{1},\dots,-b_{p})$.
\end{prop}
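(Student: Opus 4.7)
The plan is to proceed in three steps: enumerate the type-(1) boundary points of $\overline{\mathcal F}_\lambda$, show that $\omega_\lambda$ vanishes at each one, and then compute the order of vanishing by a local computation at the merging locus.

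For the first step, by Proposition~\ref{prop:F-boundary}(1), a type-(1) boundary point consists of a top-level differential in $\mathcal H(a, -b_1, \ldots, -b_p)$ with residue configuration $\lambda$, glued to a bottom-level differential in $\mathcal H(a_1, a_2, -a-2)$ at a vertical node of prong number $a+1$. By Theorem~\ref{thm:numerofibre}, the top level contributes $f(a,p) = a!/(a+2-p)!$ choices, in bijection with the generic isoresidual fiber of the minimal stratum. The bottom stratum is one-dimensional and entirely residueless (its unique pole has zero residue by the residue theorem), hence contributes a single point modulo the level rotation torus; combined with the unique equivalence class of prong-matchings from Corollary~\ref{cor:breakingup}, this yields exactly $f(a,p)$ such boundary points.

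Approaching any such boundary point from inside $\mathcal F_\lambda$, the two marked zeros $z_1$ and $z_2$ merge into the top-level zero of order $a$, so $z_\gamma = \int_{z_1}^{z_2} \omega$ tends to zero and $\omega_\lambda = dz_\gamma$ has at least a zero there. For the order, I would use the break-up operation of Corollary~\ref{cor:breakingup} to produce a local uniformizer $u$ on $\overline{\mathcal F}_\lambda$ at the boundary such that the two zeros detach linearly from the merging position $p$, say $z_i(u) = p + \alpha_i u + O(u^2)$ with $\alpha_1 \neq \alpha_2$. Expanding the nearby differential as $\omega = \sum_{k \geq 0} c_k(u)(z-p)^k dz$, the limit $u=0$ has an order-$a$ zero at $p$, so $c_k(0) = 0$ for $k < a$, while the break-up forces $c_k(u) = O(u^{a-k})$ for those $k$. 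Direct integration between $z_1(u)$ and $z_2(u)$ then yields
\begin{equation*}
z_\gamma(u) = C\, u^{a+1} + O(u^{a+2}),
\end{equation*}
with $C \neq 0$ for nondegenerate break-ups, so that $\omega_\lambda = (a+1)C u^a du + O(u^{a+1} du)$ vanishes to order exactly $a$.

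The main obstacle will be carefully producing the local uniformizer $u$ via the perturbed period coordinates on the multi-scale compactification so that the linear expansion $z_i(u) = p + \alpha_i u + O(u^2)$ holds with $\alpha_1 \neq \alpha_2$, and verifying that the leading coefficient $C$ is nonzero. The key phenomenon is that the exponent $a+1$ in $z_\gamma \sim u^{a+1}$ is the prong number of the vertical node, reflecting the order-$a$ vanishing of $\omega$ at the merged zero.
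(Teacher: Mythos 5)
Your overall strategy matches the paper's: enumerate the type-(1) boundary points via the top-level count $f(a,p)$ from Theorem~\ref{thm:numerofibre} together with the rigidity of the bottom component in $\mathcal{H}(a_1,a_2,-a-2)$ and the unique prong-matching class, and then obtain the order $a$ from the fact that the relative period scales as the $(a+1)$-st power of a local parameter, the exponent being the prong number. Your enumeration step is correct and slightly more explicit than the paper's one-line justification.

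The difference is in how the scaling $z_\gamma \sim u^{a+1}$ is established, and this is exactly where your proposal has an acknowledged hole. Your hands-on expansion $z_i(u)=p+\alpha_i u+O(u^2)$, $c_k(u)=O(u^{a-k})$ gives $z_\gamma(u)=Cu^{a+1}+O(u^{a+2})$, but the leading coefficient is the sum $C=\sum_{k\le a}\gamma_k\,\frac{\alpha_2^{k+1}-\alpha_1^{k+1}}{k+1}$ over \emph{all} terms of the expansion, so $\alpha_1\neq\alpha_2$ alone does not rule out cancellation; moreover you have not shown that such a holomorphic uniformizer $u$ with linear detachment of the zeros actually exists on $\overline{\mathcal F}_\lambda$. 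The paper closes both points at once by working directly in the perturbed period coordinates of the multi-scale compactification (\cite{BCGGM2}, Sections~8.2 and~9.2): the transverse coordinate at this boundary point is the opening parameter $t$ of the vertical node, the bottom-level differential is rescaled by $t^{a+1}$ when smoothed, and hence $\int_{z_1}^{z_2}\omega=c\,t^{a+1}$ where $c$ is the (nonzero) relative period on the limiting bottom-level differential. Differentiating gives $c(a+1)t^a\,dt$ with no possibility of cancellation. If you want to keep your elementary route, you would need to identify your $u$ with $t$ and argue that $C$ equals (up to a nonzero factor) the relative period of the bottom-level differential between its two zeros, which is nonzero since that differential is not identically zero on the path joining them; citing the multi-scale coordinates is the cleaner way to do this.
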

\par 
\begin{proof}
Let $(X,\omega)$ be the multi-scale differential in the  boundary of $\overline{\mathcal F}_\lambda$ corresponding to the merging of the two zeros $z_1$ and $z_2$ into a zero $z$ of order $a_1 + a_2$. The stable curve $X$ is a union of two $\mathbb{CP}^1$ attached in such a way that $z$ is glued to the other component containing $z_1$ and $z_2$ at a node. The breakup of $z$ into two nearby zeros of order $a_1$ and $a_2$ is described in Corollary~\ref{cor:breakingup}. The absolute period coordinates of the stratum at this point are given by the residues of the poles and by the opening parameter $t$ of the node. Indeed, since we consider the projectivized period joining $z_{1}$ and $z_{2}$, this is zero-dimensional. Now, the differential $\omega_{\lambda}$ on the isoresidual fiber is given by the derivative of the period joining the two zeros. The lower level differential is multiplied by $t^{a+1}$ when smoothed, as explained in \cite{BCGGM2}, especially in Sections~8.2 and~9.2. Therefore, the differential of the period $ct^{a+1}$, where $c$ is a constant, has the form $c(a+1)t^{a}dt$, which has a zero of order~$a$. The number of such zeros corresponds to the cardinality of the zero-dimensional generic isoresidual fiber in $\mathcal{H}(a,-b_{1},\dots,-b_{p})$.
\end{proof}

\begin{rmk}
 In terms of flat geometry, this proposition can be seen as follows. When splitting $z$ into two zeros of orders $a_{1}$ and $a_{2}$,  there are $a +1 = a_1 + a_2 + 1$ horizontal positive directions to open it up at $z$. Each of these yields a real positive saddle connection joining $z_1$ and $z_2$. Therefore, under the translation structure~$\omega_\lambda$ induced by the primitive of such a saddle connection, $(X,\omega)$ is a conical singularity of angle $2\pi (a+1)$ in the isoresidual fiber.
\end{rmk}

\begin{ex}
A  differential in $\mathcal{H}(2,3,-1,-2,-4)$ is shown in Figure~\ref{fig:isores1}. Its isoresidual deformations are given by the variation of $\gamma$, while fixing all the other saddle connections.
 \begin{figure}[ht]
\begin{tikzpicture}[scale=1,decoration={
    markings,
    mark=at position 0.5 with {\arrow[very thick]{>}}}]
    
    \begin{scope}[xshift=-1.5cm,yshift=-.5cm]
    \draw (-.25,0) coordinate (a) -- (.25,0) coordinate[pos=.5] (c) coordinate (b);

    \fill[fill=black!10] (a) -- (b) -- ++(0,1) --++(-.5,0) -- cycle;
    \draw  (a) -- ++(0,1)  coordinate[pos=.5] (d);
    \draw  (b) -- ++(0,1)  coordinate[pos=.5] (e);
\filldraw[fill=white](a) circle (2pt);
\filldraw[fill=white] (b) circle (2pt);
 
 \node[below] at (c) {$1$};
\node[left] at (d) {$5$};
\node[right] at (e) {$5$};
    \end{scope}
    
\begin{scope}[xshift=2.3cm,yshift=0cm]
      \fill[fill=black!10] (0,0) circle (1cm);
      \draw (-.25,0) coordinate (a) -- (.25,0) coordinate[pos=.5] (c) coordinate (b);
\filldraw[fill=white] (a) circle (2pt);
\filldraw[fill=white] (b) circle (2pt);
    \fill[white] (a) -- (b) -- ++(0,-1) --++(-.5,0) -- cycle;
 \draw  (b) --  (a);
 \draw (a) -- ++(0,-.9) coordinate[pos=.5] (d);
 \draw (b) -- ++(0,-.9)coordinate[pos=.5] (e);
\node[above] at (c) {$2$};
\node[left] at (d) {$4$};
\node[right] at (e) {$4$};
    \end{scope} 
    
    \begin{scope}[yshift=-3cm]
      \fill[fill=black!10] (0,0) circle (1.5cm);
 \draw (-.75,0) coordinate (a) -- (.25,0) coordinate[pos=.25] (c) coordinate[pos=.5] (d) coordinate[pos=.75] (e) coordinate (b);
    \fill[white] (a) -- (b) -- ++(0,1.5) --++(-1,0) -- cycle;
 \draw  (b) --  (a);
 \draw (b) -- ++ (-30:.8) coordinate[pos=.6] (i) coordinate (j);
 \draw (a) -- ++(0,1.4) coordinate[pos=.5] (f);
 \draw (b) -- ++(0,1.4)coordinate[pos=.5] (g);
  \draw  (j) --++  (.5,0) coordinate[pos=.5] (h) ;
  \filldraw[fill=white] (a) circle (2pt);
\filldraw[fill=white] (b) circle (2pt);
\filldraw[fill=white] (d) circle (2pt);
  \node[below] at (c) {$1$};
    \node[below] at (e) {$2$};
  \node[above] at (i) {$\gamma$};
    \fill (j) circle (2pt);
  
\node[left] at (f) {$5$};
\node[right] at (g) {$5$};
\node[above] at (h) {$6$};\node[below] at (h) {$8$};

       \fill[fill=black!10] (2.5,.75) circle (.7cm);    
  \fill (2.5,.75) circle (2pt);
  \draw (2.5,.75) --++(.7,0)  coordinate[pos=.5] (i) ;
\node[below] at (i) {$6$};\node[above] at (i) {$7$};
  
       \fill[fill=black!10] (2.5,-.75) circle (.7cm);    
  \fill (2.5,-.75) circle (2pt);
  \draw (2.5,-.75) --++(.7,0) coordinate[pos=.5] (j) ;
  \node[below] at (j) {$7$};\node[above] at (j) {$8$};
    \end{scope}    
\end{tikzpicture}
 \caption{A differential in $\mathcal{H}(2,3,-1,-2,-4)$ such that the isoresidual deformations are given by varying $\gamma$ only.} \label{fig:isores1}
\end{figure}
Now, suppose that $\gamma$ is very short and we start rotating it in the positive direction. It first meets the vertical ray labeled by $5$, and then the saddle connection labeled by $1$. In particular, after a rotation of angle $2\pi$, it does not come back to its original position. Only after a rotation of angle $12\pi$, it comes back to its original position. Therefore, this corresponds to a zero of order $5$ under the canonical translation structure of the isoresidual fiber. 
\end{ex}

\par 
\subsection{Simple poles of $\omega_{\lambda}$}\label{subsub:SimplePoles}
\par 
The intersection of $\overline{\mathcal{F}}_{\lambda}$ with the horizontal boundary of $\MS(\mu)$ is described in Proposition~\ref{prop:F-horizontal}. In the following, we will prove that each of these boundary points is a simple pole of the translation structure of $(\overline{\mathcal{F}}_{\lambda},\omega_{\lambda})$.
\par 
\begin{prop}\label{prop:simple-pole}
In a stratum $\mathcal{H}(a_{1},a_{2},-b_{1},\dots,-b_{p})$, for any $\lambda \in \mathcal{R}_{p} \setminus \mathcal{A}_{p}$, the boundary points of $\overline{\mathcal{F}}_{\lambda}$ that correspond to  multi-scale differentials with a horizontal edge are simple poles of $\omega_{\lambda}$.
\par
For such a boundary point, the underlying nodal curve $X$ consists of two irreducible components, $X_1$ and~$X_2$. Here, $X_1$ contains the zero of order $a_1$ and the subset of poles labeled by $I$, while $X_2$ contains the zero of order~$a_2$ and the subset of poles labeled by $J$. We have $I\sqcup J = \{1,\ldots,p\}$ and $a_1 - \sum\limits_{i \in I} b_i + 1 = 0$. We denote by $p_{I}$ and~$p_{J}$ the numbers of marked poles in $X_{1}$ and $X_{2}$, respectively. 
\par
For each such partition $I \sqcup J$, there are 
\begin{equation}
 \frac{a_{1}!}{(a_{1}-p_{I}+1)!} \cdot \frac{a_{2}!}{(a_{2}-p_{J}+1)!}
\end{equation}
of these boundary points. The residue of $\omega_{\lambda}$ at each of these simple poles is given by $r_{J}\coloneqq\sum\limits_{j \in J} \lambda_{j}$.
\end{prop}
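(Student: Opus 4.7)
The plan is to combine the geometric classification of horizontal boundary points from Proposition~\ref{prop:F-horizontal}, the enumeration of minimal-stratum isoresidual fibers from Theorem~\ref{thm:numerofibre}, and a local analysis of the relative period near a horizontal node. First I would invoke Proposition~\ref{prop:F-horizontal} to fix the shape of the boundary point: the nodal curve $X = X_{1} \cup X_{2}$ has a single horizontal node, and $X_{i}$ carries the zero of order $a_{i}$ together with the marked poles indexed by $I$ or $J$, with $a_{1} = \sum_{i \in I} b_{i} - 1$ (so the node is a simple pole on each side). By the residue theorem on $X_{1}$ the residue at the nodal pole equals $-\sum_{i \in I} \lambda_{i} = \sum_{j \in J} \lambda_{j} = r_{J}$, using $\sum_{k} \lambda_{k} = 0$; symmetrically the $X_{2}$-side of the node has residue $-r_{J}$.

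Next, for the count I would apply Theorem~\ref{thm:numerofibre} separately to each component. The datum on $X_{1}$ is a differential in the minimal stratum $\mathcal{H}(a_{1}, -b_{i_{1}}, \ldots, -b_{i_{p_{I}}}, -1)$ with prescribed residue tuple $\bigl((\lambda_{i})_{i \in I}, r_{J}\bigr)$. Since $\lambda \notin \mathcal{A}_{p}$, this enhanced residue tuple avoids the resonance arrangement of $\mathcal{R}_{p_{I}+1}$, so Theorem~\ref{thm:numerofibre} gives
\[
f(a_{1}, p_{I} + 1) \;=\; \frac{a_{1}!}{(a_{1} - p_{I} + 1)!}
\]
choices, and symmetrically $\frac{a_{2}!}{(a_{2} - p_{J} + 1)!}$ choices for $X_{2}$. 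The prong numbers at the horizontal node equal one on both sides, so the prong-matching is unique and introduces no further multiplicity, yielding the stated product.

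Finally, to identify $\omega_{\lambda}$ near such a boundary point I would work in the perturbed/log-period coordinates of the multi-scale compactification from \cite{BCGGM2, BenBound}. By Theorem~\ref{thm:closureMSD}, the $\mathcal{E}_{\lambda}$-GRC freezes every other deformation direction, so the horizontal smoothing parameter $s$ is a local transverse coordinate on $\overline{\mathcal{F}}_{\lambda}$ around the boundary point. The standard plumbing-fixture model for a horizontal node with residues $\pm r_{J}$ produces a thin annulus across which any path $\gamma$ joining $z_{1} \in X_{1}$ to $z_{2} \in X_{2}$ acquires a period of the form
\[
z_{\gamma}(s) \;=\; r_{J} \log s \,+\, h(s),
\]
where $h$ is holomorphic in $s$ (with coefficients determined by the restrictions of $\omega$ to $X_{1}$ and $X_{2}$). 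Differentiating gives $\omega_{\lambda} = dz_{\gamma} = r_{J}\, ds/s + (\text{holomorphic})$, so $s = 0$ is a simple pole of $\omega_{\lambda}$ with residue $r_{J}$, as asserted. The main obstacle is this last step: justifying rigorously that the relative period has exactly the logarithmic expansion above in the multi-scale coordinates, for which the plumbing/log-period formalism of \cite{BCGGM2, BenBound} (together with the fact that the $\mathcal{E}_{\lambda}$-GRC eliminates all competing transverse parameters) is the key input.
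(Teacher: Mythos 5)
Your proposal is correct and follows essentially the same route as the paper: identify the boundary configuration via Proposition~\ref{prop:F-horizontal}, count by applying the degree formula of Theorem~\ref{thm:numerofibre} to each component (viewing the node as an extra simple pole with residue $\pm r_{J}$, non-resonant since $\lambda\notin\mathcal{A}_p$), and extract the simple pole with residue $r_J$ from the logarithmic expansion of the relative period in the log-period coordinates of \cite{BCGGM2}. The paper's proof is just a terser version of the same argument, citing the last equation of Section~13.3 of \cite{BCGGM2} for the expansion $\int_{z_1}^{z_2}\omega = r_J\log(t/c_1)+c_2$.
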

\par 
\begin{proof}
The counting of such simple poles of $\omega_\lambda$ follows from the formula in Theorem~\ref{thm:numerofibre}. Now let $(X,\omega)$ be a multi-scale differential described in Proposition~\ref{prop:F-horizontal}. We use the log period coordinates as given in Sections~10.10 and~13.3 of \cite{BCGGM2}. In particular, from the last equation of that section, if the opening parameter of the node is $t$, then there exist constants $c_{1}$ and $c_{2}$ such that 
$$\int_{z_{1}}^{z_{2}}\omega = r_{J}\log\left(\frac{t}{c_{1}}\right) + c_{2} \,.$$
Near $(X,\omega)$, the differential $\omega_{\lambda}$ is thus given by
$$\omega_{\lambda} = d\left(\int_{z_{1}}^{z_{2}}\omega\right)=\frac{r_{J}}{t}dt\,.$$
\end{proof}
\par 
\begin{rmk}
The flat geometric way to understand the above result is the following.  The multi-scale differential $(X,\omega)$ has an infinite cylinder $C$ corresponding to the simple pole $q$, where the marked poles in the subsets $I$ and $J$ are separated on the two sides of $q$, respectively. Since the residues are determined by $\lambda$ and do not vary in the isoresidual fiber, the residue of $q$, i.e., the period of the cylinder core curve, is fixed as $\sum\limits_{j \in J} \lambda_{j}$ (up to orientation). Therefore, the deformation of $(X,\omega)$ into $\mathcal F_\lambda$ is parametrized by deforming the cylinder $C$ back to a finite cylinder while preserving the period of the core curve. A local neighborhood of $(X,\omega)$ under the translation structure of $\omega_\lambda$ can be identified with a local neighborhood of $q$ in $(X,\omega)$.
\end{rmk}

\begin{ex}
 A flat geometric picture of an isoresidual deformation around a simple pole is given in Figure~\ref{fig:polesimplefibre}. The simple pole is formed by letting the height of $\gamma$ go to infinity.
 \begin{figure}[ht]
\begin{tikzpicture}[scale=1,decoration={
    markings,
    mark=at position 0.5 with {\arrow[very thick]{>}}}]
    
    \begin{scope}[xshift=-1.5cm,yshift=-.5cm]
    \draw (-.25,0) coordinate (a) -- (.25,0) coordinate[pos=.5] (c) coordinate (b);

    \fill[fill=black!10] (a) -- (b) -- ++(0,1) --++(-.5,0) -- cycle;
    \draw  (a) -- ++(0,1)  coordinate[pos=.5] (d);
    \draw  (b) -- ++(0,1)  coordinate[pos=.5] (e);
\filldraw[fill=white](a) circle (2pt);
\filldraw[fill=white] (b) circle (2pt);
 
 \node[below] at (c) {$1$};
\node[left] at (d) {$5$};
\node[right] at (e) {$5$};
    \end{scope}
    
\begin{scope}[xshift=2.3cm,yshift=0cm]
      \fill[fill=black!10] (0,0) circle (1cm);
      \draw (-.25,0) coordinate (a) -- (.25,0) coordinate[pos=.5] (c) coordinate (b);
\filldraw[fill=white] (a) circle (2pt);
\filldraw[fill=white] (b) circle (2pt);
    \fill[white] (a) -- (b) -- ++(0,-1) --++(-.5,0) -- cycle;
 \draw  (b) --  (a);
 \draw (a) -- ++(0,-.9) coordinate[pos=.5] (d);
 \draw (b) -- ++(0,-.9)coordinate[pos=.5] (e);
\node[above] at (c) {$2$};
\node[left] at (d) {$4$};
\node[right] at (e) {$4$};
    \end{scope} 
    
    \begin{scope}[yshift=-3cm]
      \fill[fill=black!10] (0,0) circle (1.5cm);
  \node[below] (0,0) {$9$};
 \draw (-.5,0) coordinate (a) -- (.5,0) coordinate[pos=.25] (c) coordinate[pos=.5] (d) coordinate[pos=.75] (e) coordinate (b);
  \fill[] (a) circle (2pt);
\fill[] (b) circle (2pt);
    \fill[white] (a) -- (b) -- ++(0,1.5) --++(-1,0) -- cycle;
 \draw  (b) --  (a);
 \draw (a) -- ++(0,1.4) coordinate[pos=.5] (f);
 \draw (b) -- ++(0,1.4)coordinate[pos=.5] (g);
  \draw  (b) --++  (1,0) coordinate[pos=.5] (h) ;

\node[left] at (f) {$5$};
\node[right] at (g) {$5$};
\node[above] at (h) {$6$};\node[below] at (h) {$8$};

\filldraw[fill=black!10] (-.5,2) coordinate (o)   -- ++(1,0)coordinate[pos=.5]    (l)coordinate (p)   -- ++(80:1.1) coordinate[pos=.5] (m) coordinate (q)    -- ++(-1,0)   coordinate (r) coordinate[pos=.25] (t)  coordinate[pos=.5] (n)  coordinate[pos=.75] (s) -- ++(-100:1.1) coordinate[pos=.5] (k);   
   \fill[] (o) circle (2pt);
\fill[] (p) circle (2pt);
\filldraw[fill=white] (q) circle (2pt);
\filldraw[fill=white] (r) circle (2pt);
\filldraw[fill=white] (n) circle (2pt);
 
 \node[above] at (s) {$1$};\node[above] at (t) {$2$};
  \node[right] at (m) {$\gamma$}; \node[left] at (k) {$\gamma$}; \node[below] at (l) {$9$};
  
       \fill[fill=black!10] (2.5,.75) circle (.7cm);    
  \fill (2.5,.75) circle (2pt);
  \draw (2.5,.75) --++(.7,0)  coordinate[pos=.5] (i) ;
\node[below] at (i) {$6$};\node[above] at (i) {$7$};
  
       \fill[fill=black!10] (2.5,-.75) circle (.7cm);    
  \fill (2.5,-.75) circle (2pt);
  \draw (2.5,-.75) --++(.7,0) coordinate[pos=.5] (j) ;
  \node[below] at (j) {$7$};\node[above] at (j) {$8$};
    \end{scope}    

    \draw[->] (3.5,-2.5) -- (6,-2.5) coordinate[pos=.5] (a);
    \node[above] at (a) {$\Im(\gamma) \to +\infty$};

\begin{scope}[xshift=8cm]
 
\begin{scope}[xshift=-1.5cm,yshift=-.5cm]
\draw (-.25,0) coordinate (a) -- (.25,0) coordinate[pos=.5] (c) coordinate (b);

    \fill[fill=black!10] (a) -- (b) -- ++(0,1) --++(-.5,0) -- cycle;
    \draw  (a) -- ++(0,1)  coordinate[pos=.5] (d);
    \draw  (b) -- ++(0,1)  coordinate[pos=.5] (e);
\filldraw[fill=white](a) circle (2pt);
\filldraw[fill=white] (b) circle (2pt);
 
 \node[below] at (c) {$1$};
\node[left] at (d) {$5$};
\node[right] at (e) {$5$};
    \end{scope}
    
\begin{scope}[xshift=2.3cm,yshift=0cm]
      \fill[fill=black!10] (0,0) circle (1cm);
      \draw (-.25,0) coordinate (a) -- (.25,0) coordinate[pos=.5] (c) coordinate (b);
\filldraw[fill=white] (a) circle (2pt);
\filldraw[fill=white] (b) circle (2pt);
    \fill[white] (a) -- (b) -- ++(0,-1) --++(-.5,0) -- cycle;
 \draw  (b) --  (a);
 \draw (a) -- ++(0,-.9) coordinate[pos=.5] (d);
 \draw (b) -- ++(0,-.9)coordinate[pos=.5] (e);
\node[above] at (c) {$2$};
\node[left] at (d) {$4$};
\node[right] at (e) {$4$};
    \end{scope} 
    
    \begin{scope}[xshift=.2cm,yshift=-2.5cm]
     \fill[fill=black!10] (-.5,2) coordinate (o)   -- ++(1,0)coordinate[pos=.5]   (l)coordinate (p)   -- ++(90:1.1) coordinate[pos=.5] (m) coordinate (q)    -- ++(-1,0)   coordinate (r) coordinate[pos=.25] (t)  coordinate[pos=.5] (n)  coordinate[pos=.75] (s) -- ++(-90:1.1) coordinate[pos=.5] (k);   
\draw (o) -- (r) -- (q) -- (p);
  \node[right] at (m) {$\gamma_{2}$}; \node[left] at (k) {$\gamma_{2}$};

\filldraw[fill=white] (q) circle (2pt);
\filldraw[fill=white] (r) circle (2pt);
\filldraw[fill=white] (n) circle (2pt);
 \node[above] at (s) {$1$};\node[above] at (t) {$2$};
      \end{scope}
      
    \begin{scope}[yshift=-4cm]
      \fill[fill=black!10] (0,0) circle (1.5cm);
  \node[below] (0,0) {$9$};
 \draw (-.5,0) coordinate (a) -- (.5,0) coordinate[pos=.25] (c) coordinate[pos=.5] (d) coordinate[pos=.75] (e) coordinate (b);
  \fill[] (a) circle (2pt);
\fill[] (b) circle (2pt);
    \fill[white] (a) -- (b) -- ++(0,1.5) --++(-1,0) -- cycle;
 \draw  (b) --  (a);
 \draw (a) -- ++(0,1.4) coordinate[pos=.5] (f);
 \draw (b) -- ++(0,1.4)coordinate[pos=.5] (g);
  \draw  (b) --++  (1,0) coordinate[pos=.5] (h) ;

\node[left] at (f) {$5$};
\node[right] at (g) {$5$};
\node[above] at (h) {$6$};\node[below] at (h) {$8$};

\fill[fill=black!10] (-.5,1.8) coordinate (o)   -- ++(1,0)coordinate[pos=.5]   (l)coordinate (p)   -- ++(90:1.1) coordinate[pos=.5] (m) coordinate (q)    -- ++(-1,0)   coordinate (r) coordinate[pos=.25] (t)  coordinate[pos=.5] (n)  coordinate[pos=.75] (s) -- ++(-90:1.1) coordinate[pos=.5] (k);   
\draw (r) -- (o) -- (p) -- (q);

   \fill[] (o) circle (2pt);
\fill[] (p) circle (2pt);

  \node[right] at (m) {$\gamma_{1}$}; \node[left] at (k) {$\gamma_{1}$}; \node[below] at (l) {$9$};
  
       \fill[fill=black!10] (2.5,.75) circle (.7cm);    
  \fill (2.5,.75) circle (2pt);
  \draw (2.5,.75) --++(.7,0)  coordinate[pos=.5] (i) ;
\node[below] at (i) {$6$};\node[above] at (i) {$7$};
  
       \fill[fill=black!10] (2.5,-.75) circle (.7cm);    
  \fill (2.5,-.75) circle (2pt);
  \draw (2.5,-.75) --++(.7,0) coordinate[pos=.5] (j) ;
  \node[below] at (j) {$7$};\node[above] at (j) {$8$};
    \end{scope}   
\end{scope}

\end{tikzpicture}
 \caption{A isoresidual deformation in $\mathcal{H}(2,3,-1,-2,-4)$ leading to a simple pole of its flat structure.} \label{fig:polesimplefibre}
\end{figure}
Note that the action of the parabolic element $\begin{pmatrix}
   1 & s \\
   0 & 1 
\end{pmatrix}$
on the cylinder bounded by the saddle connections $\gamma$, $1$, $2$ and $9$ is trivial, where~$s$ is the length of the saddle connection $9$. This illustrates the fact that the corresponding pole has a residue equal to $s$ as explained in Section~\ref{subsub:SimplePoles}.
\end{ex}
\par 
\subsection{Higher-order poles of $\omega_{\lambda}$}\label{subsub:HigherPoles}
\par 
Recall that for generic residues the intersection points of $\overline{\mathcal{F}}_{\lambda}$ with the vertical boundary of $\MS(a_{1},a_{2},-b_{1},\dots,-b_{p})$ are described in Proposition~\ref{prop:F-boundary}. Case (1) corresponds to the zeros of $\omega_{\lambda}$ as shown in Section~\ref{subsub:zeros}. Now we treat case (2) and show that these are poles of order at least two in the translation structure of $(\overline{\mathcal{F}}_{\lambda},\omega_{\lambda})$. These points are described by a partition $I \sqcup J \sqcup K$ of the set of the poles. Their level graphs are cherry graphs as pictured in Figure~\ref{fig:cherry}. The top level component $X_{\rm top}$ contains the subset $K$, while the bottom level has:
\begin{itemize}
    \item one vertex $X_{1}$ that contains a subset of poles $I$ with the zero $z_1$ of order $a_{1}$;
    \item one vertex $X_{2}$ that contains a subset of poles $J$ with the zero $z_{2}$ of order $a_{2}$. 
\end{itemize}
We denote by $p_{I}$, $p_{J}$, and $p_{K}$ the numbers of marked poles in $X_{1}$, $X_{2}$, and $X_{\rm top}$, respectively. We further define
\begin{equation}\label{eq:prongs}
 c_{1}=a_{1}+1-\sum\limits_{i \in I} b_{i} \quad \text{and} \quad c_{2}=a_{2}+1-\sum\limits_{j \in J} b_{j}\,.
\end{equation}
Finally, recall from Section~\ref{sub:counting} that $\Xi(\mu)$ is the number of residueless differentials in the stratum $\calH(\mu)$.
\par 
\begin{prop}\label{prop:flat-boundary}
Given a stratum $\mathcal{H}(a_{1},a_{2},-b_{1},\dots,-b_{p})$, $\lambda \in \mathcal{R}_{p} \setminus \mathcal{A}_{p}$ and a boundary point of $(\overline{\mathcal{F}}_{\lambda}, \omega_\lambda)$ corresponding to the partition of poles $I \sqcup J \sqcup K$. Up to reordering the indices, we assume that the poles in~$K$ are the poles of order $b_{1},\ldots,b_{p_{K}}$. We have the following cases:
\par 
{\bf Both $I$ and $J$ are nonempty.} There are
\begin{equation}\label{eq:numbnoempty}
 \frac{a_{1}!}{(a_{1}-p_{I}+1)!} \cdot \frac{a_{2}!}{(a_{2}-p_{J}+1)!} \cdot \gcd (c_1, c_2) \cdot \Xi(c_{1}-1,c_{2}-1;b_{1},\dots,b_{p_{K}})
\end{equation}
of these boundary points. Each of them is a pole of order
 $1 + \lcm (c_1, c_2)$
 for $\omega_{\lambda}$, with residue 
\begin{equation}\label{eq:resIJnonempty}
 -\frac{c_{2}}{\gcd(c_{1},c_{2})}\sum\limits_{i \in I} \lambda_{i} + \frac{c_{1}}{\gcd(c_{1},c_{2})}\sum\limits_{j \in J} \lambda_{j}\,.
\end{equation}
\par
{\bf $I$ is empty and $J$ is nonempty.} There are
\begin{equation}
  \frac{a_{2}!}{(a_{2}-p_{J}+1)!} \cdot \Xi(a_{1},c_{2}-1;b_{1},\dots,b_{p_{K}})
\end{equation}
of these boundary points. Each of them is a pole of order $1 + c_{2}$ with residue
\begin{equation}\label{eq:residuecase2}
 \sum\limits_{j \in J} \lambda_{j}\,.
\end{equation}
\par
{\bf $I$ is nonempty and $J$ is empty.} There are
\begin{equation}
 \frac{a_{1}!}{(a_{1}-p_{I}+1)!} \cdot \Xi(c_{1}-1,a_{2};b_{1},\dots,b_{p_{K}})
\end{equation}
of these boundary points. Each of them is a pole of order $1 + c_{1}$ with residue
\begin{equation}
 -\sum\limits_{i \in I} \lambda_{i} \,.
\end{equation}
\par
{\bf Both $I$ and $J$ are empty.} There are $\Xi(a_{1},a_{2};b_{1},\dots,b_{p})$ of these boundary points. Each of them is a double pole with zero residue.
\end{prop}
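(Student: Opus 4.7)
The plan is to treat the four subcases of the partition $I \sqcup J \sqcup K$ in parallel: for each, we first count the multi-scale differentials in the boundary of $\overline{\calF}_\lambda$ corresponding to that configuration (using Proposition~\ref{prop:F-boundary}), then analyze the local expansion of the period $z_\gamma = \int_{z_1}^{z_2}\omega$ in a smoothing parameter to determine the pole order and residue of $\omega_\lambda$.

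For the counting when $I$ and $J$ are both nonempty, such a multi-scale differential decomposes as: a residueless top differential on $X_{\rm top}$ in the stratum $\calH(c_1-1,c_2-1,-b_1,\ldots,-b_{p_K})$ (the zero orders $c_i-1$ coming from the prong numbers at the nodal points, with the constraint $\sum_K b_i = c_1 + c_2$); together with minimal-stratum bottom differentials on $X_1$ and $X_2$ whose marked-pole residues come from $\lambda$ and whose nodal-pole residues $-r_I$ and $-r_J$ are prescribed by the residue theorem on each component. These contribute $\Xi(c_1-1,c_2-1;b_1,\ldots,b_{p_K})$ (by Definition~\ref{def:residuless}) and $\frac{a_1!}{(a_1-p_I+1)!}\cdot\frac{a_2!}{(a_2-p_J+1)!}$ (by Theorem~\ref{thm:numerofibre} applied to each minimal stratum), respectively. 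The equivalence classes of prong matchings on two edges of prongs $c_1,c_2$ are indexed by $\ZZ/c_1\ZZ \times \ZZ/c_2\ZZ$ modulo the diagonal $\ZZ/\lcm(c_1,c_2)\ZZ$ action, giving $\gcd(c_1,c_2)$ classes. Multiplying yields~\eqref{eq:numbnoempty}. The degenerate cases where one of $I, J$ is empty are handled analogously with a single vertical node (so the prong equivalence collapses and no $\gcd$ factor appears), and the fully degenerate case $I=J=\emptyset$ corresponds instead to the $\Xi(a_1,a_2;b_1,\ldots,b_p)$ smooth residueless differentials contained in $\overline{\calF}_\lambda$ by Corollary~\ref{cor:reszeros}.

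To compute the pole order and residue of $\omega_\lambda$ in the generic cherry case, we parametrize the isoresidual fiber near the boundary point by a single parameter $s$ with edge-opening parameters $t_1 = \zeta_1 s^{c_2/d}$ and $t_2 = \zeta_2 s^{c_1/d}$ (where $d=\gcd(c_1,c_2)$), satisfying the level compatibility $t_1^{c_1}=t_2^{c_2}=s^{\lcm(c_1,c_2)}$. Using the standard asymptotic expansion of the period across a vertical node of opening parameter $t$, prong number $c$, and polar residue $r$,
\[
\int \omega \;=\; \frac{B}{t^c} + r\log t + (\text{holomorphic in }t)\,,
\]
(see Sections~10.10 and~13.3 of~\cite{BCGGM2}, or Section~5 of~\cite{BenBound}), and summing the contributions along a path $z_1 \to n_1 \to X_{\rm top} \to n_2 \to z_2$ (where $n_i$ denotes the nodal point whose nodal pole has residue $-r_I$ resp.\ $-r_J$), one obtains
\[
z_\gamma(s) \;=\; \frac{A}{s^{\lcm(c_1,c_2)}} + \frac{c_1 r_J - c_2 r_I}{d}\log s + (\text{regular in }s)\,.
\]
Differentiating yields $\omega_\lambda$ with a pole of order $\lcm(c_1,c_2)+1$ and residue $(c_1 r_J - c_2 r_I)/\gcd(c_1,c_2)$, matching~\eqref{eq:resIJnonempty}. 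The degenerate one-node cases similarly give pole order $c_2+1$ (resp.\ $c_1+1$) and residue $\sum_{j\in J}\lambda_j$ (resp.\ $-\sum_{i\in I}\lambda_i$) from just one of the two log contributions. For the residueless case $I=J=\emptyset$, we parametrize the projective family in $\calF_\lambda$ approaching $[\omega_0]$ via $\omega(c) = \omega_0/c + \eta$, with $\eta$ a fixed differential of residues $\lambda$; then $z_\gamma(c) = z_0/c + z_1$ with $z_0 = \int_{z_1}^{z_2}\omega_0 \neq 0$ generically, so $\omega_\lambda = -(z_0/c^2)\,dc$ is a double pole with zero residue.

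The main obstacle is the careful sign bookkeeping in the period expansion and verifying that the leading coefficient $A$ of $1/s^{\lcm(c_1,c_2)}$ is indeed generically nonzero (otherwise the pole order would drop). This amounts to showing that the leading polar coefficients of the minimal-stratum differentials on $X_1$ and $X_2$ at the nodal poles do not conspire to cancel in $A$, which should follow from the genericity of $\lambda$ together with a degree argument over the complement of the resonance arrangement.
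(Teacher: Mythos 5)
Your proposal follows essentially the same route as the paper's proof: the same three-factor count (minimal-stratum fibers on the two bottom components via Theorem~\ref{thm:numerofibre}, residueless top-level differentials counted by $\Xi$, and $\gcd(c_1,c_2)$ prong-matching classes from the level-rotation action), and the same log-period expansion across the vertical nodes, with parameters $t_i=\zeta_i s^{\lcm(c_1,c_2)/c_i}$, to read off the pole order $1+\lcm(c_1,c_2)$ and the residue $-\tfrac{c_2}{\gcd(c_1,c_2)}\lambda_I+\tfrac{c_1}{\gcd(c_1,c_2)}\lambda_J$. The degenerate cases (one or both of $I,J$ empty) are handled the same way in both arguments.

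The one step you defer --- the nonvanishing of the leading coefficient $A$ of $s^{-\lcm(c_1,c_2)}$, which is needed so that the pole order does not drop --- is a genuine gap, and the strategy you sketch for closing it would not work. In the normalization in which the bottom-level differentials (the ones carrying the residues $\lambda$) are of size $O(1)$, the top-level differential is rescaled by $s^{-\lcm(c_1,c_2)}$, and $A$ equals the relative period $\int_{n_1}^{n_2}\omega_{\rm top}$ of the \emph{residueless top-level differential} between its two zeros at the nodes; the polar coefficients of the bottom differentials at the nodal poles are fixed by the plumbing matching condition and their contributions cancel at the cut points, so they do not enter $A$. In particular $A$ is a discrete invariant of the boundary point and is completely independent of $\lambda$, so no genericity or degree argument over $\mathcal{R}_p\setminus\mathcal{A}_p$ can establish $A\neq 0$: a residueless differential on $\mathbb{CP}^1$ is exact, and a priori its two critical values could coincide. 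The paper disposes of this point by invoking the second part of Corollary~6.3 of \cite{BenBound}, which gives $\psi_\gamma(0)\neq 0$ for the log period along the one-dimensional linear family; you need either that citation or a direct argument that the relative period of the top-level residueless differential between its two zeros is nonzero. The same issue recurs, in milder form, in your treatment of the case $I=J=\emptyset$, where you assert $z_0\neq 0$ ``generically'': again $z_0$ is the relative period of a fixed residueless differential and does not vary with $\lambda$.
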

\par 
As a general remark, if we transpose the zeros $z_{1}$ and $z_{2}$, we can multiply $\omega_{\lambda}$ by $-1$ and transpose the subsets $I$ and $J$. This symmetry is reflected in  the local invariants of the poles of $\omega_{\lambda}$.
\begin{proof}
We first focus on the case when $I$, $J$, and $K$ are all nonempty. We start by counting the number of multi-scale differentials associated to this partition in the closure of the isoresidual fiber. The data of a multi-scale differential has various pieces. First, it has two differentials in the lower level strata with a unique zero respectively of order $a_{i}$ and residues of poles prescribed. This gives the first two terms of Equation~\eqref{eq:numbnoempty}. On the top level, we have a differential with two zeros and poles of orders $b_{1},\dots,b_{p_{K}}$ and zero residues. Their number gives the last term of  Equation~\eqref{eq:numbnoempty}. Additionally, a multi-scale differential contains the data of an equivalence class of {\em prong-matchings}  (see Section~5.4 of \cite{BCGGM3}).  In the case of a cherry graph with the prong numbers $c_1$ and $c_2$ at the two vertical edges, there are $\gcd (c_1, c_2)$ of such classes, giving the remaining term of  Equation~\eqref{eq:numbnoempty}. 
\par 
Next, we determine the flat geometric structure at such points. In order to avoid heavy notations, 
we write only the relevant part of the log period coordinates. The isoresidual curve is parametrized by the opening parameter $t$ of both nodes. The period of a path $\gamma$ joining $z_{1}$ and $z_{2}$ is a multi-valued function (since the residue corresponding to such pole is not zero in general). Nevertheless, since we consider a one-parameter family, Corollary~6.3 of \cite{BenBound} shows that its log period $\psi_{\gamma}(t)$ is a holomorphic function of~$t$. Moreover, it gives a formula for the value of this log period that we can compute in our context (following the notation of \cite[Definition~5.1]{BenBound}). 
We denote by $m_{i}=\lcm (c_1, c_2)/c_{i}$ for $i=1,2$.  The rescaling parameter of the top component is equal to $1$, so $t_{\lceil\top(\gamma)\rceil}=1$. Now consider the edge $e_{1}$ between the top component and the bottom component containing $z_{1}$ and the corresponding vanishing cycle $\lambda_{e_{1}}$. The intersection pairing $\langle\gamma,\lambda_{e_{1}}\rangle = 1$. The residue $r_{e_{1}}(t)$ is given by $t^{a}\sum_{i\in I}\lambda_{i}$, where $a = \lcm (c_1, c_2)$. Finally, $\sigma_{e}$ is given by $m_{i}$ based on the formula at the beginning of \cite[Section~7.4]{BenBound}. Doing the same for $e_{2}$ and noting that $\langle\gamma,\lambda_{e_{2}}\rangle = -1$, we obtain that the log period of $\gamma$ on the isoresidual curve is 
\begin{equation*}
 \psi_{\gamma}(t) = \int_{\gamma}\omega(t) - \lambda_{I}t^{a}m_{1}\log(t)+\lambda_{J}t^{a}m_{2}\log(t)\
\end{equation*}
where $\lambda_I = \sum_{i\in I}\lambda_{i}$ and $\lambda_J = \sum_{j\in J}\lambda_{j}$. 
Since $\psi_{\gamma}(0)\neq0$ (by the second part of Corollary~6.3 of \cite{BenBound}), the period of $\gamma$ is given by
$$t^{-a}\psi_{\gamma}(t) - \lambda_{I}m_{1}\log(t)+\lambda_{J}m_{2}\log(t)\,.$$
Therefore, its derivative 
$$\left(t^{-a-1}(-a\psi_{\gamma}(t)+t\psi_{\gamma}'(t)) + \frac{-\lambda_{I}m_{1} + \lambda_{J}m_{2}}{t}\right)dt$$
has a pole of order $a+1$ with residue as given by Equation~\eqref{eq:resIJnonempty}.
\smallskip
\par 
If $I$ is empty and $J$ is nonempty, the stable model of the level graph has exactly one top vertex, one bottom vertex, and one vertical edge, where the top vertex contains $z_1$ and the poles in $K$, the bottom vertex contains $z_2$ and the poles in $J$, and the prong number of the edge is $c_2$. In this case,  there is a unique prong-matching equivalence class. The same argument as in the preceding case justifies the desired claim. The case when $I$ is nonempty and $J$ is empty follows by symmetry. 
\smallskip
\par 
If $I$ and $J$ are both empty, the stable model of the level graph consists of a single vertex only, where the residue of every pole becomes zero. The number of such differentials with zero residues is computed in Section~\ref{sub:counting} as  
$\Xi(a_1, a_2; b_1, \ldots, b_p)$. Consider such a residueless differential $(\mathbb{CP}^{1},\omega_{0})$. At this point, the stratum has the usual period coordinates $(r_{1},\dots,r_{p-1},t)$, where $r_{i}$ is the residue at the pole $q_{i}$ and $t$ is a period between $z_{1}$ and $z_{2}$. Hence, the differential $\omega_0$ 
corresponds to $(0,\dots,0,c)$ under these coordinates, where $c$ is nonzero, and nearby points in the isoresidual fiber correspond to the coordinates $(s\lambda_{1},\dots,s\lambda_{p-1}, f(s))$, where $f(s)$ is a holomorphic function such that $f(0)=c$. Therefore, the differential $\omega_{\lambda}$ is locally given by $d\left(\frac{f(s)}{s}\right) = \frac{f'(s)s-f(s)}{s^{2}}ds$, where the Taylor series of the numerator at $0$ is of the form $f(0) + o(s^{2})$. Hence, $\omega_\lambda$ has a pole of order $2$ whose residue vanishes. 
\end{proof}
\par
\begin{ex}
 The flat geometric intuition behind  Proposition~\ref{prop:flat-boundary} is that the two zeros can move arbitrarily far away from each other under the flat metric. An example of an isoresidual deformation in the stratum $\mathcal{H}(2,3,-1,-2,-4)$ illustrating the second case is given in Figure~\ref{fig:poleordresup}.
 \begin{figure}[ht]
\begin{tikzpicture}[scale=1,decoration={
    markings,
    mark=at position 0.5 with {\arrow[very thick]{>}}}]
    
      \begin{scope}[xshift=-1.5cm,yshift=-.5cm]
\draw (-.25,0) coordinate (a) -- (.25,0) coordinate[pos=.5] (c) coordinate (b);

\fill[fill=black!10] (a) -- (b) -- ++(0,1) --++(-.5,0) -- cycle;
\draw  (a) -- ++(0,1)  coordinate[pos=.5] (d);
\draw  (b) -- ++(0,1)  coordinate[pos=.5] (e);
\fill(a) circle (2pt);
\fill(b) circle (2pt);
 
 \node[below] at (c) {$1$};
\node[left] at (d) {$2$};
\node[right] at (e) {$2$};
    \end{scope}
    
\begin{scope}[xshift=2.3cm,yshift=0cm]
      \fill[fill=black!10] (0,0) circle (1cm);
      \draw (-.25,0) coordinate (a) -- (.25,0) coordinate[pos=.5] (c) coordinate (b);
      \draw (-.75,0) coordinate (z) -- (a) coordinate[pos=.5] (x);
\filldraw[fill=white] (z) circle (2pt);
\fill (a) -- (-.25,-.09) arc [start angle=-90, end angle=-180, radius=.9mm] -- ++(.09,0);
\fill (b) circle (2pt);
    \fill[white] (a) -- (b) -- ++(0,-1) --++(-.5,0) -- cycle;
 \draw  (b) --  (a);
 \draw (a) -- ++(0,-.9) coordinate[pos=.5] (d);
 \draw (b) -- ++(0,-.9)coordinate[pos=.5] (e);
\node[above] at (a) {$\beta$};
\node[below] at (x) {$\alpha$};
\node[left] at (d) {$4$};
\node[right] at (e) {$4$};
    \end{scope} 
    
    \begin{scope}[yshift=-3cm]
      \fill[fill=black!10] (0,0) circle (1.5cm);
  \fill  (0,0) circle (2pt);
 \draw (-.5,0) coordinate (a) -- (.5,0) coordinate[pos=.25] (c) coordinate[pos=.5] (d) coordinate[pos=.75] (e) coordinate (b);
 \draw (-1,0) coordinate (z) -- (a) coordinate[pos=.5] (y);
 \draw (z) -- ++(-.5,0)coordinate[pos=.5] (x);
    \filldraw[fill=white] (z) circle (2pt);
    \node[below] at (x) {$8$};    \node[above] at (x) {$9$};
     \node[below] at (e) {$1$}; 
        \node[below] at (a) {$\beta$};    \node[above] at (y) {$\alpha$};
\fill (a) -- (-.5,.09) arc [start angle=90, end angle=180, radius=.9mm] -- ++(.09,0);

\fill[] (b) circle (2pt);
    \fill[white] (a) -- (b) -- ++(0,1.5) --++(-1,0) -- cycle;
 \draw  (b) --  (a);
 \draw (a) -- ++(0,1.4) coordinate[pos=.5] (f);
 \draw (b) -- ++(0,1.4)coordinate[pos=.5] (g);
  \draw  (b) --++  (1,0) coordinate[pos=.5] (h) ;

\node[left] at (f) {$5$};
\node[right] at (g) {$5$};
\node[above] at (h) {$6$};\node[below] at (h) {$7$};

       \fill[fill=black!10] (2.5,.75) circle (.7cm);    
  \fill (2.5,.75) circle (2pt);
  \draw (2.5,.75) --++(.7,0)  coordinate[pos=.5] (i) ;
\node[below] at (i) {$6$};\node[above] at (i) {$7$};
  
       \fill[fill=black!10] (2.5,-.75) circle (.7cm);    
         \draw (2.5,-.75) --++(-.7,0) coordinate[pos=.5] (j) ;
  \filldraw[fill=white] (2.5,-.75) circle (2pt);
  \node[below] at (j) {$9$};\node[above] at (j) {$8$};
    \end{scope}

    \draw[->] (3.5,-2.5) -- (6,-2.5) coordinate[pos=.5] (a);
    \node[above] at (a) {$\alpha,\beta \to \infty$};
\begin{scope}[xshift=8cm]
 
   \begin{scope}[xshift=-1.5cm,yshift=-.5cm]
\draw (-.25,0) coordinate (a) -- (.25,0) coordinate[pos=.5] (c) coordinate (b);

\fill[fill=black!10] (a) -- (b) -- ++(0,1) --++(-.5,0) -- cycle;
\draw  (a) -- ++(0,1)  coordinate[pos=.5] (d);
\draw  (b) -- ++(0,1)  coordinate[pos=.5] (e);
\fill(a) circle (2pt);
\fill(b) circle (2pt);
 
 \node[below] at (c) {$1$};
\node[left] at (d) {$2$};
\node[right] at (e) {$2$};
    \end{scope}
 
 \begin{scope}[xshift=2.3cm,yshift=0cm]
      \fill[fill=black!10] (0,0) circle (1cm);
      \draw (-.25,0) coordinate (a) -- (.25,0) coordinate[pos=.5] (c) coordinate (b);
      \draw  (a) -- ++ (-.75,0) coordinate[pos=.6] (x);
\fill (a) -- (-.25,-.09) arc [start angle=-90, end angle=-180, radius=.9mm] -- ++(.09,0);
\fill (b) circle (2pt);
    \fill[white] (a) -- (b) -- ++(0,-1) --++(-.5,0) -- cycle;
 \draw  (b) --  (a);
 \draw (a) -- ++(0,-.9) coordinate[pos=.5] (d);
 \draw (b) -- ++(0,-.9)coordinate[pos=.5] (e);
\node[above] at (a) {$\beta''$};
\node[below] at (x) {$\alpha''$};
\node[left] at (d) {$4$};
\node[right] at (e) {$4$};
    \end{scope} 
    
     \begin{scope}[xshift=.2cm,yshift=-1cm]
        \fill[fill=black!10] (0,0) circle (1cm);
      \draw (-.25,0) coordinate (a) -- (.25,0) coordinate[pos=.5] (c) coordinate (b);
      \draw  (a) -- ++ (-.75,0) coordinate[pos=.6] (x);
\fill (a) -- (-.25,.09) arc [start angle=90, end angle=180, radius=.9mm] -- ++(.09,0);
\fill (b) circle (2pt);
    \fill[white] (a) -- (b) -- ++(0,1) --++(-.5,0) -- cycle;
 \draw  (b) --  (a);
 \draw (a) -- ++(0,.9) coordinate[pos=.5] (d);
 \draw (b) -- ++(0,.9)coordinate[pos=.5] (e);
\node[below] at (a) {$\beta''$};
\node[above] at (x) {$\alpha''$};
\node[left] at (d) {$5$};
\node[right] at (e) {$5$};
    \end{scope}    

\begin{scope}[yshift=-3cm]
    
       \fill[fill=black!10] (2.5,-.5) circle (.7cm);    
  \draw  (2.5,-.5) --++(.7,0)  coordinate[pos=.5] (i) ;
 \filldraw[fill=white] (2.5,-.5) circle (2pt);
\node[below] at (i) {$\alpha'$};\node[above] at (i) {$\beta'$};
  
       \fill[fill=black!10] (.75,-.5) circle (.7cm);    
  \draw  (.75,-.5) --++(.7,0) coordinate[pos=.5] (j) ;
   \draw  (.75,-.5) --++(-.7,0) coordinate[pos=.5] (k) ;
\filldraw[fill=white]  (.75,-.5) circle (2pt);
\node[below] at (j) {$\beta'$};\node[above] at (j) {$\alpha'$};
\node[below] at (k) {$8$};\node[above] at (k) {$9$};
  
        \fill[fill=black!10] (-1,-.5) circle (.7cm);    
  \draw (-1,-.5) --++(-.7,0) coordinate[pos=.5] (j) ;
     \filldraw[fill=white] (-1,-.5) circle (2pt);
  \node[below] at (j) {$9$};\node[above] at (j) {$8$};
    \end{scope}  
\end{scope}

\end{tikzpicture}
 \caption{An isoresidual deformation in $\mathcal{H}(2,3,-1,-2,-4)$ converging to a multi-scale differential of cherry type, where $I=\emptyset$, $J$ contains the simple pole,  and $K$ contains the other two poles.}\label{fig:poleordresup}
\end{figure}
In the picture, the zero $z_{1}$ of order $2$ is labeled in white while the zero $z_{2}$ of order $3$ is in black.
In the deformation, both saddle connections corresponding to $\alpha$ and $\beta$ go to infinity, in such a way that the difference of their periods $\int_\alpha \omega -\int_\beta \omega$ remains constant. In the picture, we only illustrate the bottom components of the resulting multi-scale differential. Note that the component containing $z_{1}$ is unstable. Hence, its stabilization is of the form (2') in Figure~\ref{fig:cherry}.
\par
To compute the orders of the poles, we can proceed in the following way. Consider the pair of saddle connections $(\alpha,\beta)$ with very large periods as in the picture. Then start rotating them such that their difference remains constant. In the upper right of the first picture, this pair comes back to its original position after a  rotation of angle $2\pi$. In the lower part of the picture, the saddle connections come back to their initial positions only after  a rotation of angle $4\pi$. Therefore, the whole picture comes back to its initial position after a rotation of $(\alpha,\beta)$ of angle $4\pi$. This implies that the corresponding pole has order $-3$.

Moreover, we can determine the residue of the pole as follows. Note that after a rotation of angle $4\pi$, the pair $(\alpha,\beta)$ does not come to its original position. Indeed, they are both shortened by the difference of the original periods of $\beta$ and $\alpha$.  
It implies that the difference $\int_{\beta}\omega -\int_{\alpha}\omega$ 
is the residue of the corresponding pole in the isoresidual curve, which is equal to the prescribed residue of the simple pole of the differentials parameterized in the isoresidual curve. Therefore, this confirms Equation~\eqref{eq:residuecase2} of Proposition~\ref{prop:flat-boundary}. 

Finally, in Figure~\ref{fig:isores1}, we can produce another example by letting $\gamma$ go to infinity. In that case, we have $I=\emptyset$, $J$ contains the poles of orders $-1$ and $-2$, and $K$ contains the pole of order $-4$.
\end{ex}
\par 
We remark that Propositions~\ref{prop:zeros}, \ref{prop:simple-pole}, and \ref{prop:flat-boundary} fully describe the zeros, simple poles, higher-order poles, and their orders for the translation structure $\omega_\lambda$ on $\overline{\mathcal{F}}_{\lambda}$. In particular, they determine the Euler characteristic $2-2g(\overline{\mathcal{F}}_{\lambda})$. Later, in Proposition~\ref{prop:T=F}, we will recover the  Euler characteristic using the alternative method of intersection theory on the multi-scale compactification of strata of differentials. 
\par

\subsection{Cylinders in isoresidual fibers}\label{sub:cylinders}
\par 
We observe that every cylinder belonging to a generic isoresidual fiber must be infinite.
\par 
\begin{prop}\label{prop:cylinder}
For a stratum $\mathcal{H}(\mu)$ with $n=2$ zeros and every configuration of residues $\lambda \in \mathcal{R}_{p} \setminus \mathcal{A}_{p}$, any cylinder in the closure $\overline{\mathcal{F}}_{\lambda}$ of the isoresidual fiber $\mathcal{F}_{\lambda}$ bounds a simple pole.
\end{prop}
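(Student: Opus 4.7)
Suppose by contradiction that $C \subset \overline{\mathcal{F}}_\lambda$ is a cylinder whose two boundary components are both chains of saddle connections of $\omega_\lambda$, so that $C$ has finite area. Let $\gamma$ denote its core curve and $Le^{i\theta}$ with $L>0$ its period. The plan is to derive a contradiction by lifting $C$ to a finite-area cylinder of the reference differential $\omega_0$ at a point of $\gamma$, and then showing that such a cylinder actually extends to a simple-pole boundary of the isoresidual fiber. First I normalize via the $\mathrm{GL}_2^+(\mathbb{R})$-equivariance of Section~\ref{subsub:action} to take $\theta=0$, so $\gamma$ is horizontal. Picking $p_0 \in \gamma$ with associated differential $\omega_0 \in \mathcal{F}_\lambda$, the translation chart of $\omega_\lambda$ records the relative period $z(p) = \int_{z_1(p)}^{z_2(p)}\omega_p$, so traversing $\gamma$ once shifts $z$ by $L$ while returning to $\omega_0$. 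Hence $L$ must be an absolute period of $\omega_0$, yielding $L = \sum_{i=1}^{p} n_i \lambda_i$ for some integers $n_i$.

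Geometrically, the loop realises a sliding of the zero $z_2$ along a closed horizontal trajectory $\tilde\gamma$ of length $L$ in the flat metric of $\omega_0$, with winding number $n_i$ around each marked pole $p_i$. The positive transverse extent $h$ of $C$ then provides a one-parameter family of parallel horizontal closed trajectories $\tilde\gamma_s$ of $\omega_0$ sweeping an annular region $\tilde C \subset (\mathbb{CP}^1, \omega_0)$. Since the boundary of $C$ corresponds, via Proposition~\ref{prop:zeros}, to configurations where $z_1$ and $z_2$ merge, the region $\tilde C$ is a finite-area cylinder of $(\mathbb{CP}^1,\omega_0)$ whose two boundary chains pass through $z_2$ and $z_1$ respectively. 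In particular, by the residue theorem applied to the waist of $\tilde C$ inside $(\mathbb{CP}^1,\omega_0)$, one has $L = \sum_{j \in J_0}\lambda_j$ for some nonempty proper $J_0 \subset \{1,\dots,p\}$.

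To obtain the contradiction, I deform $\omega_0$ within $\mathcal{F}_\lambda$ by lengthening the height of $\tilde C$, equivalently by translating $z_2$ away from $z_1$ perpendicularly to $\tilde\gamma$, while preserving its waist $L=\sum_{j\in J_0}\lambda_j$. This produces a continuous path in $\mathcal{F}_\lambda$ starting at $p_0$ that in the limit converges to a multi-scale differential with a single horizontal node separating the marked poles into $J_0$ and its complement. By Proposition~\ref{prop:simple-pole}, such a boundary point of $\overline{\mathcal{F}}_\lambda$ is a simple pole of $\omega_\lambda$ with residue $\sum_{j \in J_0}\lambda_j = L$. Therefore the cylinder $C$ must extend past its assumed finite boundary into a semi-infinite cylinder bounding this simple pole, contradicting the assumption that both boundary components of $C$ were saddle-connection chains.

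The main obstacle is making rigorous the identification of the transverse deformation of $C$ inside $\overline{\mathcal{F}}_\lambda$ with the continuous lengthening of $\tilde C$ inside $\omega_0$ (which amounts to checking that the combinatorial type of the differential is preserved throughout this deformation), and verifying that the resulting limit path in $\overline{\mathcal{F}}_\lambda$ lands on a horizontal-node boundary point rather than on a cherry-type degeneration contributing to a higher-order pole of $\omega_\lambda$.
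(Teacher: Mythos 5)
Your endgame is sound and matches the paper's argument: once you know that some differential $\omega_{0}$ parametrized on the core curve contains a finite-area horizontal cylinder $\tilde C$ with waist $L=\sum_{j\in J_{0}}\lambda_{j}$, increasing its height (equivalently, applying the Dehn-twist deformation and letting the imaginary part of a diagonal period grow) stays inside $\mathcal F_{\lambda}$ and shows that the closed geodesic lies in an infinite cylinder of $(\overline{\mathcal F}_{\lambda},\omega_{\lambda})$ bounding a simple pole. Your observation that $L$ must be an absolute period of $\omega_{0}$ is also correct.

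The genuine gap is the step you pass off as ``geometrically'' evident: that traversing the core curve realises a sliding of $z_{2}$ along a \emph{closed horizontal trajectory} $\tilde\gamma$ of $(\mathbb{CP}^{1},\omega_{0})$, so that the transverse direction of $C$ sweeps out a finite-area cylinder $\tilde C$ in $\omega_{0}$. The deformation in the isoresidual fiber changes the whole flat structure, not the position of a marked point on a fixed surface; a priori the isoperiodic motion returning to $\omega_{0}$ with displacement $L$ could involve $z_{2}$ crossing separatrices, with no periodic horizontal trajectory and no finite-area horizontal cylinder present in $(\mathbb{CP}^{1},\omega_{0})$ at all. Ruling this out is the bulk of the paper's proof: assuming no parametrized differential on the closed geodesic has a horizontal cylinder of finite area, one tracks the set $\Phi_{\omega}$ of imaginary parts of periods of saddle connections from $z_{1}$ to $z_{2}$, shows $0\notin\Phi_{\omega}$ and that its infimum is realized and invariant under the flow $A_{t}$, and deduces an infinite family of saddle connections whose slopes accumulate at the horizontal; Proposition~\ref{prop:INFINITE-SC-Chara} then forces a finite-area horizontal cylinder, a contradiction. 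Without an argument of this kind your cylinder $\tilde C$ has not been produced, and the obstacle you flag at the end (preservation of combinatorial type during the height deformation, and landing on a horizontal node rather than a cherry degeneration) is downstream of this more fundamental missing step.
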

\par 
\begin{proof}
We assume that $(\overline{\mathcal{F}}_{\lambda},\omega_{\lambda})$ admits a closed geodesic $\gamma$. Up to scaling, we can assume that $\gamma$ is horizontal and has unit length. We will show that the closed geodesic $\gamma$ belongs to an infinite cylinder.
\par
In $(\overline{\mathcal{F}}_{\lambda},\omega_{\lambda})$, we introduce the flow $(A_{t})_{t \in \mathbb{R}}$ that acts on the parametrized differentials by keeping the residues unchanged while the relative period $z_{1,2}$ joining the two zeros $z_1$ and $z_2$ becomes $z_{1,2}+t$. Any differential $\omega$ parametrized in $\gamma$ is periodic with a period of $1$ under the action of $(A_{t})_{t \in \mathbb{R}}$.
\par
We first consider the case where, for some differential $\omega$ of $\mathcal{F}_{\lambda}$ contained in the closed geodesic $\gamma$, $(\mathbb{CP}^{1},\omega)$ contains a cylinder $\mathcal{C}$ of finite area foliated by horizontal closed geodesics. Since $(\mathbb{CP}^{1},\omega)$ is of genus zero, the cylinder $\mathcal{C}$ decomposes the surface into two connected components, each containing exactly one zero of $\omega$. These two components are unchanged by the action of $(A_{t})_{t \in \mathbb{R}}$, while $A_{1}$ induces a Dehn twist on the cylinder $\mathcal{C}$ (recall that $A_{1}\omega=\omega$). Taking the period of a diagonal of the cylinder as a local coordinate, we can make its imaginary part arbitrarily large in absolute value. This proves that $\gamma$ belongs to an infinite cylinder bounded by some simple pole of $\omega_{\lambda}$.
\par
In the following, we will assume that for any differential $\omega$ of $\mathcal{F}_{\lambda}$ that is contained in the closed geodesic~$\gamma$, there is no horizontal cylinder of finite area in $(\mathbb{CP}^{1},\omega)$. We then introduce $\Phi_{\omega} \subset \mathbb{R}$ as follows: $s \in \Phi_{\omega}$ if $s$ is the imaginary part of the period of a saddle connection in $(\mathbb{CP}^{1},\omega)$ joining $z_1$  and $z_2$ (with this orientation). The subset $\Phi_{\omega}$ is nonempty because any length-minimizing path between the two zeros (for the flat metric induced by $\omega$) contains a saddle connection between them.
\par
We first observe that if $0 \in \Phi_{\omega}$, then $(\mathbb{CP}^{1},\omega)$ has a horizontal saddle connection,  and the two zeros of $\omega$ collide in finite time under the action of $(A_{t})_{t \in \mathbb{R}}$. This is impossible because the differential $\omega$ belongs to the closed geodesic $\gamma$ and is supposed to be periodic under the action of $(A_{t})_{t \in \mathbb{R}}$.
\par
Since $(\mathbb{CP}^{1},\omega)$ has no horizontal cylinder of finite area, Proposition~\ref{prop:INFINITE-SC-Chara} implies that the slopes of saddle connections of $(\mathbb{CP}^{1},\omega)$ cannot accumulate in the horizontal direction. It follows that for any $s >0$, $\Phi_{\omega} \cap [-s,s]$ is a finite set (the number of saddle connections of length smaller than some bound is always finite). We deduce that the infimum $\delta_{\omega}$ of the absolute values of elements of $\Phi_{\omega}$ is realized by some saddle connection of $(\mathbb{CP}^{1},\omega)$.
\par
Along the action of the flow, a saddle connection  $\alpha_{t}$ of $A_{t} \omega$ joining the two distinct zeros is destroyed when its interior is crossed by a conical singularity (two conical singularities cannot collide because $\gamma$ remains in the regular locus of $\mathcal{F}_{\lambda}$). Consequently, our initial saddle connection is cut into two saddle connections, one of which still joins the two zeros. Observe that this latter saddle connections persists under small perturbations of $t$, and the imaginary part of its period is (in absolute value) strictly smaller than that of $\alpha_{t}$. It follows that $\delta_{A_{t}\omega}$ is constant.
\par
The same argument shows that a saddle connection $\alpha$ of $(\mathbb{CP}^{1},\omega)$ that realizes $\delta_{\omega}$ persists under the action of $(A_{t})_{t \in \mathbb{R}}$. It follows that for a saddle connection $\alpha$ in $(\mathbb{CP}^{1},\omega)$ of period $z$, there is another saddle connection in $(\mathbb{CP}^{1},\omega)$ of period $z+1$. Consequently, there is an infinite sequence of saddle connections in $(\mathbb{CP}^{1},\omega)$ whose directions approach the horizontal direction. Proposition~\ref{prop:INFINITE-SC-Chara} proves that there is a horizontal cylinder of finite area in $(\mathbb{CP}^{1},\omega)$. This case has already been settled. This concludes the proof.
\end{proof}
\par 
Translation surfaces with infinitely many saddle connections have been characterized in Corollary~5.13 of \cite{tahar} as those that contain a cylinder of finite area.
\par 
\begin{cor}\label{cor:SCFinite}
For a stratum $\mathcal{H}(\mu)$ with $n=2$ zeros and any configuration of residues $\lambda \in \mathcal{R}_{p} \setminus \mathcal{A}_{p}$, the closure $\overline{\mathcal{F}}_{\lambda}$ of the isoresidual fiber $\mathcal{F}_{\lambda}$ contains finitely many saddle connections.
\end{cor}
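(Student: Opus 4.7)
The plan is to deduce the corollary directly from Proposition~\ref{prop:cylinder} combined with the characterization of translation surfaces with infinitely many saddle connections recalled just before the corollary statement. The argument should be by contradiction and quite short, since all the hard work is contained in Proposition~\ref{prop:cylinder}.

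First, I would suppose for contradiction that $(\overline{\mathcal F}_\lambda,\omega_\lambda)$ admits infinitely many saddle connections. By Corollary~5.13 of \cite{tahar}, a translation surface carrying infinitely many saddle connections must contain at least one cylinder of finite area. Let $\mathcal{C}$ be such a cylinder inside $(\overline{\mathcal F}_\lambda,\omega_\lambda)$.

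Next, I would invoke Proposition~\ref{prop:cylinder}, which asserts that every cylinder contained in $(\overline{\mathcal F}_\lambda,\omega_\lambda)$ bounds a simple pole of $\omega_\lambda$. A cylinder of $\omega_\lambda$ having a simple pole on one of its ends extends to infinity in that direction, and thus has infinite area. This contradicts the finite-area conclusion of the previous step, so the initial assumption fails and $(\overline{\mathcal F}_\lambda,\omega_\lambda)$ carries only finitely many saddle connections.

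The only subtlety I anticipate is verifying that the characterization from \cite{tahar} genuinely applies to $(\overline{\mathcal F}_\lambda,\omega_\lambda)$: the surface may be disconnected and of higher genus, not a priori of genus zero as in Proposition~\ref{prop:INFINITE-SC-Chara}. However, Corollary~5.13 of \cite{tahar} is stated for general meromorphic differentials (its statement is used here without the genus-zero restriction), and the conclusion applies componentwise, since a disconnected surface has infinitely many saddle connections if and only if some connected component does. No further obstacle is expected.
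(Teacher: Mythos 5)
Your proposal is correct and is essentially the paper's own argument: the corollary is deduced immediately from Proposition~\ref{prop:cylinder} together with the characterization in Corollary~5.13 of \cite{tahar} that a translation surface has infinitely many saddle connections exactly when it contains a finite-area cylinder, and since every cylinder of $(\overline{\mathcal{F}}_{\lambda},\omega_{\lambda})$ bounds a simple pole it has infinite area. Your extra remark that the criterion applies componentwise to a possibly disconnected, higher-genus fiber is a sensible clarification but does not change the route.
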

\par 
\subsection{Isoresidual fibers over configurations of real residues}\label{sub:FiberReal}
\par 
In a translation surface $(X,\omega)$ of genus zero with real residues, the group generated by the absolute periods of $\omega$ is contained in $\mathbb{R}$. Consequently, even though  the relative periods between two zeros are defined up to the addition of an absolute period, the imaginary part of the relative period between any two fixed zeros does not depend on the integration path.
\par 
\begin{lem}\label{lem:Imaginary}
For a stratum $\mathcal{H}(a_{1},a_{2},-b_{1},\dots,-b_{p})$ and a configuration $\lambda$ of real residues, there exists a harmonic function $\mathcal{I}\colon\mathcal{F}_{\lambda} \longrightarrow \mathbb{R}$ such that for any differential $\omega \in \mathcal{F}_{\lambda}$ and any path $\gamma$ joining the two zeros in $(\mathbb{CP}^{1},\omega)$, we have 
$$
\Im \left( \int_{\gamma} \omega \right) = \mathcal{I}(\omega)\,.
$$
\end{lem}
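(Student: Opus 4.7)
The plan is to exploit the fact that on $\mathbb{CP}^1$ the absolute periods are entirely controlled by the residues. Since $H_1(\mathbb{CP}^1\setminus P_\omega;\mathbb{Z})$ is generated by small positively oriented loops around the poles, every absolute period of $\omega$ is an integer linear combination of the residues $\lambda_1,\dots,\lambda_p$. When $\lambda\in\mathcal{R}_p$ is real, every such period is therefore real.

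The first step is to check that $\mathcal{I}(\omega):=\Im\left(\int_\gamma\omega\right)$ is well defined for each $\omega\in\mathcal{F}_\lambda$, independently of the path $\gamma$ joining $z_1$ to $z_2$. If $\gamma'$ is another such path, the concatenation $\gamma\cup(-\gamma')$ is a closed loop in $\mathbb{CP}^1\setminus P_\omega$, so by the first paragraph
\[
\int_\gamma\omega-\int_{\gamma'}\omega\in\mathbb{R},
\]
and their imaginary parts agree.

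The second step is to upgrade $\mathcal{I}$ to a harmonic function on $\mathcal{F}_\lambda$. Around any $\omega_0\in\mathcal{F}_\lambda$, the zeros and poles vary continuously with $\omega$, so one can pick a continuously varying family of smooth paths $\gamma(\omega)$ joining the two labelled zeros while avoiding the poles. As recalled in Section~\ref{sub:translationStructure}, the relative period
\[
z_\gamma(\omega):=\int_{\gamma(\omega)}\omega
\]
is a holomorphic local coordinate on $\mathcal{F}_\lambda$. Hence $\Im(z_\gamma)$ is harmonic on this chart. On overlaps of two such local choices, the two coordinates differ by a real absolute period (by the argument of the previous step applied pointwise), so their imaginary parts coincide. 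These locally defined harmonic functions therefore glue to a single harmonic function $\mathcal{I}\colon\mathcal{F}_\lambda\to\mathbb{R}$, which by construction is equal to $\Im\left(\int_\gamma\omega\right)$ for any path $\gamma$ from $z_1$ to $z_2$ in $(\mathbb{CP}^1,\omega)$.

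There is no real obstacle here: the only non-formal point is the continuous choice of path $\gamma(\omega)$ in a neighbourhood, which is available because $\mathcal{F}_\lambda$ is smooth and the marked singularities depend continuously on $\omega$. Everything else follows from the observation that reality of the residues forces reality of all absolute periods.
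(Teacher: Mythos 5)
Your proposal is correct and follows essentially the same route as the paper: reality of the residues forces reality of all absolute periods, so the imaginary part of the relative period is path-independent, and it coincides locally with the imaginary part of the holomorphic period coordinate on $\mathcal{F}_{\lambda}$, hence is harmonic. You merely spell out a couple of points the paper leaves implicit (that absolute periods are integer combinations of residues, and the continuous local choice of path), which does not change the argument.
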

\par 
\begin{proof}
In $(\mathbb{CP}^{1},\omega)$, the integral of $\omega$ along two distinct paths between the two zeros $z_{1}$ and $z_{2}$ differs by an absolute period of $\omega$. Given that the latter is real, $\Im\big(\int_{\gamma} \omega\big)$ is a real function $\mathcal{I}$ of $\omega$, which is globally defined on $\mathcal F_\lambda$. We observe that for any local period coordinate of the translation structure of $\mathcal{F}_{\lambda}$, $\mathcal{I}$ coincides with its imaginary part. In other words, $\mathcal{I}$ is the imaginary part of a holomorphic function on a Riemann surface, and it is therefore a harmonic function.
\end{proof}
\par 
For an isoresidual fiber $\mathcal{F}_{\lambda}$, where $\lambda$ consists of real residues, we define the \textit{real locus} $\mathbb{R}\mathcal{F}_{\lambda} \subset \mathcal{F}_{\lambda}$ as the locus formed by differentials for which the relative period between the two zeros is also real. Equivalently, $\mathbb{R}\mathcal{F}_{\lambda} = \mathcal{I}^{-1}(0)$.
\par 
\begin{lem}\label{lem:RealLocus}
In a stratum $\mathcal{H}(a_{1},a_{2},-b_{1},\dots,-b_{p})$, for any configuration of real residues $\lambda\in \mathcal{R}_p\setminus\mathcal{A}_{p}$, every zero of $(\overline{\mathcal{F}}_{\lambda},\omega_{\lambda})$ belongs to the closure of the real locus $\mathbb{R}\mathcal{F}_{\lambda}$. Moreover, every saddle connection of $(\overline{\mathcal{F}}_{\lambda},\omega_{\lambda})$ is horizontal.
\end{lem}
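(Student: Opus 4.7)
The plan is to leverage the globally-defined harmonic function $\mathcal{I}\colon \mathcal{F}_{\lambda} \to \mathbb{R}$ from Lemma~\ref{lem:Imaginary}, together with an analysis of its boundary behavior at the zeros of $\omega_{\lambda}$. Because $\lambda$ consists of real numbers, every absolute period of any $\omega \in \mathcal{F}_{\lambda}$ is a real integer linear combination of the $\lambda_i$, so all branches of the multi-valued relative period $z_\gamma$ share the same imaginary part. Consequently $\mathcal{I} = \Im(z_\gamma)$ is single-valued on $\mathcal{F}_\lambda$ and the real $1$-form $d\mathcal{I}$ equals $\Im(\omega_{\lambda})$.

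The first step is to show that $\mathcal{I}$ extends continuously to each zero $p_0$ of $\omega_\lambda$ with value $0$. By Proposition~\ref{prop:zeros}, such a boundary point corresponds to the merging of the two zeros $z_1$ and $z_2$ of the parametrized differential. Choosing for $\gamma$ any path between $z_1$ and $z_2$ lying inside a shrinking flat neighborhood of the merging point shows that $z_\gamma \to 0$ along some branch, hence $\mathcal{I}(\omega) \to 0$. To prove that $p_0$ lies in the closure of $\mathbb{R}\mathcal{F}_{\lambda}$, I would take a local coordinate $w$ on $\overline{\mathcal{F}}_{\lambda}$ centered at $p_0$, so that $\omega_{\lambda} = (c w^a + O(w^{a+1})) \, dw$ with $c \neq 0$, and a local primitive is $F(w) = \tfrac{c}{a+1} w^{a+1} + O(w^{a+2})$. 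Since $F$ and $z_\gamma$ differ by a real additive constant (both vanish at $p_0$), the real locus $\mathcal{I}^{-1}(0)$ coincides near $p_0$ with $\Im(F) = 0$, whose leading-order picture is a union of $2(a+1)$ rays through the origin. Along each such ray one obtains a sequence in $\mathbb{R}\mathcal{F}_{\lambda}$ accumulating at $p_0$.

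The second step treats a saddle connection $\sigma\colon [0,T] \to \overline{\mathcal{F}}_{\lambda}$ of $\omega_\lambda$ joining two zeros $p_1, p_2$ (possibly equal, in the closed case). The interior of $\sigma$ lies in $\mathcal{F}_{\lambda}$, where $\mathcal{I}$ is single-valued, and $\mathcal{I}$ extends continuously to both endpoints with value $0$ by the previous step. Integrating the identity $d\mathcal{I} = \Im(\omega_{\lambda})$ along $\sigma$ and passing to the limit at the endpoints yields
\begin{equation*}
0 \;=\; \mathcal{I}(p_2) - \mathcal{I}(p_1) \;=\; \int_{\sigma} \Im(\omega_{\lambda}) \;=\; \Im\!\left( \int_{\sigma} \omega_{\lambda} \right).
\end{equation*}
Thus the period of $\sigma$ is real, which is precisely the condition that $\sigma$ is horizontal in the translation structure $\omega_\lambda$.

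The only delicate step is the continuous extension of $\mathcal{I}$ to the boundary zeros. It relies essentially on the reality of $\lambda$, which forces the multi-valued ambiguity of $z_\gamma$ to affect only its real part and therefore does not obstruct convergence of the imaginary part. Once this extension is in hand, the rest reduces to the local normal form of $\omega_\lambda$ at a zero and to the real-valued fundamental theorem of calculus applied to $\mathcal{I}$.
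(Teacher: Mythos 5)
Your proof is correct and follows essentially the same route as the paper: both rest on the harmonic function $\mathcal{I}$ of Lemma~\ref{lem:Imaginary}, show that $\mathcal{I}$ tends to zero at the zeros of $\omega_{\lambda}$ via the shrinking relative period, and deduce horizontality of saddle connections from the vanishing of $\mathcal{I}$ at their endpoints. Your use of the local normal form $\omega_{\lambda}=(cw^{a}+O(w^{a+1}))\,dw$ to exhibit the $2(a+1)$ rays of $\Im(F)=0$ accumulating at a zero is just the analytic restatement of the paper's horizontal zero-splitting argument, so the two proofs coincide in substance.
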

\par 
\begin{proof}
When the two zeros of the parametrized translation surfaces collide, the relative period computed by integrating the differential on the shrinking saddle connection tends to zero. In particular, its imaginary part (well-defined since the absolute periods of the differential are real) also tends to zero. Additionally, a zero of order $a_1+a_2$ can be locally split via a horizontal opening direction to two zeros of order $a_1$ and $a_2$. Thus, every zero of $\omega_{\lambda}$ belongs to the closure of the real locus.
\par
Following Lemma~\ref{lem:Imaginary}, the imaginary part of any local period coordinate of the translation atlas in $\mathcal{F}_{\lambda}$ is given by the harmonic function $\mathcal{I}$. Saddle connections of $(\overline{\mathcal{F}}_{\lambda},\omega_{\lambda})$ have constant slopes in the translation atlas, and their endpoints belong to the zero set of $\mathcal{I}$. It follows that every saddle connection of $(\overline{\mathcal{F}}_{\lambda},\omega_{\lambda})$ is completely contained in the real locus $\mathbb{R}\mathcal{F}_{\lambda} = \mathcal{I}^{-1}(0)$ and is therefore horizontal.
\end{proof}
\par 
Any differential $\omega$ in $\mathbb{R}\mathcal{F}_{\lambda}$ can be described by the following data:
\begin{itemize}
    \item the decorated graph $\mathfrak{gr}(\omega)$ formed by $p$ vertices and $p$ edges (see Section~\ref{sub:RealGraph});
    \item the lengths of the $p$ saddle connections of $\omega$.
\end{itemize}
\par 
Since $\mathfrak{gr}(\omega)$ has $p$ vertices and $p$ edges, it contains a uniquely defined loop. This loop has:
\begin{itemize}
    \item \textit{coherent orientation} if all the edges of the loop have the same orientation;
    \item \textit{incoherent orientation} if two consecutive edges in the loop have opposite orientations.
\end{itemize}
\par 
Because the distinction between coherent or incoherent orientation depends only on the graph, this notion induces a dichotomy on the connected components of $\mathbb{R}\mathcal{F}_{\lambda}$.
\par 
\begin{prop}\label{prop:REALlocus}
We consider a stratum $\mathcal{H}(\mu)$ with $n=2$ zeros and any configuration of real residues $\lambda$ lying outside $\mathcal{A}_{p}$. Let $\mathcal{C}$ be a connected component of the real locus $\mathbb{R}\mathcal{F}_{\lambda}$ of the isoresidual fiber $\mathcal{F}_{\lambda}$. Then one of the following statements holds:
\begin{itemize}
    \item If the decorated graph of the differentials in $\mathcal{C}$ has coherent orientation, then $\mathcal{C}$ is an infinite horizontal trajectory between a zero and a pole of $(\overline{\mathcal{F}}_{\lambda},\omega_{\lambda})$.
    \item If the decorated graph of the differentials in $\mathcal{C}$ has incoherent orientation, then $\mathcal{C}$ is a horizontal saddle connection of $(\overline{\mathcal{F}}_{\lambda},\omega_{\lambda})$ joining two zeros of $\omega_{\lambda}$. Moreover, its length is $|\sum\limits_{j \in J} \lambda_{j}|$ for some subset $J$ of $\lbrace{ 1,\dots,p \rbrace}$.
\end{itemize}
\end{prop}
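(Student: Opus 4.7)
The plan is to combine the description of real-period differentials via decorated graphs from Section~\ref{sub:RealGraph} with the classification of the singularities of $\omega_{\lambda}$ given in Propositions~\ref{prop:zeros}, \ref{prop:simple-pole}, and \ref{prop:flat-boundary}. Since $\lambda\notin\mathcal{A}_{p}$, Theorem~\ref{thm:MAIN1} ensures that $\omega_{\lambda}$ has no zero inside $\mathcal{F}_{\lambda}$, so the harmonic function $\mathcal{I}$ of Lemma~\ref{lem:Imaginary} has no critical points and its zero set $\mathbb{R}\mathcal{F}_{\lambda}$ is a smooth one-dimensional real submanifold; by Lemma~\ref{lem:RealLocus}, every connected component $\mathcal{C}$ is an arc contained in a horizontal trajectory of $\omega_{\lambda}$ whose two endpoints in $\overline{\mathcal{F}}_{\lambda}$ are singularities of $\omega_{\lambda}$. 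Along $\mathcal{C}$ the combinatorial type of $\mathfrak{gr}(\omega)$ is locally constant, because a combinatorial change requires an edge length to reach $0$ or $+\infty$, which by the three propositions above corresponds to an endpoint singularity of $\omega_{\lambda}$.

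The central input is the loop identity
\[
\sum_{i=1}^{k}\epsilon_{i}\,\ell_{i}=\sum_{j\in J}\lambda_{j},
\]
obtained by integrating $\omega$ along the unique cycle of $\mathfrak{gr}(\omega)$, where $\epsilon_{i}\in\{\pm 1\}$ records the sign with which the walk around the cycle traverses the $i$-th loop edge and $J$ indexes the subset of poles enclosed on one side of the cycle. In the incoherent case the $\epsilon_{i}$ have mixed signs, so all loop lengths can tend jointly to $0$; in the coherent case every $\epsilon_{i}=+1$, so the sum $\ell_{1}+\cdots+\ell_{k}$ is pinned to the fixed nonzero real value $|\sum_{j\in J}\lambda_{j}|$, forbidding the simultaneous vanishing of all loop lengths. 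Moreover, the parity rule in Definition~\ref{defn:abstractdecoratedtgraph} forces every vertex of a coherent loop to carry a strictly positive odd number of half-edges between its two loop edges, so each such vertex corresponds to a pole of order at least two; this is the source of the extra degrees of freedom that appear in the coherent case.

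The endpoint analysis then proceeds via Lemma~\ref{lem:defengraph}: in the incoherent case, the tree part of $\mathfrak{gr}(\omega)$ recursively determines all bridge lengths from the corresponding leaf residues, so only loop lengths vary along $\mathcal{C}$; at each endpoint a loop edge vanishes, both endpoints are zeros of $\omega_{\lambda}$ by Proposition~\ref{prop:zeros}, and the length of the saddle connection $\mathcal{C}$ equals the total variation of the period, which the loop identity identifies with $|\sum_{j\in J}\lambda_{j}|$. In the coherent case, the conservation of $\sum_{i}\ell_{i}$ combined with the extra half-edge freedom at loop vertices forces exactly one endpoint of $\mathcal{C}$ to be a loop shrinking (a zero of $\omega_{\lambda}$) and the other to be a horizontal pinch or cherry-type degeneration (a pole of $\omega_{\lambda}$ by Propositions~\ref{prop:simple-pole} and \ref{prop:flat-boundary}), so $\mathcal{C}$ is an infinite trajectory from a zero to a pole. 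The main obstacle will be precisely this coherent-case bookkeeping: rigorously translating the half-edge parity and the cycle conservation into the conclusion that one and only one endpoint of $\mathcal{C}$ must be a pole of $\omega_{\lambda}$, which is where the dichotomy between coherent and incoherent orientations enters the argument in an essential way.
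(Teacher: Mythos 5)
Your setup is fine (no critical points of $\mathcal{I}$ on $\mathcal{F}_{\lambda}$, local constancy of the decorated graph along $\mathcal{C}$, endpoints of $\mathcal{C}$ being singularities of $\omega_{\lambda}$), but the central mechanism is wrong, and wrong in a way that essentially swaps the two cases. The identity $\sum_{i}\epsilon_{i}\ell_{i}=\sum_{j\in J}\lambda_{j}$ is not what the cycle of $\mathfrak{gr}(\omega)$ gives you: each cycle edge is a horizontal saddle connection joining $z_{1}$ to $z_{2}$, so the actual constraints are pairwise, namely $\epsilon_{i}\ell_{i}-\epsilon_{j}\ell_{j}$ equals a fixed partial sum of residues (the period of the closed loop $\gamma_{i}\cup(-\gamma_{j})$). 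Writing $\epsilon_{i}\ell_{i}=t+c_{i}$ for the single free parameter $t$, in the coherent case ($\epsilon_{i}\equiv+1$) all lengths $\ell_{i}=t+c_{i}$ move together: they can all be made arbitrarily large (the cherry degeneration of Proposition~\ref{prop:flat-boundary}, i.e.\ the pole end), while decreasing $t$ makes exactly one of them reach zero first (the zero end). There is no conservation of $\sum_{i}\ell_{i}$; if there were, the lengths could not all tend to infinity and $\mathcal{C}$ could never reach a pole of $\omega_{\lambda}$, so your own mechanism contradicts the conclusion you are after. In the incoherent case the lengths split into two nonempty groups moving in opposite directions as $t$ varies, so $t$ is confined to a bounded interval; at each end exactly \emph{one} length vanishes (two vanishing simultaneously would force a resonance among the $\lambda_{j}$, excluded since $\lambda\notin\mathcal{A}_{p}$), so the claim that ``all loop lengths can tend jointly to $0$'' is false. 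The length of $\mathcal{C}$ is then obtained not from a global loop identity but from the two shrinking edges $e_{1}$ and $e_{2}$: the sum $\ell_{e_{1}}+\ell_{e_{2}}$ is constant along $\mathcal{C}$ and equals $\left|\sum_{j\in J}\lambda_{j}\right|$ for the bipartition of the poles obtained by deleting $e_{1}$ and $e_{2}$ from the graph.

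Two further points. In the coherent case the pole endpoint cannot be a simple pole of $\omega_{\lambda}$ from Proposition~\ref{prop:simple-pole}: those correspond to an infinite horizontal cylinder whose approach requires $\mathcal{I}\to\pm\infty$, so they do not lie in the closure of $\mathbb{R}\mathcal{F}_{\lambda}=\mathcal{I}^{-1}(0)$; the endpoint must be one of the higher-order poles of Proposition~\ref{prop:flat-boundary}. And the fact that the tree-edge lengths are determined by the residues holds in both cases, so it cannot be the source of the dichotomy; the dichotomy is carried entirely by whether the signs $\epsilon_{i}$ on the cycle all agree or not.
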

\par 
\begin{proof}
We first consider the case where the decorated graph of the differentials in $\mathcal{C}$ has coherent orientation. The fact that the residues at the poles are fixed means that the difference between the lengths of the saddle connections corresponding to two adjacent oriented edges of the loop is fixed. This is the only constraint on them. It follows that the lengths of the saddle connections corresponding to the edges of the loop (in other words, the saddle connections joining two distinct zeros) can be made arbitrarily large. Thus, $\mathcal{C}$ is an infinite arc in $(\overline{\mathcal{F}}_{\lambda},\omega_{\lambda})$. 
\par 
As the lengths of these edges become infinite, the parametrized differential degenerates in the way described in Sections~\ref{sub:boundary} and~\ref{subsub:HigherPoles}. This corresponds to a pole of $\omega_{\lambda}$. Conversely, if we decrease the lengths of these saddle connections, at some point one of them shrinks to zero (only one at a time; otherwise, a resonance equation would hold between the residues). In other words, at that moment, the two zeros collide, leading to a degeneration corresponding to a zero of $\omega_{\lambda}$.
\par
Next, we discuss the case of a decorated graph with incoherent orientation. The edges of the loop form two subsets $E_{1}$ and $E_{2}$ according to the orientation of the edges. Again, the only degree of freedom is the length $L$ of the saddle connection corresponding to an arbitrary edge in $E_{1}$. Indeed, the lengths of the closed saddle connections are determined by the sums of the residues of the poles they enclose. As $L$ increases, the lengths of the other saddle connections in $E_{1}$ also increase, while the lengths of the saddle connections in $E_{2}$ decrease. Since all these lengths must remain positive, $L$ is constrained to an interval. The connected component $\mathcal{C}$ is thus a horizontal segment whose endpoints correspond to the collision of the two zeros. Therefore, $\mathcal{C}$ is a horizontal saddle connection of $(\overline{\mathcal{F}}_{\lambda},\omega_{\lambda})$.
\par
It remains to compute the length of this saddle connection. At the left (resp. right) endpoint of $\mathcal{C}$, exactly one saddle connection from $E_{1}$ (resp. $E_{2}$) shrinks to zero. It is impossible for two saddle connections to shrink at the same time because their lengths are related by equations involving the residues, which would lead to a resonance equation. We denote by $e_{1}$ and $e_{2}$ the edges of the loop corresponding to the saddle connections that shrink at the left and right endpoints of $\mathcal{C}$, respectively. By removing $e_{1}$ and $e_{2}$ from the decorated graph, we decompose it into exactly two connected components, $G_{1}$ and $G_{2}$. Denote by $I \sqcup J$ the corresponding bipartition of the set of poles $\lbrace{ 1,\dots,p \rbrace}$. We find that the lengths of the two saddle connections corresponding to the edges $e_{1}$ and $e_{2}$ sum to $|\sum\limits_{i \in I} \lambda_{i}|=|\sum\limits_{j \in J} \lambda_{j}|$. Since these  represent the only saddle connections that can shrink as the parametrized differential is deformed along $\mathcal{C}$, this gives the length of $\mathcal{C}$.
\end{proof}
\par 
\subsection{Periods of saddle connections}
\par 
Deformations of translation surfaces are governed by the deformation of the periods of their saddle connections. Since one-dimensional isoresidual fibers are translation surfaces, it is essential to understand how their saddle connections change in relation to the underlying configuration of residues. In the following, we will prove the linear dependence of the periods of an isoresidual fiber in terms of $\lambda$.
\par 
\begin{prop}\label{prop:IntegerWeights}
For a stratum $\mathcal{H}(\mu)$ of differentials on $\mathbb{C}\mathbb{P}^1$ with two zeros, for any configuration of residues $\lambda \in \mathcal{R}_{p} \setminus \mathcal{A}_{p}$, and for any relative homology class $[\gamma]$ of $H_{1}(\overline{\mathcal{F}}_{\lambda} \setminus P_{\omega_{\lambda}}, Z_{\omega_{\lambda}})$, we have
$$
\int_{[\gamma]} \omega_{\lambda} = \sum\limits_{j=1}^{p} w_{j}\lambda_{j}\,,
$$
where $w_{j} \in \mathbb{Z}$ for all $1 \leq j \leq p$.
\end{prop}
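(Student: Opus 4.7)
The plan is to interpret $\omega_\lambda$ globally as the differential of the multivalued function $z_{1,2}$ which, for each parameterized $(\mathbb{CP}^{1},\omega)\in \mathcal{F}_\lambda$, records the relative period $\int_{z_1}^{z_2}\omega$ along a chosen path avoiding the poles. Any two choices of such path differ by an absolute cycle on $\mathbb{CP}^{1}$ punctured at the poles, whose integral is a sum of residues of $\omega$ and hence an integer combination of $\lambda_1,\dots,\lambda_p$. Consequently $z_{1,2}$ is a single-valued holomorphic function on the universal cover of $\mathcal{F}_\lambda$, and its monodromy around any loop in $\mathcal{F}_\lambda$ lies in the lattice
$$\Lambda_\lambda:=\sum_{j=1}^{p}\mathbb{Z}\lambda_j\subset\mathbb{C}.$$

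Given a relative class $[\gamma]\in H_1(\overline{\mathcal F}_\lambda\setminus P_{\omega_\lambda},Z_{\omega_\lambda})$, I lift a smooth representative $\gamma$ to the universal cover of $\mathcal{F}_\lambda\setminus P_{\omega_\lambda}$ and express the integral as the difference of the lifted values of $z_{1,2}$ at the two endpoints of the lift. When $\gamma$ is a closed loop in $\overline{\mathcal F}_\lambda\setminus P_{\omega_\lambda}$, the two endpoints of the lift differ by a deck transformation acting by translation by an element of $\Lambda_\lambda$, so the integral is in $\Lambda_\lambda$. In particular, small positively oriented loops around the poles of $\omega_\lambda$ contribute the residues, which by Propositions~\ref{prop:simple-pole} and~\ref{prop:flat-boundary} are $\mathbb{Z}$-linear combinations of the $\lambda_j$ (the coefficients $c_i/\gcd(c_1,c_2)$ appearing at higher-order poles are integers), and absolute cycles coming from the genus of $\overline{\mathcal F}_\lambda$ are handled by the same deck-transformation argument.

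The remaining case is when the endpoints of $\gamma$ lie at zeros $\zeta_0,\zeta_1$ of $\omega_\lambda$. By Proposition~\ref{prop:zeros}, each such zero $\zeta$ corresponds to the multi-scale limit in which the two zeros $z_1$ and $z_2$ merge into a single zero of order $a$, and the splitting prescribed by Corollary~\ref{cor:breakingup} produces a path whose period vanishes together with the opening parameter $t$. More precisely, in the local expansion $\omega_\lambda=c(a+1)t^{a}\,dt$ established in the proof of Proposition~\ref{prop:zeros}, the primitive is $z_{1,2}(t)=c\,t^{a+1}\to 0$ on the chosen local branch. Thus $z_{1,2}$ extends continuously by $0$ at $\zeta$ modulo $\Lambda_\lambda$, and its lifted value at $\zeta$ lies in $\Lambda_\lambda$. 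The integral $\int_{[\gamma]}\omega_\lambda$ is then the difference of two elements of $\Lambda_\lambda$, hence belongs to $\Lambda_\lambda$.

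The main obstacle is the last step: one has to verify that $z_{1,2}$ genuinely extends continuously modulo $\Lambda_\lambda$ to the boundary points of $\mathcal{F}_\lambda$ in the multi-scale compactification which correspond to zeros of $\omega_\lambda$. This requires unwinding the splitting operation in the perturbed period coordinates recalled at the end of Section~\ref{sub:multi-scale}; the explicit local expansion already used in the proof of Proposition~\ref{prop:zeros} does essentially all of the work, and the remaining combinatorial bookkeeping (which branch of the multivalued $z_{1,2}$ is produced by which of the $a+1$ splitting directions) only shifts the limit value by an element of $\Lambda_\lambda$, which is harmless for the integrality conclusion. A consistency check is that, specialized to loops around the various poles described in Propositions~\ref{prop:simple-pole} and~\ref{prop:flat-boundary}, the argument recovers exactly the integer-weighted residues computed there.
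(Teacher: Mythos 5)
Your argument is correct, but it takes a genuinely different route from the paper's. The paper first specializes to a configuration of real residues: there Lemma~\ref{lem:RealLocus} forces every saddle connection of $(\overline{\mathcal F}_{\lambda},\omega_{\lambda})$ into the real locus, Proposition~\ref{prop:REALlocus} identifies the periods of those saddle connections as partial sums of residues, the standard fact that saddle connections generate $H_{1}(\overline{\mathcal F}_{\lambda}\setminus P_{\omega_{\lambda}},Z_{\omega_{\lambda}})$ then gives integrality for every relative class, and analytic continuation transports the conclusion to arbitrary $\lambda\in\mathcal R_{p}\setminus\mathcal A_{p}$. You instead exploit the tautological description of $\omega_{\lambda}$ as $dz_{1,2}$: the monodromy of the multivalued primitive $z_{1,2}$ along any loop in $\mathcal F_{\lambda}$ is the period of an absolute cycle of the parametrized punctured sphere, hence lies in $\Lambda_{\lambda}=\sum_{j}\mathbb Z\lambda_{j}$, and at a zero of $\omega_{\lambda}$ every branch of $z_{1,2}$ tends to an element of $\Lambda_{\lambda}$ because the branch along the vanishing saddle connection tends to $0$ (cf.\ the expansion $z_{1,2}=ct^{a+1}$ in the proof of Proposition~\ref{prop:zeros}) and all branches differ by elements of $\Lambda_{\lambda}$. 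Your route is cleaner in two respects: it treats all generic $\lambda$ uniformly, with no real-residue degeneration and no analytic-continuation step, and it does not rely on the finiteness of saddle connections or on their generating relative homology. What it does not recover is the sharper output of the paper's detour, namely that saddle-connection periods are genuine partial sums $\sum_{j\in J}\lambda_{j}$ with coefficients in $\{0,1\}$ (Proposition~\ref{prop:SCperiod}), which is used elsewhere; your method only yields integer coefficients, which is exactly the statement at hand. Two cosmetic remarks: $\Lambda_{\lambda}$ is a finitely generated subgroup of $\mathbb C$ rather than a lattice (it is typically dense), though nothing in your argument uses discreteness; and the appeal to Propositions~\ref{prop:simple-pole} and~\ref{prop:flat-boundary} for the residues of $\omega_{\lambda}$ is, as you yourself note, redundant, since loops around the poles of $\omega_{\lambda}$ are loops in $\mathcal F_{\lambda}$ and are already covered by the monodromy argument.
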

\par 
\begin{proof}
We first consider the case of an isoresidual fiber $(\overline{\mathcal{F}}_{\lambda},\omega_{\lambda})$ where $\lambda$ is a configuration of real residues. Following Lemma~\ref{lem:RealLocus}, all the saddle connections of $(\overline{\mathcal{F}}_{\lambda},\omega_{\lambda})$ belong to the real locus $\mathbb{R}\mathcal{F}_{\lambda}$. Proposition~\ref{prop:REALlocus} shows that their periods are given by partial sums of residues. As with any translation surface, saddle connections generate the relative homology group $H_{1}(\overline{\mathcal{F}}_{\lambda} \setminus P_{\omega_{\lambda}},Z_{\omega_{\lambda}})$. It follows that the integral of $\omega_{\lambda}$ along any relative homology class is a linear combination of the residues $\lambda_{1},\dots,\lambda_{p}$ with integer coefficients. Using analytic continuation, we deduce that the same claim holds for any $\lambda \in \mathcal{R}_{p} \setminus \mathcal{A}_{p}$.
\end{proof}
\par 
Actually, the fact that the period of any saddle connection of the isoresidual fiber is a partial sum of residues holds for any fiber, not just those corresponding to configurations of real residues.
\par 
\begin{prop}\label{prop:SCperiod}
Consider a stratum $\mathcal{H}(a_{1},a_{2},-b_{1},\dots,-b_{p})$ and a configuration of residues 
$\lambda \in \mathcal{R}_{p} \setminus \mathcal{A}_{p}$. For any saddle connection $\gamma$ in the closure $\overline{\mathcal{F}}_{\lambda}$ of the isoresidual fiber $\mathcal{F}_{\lambda}$, we have
$$
\int_{\gamma} \omega_{\lambda} = \sum\limits_{j \in J} \lambda_{j}
$$
for some subset $J$ of $\lbrace{ 1,\dots,p \rbrace}$.
\end{prop}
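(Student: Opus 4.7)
The plan is to upgrade Proposition~\ref{prop:IntegerWeights} by reducing to the real-residue case, where Proposition~\ref{prop:REALlocus} identifies the periods of saddle connections as partial sums of the residues. Proposition~\ref{prop:IntegerWeights} already gives $\int_\gamma \omega_\lambda = \sum_{j=1}^p w_j \lambda_j$ with $w_j \in \mathbb{Z}$, so it suffices to show that the $w_j$ lie in $\{0,\pm 1\}$ with the nonzero ones all of the same sign; using $\sum_j \lambda_j = 0$ to pass to the complement will absorb any overall sign.

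First, using the $SO(2)$-action of Section~\ref{subsub:action}, I will rotate by $e^{-i\theta}$ where $\theta = \arg \int_\gamma \omega_\lambda$. This replaces $\lambda$ by $e^{-i\theta}\lambda$ and transforms $\gamma$ into a horizontal saddle connection of the rotated fiber, with real positive period. The partial sum property is invariant under such a phase rotation, so this reduction is harmless. Next, I will connect the rotated configuration $e^{-i\theta}\lambda$ to a real configuration $\lambda^{\mathbb{R}} \in \mathcal{R}_p \setminus \mathcal{A}_p$ via a generic continuous path and parallel-transport $\gamma$ along this path using the Gauss--Manin connection of Section~\ref{sec:GaussManin}. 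Since the coefficients $w_j$ are topological invariants of the transported homology class, they remain constant. For a sufficiently generic path, the transported class is realized as a single saddle connection at the endpoint $\lambda^{\mathbb{R}}$.

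Finally, at $\lambda^{\mathbb{R}}$, Proposition~\ref{prop:REALlocus} applies: the transported $\gamma$ has incoherent orientation (its endpoints are two zeros) and period $\pm \sum_{k \in J} \lambda^{\mathbb{R}}_k$ for some subset $J \subset \{1, \ldots, p\}$. Varying $\lambda^{\mathbb{R}}$ within a small open subset of the real slice of $\mathcal{R}_p$ where $\gamma$ persists and $J$ is locally constant, the identity $\sum_j w_j \lambda^{\mathbb{R}}_j = \pm \sum_{k \in J} \lambda^{\mathbb{R}}_k$ holds on a full-dimensional real set, forcing $(w_j) = \pm \chi_J$ as integer tuples. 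Reverting to the original $\lambda$ yields $\int_\gamma \omega_\lambda = \sum_{j \in J} \lambda_j$ (or $\sum_{j \in J^c} \lambda_j$ after absorbing the sign into the complement), concluding the proof.

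The main obstacle will be ensuring that the deformation path avoids the bifurcation locus where saddle connections fragment into chains, so that $\gamma$ is transported to a single saddle connection rather than to a chain whose period is a sum of partial sums but not necessarily a single one. This relies on the bifurcation locus being of real codimension at least one in the parameter space $\mathcal{R}_p$, allowing generic 1-parameter paths to bypass it while maintaining $\gamma$ as a single geodesic arc between two zeros of $\omega_\lambda$ throughout the deformation.
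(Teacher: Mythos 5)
Your reduction to the real-residue case via Proposition~\ref{prop:REALlocus} is the right target, and the initial rotation step is harmless, but the mechanism you propose for getting there --- a generic path in $\mathcal{R}_p$ together with Gauss--Manin transport --- has a genuine gap. The locus where a given saddle connection of $(\overline{\mathcal{F}}_{\lambda},\omega_{\lambda})$ is destroyed (a zero of $\omega_\lambda$ crossing its interior) is cut out by the condition that the ratio of two holomorphic period functions of $\lambda$ be a real number in $(0,1)$; this is a \emph{real-codimension-one} wall in $\mathcal{R}_p$, not codimension two. A generic one-parameter path joining two points in different chambers cannot bypass such a wall --- it must cross it at finitely many points --- and at each crossing the transported homology class $[\gamma]$ fragments into a chain $[\gamma_1]+[\gamma_2]+\cdots$ of saddle connections. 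The period of the class is still $\sum_j w_j\lambda_j$ by Proposition~\ref{prop:IntegerWeights}, but once $[\gamma]$ is only a chain at $\lambda^{\mathbb{R}}$, Proposition~\ref{prop:REALlocus} gives you a \emph{sum of partial sums}, which is not in general a partial sum. (The remark immediately following the statement in the paper makes exactly this point for a triangle of saddle connections.) So your final paragraph's hypothesis, "codimension at least one suffices," is precisely what fails, and the argument does not close.

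The paper's proof circumvents wall-crossing entirely by using the $\mathrm{GL}_{2}^{+}(\mathbb{R})$ action of Section~\ref{subsub:action}, which commutes with the residual map and acts on the whole fiber as an isomorphism of translation surfaces, hence carries saddle connections bijectively to saddle connections --- nothing can fragment. After a small perturbation of $\lambda$ so that saddle connections share a slope only if they share period coefficients, one puts the direction of $\gamma$ vertical and applies the contracting one-parameter subgroup fixing the horizontal; this conjugates $(\overline{\mathcal{F}}_{\lambda},\omega_{\lambda})$ to the fiber over a configuration $\lambda'$ arbitrarily close to a real one, in which the images of $\gamma$ and its slope-mates are the \emph{shortest} saddle connections of the surface. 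Being shortest, they are small deformations of saddle connections of the genuinely real fiber, whose periods are partial sums by Proposition~\ref{prop:REALlocus}, and the integer coefficient vector is locally constant. If you want to salvage your path-based approach, you would need either a wall-crossing analysis showing the fragments reassemble, or a replacement of the generic path by this kind of global conjugation; as written, the proof is incomplete.
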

\par 
\begin{proof}
Following Corollary~\ref{cor:SCFinite}, the isoresidual fiber $(\overline{\mathcal{F}}_{\lambda},\omega_{\lambda})$, as a translation surface, has finitely many saddle connections. Each of these saddle connections has a period that is a linear combination of $\lambda_{1},\dots,\lambda_{p}$ with integer coefficients. We will prove that this linear combination is, in fact, a partial sum.
\par
Up to a small perturbation of $\lambda$, we assume that saddle connections of $(\overline{\mathcal{F}}_{\lambda},\omega_{\lambda})$ have the same slope if and only if their periods are given by the same coefficients (up to multiplication by a constant real factor). Any saddle connection $\gamma_{1}$ belongs to a maximal family $\gamma_{1},\dots,\gamma_{t}$ of saddle connections of $(\overline{\mathcal{F}}_{\lambda},\omega_{\lambda})$ that have the same slope $\theta \in \mathbb{RP}^{1}$.
\par
Since $(\overline{\mathcal{F}}_{\lambda},\omega_{\lambda})$ has finitely many saddle connections, there is an open interval $U \in \mathbb{RP}^{1}$ that contains $\theta$ but no other slopes  of saddle connections. Up to the action of ${\rm GL}_{2}^{+}(\mathbb{R})$ (recall that this action commutes with the isoresidual map; see Section~\ref{subsub:action}), we can assume that $\theta$ is the vertical direction, while the other saddle connections have slopes arbitrarily close to the horizontal direction. 
\par 
By using the subgroup of ${\rm GL}_{2}^{+}(\mathbb{R})$ that preserves the horizontal direction while contracting exponentially fast a direction arbitrarily close to the vertical direction, we can conjugate $(\overline{\mathcal{F}}_{\lambda},\omega_{\lambda})$ to the fiber corresponding to a configuration $\lambda'$ that is arbitrarily close to a configuration of real residues, where the saddle connections conjugate to $\gamma_{1},\dots,\gamma_{t}$ are the smallest saddle connections of the surface. 
\par 
It follows that these saddle connections are obtained by deformations of the saddle connections of a fiber corresponding to a configuration with real residues. Since such saddle connections have periods given by partial sums of residues (see Proposition~\ref{prop:REALlocus}), the same holds for $\gamma_{1},\dots,\gamma_{t}$. 
\end{proof}
\par 
\begin{rmk}
Observe that Proposition~\ref{prop:SCperiod} imposes highly nontrivial constraints on the geometry of isoresidual fibers. Indeed, when three saddle connections $\gamma_{1},\gamma_{2},\gamma_{3}$ form the sides of a triangle, if the periods of the homology classes $[\gamma_{1}]$ and $[\gamma_{2}]$ are partial sums of the underlying residues, there is no reason for the period $[\gamma_{3}]=-[\gamma_{1}]-[\gamma_{2}]$ to also be a partial sum of residues. 
\end{rmk}
\par 
\subsection{Formal proof of Theorem~\ref{thm:MAIN1}}\label{sub:formalMain1}
\par 
We summarize all the results of Section~\ref{sec:Translation} in Theorem~\ref{thm:MAIN1}.
\par 
\begin{proof}[Proof of Theorem~\ref{thm:MAIN1}]
The translation structure of $(\mathcal{F}_{\lambda},\omega_{\lambda})$ is described in Section~\ref{sub:translationStructure}, while its compactification is given in Section~\ref{sub:boundary}. It is proved in Sections~\ref{subsub:zeros},~\ref{subsub:SimplePoles}, and~\ref{subsub:HigherPoles} that $\omega_{\lambda}$ extends to $\overline{\mathcal{F}}_{\lambda}$ as a meromorphic differential. The periods of saddle connections are computed in Proposition~\ref{prop:SCperiod}, and we know from Corollary~\ref{cor:SCFinite} that there are finitely many of them. 
\par
We deduce that for a configuration of real residues in $\mathcal{R}_{p} \setminus \mathcal{A}_{p}$, all the saddle connections of $(\overline{\mathcal{F}}_{\lambda},\omega_{\lambda})$ are horizontal. Such a translation surface cannot correspond to a holomorphic differential, as this would imply that the area of the holomorphic translation surface is zero. Therefore,   $\omega_{\lambda}$ must have at least one pole.
\par
The fact that the isoresidual fibration is nonsingular outside $\mathcal{A}_{p}$ follows from Proposition~\ref{prop:SCperiod}. Since the periods of saddle connections are given by partial sums of residues, the translation structure of an isoresidual fiber $(\overline{\mathcal{F}}_{\lambda},\omega_{\lambda})$ cannot degenerate as long as $\lambda$ stays away from the resonance hyperplanes.
\par
Finally, if $\mathcal{F}_{\lambda}$ is not connected, then the monodromy of the fibration acts transitively on the set of connected components; otherwise, the whole stratum would not be connected. The singularity pattern of the translation structure of each connected component of 
$(\overline{\mathcal{F}}_{\lambda},\omega_{\lambda})$ is invariant under deformation. It follows that these patterns are the same for each connected component of $\mathcal{F}_{\lambda}$.
\end{proof}
\par 
\section{The Euler characteristic of generic isoresidual fibers}\label{sec:Euler}
In this section, we study the combinatorial structure for the Euler 
characteristic of generic isoresidual fibers. In particular, we will prove Theorem~\ref{thm:chambers}. 
\par 
\subsection{The wall and chamber structures}\label{sub:Discretecombinatorics}
\par 
We first recall the wall and chamber structures described in Definition \ref{defn:SPS}.
Given $p \geq 1$, the singularity pattern space $\mathcal{SP}_{p}$ is  $\left\{ (x_{1},x_{2},y_{1},\dots,y_{p}) \in \mathbb{R}_{>0}^{p+2}~\mid~x_{1}+x_{2}= \sum y_{j} \right\}$ endowed with the family $\mathcal{W}_{p}$ of hyperplanes $W_{1}(I),W_{2}(I)$ of Equation~\eqref{eq:hyptype1} and $W_{3}(I,K,L),W_{4}(J,K,M)$ of Equations~\eqref{eq:hyptype2} and~\eqref{eq:hyptype3}, where $I \sqcup J \sqcup K$ is a partition of the index set of the poles $\lbrace{ 1,\dots,p \rbrace}$ into three disjoint subsets, and $L$ and $M$ are (possibly intersecting) arbitrary subsets of $K$.
\par 
In the above, $x_i$ corresponds to the zero order plus one, i.e., $a_i+1$ for $i=1,2$, while $y_j$ corresponds to the pole order $b_j$ for $j=1,\ldots,p$. Now, we can show that in each chamber of the singularity pattern space $\mathcal{SP}_p$, separated by the walls in $\mathcal{W}_{p}$, the Euler characteristic of the isoresidual curve $\overline{\mathcal{F}}_\lambda$ has the desired piecewise homogeneous structure. 
\par 
We say that a function is {\em homogeneous of degree $k$} if scaling all variables by $\lambda$ scales the function value by $\lambda^k$. For example, $\gcd(x_1 - y_2, x_2 - y_3)$ is homogeneous of degree one for 
$x_i, y_j\in \mathbb{Z}$. Now, we can  prove Theorem~\ref{thm:chambers} through the following more detailed statement. 
\par
\begin{prop}\label{prop:chambers}
For $\lambda \in \mathcal{R}_{p}\setminus \mathcal{A}_{p}$ and $(a_1+1, a_2+1, b_1, \ldots, b_p)$ in any given chamber of $\mathcal{SP}_p$, the Euler characteristic of $\overline{\mathcal F}_\lambda$ is a sum of homogeneous components of degree from $0$ up to $p-1$ in terms of the variables $a_{1}+1,a_{2}+1,b_{1},\dots,b_{p}$. Moreover, the terms containing the $\gcd$ have degree at most $p-2$. 
\end{prop}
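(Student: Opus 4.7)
The plan is to compute $\chi(\overline{\mathcal{F}}_\lambda)$ via the degree formula for the canonical differential $\omega_\lambda$ on the smooth curve $\overline{\mathcal{F}}_\lambda$,
\begin{equation*}
\chi(\overline{\mathcal{F}}_\lambda) = \sum_{q\,\in\,\mathrm{poles}(\omega_\lambda)} \mathrm{ord}_q(\omega_\lambda) - \sum_{z\,\in\,\mathrm{zeros}(\omega_\lambda)} \mathrm{ord}_z(\omega_\lambda)\,,
\end{equation*}
feeding in the complete enumeration of zeros and poles of $\omega_\lambda$ provided by Propositions~\ref{prop:zeros}, \ref{prop:simple-pole}, and~\ref{prop:flat-boundary}. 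Since $(x_1,x_2,y_1,\dots,y_p):=(a_1+1,a_2+1,b_1,\dots,b_p)$ lies in the interior of a chamber of $(\mathcal{SP}_p,\mathcal{W}_p)$, none of the walls $W_1, W_2$ given by $c_i = x_i - \sum_\bullet y_\bullet = 0$ is reached, so no simple poles appear; only the zeros and the cherry-type higher-order poles indexed by partitions $I\sqcup J\sqcup K = \{1,\dots,p\}$ contribute. The chamber further fixes, for each partition, the signs of $c_1,c_2$ (hence whether the corresponding cherry actually contributes) and the polynomial piece of $\Xi$ that applies. A preliminary step is to check that after substituting $a_1\mapsto c_1-1$ and $a_2\mapsto c_2-1$ in the $\Xi$-walls from Proposition~\ref{prop:residueless}, one recovers exactly the walls $W_3(I,K,L)$ and $W_4(J,K,M)$ of Definition~\ref{defn:SPS}; this is precisely why the specific form of the walls in $\mathcal{W}_p$ is imposed.

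Next, every surviving contribution is rewritten in the homogeneous variables $x_i, y_j$. In these variables the linear forms $c_1 = x_1 - \sum_{i\in I} y_i$ and $c_2 = x_2 - \sum_{j\in J} y_j$ are manifestly homogeneous of degree~$1$, the falling factorials $A_I = (x_1-1)\cdots(x_1-p_I+1)$ and $A_J = (x_2-1)\cdots(x_2-p_J+1)$ are polynomials of respective degrees $p_I-1$ and $p_J-1$, and $\Xi(c_1-1,c_2-1;y_1,\dots,y_{p_K})$ is, on the relevant $\Xi$-chamber, a polynomial of degree $p_K-1$ in $x_1,x_2,y_1,\dots,y_{p_K}$. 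For cherry graphs with $I,J\neq\emptyset$, using $\gcd(c_1,c_2)\cdot\lcm(c_1,c_2)=c_1 c_2$, the contribution (count times pole order) becomes
\begin{equation*}
A_I A_J\,\Xi\cdot\bigl(c_1 c_2+\gcd(c_1,c_2)\bigr),
\end{equation*}
whose polynomial part has degree $(p_I-1)+(p_J-1)+(p_K-1)+2 = p-1$ and whose $\gcd$-part has degree $p-2$, using that $\gcd(c_1,c_2)$ is homogeneous of degree~$1$ under the integer scaling $(x_i,y_j)\mapsto(\lambda x_i,\lambda y_j)$. The cases with exactly one of $I,J$ empty give $(1+c_2)A_J\Xi$ (or its symmetric counterpart), of polynomial degree at most $p-1$; the case $I=J=\emptyset$ gives $2\Xi$, again of degree at most $p-1$; and the zero contribution is $-a f(a,p) = -(x_1+x_2-2)^2(x_1+x_2-3)\cdots(x_1+x_2-p)$, a polynomial of degree $p-1$ in $x_1+x_2$.

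Summing these contributions and then expanding each as a sum of homogeneous pieces in $\mathbb{Z}[x_1,x_2,y_1,\dots,y_p]$ yields the claimed decomposition of $\chi(\overline{\mathcal{F}}_\lambda)$ into homogeneous components of degrees $0$ up to $p-1$, with every factor of $\gcd(c_1,c_2)$ confined to components of degree $\leq p-2$. The main technical obstacle is the bookkeeping: for each chamber of $\mathcal{SP}_p$ one must carefully enumerate which triples $(I,J,K)$ give a genuine cherry contribution (i.e., for which $c_1,c_2$ are positive integers and the relevant polynomial piece of $\Xi$ is nonzero), which is where the correspondence between the walls of Definition~\ref{defn:SPS} and the case distinctions of Propositions~\ref{prop:flat-boundary} and~\ref{prop:residueless} plays the central role.
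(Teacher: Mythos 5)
Your proposal is correct and follows essentially the same route as the paper: enumerate the boundary contributions to $\chi(\overline{\mathcal F}_\lambda)$ from Propositions~\ref{prop:zeros}, \ref{prop:simple-pole}, and~\ref{prop:flat-boundary}, check that the walls of $\mathcal{SP}_p$ match the case distinctions governing the cherry classification and the piecewise polynomiality of $\Xi$, and use $\gcd(c_1,c_2)\lcm(c_1,c_2)=c_1c_2$ to confine the $\gcd$ factors to degree $\leq p-2$. The only slip is cosmetic: the zero contribution $-af(a,p)$ should read $-(x_1+x_2-2)^2(x_1+x_2-3)\cdots(x_1+x_2-p+1)$ (your written product has one factor too many), though your stated degree $p-1$ is the correct one.
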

\par 
\begin{proof}
    According to Propositions~\ref{prop:zeros}, \ref{prop:simple-pole}, and~\ref{prop:flat-boundary} (see also Proposition $\ref{prop:T=F}$ below), we can express the Euler characteristic of $\overline{\mathcal F}_\lambda$ in terms of its boundary points and the residueless locus. It then  suffices to verify that the contributions from these loci to the Euler characteristic satisfy the desired claim in each chamber of $\mathcal{SP}_p$.  \par 
    Recall that the boundary of $\overline{\mathcal{F}}_\lambda$ consists of three types of graphs. The first type is given by two-level graphs $\Gamma_{z_1,z_2}$, where both zeros are in the unique bottom component and all the poles are in the unique top component, with the residues of the poles determined by $\lambda$. By Proposition~\ref{prop:zeros}, their total contribution to the Euler characteristic of $\overline{\mathcal F}_\lambda$ is a polynomial of degree $p-1$ in the variables $a_1+1$ and $a_2+1$, whose expression is independent of the chambers. 
    \par 
    The second type is given by horizontal graphs satisfying the equations $(a_1+1)-\sum_{i\in I}b_i=0=(a_2+1)-\sum_{j\in I^c}b_j$ of type $\beta_1$ and $\beta_2$ 
    in $\mathcal{W}_p$, where $I\subsetneq \{1,\ldots,p\}$ parametrizes the poles in the same component that contains the zero $z_1$, and the residues of the poles are determined by $\lambda$. 
    By Proposition~\ref{prop:simple-pole}, their  contribution to the Euler characteristic of $\overline{\mathcal F}_\lambda$ is a polynomial of degree $p-2$. 
    \par 
    The third type, which we call cherry graphs as described in Proposition~\ref{prop:flat-boundary}, is given by a partition of the poles $I\sqcup J \sqcup K=\{1,\ldots,p\}$, where the zero $z_1$ belongs to a bottom component with the poles indexed by~$I$, the zero $z_2$ belongs to the other bottom component with the poles indexed by $J$, and the remaining poles are in the top component, indexed by $K$. These cherry graphs satisfy the conditions $(a_1+1)-\sum_{i\in I}b_i\geq 1$ and $(a_2+1)-\sum_{j\in J}b_j\geq 1$ as a consequence of the level-graph definition at the edges. Moreover, the multi-scale differential restricted to the top component is residueless, and, according to Proposition~\ref{prop:residueless}, the enumeration of such residueless differentials is given by the function
    \begin{eqnarray*}
        \Xi\left(c_{1,I}-1,c_{2,J}-1, \{-b_i\}_{i\in K}\right) & = & \sum_{c_{1,I\sqcup L}-|K\setminus L|\geq 0}(-1)^{|L|}f(c_{1,I\sqcup L}-1+|L|,|K|+1)\\
        & = & \sum_{c_{2,J\sqcup M}-|K\setminus M|\geq 0}(-1)^{|M|}f(c_{2,J\sqcup M}-1+|M|,|K|+1)\,,
    \end{eqnarray*}
    where $c_{j,I}=(a_j+1)-\sum_{i\in I}b_i$ and $f(a,n)=\frac{a!}{(a-(n-2))!}$. Note that the conditions in the above summation correspond to the walls of type $\beta_3$ and $\beta_4$ in $\mathcal{W}_p$. Moreover, this description also includes the residueless locus in the interior of $\overline{\mathcal{F}}_\lambda$, corresponding to the case $I=J=\emptyset$. By examining  the cases in Proposition~\ref{prop:flat-boundary}, we see that in each case, the contribution to the Euler characteristic of $\overline{\mathcal{F}}_\lambda$ consists of homogeneous components of  degree at most $p-1$. 
    \par 
    Finally, note that the term $\gcd (c_1, c_2)$ appears in Proposition~\ref{prop:flat-boundary}
    when $I$ and $J$ are both nonempty in the cherry graph. In the contribution to the Euler characteristic, it is multiplied by $1+\lcm (c_1, c_2)$. Since $\gcd(c_1, c_2) \lcm(c_1, c_2) = c_1c_2$, the remaining terms that contain the $\gcd$ only have degree strictly smaller than the main terms.  
\end{proof}
\par 
In what follows, we provide an example to illustrate the description in Proposition~\ref{prop:chambers}.  
\par
\begin{ex}
Consider the stratum $\mathcal{H}(x_{1}-1,x_{2}-1,-y_{1},-y_{2},-y_{3})$, where $y_{2}+y_{3}<x_{1},x_{2}<y_{1}$. The singularities of $\omega_{\lambda}$ on $\overline{\mathcal F}_\lambda$ are:
\begin{itemize}
    \item $x_{1}+x_{2}-2$ zeros of order $x_{1}+x_{2}-2$;
    \item $y_{2}-1$ poles of order $x_{2}-y_{3}+1$ (where $K=\lbrace{ 1,2 \rbrace}$, $I=\emptyset$, and $J=\lbrace{ 3 \rbrace}$);
    \item $y_{3}-1$ poles of order $x_{2}-y_{2}+1$ (where $K=\lbrace{ 1,3 \rbrace}$, $I=\emptyset$, and $J=\lbrace{ 2 \rbrace}$);
    \item $y_{2}-1$ poles of order $x_{1}-y_{3}+1$ (where $K=\lbrace{ 1,2 \rbrace}$, $I=\lbrace{ 3 \rbrace}$, and $J=\emptyset$);   
    \item $y_{3}-1$ poles of order $x_{1}-y_{2}+1$ (where $K=\lbrace{ 1,3 \rbrace}$, $I=\lbrace{ 2 \rbrace}$, and $J=\emptyset$);      
    \item $\gcd(x_{1}-y_{2},x_{2}-y_{3})$ poles of order $\lcm(x_{1}-y_{2},x_{2}-y_{3})+1$ (where $K=\lbrace{ 1 \rbrace}$, $I=\lbrace{ 2 \rbrace}$, and $J=\lbrace{ 3 \rbrace}$);
    \item $\gcd(x_{1}-y_{3},x_{2}-y_{2})$ poles of order $\lcm(x_{1}-y_{3},x_{2}-y_{2})+1$ (where $K=\lbrace{ 1 \rbrace}$, $I=\lbrace{ 3 \rbrace}$, and $J=\lbrace{ 2 \rbrace}$);
    \item $x_{1}-1$ poles of order $x_{1}-y_{2}-y_{3}+1$ (where $K=\lbrace{ 1 \rbrace}$, $I=\lbrace{2,3 \rbrace}$,  and $J=\emptyset$);   
    \item $x_{2}-1$ poles of order $x_{2}-y_{2}-y_{3}+1$ (where $K=\lbrace{ 1 \rbrace}$, $I=\emptyset$, and $J=\lbrace{2,3 \rbrace}$);      
    \item $2(y_{2}-1)(y_{3}-1)$ doubles poles (where $I=J=\emptyset$).
\end{itemize}
Then, the total sum of the orders of the singularities is:
$$
(x_{1}+x_{2})(y_{2}+y_{3})
-2y_{2}y_{3}
-2x_{1}
-2x_{2}
-2y_{2}
-2y_{3}
-\gcd(x_{1}-y_{2},x_{2}-y_{3})
-\gcd(x_{1}-y_{3},x_{2}-y_{2})
+6\,.
$$
Note that the principal term is a homogeneous polynomial of degree $2$, which equals $p-1$ for $p=3$, as predicted by Proposition~\ref{prop:chambers}. 
\par 
Moreover, for the special case $y_{2}=y_{3}=1$ and $x_1 = x_2 = a+1$, the above formula simplifies to $-2\gcd(x_{1}-1,x_{2}-1) = -2a$. By Theorem~\ref{thm:MAIN5}, 
a generic isoresidual fiber of the stratum $\mathcal{H}(a,a,-2a,-1,-1)$ consists of $a$ connected components. Since these connected components have the same topology, each of them has Euler characteristic $-2$. Therefore, a generic isoresidual fiber of the stratum $\mathcal{H}(a,a,-2a,-1,-1)$ is a disjoint union of $a$ spheres. 
\end{ex}
\par
\subsection{The top term of the Euler characteristic of $\overline{\mathcal F}_\lambda$}\label{sub:Euler}
\par 
Following Proposition~\ref{prop:chambers}, the Euler characteristic of the generic isoresidual fiber for a given stratum is a sum of homogeneous components of degree from $0$ up to $p-1$ in terms of the variables $a_{1}+1,a_{2}+1,b_{1},\dots,b_{p}$. We conjecture that the term of degree $p-1$ does not vanish in any chamber and therefore provides the asymptotics for the Euler characteristic of the generic isoresidual fiber when the orders of singularities become large (for a fixed number of poles).
\par 
\begin{conj}\label{conj:Euler}
For $\lambda \in \mathcal{R}_{p} \setminus \mathcal{A}_{p}$, the top term of the Euler characteristic of $\overline{\mathcal F}_\lambda$ is a piecewise homogeneous polynomial of degree $p-1$. 
\end{conj}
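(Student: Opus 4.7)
The strategy is to directly identify the degree-$(p-1)$ leading term of $\chi(\overline{\mathcal{F}}_\lambda)$ from Propositions~\ref{prop:zeros}, \ref{prop:simple-pole}, and~\ref{prop:flat-boundary}, and then show this top polynomial is nonvanishing in every chamber of $\mathcal{SP}_p$. First, I would write $\chi(\overline{\mathcal{F}}_\lambda) = P - Z$, where $Z$ and $P$ are the total orders of zeros and poles of $\omega_\lambda$. By Proposition~\ref{prop:zeros}, $Z = (a_1+a_2)\cdot f(a_1+a_2, p)$, which is polynomial of degree exactly $p-1$ in $a_1+a_2$, with leading term $(a_1+a_2)^{p-1}$. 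By Proposition~\ref{prop:simple-pole}, the simple-pole contribution to $P$ has degree $p-2$ (each pole contributes order one, and their count is of degree $p-2$), hence strictly subleading. All substantive work lies in the cherry graph contributions of Proposition~\ref{prop:flat-boundary}.

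Second, for each cherry partition $I \sqcup J \sqcup K = \{1,\dots,p\}$, I would expand the contribution to $P$ explicitly. Using $\gcd(c_1,c_2)(1+\lcm(c_1,c_2)) = \gcd(c_1,c_2) + c_1 c_2$, the $c_1 c_2$ summand is bihomogeneous of degree two in the variables, while $\gcd(c_1,c_2)$ is degree one; the factorial ratios $\tfrac{a_j!}{(a_j-p_{I_j}+1)!}$ are polynomial of degree $p_{I_j}-1$ in $a_j+1$, and by Proposition~\ref{prop:residueless}, $\Xi(c_1-1,c_2-1;\{b_i\}_{i\in K})$ is piecewise polynomial of degree $|K|-1$ in each subchamber. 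Multiplying these degrees gives total degree at most $p-1$, and the degree-$(p-1)$ part of each cherry contribution comes solely from pairing $c_1c_2$ with the top polynomial parts of the remaining factors.

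Third, I would assemble the degree-$(p-1)$ terms over all cherry configurations and combine with $-Z$. Applying the inclusion-exclusion expansion of $\Xi$ from Proposition~\ref{prop:residueless}, I expect the resulting alternating sum of products of factorial ratios to collapse via Vandermonde-type identities to a manageable closed form. To verify non-vanishing across chambers, I would take Proposition~\ref{prop:one-chamber} as the base case (where the claim holds by direct computation) and propagate to all chambers by wall-crossing across the hyperplanes of $\mathcal{W}_p$. Each wall toggles exactly one inclusion-exclusion term, modifying the top-degree polynomial by an explicit piece that one must show does not annihilate the leading polynomial in the adjacent chamber.

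The main obstacle will be ruling out accidental cancellation of the degree-$(p-1)$ polynomial in some remote chamber: the alternating signs in $\Xi$ combined with the sum over cherry partitions make a coincidental vanishing a priori plausible. I would address this in one of two ways. Option (a): regroup the terms to express the top polynomial as a sum of monomials whose signs are uniform within each chamber, so that no cancellation can occur; this would require a careful bookkeeping of how $\gcd$-free leading contributions pair with zero contributions. Option (b): invoke the intersection-theoretic computation of $\chi$ from Section~\ref{sec:intersection} to reinterpret the degree-$(p-1)$ term as a top intersection number on a boundary stratum of the multi-scale compactification, where positivity would follow from the effectivity of a canonical divisor representative. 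Option (b) seems geometrically more robust, whereas option (a) is more hands-on; I would pursue both in parallel.
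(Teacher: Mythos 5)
The statement you are trying to prove is stated in the paper as a \emph{conjecture}: the authors do not prove it. They establish only (i) the structural fact that $\chi(\overline{\mathcal F}_\lambda)$ is a sum of homogeneous components of degree $0$ up to $p-1$ in each chamber (Proposition~\ref{prop:chambers}), and (ii) the non-vanishing of the degree-$(p-1)$ component in the single family of chambers where the smallest zero order is below every pole order (Proposition~\ref{prop:one-chamber}), where the computation telescopes because only cherry graphs with $I=\emptyset$ occur. Your first two steps essentially reprove Proposition~\ref{prop:chambers}: the decomposition $\chi = P - Z$, the degree count for the zeros, the subleading nature of the simple-pole contribution, and the observation that $\gcd(c_1,c_2)\bigl(1+\lcm(c_1,c_2)\bigr) = \gcd(c_1,c_2) + c_1c_2$ isolates the bihomogeneous leading piece are all correct and match the paper. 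That part is fine but is not where the content of the conjecture lies.

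The genuine gap is exactly the step you flag as the ``main obstacle,'' and neither of your two options resolves it. For option (a), in a general chamber both $I$ and $J$ range over nonempty sets, the inclusion--exclusion expansion of $\Xi$ carries signs $(-1)^{|L|}$, and the degree-$(p-1)$ part of the cherry sum is a genuinely alternating sum of products of factorial ratios; no uniform-sign regrouping is known, and your wall-crossing induction from Proposition~\ref{prop:one-chamber} merely restates the conjecture locally --- showing that the explicit polynomial added when a wall of type $W_3$ or $W_4$ toggles an inclusion--exclusion term (or when a wall of type $W_1,W_2$ creates or destroys entire cherry configurations) does not annihilate the leading polynomial in the adjacent chamber \emph{is} the open problem. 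For option (b), the divisor appearing in Proposition~\ref{prop:T=F} is not effective: it mixes terms of both signs (e.g.\ $-D_h$, the $-2\Xi$ contribution from the residueless locus, and the positive $(\ell_{\Gamma_{z_1,z_2}}-1)D_{\Gamma_{z_1,z_2}}$ term), and indeed the verified top term $x_1^{p-1}+x_2^{p-1}-(x_1+x_2)^{p-1}$ is negative on the positive orthant for $p\geq 3$, so no positivity/effectivity argument can apply directly; one would need a definite-sign statement in every chamber, which again is the conjecture. In short, your proposal is a reasonable research plan that correctly locates the difficulty, but it does not constitute a proof, and the paper itself leaves the statement open outside one family of chambers.
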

\par 
In the chambers of $\mathcal{SP}_{p}$ where the order of the smallest zero is smaller than the order of every pole, we confirm Conjecture~\ref{conj:Euler} by providing an explicit formula for the term of degree $p-1$ in the Euler characteristic.  
\par 
\begin{prop}
\label{prop:one-chamber}
For any $p \geq 2$, the top term of the Euler characteristic of the generic isoresidual fiber in the chamber of $\mathcal{SP}_{p}$ defined by the inequalities $x_{1}>p$ and $x_{1}<y_{j} +1-p$ for any $j \in \lbrace{ 1,\dots,p \rbrace}$,  is the homogeneous polynomial
$$
x_{1}^{p-1}+x_{2}^{p-1}-(x_{1}+x_{2})^{p-1}\,.
$$
\end{prop}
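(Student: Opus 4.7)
The plan is to exploit the explicit descriptions of the zeros and poles of $\omega_\lambda$ on $\overline{\mathcal{F}}_\lambda$ given in Propositions~\ref{prop:zeros},~\ref{prop:simple-pole}, and~\ref{prop:flat-boundary}, combined with the identity
\[
\chi(\overline{\mathcal{F}}_\lambda) = -\!\!\sum_{\text{zeros of }\omega_\lambda}\!\!\operatorname{ord} + \!\!\sum_{\text{poles of }\omega_\lambda}\!\!\operatorname{ord}
\]
coming from $\deg(\omega_\lambda) = 2g(\overline{\mathcal{F}}_\lambda) - 2$. The chamber hypotheses $x_1 > p$ and $y_j > x_1 + p - 1$ drastically reduce the combinatorics: for every nonempty $I \subseteq \{1,\dots,p\}$ we have $\sum_{i \in I} y_i \geq y_{\min} > a_1 + p > a_1 + 1$, so the horizontal condition $\sum_{i \in I} b_i = a_1 + 1$ from Proposition~\ref{prop:F-horizontal} is never satisfied, and the cherry-graph condition $c_1 = a_1 + 1 - \sum_{i \in I} b_i \geq 1$ forces $I = \emptyset$. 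Only the top-bottom degeneration (contributing the zeros of $\omega_\lambda$) and the cherry graphs with $I = \emptyset$ can occur.

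Next I will simplify the residueless count $\Xi$ in this chamber. Since $b_j > a_1 + p$, the inequality $a_1 + 1 - b_{i_1} - \cdots - b_{i_k} - (p_K - k) \geq 0$ in Proposition~\ref{prop:residueless} fails for every nonempty subset of $\{b_k\}_{k \in K}$, so only the $k=0$ inclusion-exclusion term survives, yielding
\[
\Xi(a_1, c_2-1; \{b_k\}_{k \in K}) = (p_K-1)!\,\binom{a_1}{p_K-1}\,,
\]
a polynomial of degree $p_K - 1 = p - |J| - 1$ in $a_1$ with leading coefficient $1$. The same argument gives $\Xi(a_1,a_2;b_1,\dots,b_p)$ with leading term $a_1^{p-1}$, whose doubling contributes $2a_1^{p-1}$ from the $I = J = \emptyset$ double poles.

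The heart of the computation is extracting the degree $p-1$ terms from the cherry-graph sum over nonempty $J$. For each $J$ with $|J| = k \in \{1,\dots,p-1\}$, the contribution to $\sum\operatorname{ord}$ has top-degree part $x_1^{p-k-1} x_2^{k-1}\bigl(x_2 - \sum_{j \in J} y_j\bigr)$. Summing over $J$ of size $k$ and using $\sum_{|J|=k}\sum_{j \in J} y_j = \binom{p-1}{k-1}(x_1+x_2)$ together with Pascal's identity gives
\[
\binom{p-1}{k} x_1^{p-k-1} x_2^k - \binom{p-1}{k-1} x_1^{p-k} x_2^{k-1}\,.
\]
Summing over $k = 1,\dots,p-1$ and applying the binomial theorem, the two pieces telescope to $\bigl[(x_1+x_2)^{p-1} - x_1^{p-1}\bigr] - \bigl[(x_1+x_2)^{p-1} - x_2^{p-1}\bigr] = x_2^{p-1} - x_1^{p-1}$. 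Adding the double-pole contribution $2x_1^{p-1}$ and the zero contribution $-a\cdot f(a,p)$, whose leading term is $-(x_1+x_2)^{p-1}$, yields the degree $p-1$ term
\[
x_1^{p-1} + x_2^{p-1} - (x_1+x_2)^{p-1}\,,
\]
as required. The main obstacle is careful bookkeeping of leading coefficients across the different boundary types, but the chamber hypothesis eliminates all non-leading combinatorial contributions and collapses the proof to the clean binomial identity above.
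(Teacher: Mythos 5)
Your proof is correct and follows essentially the same route as the paper's: both use the chamber inequalities to force $I=\emptyset$ in all degenerations, extract the leading term $x_2^{|J|-1}x_1^{|K|-1}\bigl(x_2-\sum_{j\in J}y_j\bigr)$ for each cherry partition, and telescope the binomial sum against the zero contribution $-(x_1+x_2)^{p-1}$. Your explicit simplification of $\Xi$ via the vanishing of all $k\geq 1$ inclusion-exclusion terms and your use of Pascal's identity are just more detailed renderings of the averaging step the paper performs.
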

\par 
\begin{proof}
Using the results of Sections~\ref{subsub:zeros} and~\ref{subsub:HigherPoles}, we compute the term of degree $p-1$ in the Euler characteristic of $\overline{\mathcal F}_\lambda$ by adding the terms of degree $p-1$ in the total order of the zeros and poles of $\omega_{\lambda}$.
\par
Since $x_{1}<y_{j} -p$ for any $j \in \lbrace{ 1,\dots,p \rbrace}$, the poles of $\omega_{\lambda}$ correspond to degenerations where $I=\emptyset$ while~$J$ and $K$ form an arbitrary partition of $\lbrace{ 1,\dots,p \rbrace}$ where $K \neq \emptyset$.
\par
For each such partition $J \cup K$ where $J,K \neq \emptyset$, the total order of the poles (computed using the explicit formula for $\Xi$ given in Proposition~\ref{prop:residueless}) is $x_{2}^{|J|-1}x_{1}^{|K|-1}(x_{2}-\sum\limits_{j \in J} y_{j})$ (plus terms of lower order). Summing on the set of partitions where $|J|=j$ and $|K|=k$, we obtain
$$
\binom{p}{j}
x_{2}^{|J|-1}x_{1}^{|K|-1} \left(x_{2}
- \frac{j}{p}\sum\limits_{j=1}^{p} y_{j}
\right)
=
\binom{p}{j}
x_{2}^{|J|-1}x_{1}^{|K|-1} \left(\frac{k}{p}x_{2}
- \frac{j}{p}x_{1}
\right)\,.
$$
For the (unique) partition where $J$ is empty, the total order is $2x_{1}^{p-1}$ (plus terms of lower order). Summing these contributions on the possible values of $j$, we obtain a telescopic sum equal to $x_{1}^{p-1}+x_{2}^{p-1}$.
\par
Finally, the term of degree $p-1$ in the total order of the zeros is $(x_{1}+x_{2})^{p-1}$. Therefore, the term of degree $p-1$ in the Euler characteristic of $\overline{\mathcal F}_\lambda$ is $x_{1}^{p-1}+x_{2}^{p-1}-(x_{1}+x_{2})^{p-1}$.
\end{proof}

\section{Intersection calculations on the multi-scale compactification}
\label{sec:intersection}
\par 
In this section, we interpret and recover some of the previous results by using degeneration and intersection theory on the multi-scale compactification of strata of differentials. 
\par 
\subsection{The genus of a complete intersection curve}
\label{subsec:intersection}
\par 
First, we review how to use the adjunction formula successively to calculate the arithmetic genus $p_a$ of a complete intersection curve. Note that the arithmetic genus and the geometric genus coincide for smooth curves, although they can differ for singular curves. 
\par 
Let $C = H_1\cap \cdots \cap H_{n-1}$ be a complete intersection curve of arithmetic genus $p_a$ in an $n$-dimensional variety $X$, where each $H_i$ is a hypersurface (i.e., an effective Cartier divisor) in $X$. Then,  
$$ K_{H_1\cap\cdots \cap H_k} = \left(K_{H_1\cap \cdots \cap H_{k-1}} + H_k\right)|_{H_1\cap \cdots \cap H_{k}} $$
where $K$ stands for the canonical divisor class. 
It follows that 
$$2p_a(C)-2 = \left(K_X + \sum_{i=1}^{n-1} H_i\right) \prod_{i=1}^{n-1} H_i\,. $$
\par 
Recall that $\mu = (a_1, a_2, -b_1, \ldots, -b_p)$ is a partition of $-2$ with two positive entries and $p$ negative entries. The projectivized moduli space of differentials of type $\mu$ on $\mathbb{CP}^1$ is isomorphic to $\mathcal M_{0,p+2}$, and let $\MS(\mu)$ be the multi-scale compactification. We denote by $z_1$ and $z_2$ the two zeros, and by $q_1,\ldots,q_p$ the poles. 
\par 
We use $\Gamma$ to denote the level graph of a multi-scale differential. If $\Gamma$ has exactly two levels, then the locus of multi-scale differentials with the level graph $\Gamma$ is a boundary divisor class denoted by $D_\Gamma$. We use $\Gamma^{\top}$ and $\Gamma^{\bot}$ to denote the top and bottom levels of $\Gamma$, and we denote by $N_{\Gamma}^{\top}$ and $N_{\Gamma}^{\bot}$ the (unprojectivized) dimensions of the top and bottom level strata, respectively. We also denote by $\ell_\Gamma$ the $\lcm$ of the prong numbers of the vertical edges in $\Gamma$. The set of two-level graphs in $\MS(\mu)$ is denoted by $\LG_1(\mu)$.  
\par 
If $\Gamma$ has only one horizontal edge (i.e., with two simple poles at the branches of the corresponding node), we denote this horizontal boundary divisor class by $D_h$. 
\par 
Given a generic configuration of residues $\lambda = (\lambda_{1}, \ldots, \lambda_{p})\in \mathcal{R}_{p} \setminus \mathcal{A}_{p}$, let $D_i$ be the divisorial locus in $\MS(\mu)$ 
where $\lambda_i \Res_{q_p}\omega - \lambda_{p} \Res_{q_i}\omega = 0$ for $i=1,\ldots, p-1$. From this description, the divisor class of $D_i$ is given by 
$$ D_i = -\eta\,, $$
where $\eta$ is the first Chern class of the tautological line bundle $\mathcal O(-1)$, whose fibers are generated by the top level differentials on $\MS(\mu)$. 
\par 
We remark that $D_i$ can contain extraneous boundary divisor components that arise in the following two ways. First, $q_i$ and $q_p$ can both go to the lower levels of a level graph, resulting in  their residues becoming zero (before twisting to obtain the multi-scale differential). Alternatively, suppose one of them is on the top level. If the top level vertex containing this pole has it as the unique pole, then the residue is still automatically zero.
\par 
Recall that $\overline{\mathcal F}_{\lambda}$ is the closure of the isoresidual fiber $\mathcal F_\lambda$ in $\MS(\mu)$ parametrizing differentials with $\lambda$ as the prescribed configuration of residues at the poles (up to simultaneous scaling). Let 
$$\mathcal D_\lambda = D_1\cap \cdots \cap D_{p-2}\,.$$ 
From the above construction, it is clear that 
$\overline{\mathcal F}_\lambda\subset \mathcal D_\lambda$. In what follows, we will study the differences between them. Recall also that in Section~\ref{sub:boundary}, we described the multi-scale differentials contained in the boundary of $\overline{\mathcal F}_{\lambda}$. Next, we study multi-scale differentials that are contained in the boundary of $\mathcal D_\lambda$. 
\par 
\begin{prop}
\label{prop:boundary}
Let $\lambda$ be a generic tuple of residues. 
Then a multi-scale differential $(X,\omega)$ contained in the boundary of $\mathcal D_\lambda$ satisfies the following conditions: 
\begin{itemize}
\item If the level graph of $(X,\omega)$ has only one level with a horizontal edge, then the horizontal edge separates two components $X_1$ and $X_2$, where $X_1$ contains $z_1$ and the subset of poles labeled in $I$, and $X_2$ contains $z_2$ and the subset of poles labeled in $J$ (with $I\sqcup J = \{1,\ldots,p\}$), and $a_1 - \sum\limits_{i\in I} b_i + 1 = 0$. Moreover, the residues of the marked poles are determined by $\lambda$. 
\item If the level graph has multiple levels and if the lower levels contain a marked pole, then the residue of every top-level marked pole is zero.
\end{itemize}
\end{prop}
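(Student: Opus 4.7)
The plan is to unpack the defining equations $\lambda_i \Res_{q_p}\omega = \lambda_p \Res_{q_i}\omega$ of $\mathcal{D}_\lambda$ at a boundary point $(X,\omega)$, combined with the residue theorem on each irreducible component of $X$ and the fact that $\Res_{q_i}\omega$, viewed as a section of $\mathcal{O}(1)$ on $\MS(\mu)$, vanishes identically on any boundary divisor along which $q_i$ drops to a lower level.

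For part (1), a single-level degeneration with one horizontal edge forces $X=X_1\cup X_2$ to meet at a node with simple poles on each branch. Stability excludes both zeros lying on the same component: if $z_1,z_2\in X_1$, the Euler characteristic balance on $X_2$ would give $\sum_{i\in J}b_i=1$, leaving $X_2$ with only two marked points (one marked pole and the node), contradicting the stability of the pointed nodal curve. Thus, up to relabeling, $z_1\in X_1$ and $z_2\in X_2$, the poles split as $I\sqcup J=\{1,\ldots,p\}$, and the Euler characteristic balance on $X_1$ yields $a_1-\sum_{i\in I}b_i+1=0$. Since no marked pole residue vanishes automatically in this horizontal case, the equations $D_1,\ldots,D_{p-2}$ act non-degenerately, giving $\Res_{q_i}\omega=c\,\lambda_i$ with $c:=\Res_{q_p}\omega/\lambda_p$ for $i\leq p-2$; the global residue identity $\sum_{i=1}^p\Res_{q_i}\omega=0$ then fixes $\Res_{q_{p-1}}\omega=c\,\lambda_{p-1}$, so the whole residue tuple is proportional to $\lambda$.

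For part (2), assume $(X,\omega)$ has multiple levels with some marked pole $q_j$ on a lower level. If $q_p$ is itself on a lower level, then $\Res_{q_p}\omega=0$ and each $D_i$ collapses to $\lambda_p\Res_{q_i}\omega=0$, killing every top-level marked residue indexed by $i\leq p-2$. If $q_p$ lies on the top level but some $q_j$ with $j\leq p-2$ is below, then $D_j$ reads $\lambda_j\Res_{q_p}\omega=0$ and forces $\Res_{q_p}\omega=0$ by the genericity $\lambda_j\neq 0$, after which the remaining equations kill the other top-level residues with $i\leq p-2$.

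The delicate case is when $q_p$ together with all of $q_1,\ldots,q_{p-2}$ remain on the top level while only $q_{p-1}$ drops below, since the divisors $D_1,\ldots,D_{p-2}$ alone do not immediately force $\Res_{q_p}\omega=0$ nor constrain $\Res_{q_{p-1}}\omega$. To handle this case, and to kill $\Res_{q_{p-1}}\omega$ whenever $q_{p-1}$ sits on the top level in the earlier cases, I invoke the residue theorem on each top-level component $v$: as there are no horizontal edges and top branches of vertical edges carry zero residue, $\sum_{q_i\in v}\Res_{q_i}\omega=0$. Substituting the proportionality $\Res_{q_i}\omega=c\,\lambda_i$ into this equation gives $c\bigl(\sum_{q_i\in v,\,i\neq p-1}\lambda_i\bigr)=0$ on any top-level component $v$ containing a marked pole; since $q_{p-1}\notin v$, this is a proper nonempty partial sum of the $\lambda_i$, hence nonzero by $\lambda\in\mathcal{R}_p\setminus\mathcal{A}_p$, so $c=0$ and every top-level marked residue vanishes. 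The main obstacle is orchestrating the case analysis so that the residue theorem, the vanishing of lower-level residues, and the non-vanishing of all partial sums of $\lambda$ together cover every configuration in which a top-level residue could otherwise escape the constraints imposed by $D_1,\ldots,D_{p-2}$.
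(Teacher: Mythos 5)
Your proof is correct and follows essentially the same route as the paper's: unpack the defining equations of the $D_i$, use stability and the degree count on each component for the horizontal case, and use the vanishing of lower-level residues plus the residue theorem for the multi-level case. The one place where you add genuine value is the configuration you call delicate — $q_{p-1}$ being the \emph{only} marked pole on a lower level, so that none of $D_1,\dots,D_{p-2}$ directly forces a vanishing. The paper's proof is terse exactly there (its phrase ``using some $q_j$ in a lower level implies that the residue at $q_p$ is zero'' only works verbatim when $j\leq p-2$), and your resolution — applying the residue theorem on a top-level component, which necessarily carries a nonempty proper subset of the marked poles not containing $q_{p-1}$, and invoking $\lambda\in\mathcal{R}_p\setminus\mathcal{A}_p$ to rule out the vanishing of that partial sum — is the right way to close it; the same residue-theorem step also correctly disposes of $\Res_{q_{p-1}}\omega$ when $q_{p-1}$ stays on top. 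One cosmetic point: the upper branch of a vertical edge is a zero of the top-level differential rather than a pole of zero residue, but this does not affect your use of the residue theorem.
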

\par 
\begin{proof}
In the first case, since there are only two zeros and each vertex (of genus zero) must contain a zero, it follows that $X$ has two components joined by the horizontal edge. Since there is no lower level, the residues do not degenerate to zero; hence, they are still determined by $\lambda$. 
\par 
In the second case, suppose a marked pole $q_i$ is on the top level for $i\leq p-2$. If $q_p$ is in a lower level, then since the residue at $q_p$ is zero (before twisting to obtain the multi-scale differential), it follows that the residue at $q_i$ is zero. If $q_p$ is also on top, then using some $q_j$ in a lower level implies that the residue at $q_p$ is zero, and hence the residue of each top-level pole is zero. Finally,  if $q_{p-1}$ is the only top-level pole, then by the residue theorem, its residue is zero.  
\end{proof}
\par 
Note that a simple pole cannot have zero residue. Therefore, the case when all the poles are simple is easier to treat  using Proposition~\ref{prop:boundary}. For the case of higher-order poles, we need to subtract extraneous boundary components from each $D_i$ before taking the complete intersection. In the following two sections, we will deal with the two cases separately.   
\par 
\subsection{Strata with only simple poles}
\label{sec:simple}
\par 
In this section, we will prove Theorem~\ref{thm:simple} using intersection theory on $\MS(\mu)$. 
\par 
\begin{cor}
\label{cor:boundary-simple}
Let $\lambda$ be a generic tuple of residues. If all poles are simple, i.e., $b_i = 1$ for all $i$, then $\mathcal D_\lambda $ does not contain any multi-scale differential whose level graph has more than one level with a pole below the top level. In this case, $\mathcal D_\lambda = \overline{\mathcal F}_\lambda$ and 
\begin{eqnarray} 
\label{eq:simple}
2g (\overline{\mathcal F}_\lambda) - 2 & = & \left( 2\eta - D_h + \sum_{\Gamma\in \LG_1(\mu)} (N_{\Gamma}^{\bot} \ell_\Gamma - 1) D_\Gamma  \right) (-\eta)^{p-2}\,.
\end{eqnarray}
\end{cor}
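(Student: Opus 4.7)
The plan is to proceed in two steps: first identify $\mathcal{D}_\lambda$ with $\overline{\mathcal{F}}_\lambda$ as subvarieties of $\MS(\mu)$, and then apply repeated adjunction.

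For the first step, the inclusion $\overline{\mathcal{F}}_\lambda \subseteq \mathcal{D}_\lambda$ is built into the definition of $\mathcal{D}_\lambda$. To rule out extraneous components, I apply Proposition~\ref{prop:boundary}, which lists the only two possibilities for points of $\mathcal{D}_\lambda \setminus \mathcal{F}_\lambda$: a horizontal degeneration (which already lies in $\overline{\mathcal{F}}_\lambda$ by Proposition~\ref{prop:F-horizontal}), or a multi-level degeneration with every top-level marked pole having vanishing residue. The second case is incompatible with all poles being simple, since the local model at a simple pole is $r\,dz/z$ with $r \neq 0$. This forces $\mathcal{D}_\lambda = \overline{\mathcal{F}}_\lambda$.

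For the second step, Theorem~\ref{thm:closureMSD} ensures that $\overline{\mathcal{F}}_\lambda$ is a smooth complete intersection curve, cut out by the $p-2$ divisors $D_i \equiv -\eta$ inside the $(p-1)$-dimensional smooth orbifold $\MS(\mu)$. Iterating the adjunction formula from Section~\ref{subsec:intersection} yields
$$2g(\overline{\mathcal{F}}_\lambda) - 2 = \bigl(K_{\MS(\mu)} - (p-2)\eta\bigr)\cdot (-\eta)^{p-2}.$$
Matching this with the target formula then reduces the problem to establishing the canonical class identity
$$K_{\MS(\mu)} = p\,\eta - D_h + \sum_{\Gamma\in \LG_1(\mu)} \bigl(N_\Gamma^\bot \ell_\Gamma - 1\bigr)\, D_\Gamma.$$

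The main obstacle is precisely this canonical class identity. Its overall shape follows from the log smoothness of the multi-scale compactification established in \cite{BCGGM3}: the coefficient $-1$ of $D_h$ is the standard contribution of a horizontal normal-crossings divisor; the factor $\ell_\Gamma$ at each vertical boundary divisor $D_\Gamma$ arises from the prong-matching cover that models $D_\Gamma$ locally; and the factor $N_\Gamma^\bot$ reflects the rescaling of the bottom-level differential by the level-transition parameter, contributing with multiplicity equal to the dimension of the bottom-level stratum. The leading coefficient $p$ of $\eta$ is then pinned down by testing against a generic curve in the interior, where only the tautological class survives. Once the formula is verified in this form, substitution into the adjunction computation yields the stated genus formula. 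As an independent cross-check, the same Euler characteristic can be read off by summing the orders of zeros and poles of $\omega_\lambda$ produced in Propositions~\ref{prop:zeros},~\ref{prop:simple-pole}, and~\ref{prop:flat-boundary}, which is essentially the content of the later Proposition~\ref{prop:T=F}.
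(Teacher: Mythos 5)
Your proposal follows essentially the same route as the paper: use Proposition~\ref{prop:boundary} to rule out extraneous multi-level components of $\mathcal D_\lambda$ (a simple pole cannot have zero residue, while every genus-zero top-level vertex must carry a marked pole), identify $\mathcal D_\lambda$ with the smooth complete intersection $\overline{\mathcal F}_\lambda$, and iterate the adjunction formula. The one substantive difference is your treatment of the canonical class of $\MS(\mu)$: the paper simply invokes \cite[Theorem~1.1]{CMZ}, whereas you sketch a term-by-term heuristic derivation; that sketch correctly identifies the formula but does not prove it (it is a nontrivial theorem in its own right), so the clean fix is to cite it rather than re-derive it. With that citation in place, your argument matches the paper's.
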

\par 
\begin{proof}
By Proposition~\ref{prop:boundary}, for a multi-scale differential contained in $\mathcal D_\lambda$ with multiple levels, if there is a pole below the top level, then every top-level pole has zero residue. Note that every vertex (of genus zero) on top level must contain at least one pole. Additionally, a simple pole on the top level cannot have zero residue. This thus justifies the first claim. We remark that $D_\lambda$ can contain multi-scale differentials with a two-level, two-vertex graph, where all simple poles are on the top-level vertex with prescribed generic residues. Note that the locus of such multi-scale differentials is $0$-dimensional and thus does not produce any extraneous boundary component in $\mathcal D_\lambda$ but not in $\overline{\mathcal F}_\lambda$. 

Since $\overline{\mathcal F}_\lambda$ is smooth by Theorem~\ref{thm:closureMSD}, we have 
\begin{eqnarray}
2g(\overline{\mathcal F}_\lambda) - 2 & = & 2p_a (\mathcal D_\lambda) - 2 \\ \nonumber
& = & \left(K_{\MS(\mu)} + \sum_{i=1}^{p-2}D_i\right) \prod_{i=1}^{p-2} D_i \\ \nonumber 
& = & \left( 2\eta - D_h + \sum_{\Gamma\in \LG_1(\mu)} (N_{\Gamma}^{\bot} \ell_\Gamma - 1) D_\Gamma  \right)(-\eta)^{p-2}\,,
\end{eqnarray}
where we used the canonical divisor class formula of $\MS(\mu)$ from \cite[Theorem 1.1]{CMZ}. 
\end{proof}
\par 
Consider $\mu = (a_1, a_2, \rec[-1][a_1+a_2+2])$ where $a_1, a_2 > 0$ and all the poles are simple. By Corollary~\ref{cor:boundary-simple}, in this case, the generic isoresidual fiber $\overline{\mathcal F}_\lambda = \mathcal D_\lambda = D_1\cap \cdots \cap D_{p-2}$; hence, we can apply Formula~\eqref{eq:simple} to compute the genus of $\overline{\mathcal F}_\lambda$.
\par 
\begin{proof}[Proof of Theorem~\ref{thm:simple}]
In Formula~\eqref{eq:simple}, we need to evaluate 
$$D_h (-\eta)^{a_1+a_2}, \quad (-\eta)^{a_1+a_2+1},\quad D_\Gamma (-\eta)^{a_1+a_2}$$ 
where $\Gamma$ is a two-level graph with $N_\Gamma^{\bot} = 1$ (otherwise, the intersection of $D_\Gamma$ with $\eta^{a_1+a_2}$ is zero since the top level does not have enough dimension). For simplicity of notation, we denote 
$$ N(a_1, a_2) = \int_{\MS(\mu)} (-\eta)^{a_1+a_2+1}\,. $$
We also denote and recall 
$$ N(a) = \int_{\MS(a, \rec[-1][a+2])} (-\eta)^{a} = a! \, $$
from \cite{ChPrIso}. 
\par 
First, a general differential in $D_h$ is of type $(a_1,\rec[-1][a_1+1]; -1)\times (a_2, \rec[-1][a_21+1]; -1)$ with matching residues at the horizontal node. Let $\pi_1$ and $\pi_2$ be the projections from the product of the strata of the two horizontal components to each of them, respectively.   Then $D_h$ can be identified with the zero locus of a section of $-\eta$ over $\PP(\pi_1^{*}\mO(-1) \oplus \pi_2^{*}\mO(-1))$, using the matching residue condition at the horizontal node.  We also need to choose $a_1+1$ simple poles to be in the component that contains $z_1$. It follows that
\begin{eqnarray*} 
\int_{\MS(\mu)}D_h (-\eta)^{a_1+a_2} & = & \binom{a_1 + a_2 + 2 }{a_1 + 1} a_1 ! a_2 ! \\ 
& = & \frac{(a_1+a_2+2)!}{(a_1+1)(a_2+1)}\,. 
\end{eqnarray*}
\par 
Next, we recall the divisor class relation 
$$ \eta = (-1+1) \psi_{q_1} -\sum_{q_1\in \Gamma^\bot}  \ell_\Gamma D_\Gamma $$
from \cite[Proposition 8.2]{CMZ}. 
Then we have 
\begin{eqnarray*}
N (a_1, a_2) & = & \left( \sum_{q_1\in \Gamma^\bot,\ N_\Gamma^{\bot} = 1} \ell_\Gamma D_\Gamma \right) (-\eta)^{a_1+a_2} \\
& = & a_1 N(a_1-1, a_2) + a_2 N(a_1, a_2-1)\,.  
\end{eqnarray*}
In the above, the two terms come from the two cases of the bottom vertex of $\Gamma$ containing $q_1$ and $z_1$, or containing $q_1$ and $z_2$, respectively. Note that $N(0,0) = 0$. By induction, we conclude that $N(a_1, a_2) = 0$ for all $a_1$ and $a_2$. 
\par 
Finally, we compute $\sum\limits_{N_\Gamma^{\bot} = 1} \ell_\Gamma D_\Gamma (-\eta)^{a_1+a_2}$, where the sum runs over the two-level graphs with (unprojectivized) bottom dimension $N_\Gamma^{\bot} = 1$. Since every top vertex must carry a marked pole, the global residue condition (GRC) is void. There are two cases, depending on whether a simple pole (say $q_1$) is in the lower level or not. First, suppose $q_1\in \Gamma^\bot$.  Then the top vertex contains one marked zero and one zero edge. In this case, we obtain $D_\Gamma (-\eta)^{a_1+a_2} = 0$ as before. Next, suppose all simple poles are on the top level. Then there are two sub-cases. If both $z_1$ and $z_2$ are on the bottom, then we obtain 
$D_\Gamma (-\eta)^{a_1+a_2} = N(a_1+a_2)$ with $\ell_\Gamma = a_1+a_2+1$. If $z_1$ is on the bottom and there are two top vertices---one containing $k$ many simple poles and the other containing $z_2$ with the remaining simple poles---then the corresponding contribution is  
$$ \sum_{k=2}^{a_1} \binom{a_1+a_2+2}{k} N(k-2) N(a_1-k, a_2) = 0\,. $$
Similarly, if $z_2$ is on the bottom and there are two top vertices, the contribution is 
$$ \sum_{k=2}^{a_2} \binom{a_1+a_2+2}{k} N(k-2) N(a_1, a_2-k) = 0\,. $$
\par 
In summary, we conclude that 
\begin{eqnarray*}
2g (\overline{\mathcal F}_\lambda) -2 & = & - D_h (-\eta)^{a_1+a_2} - 2 (-\eta)^{a_1+a_2+1} + 
\sum_{N_\Gamma^{\bot} = 1} (N_\Gamma^{\bot}\ell_\Gamma - 1) D_\Gamma (-\eta)^{a_1+a_2} \\
&= & - \frac{(a_1+a_2+2)!}{(a_1+1)(a_2+1)} - 2\cdot 0 + (a_1+a_2+1-1) \cdot (a_1+a_2)! \\
& = & (a_1+a_2)! \left(  a_1+a_2  - \frac{(a_1+a_2+2)(a_1+a_2+1)}{(a_1+1)(a_2+1)} \right)\,.  
\end{eqnarray*}
This completes the proof of Theorem~\ref{thm:simple}. 
\end{proof}
\par 

\subsection{Strata with arbitrary poles}
\par
Now, we consider a partition $\mu = (a_1, a_2, -b_1, \ldots, -b_p)$ of $-2$, where the $b_i$'s can be arbitrary positive integers. 
\par 
We denote $T_0= \MS(\mu)$. For $i=1,\ldots,p-2$, we define the spaces $T_i$ successively, each as the closure inside $T_{i-1}$ of the locus of differentials $(\mathbb{C}\mathbb{P}^1,\omega)$ satisfying the {\em residue equation} 
$$\lambda_{p} \Res_{q_i}\omega-\lambda_{i} \Res_{q_p}\omega=0$$ 
for $q_i$. 
\par 
We denote and recall 
$$ f(a,n) = \frac{a!}{(a-n+2)!}\,.$$ 
Moreover, we denote and recall 
$$ c_{1,I} = a_1 + 1 - \sum_{i\in I}b_i \quad {\rm and} \quad c_{2,J} = a_2 + 1 - \sum_{j\in J}b_j$$
for subsets $I$ and $J$ of $\{1,\ldots, p\}$, as used in Proposition~\ref{prop:flat-boundary}. We also recall the quantity $\Xi(a_1,a_2;b_1,\ldots, b_n)$, which is described in Proposition~\ref{prop:residueless} and counts the number of residueless differentials of type $\mu$. 
\par 
\begin{prop}
    \label{prop:T=F}
    For any generic configuration of residues $\lambda\in \mathcal{R}_p\setminus \mathcal{A}_p$, we have $T_{p-2}=\overline{\mathcal F}_\lambda$. Moreover, the genus $g_{\lambda}$ of $\overline{\mathcal F}_\lambda$ is determined by
    \begin{eqnarray*}
        2g_\lambda-2 & = & (a_1+a_2)f(a_1+a_2, p) \\
      & & - \sum_{c_{1,I}=0} f(a_1, |I|+1)f(a_2,|I^c|+1)\\
      & & - \sum_{c_{1,I}>0} (c_{1,I}+1)f(a_1,|I|+1)\Xi(c_{1,I}-1,a_{2};\{b_i\}_{i\in I})\\
      & & - \sum_{c_{2,J}>0} (c_{2,J}+1)f(a_2,|J|+1)\Xi(a_{1}, c_{2,J}-1;\{b_j\}_{j\in J})\\
      & & -\sum_{\substack{c_{1,I}>0, \\ c_{2,J}>0}}(1+\lcm(c_{1,I},c_{2,J}))f(a_1,|I|+1)f(a_2,|J|+1)\gcd(c_{1,I},c_{2,J})\Xi(c_{1,I}-1,c_{2,J}-1;\{b_i\}_{i\in K})\\
      & &-2\Xi(a_1,a_2;b_1,\ldots, b_n)\,.
    \end{eqnarray*}
\end{prop}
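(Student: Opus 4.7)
The plan is to prove the identification $T_{p-2}=\overline{\mathcal F}_\lambda$ first, and then derive the genus formula by applying the standard degree formula for the canonical bundle to the meromorphic one-form $\omega_\lambda$, using the enumeration of its zeros and poles already established in Section~\ref{sec:Translation}.

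For the first claim, I would argue inductively: each $T_i$ is defined as the closure of the locus inside $T_{i-1}$ where the linear residue equation $\lambda_p\Res_{q_i}\omega-\lambda_i\Res_{q_p}\omega=0$ is satisfied. On the interior of the stratum, these $p-2$ equations together with the residue theorem cut out exactly the isoresidual fiber $\mathcal F_\lambda$, so $T_{p-2}$ and $\overline{\mathcal F}_\lambda$ agree on the dense open locus. Since $T_{p-2}$ is defined as a successive closure starting from those interior loci, it cannot contain extraneous boundary components of the type flagged by Proposition~\ref{prop:boundary}; the extraneous components of the divisorial $D_i$ discussed there arise precisely because $D_i$ is taken as the full closure in $\MS(\mu)$ rather than as the closure of its interior. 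By Theorem~\ref{thm:closureMSD}, the closure $\overline{\mathcal F}_\lambda$ is itself a smooth one-dimensional orbifold, and its boundary points are exactly those multi-scale differentials satisfying the $\mathcal E_\lambda$-GRC (described explicitly in Propositions~\ref{prop:F-boundary} and~\ref{prop:F-horizontal}). This forces the equality $T_{p-2}=\overline{\mathcal F}_\lambda$.

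Once this identification is in hand, I apply Theorem~\ref{thm:MAIN1}, which asserts that $(\overline{\mathcal F}_\lambda,\omega_\lambda)$ is a compact (possibly disconnected) smooth Riemann surface endowed with the meromorphic differential $\omega_\lambda$. The degree of the canonical divisor then equals the sum of orders of $\omega_\lambda$ over its zeros and poles, so
\[
2g_\lambda-2=\deg(\mathrm{div}\,\omega_\lambda)=\sum_{z\in Z_{\omega_\lambda}}\mathrm{ord}_z(\omega_\lambda)-\sum_{q\in P_{\omega_\lambda}}\mathrm{ord}_q(\omega_\lambda),
\]
where $g_\lambda$ is the arithmetic genus of the (possibly disconnected) curve. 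The remaining work is bookkeeping: I substitute the descriptions of zeros, simple poles, and higher-order poles as follows.

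The contribution of zeros is computed via Proposition~\ref{prop:zeros}: there are $f(a_1+a_2,p)$ zeros, each of order $a_1+a_2$, giving the term $(a_1+a_2)f(a_1+a_2,p)$. The simple poles coming from horizontal boundary components are counted by Proposition~\ref{prop:simple-pole} and correspond to partitions $I\sqcup J=\{1,\dots,p\}$ with $c_{1,I}=0$, contributing $-\sum_{c_{1,I}=0}f(a_1,|I|+1)f(a_2,|I^c|+1)$. The higher-order poles coming from cherry graphs are enumerated in Proposition~\ref{prop:flat-boundary}, split into four cases: $I=\emptyset,J\neq\emptyset$ (pole order $1+c_{2,J}$), $I\neq\emptyset,J=\emptyset$ (pole order $1+c_{1,I}$), both $I,J$ nonempty (pole order $1+\lcm(c_{1,I},c_{2,J})$, multiplicity involving $\gcd(c_{1,I},c_{2,J})$), and $I=J=\emptyset$ (double poles with zero residue, count $\Xi(a_1,a_2;b_1,\dots,b_p)$). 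Collecting the four corresponding terms with the appropriate signs yields exactly the displayed formula. The main obstacle is the first step: carefully verifying that the successive-closure construction of $T_{p-2}$ does not pick up any of the extraneous boundary loci that Proposition~\ref{prop:boundary} shows could potentially appear in the naive intersection $\mathcal D_\lambda$; the smoothness and normal crossing structure from Theorem~\ref{thm:closureMSD} applied at each intermediate stage is what makes this work.
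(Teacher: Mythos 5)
Your identification $T_{p-2}=\overline{\mathcal F}_\lambda$ follows essentially the same argument as the paper: the interiors agree because the $p-2$ residue equations together with the residue theorem cut out $\mathcal F_\lambda$ on the open stratum, and the successive-closure construction discards the extraneous boundary components that would contaminate the naive complete intersection $\mathcal D_\lambda$. For the genus formula, however, you take a genuinely different route. The paper derives it by intersection theory on $\MS(\mu)$: it writes the divisor class $[T_i]=-\eta-\sum_{\Gamma\in\mathcal G_i}\ell_\Gamma D_\Gamma$ at each step, applies the adjunction formula successively, performs a four-case analysis of which cherry graphs $\Gamma_{I,J}$ belong to which $\mathcal G_i$ (concluding each appears $|I|+|J|-1$ times), and evaluates the resulting intersection numbers, which requires the nontrivial combinatorial identity of Proposition~\ref{prop:zero-res-taut} relating $\Xi$ to a tautological class. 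You instead compute $2g_\lambda-2=\deg(\mathrm{div}\,\omega_\lambda)$ directly from the enumeration of zeros and poles of $\omega_\lambda$ in Propositions~\ref{prop:zeros}, \ref{prop:simple-pole}, and~\ref{prop:flat-boundary}. This is logically valid and not circular, since those propositions are proved independently of Proposition~\ref{prop:T=F}; the paper itself remarks that the terms of the formula match that description and that the intersection computation "provides an alternative cross-check." What your shortcut gives up is precisely that cross-check: the point of the paper's proof is to obtain the Euler characteristic by an independent algebro-geometric method, whereas your argument makes the formula entirely dependent on the correctness of the flat-geometric enumeration. As a proof of the stated proposition your argument is sound and considerably shorter; as a replacement for the section's content it would lose the independent verification the authors intended.
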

\par 
We note that the terms in the above formula correspond precisely to the description of the zeros, poles, and their orders for the translation structure $\omega_\lambda$ on $\overline{\mathcal F}_\lambda$, as given in Propositions~\ref{prop:zeros}, \ref{prop:simple-pole}, and~\ref{prop:flat-boundary}. Therefore, this provides an alternative cross-check for the computation of the Euler characteristic of $\overline{\mathcal F}_\lambda$. 
\par
\begin{proof}
    Every element $(X,\omega)\in T_i$ satisfies the residue equations $\lambda_{p} \Res_{q_j}\omega-\lambda_{j} \Res_{q_p}\omega=0$ for every $1\leq j \leq i$. In particular, every element in the interior of $T_{p-2}$ satisfies the residue equations for the first $p-2$ poles; hence, by the residue theorem, all the residues are determined by $\lambda$ up to scaling. Therefore, the interior of $T_{p-2}$ coincides with the open isoresidual fiber curve $\mathcal{F}_\lambda$. Since at each step we remove the extraneous boundary components, the closure $T_{p-2}$ of its interior inside the closed space $T_{p-3}$ is exactly the closure of $\mathcal{F}_\lambda$ inside $\MS(\mu)$. It follows that $T_{p-2}=\overline{\mathcal F}_\lambda$. 
    \par 
    To compute the genus of $T_{p-2}$, we observe that the divisor class of $T_i$ inside $T_{i-1}$ satisfies 
    \begin{equation*}
        [T_i]=-\eta-\sum_{\Gamma\in \mathcal{G}_i}\ell_{\Gamma}D_{\Gamma}
    \end{equation*}
    where $\mathcal{G}_i$ is the collection of two-level graphs $\Gamma$ that automatically guarantee the residue equation for the pole $q_i$ by using the residue equations of the previous poles $q_1,\ldots, q_{i-1}$ as well as the associated global residue condition for $\Gamma$. 
    \par 
    Let $K_{T_{i}}$ be the canonical divisor class of  $T_i$. By the adjunction formula, we obtain that
    \begin{equation*}
        K_{T_i} = \left(K_{T_{i-1}} + [T_i]\right)[T_i]\,.
    \end{equation*}
    This implies that 
    \begin{eqnarray*}
        2g (T_{p-2})-2 & = & \left(K_{\MS(\mu)} + \sum_{i=1}^{p-2}\left( -\eta-\sum_{\Gamma\in \mathcal{G}_i}\ell_{\Gamma}D_{\Gamma} \right)\right)[T_{p-2}] \\
        & = & \left( p\eta -D_h + \sum_{\Gamma \in \LG_1}(N_{\Gamma}^\bot\ell_{\Gamma}-1)D_{\Gamma} - (p-2)\eta - \sum_{i=1}^{p-2}\sum_{\Gamma \in \mathcal{G}_i}\ell_{\Gamma} D_{\Gamma} \right)\left[\overline{\mathcal F}_\lambda\right]\,.
    \end{eqnarray*}
    Therefore, we only need to consider those two-level graphs $\Gamma$ such that $D_{\Gamma}$ intersects $\overline{\mathcal F}_\lambda$ as described in Propositions~\ref{prop:F-boundary} and~\ref{prop:F-horizontal}. 
    \par 
    We denote by $\Gamma_{z_1,z_2}$ the two-level graph with both zeros in the unique bottom component and all the poles in the unique top component. We denote by $\Gamma_{I,J}$ the two-level ``cherry'' graphs with the zero $z_1$ in one bottom component, along with some marked poles indexed by $I$, the zero $z_2$ in the other bottom component with some marked poles indexed by $J$, and the rest of the marked poles in the unique top component indexed by $K$. Note that either $I$ or $J$ can be empty, but not both. Moreover, neither $\Gamma_{z_1,z_2}$ nor the horizontal graphs corresponding to the components of  $D_h$ belong to any $\mathcal{G}_i$. 
    \par 
    To describe the set $\left\{\Gamma_{I,J} \in \sqcup_{i=1}^{p-2}\mathcal{G}_i\right\}$, depending on the positions of $q_{p-1}$ and $q_p$, we analyze the following four cases: 
    \par 
    \begin{itemize}
        \item[\textit{Case 1:}] Suppose $q_{p-1}, q_p\in \Gamma_{I,J}^{\top}$. First, we cannot determine the residue equation for any of the poles in the top component using only the residue equations for the previous ones, due to the presence of~$q_{p-1}$. Next, let $q_{i_0}$ be the pole with the smallest index at the bottom level. Imposing its residue equation forces the residue at $q_p$ to be zero because the differential $\omega$ (before twisting) vanishes at the bottom level. This guarantees the residue equations for all the other poles at the bottom level, other than~$q_{i_0}$. Consequently, $\Gamma_{I,J}\in \mathcal{G}_i$ for each $i$ such that $i_0<i\leq p-2$ and $q_i\in \Gamma_{I,J}^\bot$.
        \par 
        \item[\textit{Case 2:}] Suppose $q_{p-1} \in \Gamma_{I,J}^{\top}$ and $q_p \in \Gamma_{I,J}^{\bot}$. Similarly, we cannot determine the residue equation for any of the poles in the top component using only the residue equations for the previous ones. However, all the residue equations for the other poles at the bottom level are guaranteed, due to the vanishing of $\omega$ at the bottom level. This means that $\Gamma_{I,J}\in \mathcal{G}_i$ for each $i\leq p-2$ such that $q_i\in \Gamma_{I,J}^\bot$.
        \par 
        \item[\textit{Case 3:}] Suppose $q_{p-1} \in \Gamma_{I,J}^{\bot}$ and $q_p \in \Gamma_{I,J}^{\top}$. First, consider the case where the other poles in the top component are the first $i$ poles, $q_1,\ldots, q_i$. We cannot determine the residue equation for any of these poles using the residue equations of the previous ones. However, if we impose the residue equations for all of them, the residue theorem for the top component and the generality of $\lambda$ imply that all the poles in the top component are residueless, including in particular $q_p$. This guarantees the residue equations for the remaining poles at the bottom level, except for $q_{p-1}$. In this case, $\Gamma_{I,J}\in \mathcal{G}_i$ for each $i\leq p-2$ such that $q_i\in \Gamma_{I,J}^\bot$.
        \par 
        Next, consider the case where $q_{i_0}$ is the pole with the smallest index at the bottom level and  there exists at least one pole $q_i$ at the top level such that $i_0<i<p-1$. Imposing the residue equation for $q_{i_0}$ forces the residue at $q_p$ to be zero. This ensures that the residue equations for the remaining poles at the bottom level are satisfied, except for $q_{p-1}$. Additionally, let $q_{i_1}$ be the pole with the largest index in the top component. By imposing the residue equations for the poles with indices smaller than $i_1$, and since the residue at $q_p$ is zero, all the poles in the top component become residueless, and the residue equation for $q_{i_1}$ is guaranteed. In this case, we conclude that $\Gamma_{I,J}\in \mathcal{G}_i$ for $i=i_1$, and 
       for all other $i\leq p-2$ such that $i\neq i_0$ and $q_i\in \Gamma_{I,J}^\bot$. 
        \par 
        \item[\textit{Case 4:}] Suppose $q_{p-1},q_p \in \Gamma_{I,J}^{\bot}$. The residue equations for all the poles at the bottom level, except for~$q_{p-1}$ and $q_p$, are guaranteed. For the top level, only the residue equation for the pole with the largest index $i_1$ is guaranteed by imposing the residue equations for the previous ones. In this case, $\Gamma_{I,J}\in \mathcal{G}_i$ for $i=i_1$, and for all other $i\leq p-2$ such that $q_i\in \Gamma_{I,J}^\bot$.
    \end{itemize}
    \par 
    Note that for any of these four cases, the graph $\Gamma_{I,J}$ appears $|I|+|J|-1$ times in $\sqcup_{i=1}^{p-2}\mathcal{G}_i$, with $N_{\Gamma_{I,J}}^\bot=|I|+|J|$. Therefore, 
    \begin{eqnarray*}
        2g (\overline{F}_\lambda)-2 & = & \left( 2\eta -D_h + (\ell_{\Gamma_{z_1,z_2}}-1)D_{\Gamma_{z_1,z_2}} +\sum_{\Gamma_{I,J}}(\ell_{\Gamma_{I,J}}-1)D_{\Gamma_{I,J}} \right)\left[\overline{\mathcal F}_\lambda\right]\\
        & = & (\ell_{\Gamma_{z_1,z_2}}-1)D_{\Gamma_{z_1,z_2}}\left[\overline{\mathcal F}_\lambda\right] - D_h\left[\overline{\mathcal F}_\lambda\right]-2\left(-\eta-\sum_{\Gamma_{I,J}}\ell_{\Gamma_{I,J}}D_{\Gamma_{I,J}}\right)\left[\overline{\mathcal F}_\lambda\right]\\
        & & -\sum_{\Gamma_{I,J}}(\ell_{\Gamma_{I,J}}+1)D_{\Gamma_{I,J}}\left[\overline{\mathcal F}_\lambda\right]\,,
    \end{eqnarray*}
where
    \begin{eqnarray*}
(\ell_{\Gamma_{z_1,z_2}}-1)D_{\Gamma_{z_1,z_2}}\left[\overline{\mathcal F}_\lambda\right] & = & (a_1+a_2)f(a_1+a_2,p)\,,\\ 
& & \\
 D_h \left[\overline{\mathcal F}_\lambda\right] & = & \sum_{c_{1,I}=0} f(a_1,|I|+1)f(a_2,|I^c|+1)\,,\\
-2\left(-\eta-\sum_{\Gamma_{I,J}} \ell_{\Gamma_{I,J}} D_{\Gamma_{I,J}}\right)\left[\overline{\mathcal F}_\lambda\right] & = & -2 \Xi(a_1,a_2; b_1,\ldots, b_p) \ \ \text{by Proposition \ref{prop:zero-res-taut} below,} 
\end{eqnarray*}
and
\begin{equation*}
D_{\Gamma_{I,J}}\left[\overline{\mathcal F}_\lambda\right] = 
\left\{
    \begin{array}{lr}
        f(a_1,|I|+1)\Xi(c_{1,I}-1,a_2;\{b_i\}_{i\in K}) & \text{if } J=\emptyset,\\
        f(a_2,|J|+1)\Xi(a_1,c_{2,J}-1;\{b_i\}_{i\in K}) & \text{if } I=\emptyset,\\
        f(a_1,|I|+1)f(a_1,|J|+1)\text{gcd}(c_{1,I},c_{2,J})\Xi(c_{1,I}-1,c_{2,J}-1;\{b_i\}_{i\in K)}) & \text{otherwise}.
    \end{array}\right.
\end{equation*}
\end{proof}
It remains to prove the following identity: 
\par 
\begin{prop}
\label{prop:zero-res-taut}
The number of residueless differentials $\Xi(a_1,a_2;b_1,\ldots,b_p)$ in stratum $\mathcal{H}(a_1, a_2, -b_1,\ldots, -b_p)$ is
\begin{equation}
    \label{eqn:zero-res-taut}
    \Xi(a_1,a_2;b_1,\ldots,b_p)=\left(-\eta-\sum_{\Gamma_{I,J}}\ell_{\Gamma_{I,J}} D_{\Gamma_{I,J}}\right)\left[\overline{\mathcal{F}}_{\lambda}\right]\,.
\end{equation}
\end{prop}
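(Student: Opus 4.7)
The plan is to interpret $-\eta$ as the first Chern class of $\mathcal{O}(1)$ on $\MS(\mu)$ and to express $(-\eta)\cdot[\overline{\mathcal F}_\lambda]$ as the degree of a suitably chosen section of $\mathcal{O}(1)$ restricted to $\overline{\mathcal F}_\lambda$. Taking the section $s = \Res_{q_{p-1}}\omega$, which is built from the tautological differential and has first Chern class $-\eta$, one obtains
\[ (-\eta)\cdot[\overline{\mathcal F}_\lambda] \;=\; \deg\bigl(s|_{\overline{\mathcal F}_\lambda}\bigr) \;=\; \sum_{q\in \overline{\mathcal F}_\lambda} \mathrm{ord}_q(s)\,. \]
In the open locus $\mathcal F_\lambda$, the residues of $\omega$ are proportional to $\lambda$, so $\Res_{q_{p-1}}\omega \propto \lambda_{p-1}\neq 0$ (since $\lambda\in \mathcal R_p\setminus \mathcal A_p$), and $s$ has no interior zeros.

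Next, I would enumerate the boundary zeros using Propositions~\ref{prop:F-boundary} and~\ref{prop:F-horizontal} and Corollary~\ref{cor:reszeros}. At $D_{\Gamma_{z_1,z_2}}$ and at horizontal components in $D_h$, the top components still carry $q_{p-1}$ with residue $\lambda_{p-1}$, so $s$ does not vanish there. At a cherry boundary $D_{\Gamma_{I,J}}$, the section $s$ does vanish: if $p-1\in K$ the top component is residueless by Proposition~\ref{prop:flat-boundary}, and if $p-1\in I\cup J$ the pole $q_{p-1}$ has been demoted to a lower level. Finally, at a residueless differential, which lies in $\overline{\mathcal F}_\lambda$ by Corollary~\ref{cor:reszeros}, the section $s$ vanishes because all residues are zero.

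For the multiplicities, at a residueless differential I would use the local stratum coordinates $(r_1,\ldots,r_{p-1},t)$ as in the proof of Proposition~\ref{prop:flat-boundary}: the curve $\overline{\mathcal F}_\lambda$ is locally parametrized by $(u\lambda_1,\ldots,u\lambda_{p-1},f(u))$ with $f(0)\neq 0$, so $s = u\lambda_{p-1}$ has a simple zero, contributing $\Xi(a_1,a_2;b_1,\ldots,b_p)$ in total. At a cherry boundary $D_{\Gamma_{I,J}}$, using the perturbed and log-period coordinates of Sections~10.10 and~13.3 of \cite{BCGGM2} together with the prong-rescaling formalism, I expect the vanishing order of $s$ to equal the smoothing power $\ell_{\Gamma_{I,J}}$, regardless of whether $p-1$ lies in $K$, $I$, or $J$. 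Summing all contributions yields
\[ (-\eta)\cdot[\overline{\mathcal F}_\lambda] \;=\; \Xi(a_1,a_2;b_1,\ldots,b_p) + \sum_{\Gamma_{I,J}} \ell_{\Gamma_{I,J}}\, D_{\Gamma_{I,J}}\cdot [\overline{\mathcal F}_\lambda]\,, \]
which rearranges to the desired identity~\eqref{eqn:zero-res-taut}.

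The main obstacle is the uniform verification of the vanishing order $\ell_{\Gamma_{I,J}}$ at cherry boundaries, especially in the case $p-1\in K$: there $s$ vanishes not because $q_{p-1}$ is demoted but because the isoresidual condition forces the top-level residues to vanish at the precise rate $t^{\ell_{\Gamma_{I,J}}}$, and this rate must be extracted by combining the residueless-top condition with the perturbed period coordinates rather than by direct prong rescaling.
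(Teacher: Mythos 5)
Your argument is correct in outline but takes a genuinely different route from the paper. The paper does not work with a residue section directly: it substitutes the divisor-class relation $\eta=(b_p-1)\psi_{q_p}-\sum_{q_p\in\Gamma^\bot}\ell_\Gamma D_\Gamma$, pushes $\psi_{q_p}$ forward to $\overline{\mathcal M}_{0,p+2}$ to evaluate all the intersection numbers explicitly, and then proves the resulting combinatorial identity between two degree-$(p-1)$ polynomials $L,R\in\mathbb Z[a_1,b_1,\ldots,b_p]$ by showing $L-R$ vanishes at $b_i=1$ for each $i$, hence is divisible by $\prod_{i=1}^p(b_i-1)$, which has degree $p$ and therefore forces $L-R=0$. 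Your approach instead computes $\deg\bigl(\mathcal O(1)|_{\overline{\mathcal F}_\lambda}\bigr)=(-\eta)\cdot[\overline{\mathcal F}_\lambda]$ as the zero divisor of the section $s=\Res_{q_{p-1}}\omega$; this is more geometric, avoids the combinatorial manipulation entirely, and your enumeration of the zero locus is complete (note $\lambda_{p-1}\neq 0$ since singletons are resonance hyperplanes, and the count is valid component by component even when $\overline{\mathcal F}_\lambda$ is disconnected, since $s$ is not identically zero on any component). What the paper's route buys is independence from a delicate local-multiplicity computation; what yours buys is brevity and a transparent geometric meaning for each term.

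The gap you flag --- the uniform vanishing order $\ell_{\Gamma_{I,J}}$ at a cherry point, in particular when $q_{p-1}\in K$ --- closes using material already in the paper. In the proof of Proposition~\ref{prop:flat-boundary}, the residue at the node $e_1$ along the smoothing family is computed as $r_{e_1}(t)=t^{a}\sum_{i\in I}\lambda_i$ with $a=\lcm(c_1,c_2)=\ell_{\Gamma_{I,J}}$ (and similarly $t^{c_2}$ in the case $I=\emptyset$). Since $I\cup J\neq\emptyset$ for every genuine $\Gamma_{I,J}$, at least one marked residue vanishes to order exactly $\ell_{\Gamma_{I,J}}$ in $t$; the isoresidual condition makes all residues proportional with nonzero constants along the family, so $\Res_{q_{p-1}}\omega(t)=\lambda_{p-1}\,c(t)$ with $c(t)\sim t^{\ell_{\Gamma_{I,J}}}$ regardless of which level carries $q_{p-1}$. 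Transversality of $\overline{\mathcal F}_\lambda$ with $D_{\Gamma_{I,J}}$ (Theorem~\ref{thm:closureMSD}) makes $t$ a local coordinate on the curve, so the order of $s$ is exactly $\ell_{\Gamma_{I,J}}$, and your identity follows.
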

\par 
\begin{proof}
    We rewrite the right-hand side of Equation \eqref{eqn:zero-res-taut} with respect to the pole $q_p$ as
    \begin{equation*}
        \left(-\eta-\sum_{\Gamma_{I,J}}\ell_{\Gamma_{I,J}} D_{\Gamma_{I,J}}\right)\left[\overline{\mathcal{F}}_{\lambda}\right]=\left((b_p-1)\psi_{q_p}-\sum_{q_p\in \Gamma_{I,J}^{\top}}\ell_{\Gamma_{I,J}}D_{\Gamma_{I,J}}\right)\left[\overline{\mathcal{F}}_{\lambda}\right]\,.
    \end{equation*}
    \par 
Consider the forgetful morphism $\pi\colon \MS(\mu)\rightarrow \overline{\mathcal M}_{0,p+2}$, which remembers only the $p+2$ marked zeros and poles. We have 
$$\pi_{*}(\psi_{q_p})=\sum_{\Delta \in B^{q_p}_{z_1,z_2}}\Delta\,,$$ 
where $B^{q_p}_{z_1,z_2}$ is the set of boundary divisors in $\overline{\mathcal M}_{0,p+2}$ whose dual graphs have the pole $q_p$ in one component and the zeros $z_1$ and $z_2$ in the other component. 
\par 
Let $\pi^*(B^{q_p}_{z_1,z_2}) =\left\{D_{\Gamma}\ | \ \pi_{*}(D_{\Gamma})\in B^{q_p}_{z_1,z_2}\right\}$. Then, the only boundary divisor $D_{\Gamma}\in \pi^*(B^{q_p}_{z_1,z_2})$ that intersects~$\overline{\mathcal{F}}_{\lambda}$ is the one for which $z_1$ and $z_2$ are in the bottom component of $\Gamma$ and all the poles are in the unique top component. We conclude that
\begin{equation}
 \begin{aligned}
    \Bigg((b_p-1)\psi_{q_p}&-\sum_{q_p\in \Gamma^{\top}}\ell_{\Gamma}D_{\Gamma}\Bigg)\left[\overline{\mathcal{F}}_{\lambda}\right]\\
    =& \ (b_p-1)f(a_1+a_2,p) \\
    &-\sum_{\substack{c_{1,I'}>0 \\ c_{2,J'}>0 \\ |I'|+|J'|\geq 1 \\ p\in K'}}c_{1,I'}c_{2,J'}f(a_1,|I'|+1)f(a_2,|J'|+1)\Xi(c_{1,I'}-1,c_{2,J'}-1;\{b_i\}_{i\in K'})\,.
\end{aligned}
\end{equation}
\par 
Recall that the number of residueless differentials is given by Proposition \ref{prop:residueless}, where Equation \eqref{eqn:residueless} can be rewritten in two different ways with respect to the zeros $z_1$ and $z_2$, as
\begin{eqnarray*}
    \Xi(a_1,a_2;b_1,\ldots,b_p) & = & \sum_{c_{1,I}+|I|-p\geq 0}(-1)^{|I|}f(c_{1,I}-1+|I|,p+1)\\
    & = & \sum_{c_{2,J}+|J|-p\geq 0}(-1)^{|J|}f(c_{2,J}-1+|J|,p+1)\,.
\end{eqnarray*}
If we set $b_p=1$, every term $(-1)^{|I|}f(c_{1,I}-1+|I|,p+1)$ such that $p\in I$ cancels out with the term $(-1)^{|I\setminus\{p\}|}f(c_{1,I\setminus\{p\}}-1+|I\setminus\{p\}|,p+1)$, and the same holds for $z_2$ and $J$. Therefore,  
\begin{eqnarray*}
    \Xi(a_1,a_2;b_1,\ldots,b_p)|_{b_p=1} & = & \sum_{\substack{b_p-1>c_{1,I}+|I|-p\geq 0\\ p\notin I}}(-1)^{|I|}f(c_{1,I}-1+|I|,p+1)\\
    & = & \sum_{\substack{b_p-1>c_{2,J}+|I|-p\geq 0\\ p\notin J}}(-1)^{|J|}f(c_{2,J}-1+|J|,p+1)\,.   
\end{eqnarray*}
\par 
Note that 
$$(-1)^{|I|}f(c_{1,I}-1+|I|,p+1)|_{b_p=1}=(-1)^{|J|}f(c_{2,J}-1+|J|,p+1)|_{b_p=1}$$ 
for $I\sqcup J=\{1,\ldots,p-1\}$. 
Moreover, if $b_p-1>c_{1,I}+|I|-p\geq 0$ and $b_p-1>c_{2,J}+|J|-p\geq 0$ for $I\sqcup J=\{1,\ldots,p-1\}$, then
\begin{eqnarray*}
   c_{1,I'}, c_{2,J'} & > & 0\\
   b_p-1>c_{1,I}+|I|-(p-|J'|) & \geq & 0\\ 
   b_p-1>c_{2,J}+|J|-(p-|I'|) & \geq & 0
\end{eqnarray*}
 for every $I'\subset I$ and $J'\subset J$. This implies that 
$$
\begin{aligned}
    \Biggl( &(b_p-1)\psi_{q_p}- \sum_{q_p\in \Gamma^{\top}}\ell_{\Gamma_{I,J}}D_{\Gamma_{I,J}}\Biggr)\left[\overline{\mathcal{F}}_{\lambda}\right]\Big|_{b_p=1}\\
    &= -\sum_{\substack{c_{1,I'}>0 \\ c_{2,J'}>0 \\ |I'|+|J'|\geq 1 \\ p\in K'}}c_{1,I'}c_{2,J'}f(a_1,|I'|+1)f(a_2,|J'|+1)\Xi(c_{1,I'}-1,c_{2,J'}-1,\{b_i\}_{i\in K'})|_{b_p=1}\\
    &=\sum_{\substack{b_p-1>c_{1,I}+|I|-|I'|-|K'|\geq 0\\ I'\subset I , \ J' \subset \{1,\ldots,p-1\}-I \\ |I'|+|J'|\geq 1 \\ p\in K'}}(-1)^{|I|-|I'|-1}c_{1,I'}c_{2,J'}f(a_1,|I'|+1)f(a_2,|J'|+1)f(c_{1,I}-1+|I|-|I'|,|K'|+1)\\
    &=\sum_{\substack{b_p-1>c_{1,I}+|I|-p\geq 0\\ p \notin I}}(-1)^{|I|-1}\sum_{i=0}^{|I|}\sum_{\substack{j=0\\i+j\geq 1}}^{p-1-|I|}\Bigg[(-1)^i f(a_1,i+1)f(a_2,j+1)f(c_{1,I}-1+|I|-i,p+1-i-j)\\
    & \ \ \ \ \ \ \ \ \ \ \ \ \ \ \ \ \ \ \ \ \ \ \ \ \ \ \ \ \ \ \ \ \ \ \ \ \ \ \ \ \ \ \ \ \ \ \ \ \ \ \ \ \ \ \ \ \ \ \ \ \cdot \left( \binom{|I|-1}{i-1}c_{1,I}+\binom{|I-1|}{i}(a_1+1) \right)\\
    & \ \ \ \ \ \ \ \ \ \ \ \ \ \ \ \ \ \ \ \ \ \ \ \ \ \ \ \ \ \ \ \ \ \ \ \ \ \ \ \ \ \ \ \ \ \ \ \ \ \ \ \ \ \ \ \ \ \ \ \  \cdot\left( \binom{p-2-|I|}{j-1}(1-c_{1,I})+\binom{p-2-|I|}{j}(a_2+1) \right)\Bigg]\\
    &=\sum_{\substack{b_p-1>c_{1,I}+|I|-p\geq 0\\ p\notin I}}(-1)^{|I|}f(c_{1,I}-1+|I|,p+1)\\
    &\ \  \ \ \ \ \ \ \ \ \ \ \ \ \ \ \ \ \ \ \ \ \ \ \ \  +(-1)^{|I|-1} \sum_{i=0}^{|I|}\Bigg[(-1)^i f(a_1,i+1)\left( \binom{|I|-1}{i-1}c_{1,I}+\binom{|I|-1}{i}(a_1+1) \right)\\
    &\ \ \ \ \ \ \ \ \ \ \ \ \ \ \ \ \ \ \ \ \ \ \ \ \ \ \ \ \ \ \ \ \ \ \ \ \ \ \ \ \ \ \ \ \ \ \ \ \ \cdot\Bigg(-\sum_{j=1}^{p-1-|I|}f(a_2,j+1)f(c_{1,I}-1+|I|-i,p+2-i-j)\\
    &\ \ \ \ \ \ \ \ \ \ \ \ \ \ \ \ \ \ \ \ \ \ \ \ \ \ \ \ \ \ \ \ \ \ \ \ \ \ \ \ \ \ \ \ \ \ \ \ \ \ \ \ \  +\sum_{j=0}^{p-2-|I|}f(a_2,j+2)f(c_{1,I}-1+|I|-i,p+1-i-j)\Bigg)\Bigg]\\
    &=\sum_{\substack{b_p-1>c_{1,I}+|I|-p)\geq 0\\ p\notin I}}(-1)^{|I|}f(c_{1,I}+|I|,p+1)\,.
\end{aligned}$$
\par 
Finally, since $a_2=\sum_{i=1}^p b_i -a_1-2$, the left-hand side and right-hand side of Equation \eqref{eqn:zero-res-taut} are polynomials $L,R\in \mathbb{Z}[a_1,b_1,\ldots,b_p]$ of degree $p-1$. The computations above imply that $(b_p-1) \mid  (L-R)$. Furthermore, since we can use any other pole instead of $q_p$, it follows that  $\prod_{i=1}^p(b_i-1)\mid (L-R)$. Since $L-R$ has degree at most $p-1$, we conclude that $L-R$ is the zero polynomial.
\end{proof}

\section{Monodromy of the isoresidual fibration}\label{sec:GaussManin}

\subsection{Gauss--Manin connection}
\par 
For a stratum $\mathcal{H}(a_{1},a_{2},-b_{1},\dots,-b_{p})$, we introduce the vector bundle $\pi\colon\mathcal{E} \rightarrow \mathcal{B}$, where:
\begin{itemize}
    \item $\mathcal{B}$ is the complement $\mathcal{R}_{p} \setminus \mathcal{A}_{p}$ of the resonance arrangement in the residual space;
    \item for each $\lambda \in \mathcal{B}$, the fiber $\mathcal{E}_{\lambda}$ is $\mathbb{C} \otimes H_{1}(\overline{\mathcal{F}}_{\lambda} \setminus P_{\omega_{\lambda}}, Z_{\omega_{\lambda}},\ZZ)$.
\end{itemize}
The vector bundle $\pi\colon\mathcal{E} \rightarrow \mathcal{B}$ is endowed with a connection $\nabla_{GM}$ that is locally trivial on the elements of  $\calH_{\lambda} := H_{1}(\overline{\mathcal{F}}_{\lambda} \setminus P_{\omega_{\lambda}}, Z_{\omega_{\lambda}},\ZZ)$ (which is a lattice in the fiber). This connection is called the \textit{Gauss--Manin connection} of $\mathcal{E} $.
\par 
\subsection{Period central charge}
\par 
\begin{defn}\label{defn:DualLattice}
In the dual space $(\mathcal{R}_{p})^{\ast}$ of the residual space $\mathcal{R}_{p}$, we denote by $L^{\vee}$ the lattice generated by the partial sums $\lambda \mapsto \sum\limits_{j \in J} \lambda_j$ for subsets $J\subset \{1,\ldots,p\}$. 
\end{defn}
\par 
It follows from Proposition~\ref{prop:IntegerWeights} that for each relative homology class $[\gamma]$ of $\calH_{\lambda}$, its period depends on $\lambda$ as an element of $L^{\vee}$. Since the saddle connections generate the relative homology group of any translation surface, the period of any relative homology class $[\gamma]$ in $\calH_{\lambda}$ is given by an element $w_{[\gamma]}\in L^{\vee}$. Moreover, the map $[\gamma]\mapsto w_{[\gamma]}$ is locally constant under small variations of $\lambda$. We describe this structure as a morphism from $\mathcal{E}$ to the trivial bundle $(\mathcal{R}_{p})^{\ast} \times \mathcal{B}$.
\par 
\begin{defn}
The \textit{period central charge} is the bundle morphism $\Psi\colon\mathcal{E} \rightarrow \mathcal{B}\times (\mathcal{R}_{p})^{\ast} $ such that for any configuration $\lambda \in \mathcal{B}$ and any relative homology class $[\gamma] \in \calH_{\lambda}$, we have $\Psi(\lambda,[\gamma])=(\lambda,w)$, where $w$ is the element of $L^{\vee}$ prescribing the deformation of $\int_{[\gamma]} \omega_{\lambda}$ as $\lambda$ changes. The morphism $\Psi$ extends linearly to arbitrary elements of $\mathcal{E}_{\lambda} \coloneqq \mathbb{C} \otimes H_{1}(\overline{\mathcal{F}}_{\lambda} \setminus P_{\omega_{\lambda}}, Z_{\omega_{\lambda}},\ZZ)$.
\end{defn}
\par 
Isoresidual fibers are translation surfaces, and deformations of translation surfaces correspond to deformations of the periods of their relative homology classes. The period central charge describes how the periods of the relative homology classes of $\calH_{\lambda}$ change in terms of $\lambda$. It follows that the geometry of the isoresidual fibration is entirely described by the Gauss--Manin connection of $\calE$, obtained by replacing the fiber with its homology group (up to a change of coefficients).
\par 
\subsection{Monodromy}\label{sub:Monodromy}
\par 
For any based loop $\alpha$ in $\mathcal{B}$, the Gauss--Manin connection along $\alpha$ induces an automorphism 
$T_{\alpha}$ of $\mathcal{E}_{\lambda} =\mathbb{C} \otimes 
 H_{1}(\overline{\mathcal{F}}_{\lambda} \setminus P_{\omega_{\lambda}}, Z_{\omega_{\lambda}})$. A priori, computing the monodromy of the Gauss--Manin connection on $\mathcal{E}_\lambda$ is a difficult problem. Nevertheless, the isoresidual fibration enjoys additional structures that impose significant constraints on the possible form of this monodromy. In this section, we explain the following three results:
\begin{enumerate}
    \item The period central charge $\Psi$ commutes with the monodromy automorphisms.
    \item The action of the monodromy on the singular points of the isoresidual fiber can be deduced from the study of the monodromy in the case $n=1$ investigated in \cite[Section~5]{GeTaIso}.
    \item Saddle connections that shrink as $\lambda$ approaches a resonance hyperplane survive as saddle connections along the monodromy around this hyperplane.
\end{enumerate}
\par 
\subsubsection{The period central charge commutes with monodromy}\label{subsub:commute}
\par 
\begin{prop}\label{prop:CommuteWithMonodromy}
For any $\lambda \in \mathcal{B}$, any based loop $\alpha$ in $\mathcal{B}$, and any homology class $[\gamma] \in \calH_{\lambda}$, we have:
$$
\Psi(\lambda,T_{\alpha}[\gamma]) = \Psi(\lambda,[\gamma]).
$$
\end{prop}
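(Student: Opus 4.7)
The plan is to exploit the discreteness of the lattice $L^{\vee}$ together with the continuity of parallel transport: the period central charge is essentially a locally constant integer-valued invariant of Gauss--Manin flat sections, and any such invariant must be preserved by monodromy along a connected loop.

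First I would parametrize the situation by a continuous path. Fix a loop $\alpha \colon [0,1] \to \mathcal{B}$ based at $\lambda$, and set $[\gamma_{t}] := T_{\alpha|_{[0,t]}}[\gamma] \in \calH_{\alpha(t)}$, so that $[\gamma_{0}] = [\gamma]$ and $[\gamma_{1}] = T_{\alpha}[\gamma]$. Since the Gauss--Manin connection is integral (it restricts to parallel transport of the relative integral homology lattices $\calH_{\lambda}$), the assignment $t \mapsto \bigl(\alpha(t),[\gamma_{t}]\bigr)$ is a continuous lift of $\alpha$ to the total space of the lattice bundle $\calH \subset \mathcal{E}$.

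Next, I would consider the function $w \colon [0,1] \to L^{\vee}$ defined by taking the second component of $\Psi\bigl(\alpha(t),[\gamma_{t}]\bigr)$. Proposition~\ref{prop:IntegerWeights} guarantees that $w(t)$ lies in the discrete lattice $L^{\vee} \subset (\mathcal{R}_{p})^{\ast}$, and the explicit local constancy of $[\gamma] \mapsto w_{[\gamma]}$ under small variations of $\lambda$ (stated just after Definition~\ref{defn:DualLattice}) makes $w$ continuous along the lifted path.

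The conclusion is then purely topological: any continuous map from the connected interval $[0,1]$ to the discrete abelian group $L^{\vee}$ must be constant, so $w(0) = w(1)$. Unraveling this equality gives $\Psi(\lambda,[\gamma]) = \Psi(\lambda, T_{\alpha}[\gamma])$, and extending by $\mathbb{C}$-linearity from $\calH_{\lambda}$ to $\mathcal{E}_{\lambda}$ yields the full statement. There is no genuine obstacle to overcome here: the proposition is essentially a formal consequence of Proposition~\ref{prop:IntegerWeights} combined with the connectedness of $[0,1]$, once one has set up the correct framework of an integral flat bundle mapping to a discrete lattice.
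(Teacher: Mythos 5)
Your proposal is correct and is essentially the paper's own argument made explicit: the paper's one-line proof invokes analytic continuation of the locally constant assignment $[\gamma]\mapsto w_{[\gamma]}$ along the loop, which is exactly your lift-and-discreteness argument (a locally constant map from the connected interval $[0,1]$ to the lattice $L^{\vee}$ is constant). No gap; you have simply written out the topological details the paper leaves implicit.
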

\par 
\begin{proof}
The Gauss--Manin connection identifies nearby relative homology classes (with integer coefficients) and linear dependence of their periods in terms of $\lambda$, which can be shown to be the same through analytic continuation.
\end{proof}
\par 
Proposition~\ref{prop:CommuteWithMonodromy} provides the construction of many invariant subbundles of $\mathcal{E}$.
\par 
\begin{cor}\label{cor:InvariantSubbundle}
For any linear subspace $V$ of $(\mathcal{R}_{p})^{\ast}$, $\mathcal{E}^{V}=\Psi^{-1}(\mathcal{B} \times V)$ is an invariant subbundle of $\mathcal{E}$.
\end{cor}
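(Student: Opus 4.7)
The plan is to split the claim into two parts: first, that $\mathcal{E}^V$ forms a subbundle of $\mathcal{E}$ (i.e., has locally constant rank), and second, that it is preserved under the Gauss--Manin monodromy.

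First I would establish the subbundle property by trivializing $\mathcal{E}$ locally. Over any simply connected open subset $U\subset \mathcal{B}$, parallel transport under $\nabla_{GM}$ identifies the fibers $\mathcal{E}_\lambda$ as $\lambda$ varies in $U$, and identifies the corresponding integer lattices $\calH_\lambda$. Under this trivialization, the period central charge $\Psi$ is locally constant: for each relative homology class $[\gamma]$ in $\calH_\lambda$, the integer weight $w_{[\gamma]}\in L^{\vee}$ attached to $[\gamma]$ is by construction locally constant in $\lambda$ (it is already constant along nearby translations of the lattice, and it determines $\Psi$ on $\calH_\lambda$ before extension by $\mathbb{C}$-linearity). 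Hence $\Psi_\lambda\colon \mathcal{E}_\lambda \to (\mathcal{R}_p)^{\ast}$ appears as a fixed $\mathbb{C}$-linear map over $U$, so its preimage of $V$ is a linear subspace of constant dimension. This shows that $\mathcal{E}^V=\Psi^{-1}(\mathcal{B}\times V)$ is a holomorphic subbundle of $\mathcal{E}$.

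Next, I would deduce the invariance under monodromy directly from Proposition~\ref{prop:CommuteWithMonodromy}. For any based loop $\alpha$ in $\mathcal{B}$ at $\lambda$ and any class $[\gamma]\in \mathcal{E}^V_\lambda$, the proposition yields $\Psi(\lambda,T_{\alpha}[\gamma])=\Psi(\lambda,[\gamma])$, which lies in $\{\lambda\}\times V$ because $[\gamma]\in \mathcal{E}^V_\lambda$. Consequently, $T_{\alpha}[\gamma]\in \mathcal{E}^V_\lambda$, and as $\alpha$ was arbitrary the fiber $\mathcal{E}^V_\lambda$ is stable under the entire monodromy representation.

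Since invariance is essentially an immediate reformulation of the commutation statement, the only real content lies in the subbundle verification, which rests on the fact that the integer weights assembled into $\Psi$ are locally constant under the Gauss--Manin trivialization; no further obstacle arises.
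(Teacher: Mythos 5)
Your proof is correct and follows the same route the paper intends: the paper states the corollary as an immediate consequence of Proposition~\ref{prop:CommuteWithMonodromy} without writing out a proof, and your two steps (local constancy of $\Psi$ under the Gauss--Manin trivialization giving the subbundle structure, plus the commutation relation giving monodromy invariance) are exactly the content being taken for granted there.
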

\par 
In particular, the subbundle $\mathcal{E}^{\lbrace{ 0 \rbrace}}$ generated by the relative homology classes with vanishing periods is invariant under the monodromy of the Gauss--Manin connection.
\par 
\subsubsection{Action of the monodromy on the zeros and poles of $\omega_{\lambda}$}\label{subsub:MonodromyZerosPoles}
\par 
The relative homology group $\calH_{\lambda}$ is  part of the exact sequence
$$
0 \longrightarrow
H_{1}(\overline{\mathcal{F}}_{\lambda} \setminus P_{\omega_{\lambda}}) 
\longrightarrow H_{1}(\overline{\mathcal{F}}_{\lambda} \setminus P_{\omega_{\lambda}}, Z_{\omega_{\lambda}})
\longrightarrow
H_{0}(Z_{\omega_{\lambda}}) 
\longrightarrow
H_{0}(\overline{\mathcal{F}}_{\lambda} \setminus P_{\omega_{\lambda}})
\longrightarrow 0\,.
$$
\par 
Following Section~\ref{subsub:zeros}, the zeros of $\omega_{\lambda}$ are the elements of the isoresidual fiber of the minimal stratum $\mathcal{H}(a_{1}+a_{2},-b_{1},\dots,-b_{p})$. The action of the monodromy on $H_{0}(Z_{\omega})$ partially determines the action of the monodromy on $\calH_{\lambda}$. This monodromy on $H_{0}(Z_{\omega})$ is fully described in Section~5 of \cite{GeTaIso} for the case $p=3$ and partially for some other cases.
\par
We can now describe part of the action of the monodromy on $\calH_{\lambda}$ by the action on the poles of $\omega_{\lambda}$. As in the case of the zeros, it follows from the study of the action of the monodromy on some discrete isoresidual fibers.
\par
Following the results of Sections~\ref{subsub:SimplePoles} and~\ref{subsub:HigherPoles}, in the multi-scale compactification, the poles of $\omega_{\lambda}$ correspond to nodal curves with at most three irreducible components. The top-level component corresponds to a differential with zero residues, making it invariant under the monodromy. In contrast, the monodromy usually acts nontrivially on the other components. Since each of the latter components contains exactly one zero, they are described by the isoresidual fibration for the case $n=1$ as in \cite{GeTaIso}.
\par
This action of the monodromy, via permutations on $P_{\omega_{\lambda}}$, determines the action of the monodromy on the subspace of 
$H_{1}(\overline{\mathcal{F}}_{\lambda} \setminus P_{\omega_{\lambda}})$ generated by the homology classes of the loops around the poles.
\par 
\subsubsection{Action of the monodromy on saddle connections}
\par 
In $\mathcal{B}$, we consider a positively oriented simple loop $\alpha$ around the resonance hyperplane $_{A_{J}}$ corresponding to the resonance equation $\sum\limits_{j \in J} \lambda_{j} =0$. We will use the geometry of the saddle connections of $(\overline{\mathcal{F}}_{\lambda},\omega_{\lambda})$ as $\lambda$ approaches $A_{J}$ to construct a block decomposition of the monodromy automorphism $T_{\alpha}$ on $H_{1}(\overline{\mathcal{F}}_{\lambda} \setminus P_{\omega_{\lambda}})$.
\par
Denoting by $V$ the complex line in  $(\mathcal{R}_{p})^{\ast}$ generated by $\lambda \mapsto \sum\limits_{j  \in J} \lambda_{j}$, Corollary~\ref{cor:InvariantSubbundle} shows that $\mathcal{E}^{V}$ is an invariant subbundle. In particular, $\mathcal{E}_{\lambda}^{V}$ is invariant under the action of $T_{\alpha}$.
\par
For $\lambda$ close enough to $A_{J}$, we introduce the bundle $\mathcal{F}$, where $\mathcal{G}_{\lambda}$ is the subspace of $\mathcal{E}_{\lambda}^{V}$ generated by the saddle connections whose period central charges coincide with $\sum\limits_{j  \in J} \lambda_{j}$. These saddle connections become arbitrarily small (in contrast to the other saddle connections in the fiber) as $\lambda$ approaches $A_{J}$. Therefore, they cannot cross any singularity and persist as saddle connections as $\lambda$ moves along $\alpha$.
\par 
\begin{prop}\label{prop:MonoSC}
For $\lambda$ close enough to $A_{J}$, the automorphism $T_{\alpha}$ acts by permutation on a basis of the subbundle $\mathcal{G}_{\lambda} \subset \mathcal{E}_{\lambda}$. Similarly, $T_{\alpha}$ acts by permutation on a basis of the quotient bundle $\mathcal{E}_{\lambda}/\mathcal{G}_{\lambda}$.
\end{prop}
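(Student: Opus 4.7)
The plan is to leverage the finiteness of saddle connections on $(\overline{\mathcal{F}}_\lambda,\omega_\lambda)$ established in Corollary~\ref{cor:SCFinite}, together with a length comparison, to exhibit $T_\alpha$ as a permutation on explicit bases. First, I would fix $\lambda$ sufficiently close to $A_J$ so that $|\sum_{j\in J}\lambda_j|$ is strictly smaller than the modulus of the period of any saddle connection of $(\overline{\mathcal{F}}_\lambda,\omega_\lambda)$ whose period central charge is \emph{not} proportional to $\sum_{j\in J}\lambda_j$. Proposition~\ref{prop:SCperiod} guarantees such a bound exists: only finitely many partial sums of residues appear as saddle connection periods, and for $\lambda$ near $A_J$ the ones vanishing on $A_J$ dominate all others from below. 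I would then take the \emph{short} saddle connections (of length $|\sum_{j\in J}\lambda_j|$) to furnish a spanning set of $\mathcal{G}_\lambda$, and extend this by a choice of \emph{long} saddle connections to produce a basis of $\mathcal{E}_\lambda$.

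For the permutation on $\mathcal{G}_\lambda$, each short saddle connection has length uniformly dominated by every other geometric distance on the surface, so it cannot be crossed by a conical singularity during the monodromy around $A_J$. Hence it deforms continuously throughout $\alpha$; its endpoints return to a permutation of the zeros of $\omega_\lambda$ as analyzed in Section~\ref{subsub:MonodromyZerosPoles}, and its period central charge is preserved by Proposition~\ref{prop:CommuteWithMonodromy}. This yields a permutation on the finite set of short saddle connections, from which a permutation on a chosen basis of $\mathcal{G}_\lambda$ is extracted.

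For the quotient $\mathcal{E}_\lambda/\mathcal{G}_\lambda$, I would track the long saddle connections. A long saddle connection may well be destroyed during the loop when a conical singularity crosses its interior, splitting it into two geodesic subsegments. The key point is that the homology class modulo $\mathcal{G}_\lambda$ is still preserved: the newly formed pair of subsegments differs from a nearby long saddle connection on the deformed surface by a sum of short saddle connections localized at the crossing point. Consequently the induced action on $\mathcal{E}_\lambda/\mathcal{G}_\lambda$ is a permutation of the image of the chosen basis of long saddle connections, and Proposition~\ref{prop:CommuteWithMonodromy} ensures that this permutation respects the period central charge.

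The hardest step will be the rigorous verification in the quotient step, namely that each passage of a singularity through the interior of a long saddle connection contributes only a correction in $\mathcal{G}_\lambda$. This requires a local flat-geometric description of the crossing event: in a small flat neighborhood of the passing singularity, any short arc between two other singularities must be expressible as an integer combination of short saddle connections based at that singularity. Controlling the size of this neighborhood uniformly along $\alpha$---so that the correction always remains of central charge a multiple of $\sum_{j\in J}\lambda_j$---is the technical heart of the argument, and is where the assumption that $\lambda$ is close enough to $A_J$ will be used quantitatively.
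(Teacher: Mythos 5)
Your proposal is correct and follows essentially the same route as the paper: the short saddle connections (period central charge $\sum_{j\in J}\lambda_j$) are too small to be crossed and hence are permuted, giving the statement for $\mathcal{G}_\lambda$, while the long saddle connections generate the quotient and can only be crossed by short ones, so their classes modulo $\mathcal{G}_\lambda$ are permuted. The "technical heart" you flag — that each crossing of a long saddle connection contributes only a correction lying in $\mathcal{G}_\lambda$, for a sufficiently small loop around a regular point of $A_J$ — is exactly the step the paper also asserts (equally tersely) rather than proves in detail.
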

\par 
\begin{proof}
In an arbitrarily small neighborhood of a point in the regular locus of $A_{J}$, the Gauss-Manin connection around $A_{J}$ preserves saddle connections whose period is given by $\sum\limits_{j  \in J} \lambda_{j}$. Indeed, since these saddle connections are arbitrarily small compared to the other saddle connections of $(\overline{\mathcal{F}}_{\lambda},\omega_{\lambda})$, they cannot be crossed by them. It follows that they are permuted by $T_{\alpha}$.
\par
Since the relative homology group $\calH_{\lambda}$ is generated by the homology classes of saddle connections, the fiber $\mathcal{E}_{\lambda}/\mathcal{G}_{\lambda}$ of the quotient bundle is generated by saddle connections whose periods are not given by $\sum\limits_{j \in J} \lambda_{j}$.
\par
Along a loop in $\mathcal{B}$ that is close enough to the resonance hyperplane, the saddle connections that do not shrink can only be crossed by the vanishing saddle connections. It follows that their classes in the quotient fiber $\mathcal{E}_{\lambda}/\mathcal{G}_{\lambda}$ are the same, up to permutation.
\end{proof}
\par 
\subsection{Example: the case of $\mathcal{H}(1,1,-2,-1,-1)$}\label{sub:monoExample}
\par 
Note first that when $p=2$, the monodromy around the unique resonance hyperplane is trivial because the generic isoresidual fibers are related to each other by the scaling action of $\mathbb{C}^{\ast}$. Hence, the stratum $\mathcal{H}(1,1,-2,-1,-1)$ is the first nontrivial example.
\par
Here, $\mathcal{B} = \mathcal{R}_{3} \setminus \mathcal{A}_{3}$ is the complement of three resonance hyperplanes $A_{1}$, $A_{2}$, and $A_{3}$,  corresponding respectively to the vanishing of $\lambda_{1}$ (the residue at the double pole), and $\lambda_{2}$ and $\lambda_{3}$ (the residues at the simple poles).
\par
The zeroes of $\omega_{\lambda}$ are described in Theorem~\ref{thm:MAIN1} while the poles are described in Sections~\ref{subsub:SimplePoles} and~\ref{subsub:HigherPoles}. We obtain that the generic isoresidual fiber is a punctured sphere endowed with a translation structure belonging to the stratum $\mathcal{H}(2,2,-2,-2,-1,-1)$. In particular, it is of genus zero. The residues at the double poles are $\pm (\lambda_{2}-\lambda_{3})$, while the residues at the simple poles are $\pm\lambda_{1}$. The vector bundle $\mathcal{E}$ has rank $4$.
\par
We deduce from Section~\ref{subsub:MonodromyZerosPoles} that the monodromy of the isoresidual fibration preserves each pole of~$\omega_{\lambda}$. Therefore, $\mathcal{E}$ admits a subbundle $\mathcal{E}_{p}$ of rank $3$, on which the monodromy acts trivially. In particular, $\mathcal{E}_{p}$ contains the subbundle $\mathcal{E}^{\lbrace{ 0 \rbrace}}$ of rank $2$,  generated by the classes whose period charge vanishes (see Corollary~\ref{cor:InvariantSubbundle}).
\par
The relative homology class of any arc joining the two zeros of $\omega_{\lambda}$ generates the quotient bundle $\mathcal{E}/\mathcal{E}_{p}$ of rank $1$. Following Section~\ref{subsub:MonodromyZerosPoles}, we can verify that the monodromy of loops around $A_{2}$ and $A_{3}$ permutes the two zeros and therefore acts as $-\Id$ on $\mathcal{E}/\mathcal{E}_{p}$. In contrast, the monodromy of loops around $A_{1}$ preserves the two zeros individually and therefore acts trivially on $\mathcal{E}/\mathcal{E}_{p}$. The eigenvalues of the monodromy operators are then:
\begin{itemize}
    \item $(1,1,1,1)$ for the loops around $A_{1}$;
    \item $(1,1,1,-1)$ for the loops around $A_{2}$ and $A_{3}$).
\end{itemize}
In order to compute these monodromy operators, we will describe explicitly the translation structure of $\mathcal{F}_{\lambda}$ for a generic configuration of real residues~$\lambda$ (see Figure~\ref{fig:Sphere}). This can be accomplished by exhaustively writing down the ribbon graphs associated to~$\omega_\lambda$ as defined in Section~\ref{sub:RealGraph}. We observe that as $\lambda$ approaches any of the three resonance hyperplanes, exactly two of the four saddle connections of $(\mathcal{F}_{\lambda},\omega_{\lambda})$ shrink. According to Proposition~\ref{prop:MonoSC} there exists of a subbundle $\mathcal{G}_{\lambda,A_{i}}$ of rank $2$ (defined in a neighborhood of the resonance hyperplane $A_{i}$) on which the monodromy around that hyperplane acts as a permutation, while it also acts by permuting the quotient bundle. 

\subsubsection{Computation of the monodromy around $A_{1}$}

The eigenvalues of the monodromy operator are $(1,1,1,1)$ so these two permutations are trivial. Looking at the translation structure for a generic configuration of real residues close to $A_{1}$, we observe that each of the two shrinking saddle connections is a closed saddle connection enclosing a simple pole of $\omega_{\lambda}$. We deduce that the subbundle $\mathcal{G}_{\lambda,A_{1}}$ of rank $2$ (defined in a neighborhood of $A_{1}$) is contained in the rank-$3$ subbundle $\mathcal{E}_{p}$ where the monodromy acts trivially.
\par
Using the block decomposition of the matrix together with its commutation with the period charge, we obtain a matrix of the following form:
$$
\begin{pmatrix}1&0&1&-1\\0&1&0&0\\0&0&1&0\\0&0&0&1\end{pmatrix}.
$$
\subsubsection{Computation of the monodromy around $A_{2}$ and $A_{3}$}
The eigenvalues for the monodromy around $A_{2}$ and $A_{3}$ are $(1,1,1,-1)$. Again, by looking at the translation structure for a generic configuration of real residues close to $A_{2}$ (or $A_{3}$), we observe that the two shrinking saddle connections join to the two zeros. Since the monodromy around these hyperplanes transposes the two zeros, it follows that the monodromy acts as a transposition on $\mathcal{G}_{\lambda,A_{i}}$ and trivially on the quotient bundle $\mathcal{E}_{p} / \mathcal{G}_{\lambda,A_{i}}$ for $i =2$ and $3$.
\par
In order to write down explicitly the monodromy action corresponding to $A_{2}$ (or equivalently $A_{3}$) up to conjugacy, we factorize the matrix into two \textit{half-monodromy transformations} corresponding to the two half-loops around the resonance hyperplane connecting the two chambers of the space of configurations of real residues, where we have $\lambda_{2}>0$ and $\lambda_{2}<0$, respectively. We obtain that the monodromy matrix is given by 
$$
\begin{pmatrix}1&0&1&-1\\0&1&-1&1\\0&0&0&-1\\0&0&1&0\end{pmatrix}
$$
in a basis $(u_{1},u_{2},u_{3},u_{4})$ of $\mathcal{H}_{\lambda}$, formed by the four saddle connections of $(\mathcal{F}_{\lambda},\omega_{\lambda})$. Here, $\lambda_{2},\lambda_{3}>0$ while $\lambda_{1}<0$, $u_{3}$ and $u_{4}$ generate $\mathcal{G}_{\lambda,A_{2}}$, and $\Psi(u_{1})=\Psi(u_{2})=\lambda_{3}$ while $\Psi(u_{3})=\Psi(u_{4})=\lambda_{2}$ (see Figure~\ref{fig:Sphere} for a picture of the translation structure). We can check that the monodromy commutes with the period central charge.
\begin{figure}[ht]
 \centering
 \begin{tikzpicture}[,decoration={
    markings,
    mark=at position 0.45 with {\arrow[very thick]{>}}}]
\coordinate (a) at (-2,0);
\coordinate (b) at (2,0);

\draw[postaction={decorate}] (a)  .. controls ++(-90:1.2) and  ++(180:.5) .. (0,-1.5) .. controls ++(0:.5) and ++(-90:1.2) ..node[below right] {$u_{4}$} (b);
\draw[postaction={decorate}] (b)  .. controls ++(90:1.2) and  ++(0:.5) .. (0,1.5) .. controls ++(180:.5) and ++(90:1.2) ..node[above left] {$u_{2}$}  (a);
\draw[postaction={decorate}] (b)  .. controls ++(135:.7) and  ++(0:.5) .. (0,.6) .. controls ++(180:.5) and ++(45:.7) ..node[above] {$u_{3}$}  (a);
\draw[postaction={decorate}] (a)  .. controls ++(-45:.7) and  ++(180:.5) .. (0,-.6) .. controls ++(0:.5) and ++(-135:.7) ..node[above right] {$u_{1}$}  (b);
\foreach \i in {0,1}
\draw (a) --++(-15+30*\i:.3);
\foreach \i in {0,1}
\draw (b) --++(-15+30*\i:.3);

\fill[] (a) circle (2pt);\node[left] at (a) {$z_{2}$};
\filldraw[fill=white] (b)  circle (2pt);\node[left] at (b) {$z_{1}$};
\node at (0,0) {$d_{1}$};
\node at (0,1) {$s_{1}$};
\node at (0,-1) {$s_{2}$};
\node at (2.2,.7) {$d_{2}$};
\end{tikzpicture}
    \caption{The graph associated to the differential $\omega_{\lambda}$ where $z_{1},z_{2}$ are the zeros, $s_{1},s_{2}$ are the simple poles and $d_{1},d_{2}$ are the double poles.}\label{fig:Sphere}
\end{figure}
\par 
\begin{rmk}
The lattice of absolute periods of $\omega_{\lambda}$ is $(\lambda_{2}+\lambda_{3})\mathbb{Z}+(\lambda_{2}-\lambda_{3})\mathbb{Z}$. Indeed, the isoresidual fiber has genus zero, and we know the residues at the poles. In contrast, we can find saddle connections with periods $\lambda_{2}$ or $\lambda_{3}$ between the two zeros of $\omega_{\lambda}$. The relative period lattice of $\omega_{\lambda}$ is therefore $\lambda_{2}\mathbb{Z}+\lambda_{3}\mathbb{Z}$. These two lattices do not coincide.
\end{rmk}
\par 
\section{Connected components of generic isoresidual fibers}\label{sec:Connected}
\par 
The goal of this section is to prove the classification of the connected components of generic isoresidual fibers in genus zero, as stated in Theorem~\ref{thm:MAIN5}. To achieve this, we introduce topological invariants in Section~\ref{sub:ConnectedInvariant}, prove that they are complete for one-dimensional isoresidual fibers in Section~\ref{sub:ConnectedUni}, and then use the incidence relations between strata to deduce the result in full generality in Section~\ref{sub:Incidence}.
\par 
\subsection{Topological invariants}\label{sub:ConnectedInvariant}
\par 
Recall that the two exceptional families of strata where generic isoresidual fibers are disconnected are as follows:
\begin{itemize}
    \item $\mathcal{H}(kc_{1},\dots,kc_{m},-1,-1)$ with $k \geq 2$ and  $\gcd(c_{1},\dots,c_{m})=1$;
    \item $\mathcal{H}(c_{1},\dots,c_{m},\rec[-1][p])$ with $p,c_{1},\dots,c_{m}$ even.
\end{itemize}
In each case, we introduce topological invariants of the connected components in an indirect way as follows:
\par
1) We consider isoresidual fibers corresponding to configurations of residues where, for each pair of simple poles, one pole has a real positive residue while the other has a real negative residue. Notice that this condition can be satisfied by some residue configurations that do not belong to any resonance hyperplane (see Definition~\ref{defn:resonanceintro}).
\par
2) For each translation surface $(X,\omega)$ belonging to any such generic isoresidual fiber $\mathcal{F}_{\lambda}$, we glue the cylinders corresponding to the pairs of simple poles using a homothety. The resulting object is not a translation surface, but rather a dilation surface of genus $g \geq 1$.
\par
3) The moduli space of dilation surfaces is stratified in exactly the same way as that of translation surfaces. The charts of the complex affine structure around singular points are given by branches of $\int z^{m} g(z)dz$, where 
$g$ is a nonzero holomorphic function and $m \in \mathbb{C}$, with $m$ representing the order of the singularities. The Gauss--Bonnet identity $\sum m_{i}=2g-2$ also holds for complex affine structures. When the complex affine structure reduces to a dilation structure, its singularities satisfy $\Re(m) \in \mathbb{Z}$, and the moduli space is stratified according to these integer values (see \cite{ABW} for details).
\par
4) Any stratum $\mathcal{H}(a_{1},\dots,a_{n},-b_{1},\dots,-b_{p})$ of meromorphic differentials naturally embeds into the corresponding stratum
$\mathcal{D}(a_{1},\dots,a_{n},-b_{1},\dots,-b_{p})$ of dilation surfaces. The topological invariants deduced from the rotation numbers (which still make sense in a dilation surface, since slopes are globally defined) that we described in Section~\ref{sub:CCClassification} distinguish the connected components of $\mathcal{D}(a_{1},\dots,a_{n},-b_{1},\dots,-b_{p})$.

\subsubsection{The first exceptional family of strata}\label{subsub:FirstSurgery}

For a stratum of the form $\mathcal{H}(kc_{1},\dots,kc_{m},-1,-1)$, where $k \geq 2$ and $c_{1},\dots,c_{m}$ are positive or negative coprime integers satisfying $\sum\limits_{i=1}^m c_{i} = 0$, we consider a generic isoresidual fiber $\mathcal{F}_{\lambda}$ corresponding to a configuration $\lambda=(\lambda_{1},\dots,\lambda_{p})$ of residues,  where $\lambda_{p-1}$ is a positive real number and  $\lambda_{p}$ is a negative real number.
\par
We introduce the following surgery: for any translation surface $(X,\omega)$ in $\mathcal{F}_{\lambda}$, we glue the two cylinders corresponding to the two simple poles together. We choose one regular closed geodesic on each cylinder and identify them with each other using a homothety. As a result, we obtain a dilation surface $X'$ of genus one with a distinguished closed geodesic $\gamma$. 
\par
There is a well-defined notion of topological index (see Section~\ref{sub:rotation}) in the dilation surface $X'$. For any pair of simple loops $\alpha,\beta$ in $X'$ crossing $\gamma$ exactly once (with a positive orientation), the topological indices of $\alpha$ and $\beta$ differ by a combination of the orders $kc_{1},\dots,kc_{m}$. We thus define the \textit{surgical rotation number} of $(X,\omega)$ as the class of ${\rm Ind}_{\alpha}$ in $\mathbb{Z}/k\mathbb{Z}$.
\par
We can easily check that:
\begin{itemize}
    \item the surgical rotation number of $(X,\omega)$ does not depend on the choice of closed geodesics in the surgery performed on $(X,\omega)$;
    \item deformations of $(X,\omega)$ in the isoresidual fiber $\mathcal{F}_{\lambda}$ preserve the surgical rotation number.
\end{itemize}
Therefore, for each connected component $\mathcal{C}$ of $\mathcal{F}_{\lambda}$, there is a well-defined \textit{surgical rotation number} in $\mathbb{Z}/k\mathbb{Z}$. We will see in Proposition~\ref{prop:MAIN5} that the number of connected components in a generic isoresidual fiber is indeed $k$, proving in particular that the  \textit{surgical rotation number} is a complete invariant of the connected components for the fibers where it is defined (where $\lambda_{p-1}$ and $\lambda_{p}$ are respectively a positive and a negative real number).
\par 
Algebro-geometrically speaking, gluing the two simple poles yields a rational nodal curve as a degeneration of elliptic curves. The torsion number for the connected components of the stratum $\mathcal{H}(kc_1, \ldots, kc_m)$ in genus one (see \cite[Section 3.4]{chge}) can also be used to induce the surgical rotation number.  

\subsubsection{The second exceptional family of strata}\label{subsub:SecondSurgery}

For a stratum of the form $\mathcal{H}(c_{1},\dots,c_{m},\rec[-1][2g])$, where $g\geq 1$ and $c_{1},\dots,c_{m}$ are positive or negative even integers satisfying $\sum\limits_{i=1}^m c_{i} = 2g-2$, we consider a generic isoresidual fiber $\mathcal{F}_{\lambda}$ corresponding to a configuration of residues, where $g$ simple poles have pairwise distinct real positive residues while the other $g$ simple poles have pairwise distinct negative real residues.
\par
The surgery we introduce is as follows: We choose a regular closed geodesic on each cylinder corresponding to a simple pole. We denote by $\gamma_{1},\dots,\gamma_{g}$ (resp. $\delta_{1},\dots,\delta_{g}$) the closed geodesics chosen on the cylinders corresponding to the simple poles with negative (resp. positive) residues. Without loss of generality, we assume that $\gamma_{1},\dots,\gamma_{g}$ (resp. $\delta_{1},\dots,\delta_{g}$) are ordered by increasing length. Using homotheties to identify $\gamma_{i}$ with $\delta_{i}$ for each $i \in \lbrace{ 1,\dots,g \rbrace}$, we obtain a dilation surface $X'$ of genus $g \geq 1$.
\par
We know from \cite{ABW,Kawazumi} that the parity of the spin structure (or equivalently, the Arf invariant) generalizes from translation surfaces to dilation surfaces. We can then check that the parity of
$$
\sum\limits_{i=1}^{g} ({\rm Ind}_{\alpha_{i}}+1)({\rm Ind}_{\beta_{i}}+1)
$$
does not depend on the choice of geodesics $\gamma_{1},\dots,\gamma_{g}$ and $\delta_{1},\dots,\delta_{g}$. It is also constant along deformations of $(X,\omega)$ inside the isoresidual fiber, since the closed geodesics of the cylinders retain their directions.
\par
Then, for each connected component $\mathcal{C}$ of $\mathcal{F}_{\lambda}$, there is a well-defined \textit{surgical Arf invariant} in $\mathbb{Z}/2\mathbb{Z}$. We will see in Proposition~\ref{prop:MAIN5} that the number of connected components in a generic fiber is indeed $2$, proving in particular that the  \textit{surgical Arf invariant} is a complete invariant of the connected components for the fibers where it is defined (where the $g$ residues are positive real numbers while the other $g$ residues are negative real numbers).

\subsection{Incidence relations}\label{sub:Incidence}

In this section, we consider strata with the same pattern of poles $b_{1},\dots,b_{p}$, but the pattern of zeros can vary. To avoid confusion between isoresidual fibers of distinct strata, we introduce the notation $\mathcal{F}_{\lambda}(a_{1},\dots,a_{n})$ for the isoresidual fiber of $\mathcal{H}(a_{1},\dots,a_{n}) =
\mathcal{H}(a_{1},\dots,a_{n}, -b_1,\ldots, -b_p)$ corresponding to a configuration of residues $\lambda=(\lambda_{1},\dots,\lambda_{p})$.
\par 
We deduce from the surgery of breaking up zeros (see Corollary~\ref{cor:breakingup}) the following lemma about incidence relations between these isoresidual fibers.
\par 
\begin{lem}\label{lem:incidence}
Consider a stratum $\mathcal{H}(a_{1},\dots,a_{n})$ with $n \geq 2$, any configuration of residues $\lambda$, and a subset $I\subset \left\{1,\dots,n\right\}$. For any connected component of $\mathcal{F}_{\lambda}(a_{1},\dots,a_{n})$, there exists a connected component of $\mathcal{F}_{\lambda}(\sum_{i\in I} a_{i},\lbrace a_{j}\rbrace_{j\notin I})$ contained in its closure. Moreover, each connected component of $\mathcal{F}_{\lambda}(\sum_{i\in I} a_{i},\lbrace a_{j}\rbrace_{j\notin I})$ is adjacent to a unique connected component of $\mathcal{F}_{\lambda}(a_{1},\dots,a_{n})$.
\end{lem}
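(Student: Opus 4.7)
The plan is to combine the local surgery of breaking up zeros (Corollary~\ref{cor:breakingup}) with the smoothness of the closures of isoresidual fibers established in Theorem~\ref{thm:closureMSD}. Existence amounts to producing a merging degeneration in the closure of any given connected component of $\mathcal{F}_{\lambda}(a_1,\ldots,a_n)$, while uniqueness amounts to showing that the local picture of $\overline{\mathcal{F}_{\lambda}(a_1,\ldots,a_n)}$ at such a merging point is connected once the boundary is removed.

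For the existence, starting from a connected component $\mathcal{C}$ of $\mathcal{F}_{\lambda}(a_1,\ldots,a_n)$ and any representative $(X,\omega)\in\mathcal{C}$, I would first choose a tree of saddle connections in $(X,\omega)$ joining the zeros indexed by $I$; such a tree exists since any homotopy class of paths between two zeros contains a geodesic representative made of saddle connections. Then, shrinking the periods of these saddle connections simultaneously along a continuous path in $\mathcal{C}$ is feasible because, within the isoresidual fiber, relative periods between the zeros are free deformation parameters, unconstrained by the prescribed residues. The limit point corresponds to a multi-scale differential in the closure of $\mathcal{C}$ whose bottom-level component absorbs the merged-zero datum, and by the multi-scale description of Section~\ref{sub:boundary} it parametrizes a point of $\mathcal{F}_{\lambda}(\sum_{i\in I} a_i,\lbrace a_j\rbrace_{j\notin I})$.

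For the uniqueness, I would work locally near a representative $(X',\omega')$ of a given connected component $\mathcal{C}'$ of $\mathcal{F}_{\lambda}(\sum_{i\in I} a_i,\lbrace a_j\rbrace_{j\notin I})$. By Corollary~\ref{cor:breakingup}, breaking up the merged zero produces differentials in $\mathcal{F}_{\lambda}(a_1,\ldots,a_n)$ arbitrarily close to $(X',\omega')$. To show that all such nearby differentials belong to the same connected component of $\mathcal{F}_{\lambda}(a_1,\ldots,a_n)$, I would invoke Theorem~\ref{thm:closureMSD}: the closure $\overline{\mathcal{F}_{\lambda}(a_1,\ldots,a_n)}$ is a smooth orbifold with normal-crossing boundary at $(X',\omega')$, so a small polydisc neighborhood of $(X',\omega')$ therein has a connected complement of its boundary divisor (the boundary being a complex codimension-one subvariety of the smooth polydisc). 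Any two nearby points of $\mathcal{F}_{\lambda}(a_1,\ldots,a_n)$ can therefore be joined by a path lying entirely in the interior, hence in a single component, so $\mathcal{C}'$ is adjacent to a unique connected component of $\mathcal{F}_{\lambda}(a_1,\ldots,a_n)$.

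The hard part will be to identify the precise boundary strata of $\MS(a_1,\ldots,a_n,-b_1,\ldots,-b_p)$ meeting a generic point $(X',\omega')\in \mathcal{C}'$, so that Theorem~\ref{thm:closureMSD} can be applied with the correct normal-crossing structure at that point. This verification reduces to checking that the local deformations producing the breaking-up of zeros are transverse to the residue conditions defining $\overline{\mathcal{F}_{\lambda}}$, which ultimately rests on the $\mathcal{E}_{\lambda}$-GRC analysis of Section~\ref{sub:closure}.
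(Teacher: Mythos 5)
Your uniqueness argument is sound and is essentially the paper's: both rest on the smoothness and normal-crossing boundary of $\overline{\mathcal F}_{\lambda}$ from Theorem~\ref{thm:closureMSD}, so that a punctured polydisc around a boundary point meets a single interior component. The gap is in the existence step. You assert that the periods of a chosen tree of saddle connections can be shrunk to zero ``along a continuous path in $\mathcal{C}$'' because relative periods are free parameters. Freeness only gives \emph{local} deformability: it does not guarantee that the ray in period coordinates along which your tree periods tend to $0$ stays in the image of the period map all the way down, nor that the first degeneration encountered is the intended collision. Concretely: (i) a saddle connection of your tree can be destroyed when another conical point crosses its interior (compare the care taken in the proof of Proposition~\ref{prop:cylinder}); after that you are only shrinking a relative homology class, and a relative period tending to $0$ does not force the corresponding zeros to collide --- the limit could equally be a cherry-type or horizontal degeneration; (ii) a geodesic representative of a path between two zeros of $I$ is a chain of saddle connections whose intermediate vertices may be zeros \emph{outside} $I$, so the tree you want need not exist, and shrinking the chain that does exist may merge extra zeros.

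This is precisely the difficulty the paper's proof is built to handle, and it does so differently: it treats only $I=\{1,\dots,n\}$ directly, forming the dual graph on the polar domains, extracting a cycle, and shrinking a \emph{shortest} edge of that cycle while coherently adjusting the other edges of the cycle (whose periods are tied together by the fixed residues), which guarantees that the first degeneration is a collision of zeros; the case of general $I$ is then recovered by splitting zeros back off the fully merged point, using the same smoothness statement you invoke for uniqueness. Note also that you locate the remaining difficulty in checking transversality of the zero-splitting to the residue conditions; that part is already supplied by Theorem~\ref{thm:closureMSD} and Corollary~\ref{cor:breakingup}. The genuinely missing ingredient is a proof that the degeneration merging the zeros can actually be reached from every connected component, i.e.\ a substitute for the paper's combinatorial shrinking argument.
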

\par 
The proof of this result is a refinement of Propositions~7.1 and~7.2 of \cite{Bo}.
\par 
\begin{proof}
We first treat the case where $I=\left\{1,\dots,n\right\}$ and show that, given $\mathcal{F}_{\lambda}(a_{1},\dots,a_{n})$, there is an element in its closure that belongs to $\mathcal{F}_{\lambda}(\sum_{i\in I} a_{i})$.
\par 
We associate to $\omega\in \mathcal{F}_{\lambda}(a_{1},\dots,a_{n})$ an oriented graph in the following way. Consider a generic direction and the associated decomposition into basic domains (see \cite[section 3.3]{Bo}). Each pole corresponds to a vertex in the graph. For each saddle connection, we draw an arrow between the poles corresponding to the basic domains it bounds, such that the orientation goes from the lower domain to the upper domain.
\par 
The dimension of $\mathcal{H}(a_{1},\dots,a_{n})$ is $p+n-2$, so the graph has at least $p$ edges, since $n\geq2$. Therefore, it contains at least one simple closed path. Let $\gamma$ be such a path. If $\gamma$ is a loop (i.e., it contains only one vertex), then we are done by letting the corresponding saddle connection degenerate. 
\par
If $\gamma$ is a sum of edges, consider one of the shortest edges. Shrink it by a quantity $d$. Then, consider the next edge in the loop. If both edges point in the same direction, we change the latter saddle connection by~$-d$. If one edge points toward the vertex and the other points away from it, we add $d$ to the corresponding saddle connection. More generally, if the change at an edge is $\pm d$, then the change at the next edge is $\pm d$ if both edges point toward or away from the vertex, and $\mp d$ if the edges point in opposite directions.
\par 
Now, this can be done in a coherent way. Indeed, if $\gamma$ is a directed graph, then we add $d$ to all the corresponding saddle connections. If we reverse the orientation of exactly one edge in $\gamma$, then this only changes its contribution to $-d$ and makes no other change. Since any orientation of the edges can be obtained from the directed one by such changes, this proves the result.
\par 
\smallskip
\par
Now we can show uniqueness and the case $I \neq \left\{1,\dots,n\right\}$ at the same time. The multi-scale compactification is smooth at the points corresponding to $\mathcal{F}_{\lambda}(a_{1},\dots,a_{n})$ on the boundary. Hence, we can break the zeros not in $I$, and then those in $I$. Since breaking the zeros does not change the residues, this shows (by reversing this construction) that we can merge only the zeros in $I$. Finally, the smoothness implies the uniqueness statement,  since if two components were to meet, this would create a singularity.
\end{proof}

\subsection{Connected components of generic one-dimensional isoresidual fibers}\label{sub:ConnectedUni}

In this section, we prove that the surgical Arf invariant and the surgical rotation number are complete invariants for the connected components of generic isoresidual fibers in strata with $n=2$ zeros.
\par 
\begin{prop}\label{prop:MAIN5}
In strata of meromorphic differentials of genus zero, generic isoresidual fibers are connected, except for the following two families of strata:
\begin{enumerate}
    \item $\mathcal{H}(ka_{1},ka_{2},-kb_{1},\dots,-kb_{s},-1,-1)$ for some $k \geq 2$ and $a_{1},a_{2},b_{1},\dots,b_{s}$ positive integers such that $\gcd(a_{1},a_{2},b_{1},\dots,b_{s})=1$, where generic isoresidual fibers have $k$ connected components;
    \item $\mathcal{H}(2a_{1},2a_{2},-2b_{1},\dots,-2b_{s},\rec[-1][2g])$ with $a_{1},a_{2},b_{1},\dots,b_{s}\geq 1$ and $g \geq 2$, where generic isoresidual fibers have two connected components.
\end{enumerate}
\end{prop}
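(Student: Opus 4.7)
The plan is to combine the surgical topological invariants constructed in Section~\ref{sub:ConnectedInvariant} with the incidence relations of Lemma~\ref{lem:incidence}, argued on a convenient real-residue configuration. By Theorem~\ref{thm:MAIN1}(7), the stratum component containing $(\overline{\mathcal{F}}_\lambda,\omega_\lambda)$ is independent of $\lambda$, so one may restrict to $\lambda \in \mathcal{R}_p \setminus \mathcal{A}_p$ with real residues, and in the two exceptional families one may further assume the sign patterns required by Sections~\ref{subsub:FirstSurgery} and~\ref{subsub:SecondSurgery}. The connected components of $\mathcal{F}_\lambda$ then have well-defined surgical rotation number in $\mathbb{Z}/k\mathbb{Z}$ (Case~(1)) or surgical Arf invariant in $\mathbb{Z}/2\mathbb{Z}$ (Case~(2)), giving an upper bound of $k$ or $2$ components respectively.

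First, I would establish the matching lower bounds on the number of components in the two exceptional families by exhibiting differentials in $\mathcal{F}_\lambda$ whose surgical invariants realize every admissible value. For Case~(1), starting from any differential in $\mathcal{F}_\lambda$, a controlled cut-and-reglue along a horizontal segment connecting the two simple-pole cylinders shifts the surgical rotation number by a prescribed amount while preserving all residues; iterating this operation sweeps out all of $\mathbb{Z}/k\mathbb{Z}$. For Case~(2), two distinct model differentials with decorated graphs of opposite surgical Arf parities (built from genus~$g$ hyperelliptic-type versus non-hyperelliptic-type pieces in the style of Section~\ref{sub:CCClassification}) suffice to realize both invariants. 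Combined with the upper bound above, this settles the exceptional families.

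For the non-exceptional strata, I would prove that $\mathcal{F}_\lambda$ is connected by reducing to a transitivity statement about the minimal-stratum fiber via Lemma~\ref{lem:incidence}. Each connected component of $\mathcal{F}_\lambda=\mathcal{F}_\lambda(a_1,a_2)$ contains at least one point of the minimal-stratum fiber $\mathcal{F}_\lambda(a_1+a_2)$ in its closure, and each such point lies in a unique component; hence the components of $\mathcal{F}_\lambda$ correspond to orbits on $\mathcal{F}_\lambda(a_1+a_2)$ under the equivalence ``lying in the closure of a common component.'' Using the real-locus description of Section~\ref{sub:FiberReal} and Lemma~\ref{lem:defengraph}, this equivalence is generated by the combinatorial operation that inserts an edge between two polar domains of a decorated graph $\mathfrak{gr}(\omega)$, splitting the unique zero of $\omega$ into two nearby zeros of orders $a_1$ and $a_2$. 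The task is to show that this operation, together with its inverse, acts transitively on $\mathcal{F}_\lambda(a_1+a_2)$ whenever the signature lies outside the two exceptional families.

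The main obstacle is precisely this final transitivity: one must verify that edge-insertion/contraction on decorated graphs generates a single orbit, except when a common divisor $k \geq 2$ of all nonzero orders (Case~(1)) or a universal evenness of all orders (Case~(2)) produces a genuine obstruction. The key is that the surgical invariants of Section~\ref{sub:ConnectedInvariant} give, via the classification of connected components of dilation strata in \cite{ABW}, the \emph{only} possible obstructions; any remaining discrete invariant of $\mathcal{F}_\lambda(a_1+a_2)$ modulo zero-splitting would descend to such an invariant on the dilation surface, contradicting this classification outside the exceptional list. Combining the transitivity with the lower bounds above completes the proof.
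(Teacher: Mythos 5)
Your proposal has the right ingredients in its periphery (reduction to real residues, the surgical invariants for distinguishing components, the reduction via decorated graphs and edge insertion/contraction to a transitivity statement on the minimal-stratum fiber), but the two load-bearing steps are not sound.

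First, the claim that the surgical rotation number and surgical Arf invariant give ``an upper bound of $k$ or $2$ components'' is a non sequitur. A well-defined invariant of components valued in $\mathbb{Z}/k\mathbb{Z}$ shows only that components with \emph{different} values are distinct, i.e.\ it yields a \emph{lower} bound (once all values are realized). The upper bound requires showing that any two differentials with the same invariant lie in the same component, and that is exactly the hard content of the statement. The paper obtains the upper bound by explicit combinatorial transitivity: it encodes the zeros of $\omega_{\lambda}$ as decorated trees, encodes saddle connections of $(\overline{\mathcal{F}}_{\lambda},\omega_{\lambda})$ as decorated graphs with a unique loop (Lemma~\ref{lem:Sec8SaddleConnection}), and then proves case by case (no simple poles; only simple poles, where the moves generate $\mathfrak{A}_{t-1}$ or $\mathfrak{S}_{t-1}$ according to the parity of $a_1,a_2$; a unique higher-order pole, where a $\gcd$ orbit count appears; and the general assembly) that the moves act with exactly the predicted number of orbits.

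Second, your substitute for this transitivity --- the assertion that any discrete invariant of $\mathcal{F}_{\lambda}(a_1+a_2)$ modulo zero-splitting ``would descend to'' an invariant of connected components of a dilation stratum, so that the classification in \cite{ABW} rules out further obstructions --- does not hold. The surgery of Section~\ref{sub:ConnectedInvariant} sends a fiber component \emph{into} a dilation stratum, so invariants of dilation stratum components pull back to invariants of fiber components; the converse direction is what you would need, and there is no reason for it. The isoresidual fiber is a small subvariety of the stratum, and its components could a priori be separated by invariants invisible at the level of the ambient (translation or dilation) stratum. Nothing in the paper, and nothing in your sketch, supplies such a descent. Without it, the connectedness in the non-exceptional cases and the exact counts $k$ and $2$ in the exceptional cases remain unproved; you would need to carry out the explicit graph-move arguments (or an equivalent group-theoretic computation) that constitute the bulk of the paper's proof.
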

\par 
This entire Section~\ref{sub:ConnectedUni} is dedicated to the proof of this proposition. We begin with some general considerations.
\smallskip
\par
We know from Theorem~\ref{thm:MAIN1} that the type of $(\overline{\mathcal{F}}_{\lambda}, \omega_\lambda)$ as a translation structure is the same for any $\lambda \in \mathcal{R}_{p} \setminus \mathcal{A}_{p}$. Therefore, it suffices to determine the number of connected components of generic isoresidual fibers in the special case of an isoresidual fiber $\mathcal{F}_{\lambda}$, where $\lambda=(\lambda_{1},\dots,\lambda_{p})\in \mathcal{R}_{p} \setminus \mathcal{A}_{p}$ satisfies $\lambda_{1} \in \mathbb{R}_{>0}$ and $\lambda_{j} \in \mathbb{R}_{<0}$ for $2 \leq j \leq p$ as shown in Figure~\ref{fig:exgraph}.
\par
Following Section~\ref{subsub:zeros}, the zeros of $(\overline{\mathcal{F}}_{\lambda}, \omega_\lambda)$ correspond to elements of stratum $\mathcal{H}(a_{1}+a_{2},-b_{1},\dots,-b_{s},\rec[-1][t])$ that realize the configuration $\lambda$. We know from Lemma~\ref{lem:incidence} that every connected component of $(\overline{\mathcal{F}}_{\lambda}, \omega_\lambda)$ contains a zero of $\omega_{\lambda}$. To classify the connected components of $\mathcal{F}_{\lambda}$, we determine which pairs of zeros of $\omega_{\lambda}$ can be joined by a chain of saddle connections of $(\overline{\mathcal{F}}_{\lambda}, \omega_\lambda)$.
\par
It is proved in Lemma~\ref{lem:RealLocus} that when $\lambda$ is a configuration of real residues, every saddle connection of 
$(\overline{\mathcal{F}}_{\lambda}, \omega_\lambda)$ is horizontal. Differentials that belong to such a saddle connection define the same decorated graph (see Section~\ref{sub:RealGraph}), which we describe in the following lemma.
\par 
\begin{lem}\label{lem:Sec8SaddleConnection}
Consider a stratum $\calH(a_{1},a_{2},-b_{1},\dots,-b_{p})$ and a configuration of real residues $\lambda\in \mathcal{R}_{p} \setminus \mathcal{A}_{p}$ that satisfies $\lambda_{1} \in \mathbb{R}_{>0}$ and $\lambda_{j} \in \mathbb{R}_{<0}$ for $2 \leq j \leq p$. 
Saddle connections of $(\overline{\mathcal{F}}_{\lambda}, \omega_\lambda)$ correspond to decorated graphs that satisfy the following two sets of properties.
\par
The first set of properties ensures that the decorated graph is compatible with the data of $\calH(a_{1},a_{2},-b_{1},\dots,-b_{p})$: 
\begin{enumerate}
    \item Faces are labeled according to the zeros (of orders $a_{1}$ and $a_{2}$);
    \item Vertices are labeled according to the poles (with orders and residues $b_{1},\dots,b_{p}$ and $\lambda_{1},\dots,\lambda_{p}$);
    \item Each face corresponding to a zero of order $a_{i}$ contains $2a_{i}+2$ corners;
    \item For each vertex corresponding to a pole of order $b_{j}$, there are $b_{j}-1$ incoming half-edges and $b_{j}-1$ outgoing half-edges attached to the vertex.
\end{enumerate}
The second set of properties ensures that the orientations of the edges are compatible with the signs of the partial sums of residues (which are positive if and only if they include $\lambda_{1}$): 
\begin{enumerate}
    \item The graph contains a unique loop formed by $r+s+2$ vertices called $T,V_{1},\dots,V_{r},B,$ $U_{1},\dots,U_{s}$ (in this cyclic order) for $r,s \geq 0$;
    \item The edges of the loop are oriented from $B$ (as ``bottom'') to $T$ (as ``top'') in the two branches of the loop (through $U_{1},\dots,U_{s}$ or $V_{r},\dots,V_{1}$);
    \item The vertex $P$, corresponding to the pole of order $b_{1}$ with the only positive residue $\lambda_{1}$, either coincides with $T$ or belongs to a tree attached to $T$;    
    \item In a tree attached to a vertex of the loop (provided that the tree does not contain $P$), the edges are oriented towards the loop;
    \item In a tree attached to $T$ that contains $P$ as a vertex, the edges are oriented towards $P$.
\end{enumerate}
\par 
Denote by $\lambda_{B}$ the total (negative) residue of the poles corresponding to the vertex $B$ and the vertices of the trees attached to $B$. Then, the length of the corresponding saddle connection is $|\lambda_{B}|$. Moreover, each end of the saddle connection corresponds to shrinking one of the two edges of the loop incident to $B$.
\end{lem}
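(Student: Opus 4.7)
The plan is to reduce the statement to Proposition~\ref{prop:REALlocus}, which already identifies the saddle connections of $(\overline{\mathcal F}_\lambda,\omega_\lambda)$ with the connected components of $\mathbb R\mathcal F_\lambda$ whose underlying decorated graph has incoherent orientation, and then to specialize that characterization to the sign convention $\lambda_1>0$, $\lambda_j<0$ for $j\ge 2$. Properties~(1)--(4) of the first set require no extra work: they are precisely the defining features of the decorated graph $\mathfrak{gr}(\omega)$ of any differential $\omega$ in $\mathcal H(a_1,a_2,-b_1,\ldots,-b_p)$ with real periods, as recalled in Section~\ref{sub:RealGraph}. Since $\mathfrak{gr}(\omega)$ has $p$ vertices and $p$ edges (by the Euler characteristic computation at the end of that section), it contains a unique loop, which yields property~(1) of the second set.

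For the orientation rules, the central observation is that the positive-real orientation of an edge $e$ of $\mathfrak{gr}(\omega)$ is determined by the sign of a partial sum of residues. For a tree edge $e$, removing $e$ disconnects the graph and one of the two resulting pieces is a subtree $S_e$ containing no loop vertex; the corresponding closed saddle connection has period $\pm\sum_{j\in S_e}\lambda_j$, and the positive-real orientation picks out the sign making this quantity positive. Because $\lambda_1$ is the only positive residue, this sum is positive if and only if $S_e$ contains the vertex $P$. This directly yields rules~(4) and~(5): in a tree not containing $P$, every subtree sum is negative, so every edge points toward the loop; in the tree containing $P$, every edge points toward $P$. For loop edges, the difference of periods of two adjacent loop edges is again a partial sum of residues over the pole set ``between them''; combined with the requirement of incoherent orientation (which, by Proposition~\ref{prop:REALlocus}, is necessary for a bounded component), this forces the loop to carry at least one source and one sink, and genericity of $\lambda\notin\mathcal A_p$ rules out higher multiplicities, giving exactly one unique sink $T$ and one unique source $B$. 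Property~(3) then holds because $P$ must sit on the side of $T$: otherwise the loop edges incident to $T$ would inherit orientations forcing $P$ to lie ``above'' $T$, contradicting that $T$ is the sink.

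Finally, the length formula follows from the proof of Proposition~\ref{prop:REALlocus}: the length of $\mathcal C$ is $|\sum_{j\in J}\lambda_j|$, where $J$ is determined by the bipartition of poles obtained by removing the two loop edges that shrink at the endpoints of $\mathcal C$. In the present setting, these two shrinking edges must be the two loop edges incident to $B$: along $\mathcal C$ only lengths of loop edges can vary, and an extremality argument along each of the two branches of the loop shows that the edges closest to $B$ shrink first, because successive edges differ in length by negative partial sums (the intermediate poles all have negative residues until one reaches the tree containing $P$). Removing these two edges isolates the subgraph formed by $B$ together with its attached trees, whose total residue is $\lambda_B$, yielding the length $|\lambda_B|$. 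The main obstacle will be rigorously justifying the uniqueness of the source and sink along the loop and identifying the shrinking edges, both of which require comparing signs of several partial sums simultaneously and ruling out orientation patterns that would be compatible with the local structure at loop vertices only under a nongeneric resonance relation on $\lambda$.
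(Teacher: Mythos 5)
Your route is the paper's: reduce to Proposition~\ref{prop:REALlocus}, read the first set of properties off the construction of $\mathfrak{gr}(\omega)$ in Section~\ref{sub:RealGraph}, get the unique loop from the count of $p$ vertices and $p$ edges, orient each tree edge by the sign of the partial sum of residues over the subtree it cuts off (positive exactly when that subtree contains $P$), identify the two shrinking saddle connections as the loop edges incident to $B$ via the monotone decrease of loop-edge lengths from $T$ to $B$, and conclude that the length is $\lvert\lambda_B\rvert$. All of this matches the paper's argument.

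The one genuine gap is the step you yourself flag: the uniqueness of the sink $T$ and the source $B$ on the loop. You attribute it to ``genericity of $\lambda\notin\mathcal{A}_p$'' and to ruling out patterns ``compatible only under a nongeneric resonance relation,'' but genericity is not the operative mechanism and would not by itself exclude two sinks. The paper's argument is purely a sign argument: cutting $(\mathbb{CP}^1,\omega)$ along the saddle connections corresponding to the loop edges decomposes the sphere into regions, one per loop vertex (the vertex together with its attached trees), each bounded by exactly two of these saddle connections, and the region whose vertex has two incoming loop edges has strictly positive total residue. Since these pole sets are pairwise disjoint and $\lambda_1$ is the only positive residue, any region not containing $P$ has total residue equal to a sum of strictly negative numbers, hence negative regardless of any resonance condition; so at most one loop vertex is a sink, and incoherence (Proposition~\ref{prop:REALlocus}) forces exactly one sink and one source, with property~(3) following immediately because the sink region is the one containing $P$. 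You should replace your genericity appeal with this argument. Finally, the lemma asserts a correspondence, so the converse direction is also needed: for every decorated graph satisfying both lists, the prescribed orientations allow one to choose all edge lengths positive and realize the translation surface, hence an actual saddle connection of $(\overline{\mathcal{F}}_{\lambda},\omega_{\lambda})$; your proposal omits this (admittedly brief) realizability check, which the paper includes as its closing sentence.
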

\par 
\begin{proof}
We first verify the properties of a decorated graph associated with a differential parameterized by a saddle connection $\gamma$ of $(\overline{\mathcal{F}}_{\lambda}, \omega_\lambda)$. Note that the first set of properties follows directly from Sections~\ref{sub:RealGraph} and~\ref{sub:FiberReal}.
\par 
For the second set of properties, the number of edges in the decorated graph is $n+p-2=p$, while the number of vertices is $p$. It follows that the graph has only one loop. As proved in Proposition~\ref{prop:REALlocus}, the orientation of the edges in the loop is incoherent. In other words, there are some vertices of the loop where the two incident edges of the loop are incoming, and the same number of vertices where the two incident edges (of the loop) are outgoing. For a translation surface associated with $\gamma$, cutting along all the saddle connections corresponding to the edges of the loop in the decorated graph decomposes the translation surface into connected components, each  having two boundary saddle connections. It is clear that the connected component corresponding to the vertex of the loop with two incoming edges has a positive total residue. It follows that:
\begin{itemize}
    \item there is only one vertex, called $T$, in the loop with two incoming edges;
    \item the vertex $P$ of the graph, corresponding to the unique pole with a positive residue, either coincides with $T$ or belongs to a tree attached to the loop at $T$;
    \item there is only one vertex, called $B$, in the loop with two outgoing edges.
\end{itemize}
The orientations of the edges in the loop are now fixed. The other edges correspond to closed saddle connections that decompose $\mathbb{CP}^{1}$ into two connected components. The orientations of the corresponding edges are fully determined by the signs of the partial sums of the residues.
\par
Since there is only one positive residue, we observe that the lengths of the saddle connections corresponding to the edges of the loop are decreasing. Specifically, the lengths of the saddle connections corresponding to the edges between $T,V_{1},\dots,V_{r},B$ (or similarly $T,U_{s},\dots,U_{1},B$) decrease and differ by constant partial sums of negative residues. The only degree of freedom in the saddle connection $\gamma$ is the length of the saddle connection corresponding to the edges of the loop, as it represents the relative period of the differential between the two zeros. Thus, the two saddle connections that can shrink correspond to the edges of the loop incident to $B$. The parametrization of $\gamma$ is given by the lengths of these two saddle connections, which sum to $\lambda_{B}$ (up to sign), and this is the length of $\gamma$. In particular, the two ends of $\gamma$ correspond to the shrinking of one of these saddle connections and the removal of the corresponding edges from the decorated graph (see Lemma~\ref{lem:defengraph}).
\par
Conversely, for any such configuration of real residues with a unique positive residue, the orientation of the edges we prescribed to the graph ensures that we can construct the translation surface corresponding to the decorated graph in such a way that the lengths of the saddle connections corresponding to the edges are positive.
\end{proof}
\par 
We describe the decorated graph of a differential $\omega$ parameterized by a saddle connection of $(\overline{\mathcal{F}}_{\lambda}, \omega_\lambda)$ in the case of strata with only simple poles.
\begin{ex}
Consider a stratum $\calH(a_{1},a_{2},\rec[-1][t])$ in genus zero with only simple poles. A decorated graph parameterized in a saddle connection of $\omega_\lambda$
has $t$ vertices $V_{1},\dots,V_{t}$ (corresponding to the poles $p_{1},\dots,p_{t}$) and $t$ edges pointing towards the vertex~$V_{1}$. It follows that there is a unique pole of residue $\lambda_{j}<0$ whose corresponding vertex $V_{j}$ has valency $2$. The loop is formed by the two edges from $V_{j}$ to $V_{1}$ and decomposes the underlying sphere into two connected components. The other $t-2$ vertices are attached to $V_{1}$ each through a unique edge. They are split into two families having respectively $a_{1}$ and $a_{2}$ vertices according to which face they belong to. This example is illustrated in Figure~\ref{fig:exgraph} for the case $t = 5$ and $j=4$.
\end{ex}
\par 
In what follows, we first prove Proposition~\ref{prop:MAIN5} in the case of strata without simple poles.
\begin{lem}\label{lem:MAIN5nosimple}
In strata $\calH(a_{1},a_{2},-b_{1},\dots,-b_{p})$ of meromorphic differentials of genus zero with $b_{i}\geq2$ for all $i$, the generic isoresidual fibers are connected.
\end{lem}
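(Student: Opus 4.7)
My plan is to reduce the lemma to a combinatorial transitivity problem for decorated trees. By Theorem~\ref{thm:MAIN1}(6)--(7), it suffices to exhibit a single $\lambda \in \mathcal{R}_p \setminus \mathcal{A}_p$ for which $\mathcal{F}_\lambda$ is connected. I will work with a configuration of real residues satisfying $\lambda_1 > 0$ and $\lambda_j < 0$ for all $j \geq 2$, so that Lemma~\ref{lem:Sec8SaddleConnection} describes the saddle connections of $(\overline{\mathcal{F}}_\lambda, \omega_\lambda)$. By Lemma~\ref{lem:incidence}, every connected component of $\mathcal{F}_\lambda$ contains a zero of $\omega_\lambda$, and by Theorem~\ref{thm:numerofibre} together with Section~\ref{sub:Minimal} these zeros are indexed by the decorated trees parametrizing the isoresidual fiber of the minimal stratum $\mathcal{H}(a_1+a_2, -b_1, \ldots, -b_p)$ over $\lambda$. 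Hence I must show that any two such decorated trees are linked by a chain of saddle connections of $(\overline{\mathcal{F}}_\lambda, \omega_\lambda)$.

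The heart of the argument is to translate saddle connections into elementary moves on decorated trees. By Lemma~\ref{lem:Sec8SaddleConnection}, a saddle connection corresponds to a decorated graph $G$ with a single loop passing through a distinguished bottom vertex $B$, and its two endpoints yield the two trees obtained by deleting one of the two edges of the loop at $B$. Unwinding this combinatorially: starting from a decorated tree $T_1$ (all of whose edges are oriented toward $V_1$), one picks a non-root vertex $X$ and inserts an auxiliary edge from $X$ outward to a vertex $Y$ lying on a different branch of $T_1$ relative to $X$'s path to $V_1$; the loop of the resulting graph $G$ then has $B = X$ and $T$ equal to the least common ancestor of $X$ and $Y$. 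Deleting the original tree-edge at $X$ produces a tree $T_2$ in which $X$ has been re-parented from $\mathrm{parent}_{T_1}(X)$ to $Y$. Since any two rooted trees on the same vertex set differ by a finite sequence of such re-parentings, it remains only to check that every re-parenting can be realized as a valid elementary move.

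The hypothesis $b_i \geq 2$ for all $i$ enters precisely at this last verification step: each vertex $V_j$ carries $2b_j-2 \geq 2$ half-edges, which supply the slack needed to arrange the auxiliary edge around $X$ and $Y$ in accordance with the parity condition of Definition~\ref{defn:abstractdecoratedtgraph} (even many half-edges between adjacent edges of the same orientation, odd many between adjacent edges of opposite orientation). The main obstacle I foresee is this parity bookkeeping: one must perform a case analysis on the orientations of the edges already incident to $X$ and $Y$ and on whether $V_1$ coincides with the common ancestor $T$ or lies in a tree attached to it. However, having $b_j - 1 \geq 1$ free unoriented half-edges of each local side at every vertex lets one reorder the embedded cyclic arrangement around $X$ and $Y$ so that $G$ satisfies the decorated-graph axioms. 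This completes the reduction to the combinatorial transitivity statement and thereby establishes connectedness of $\mathcal{F}_\lambda$ in the absence of simple poles.
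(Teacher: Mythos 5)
Your overall framing is the right one (work over a real configuration with a single positive residue, use Lemma~\ref{lem:incidence} to put a zero of $\omega_{\lambda}$ in every component, and realize saddle connections of $(\overline{\mathcal{F}}_{\lambda},\omega_{\lambda})$ as elementary moves on decorated trees), and this is indeed how the paper proceeds. But the core of your argument has a genuine gap: you claim that the elementary move lets you re-parent a chosen vertex $X$ to an \emph{arbitrary} vertex $Y$ on another branch, so that connectedness reduces to transitivity of re-parentings on rooted trees, with only the half-edge parity condition of Definition~\ref{defn:abstractdecoratedtgraph} left to check. This is not the case. The intermediate decorated graph $G$ must satisfy \emph{all} of the constraints of Lemma~\ref{lem:Sec8SaddleConnection}, in particular that its two faces contain exactly $2a_{1}+2$ and $2a_{2}+2$ corners. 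That corner count rigidly determines where the auxiliary edge lands: a single move displaces the vertex by exactly $a_{1}$ (or $a_{2}$) quadrants in the cyclic order around the tree, and nothing else. So the set of realizable moves is far smaller than the set of all re-parentings, and the transitivity statement you invoke for rooted trees does not apply. (Relatedly, the relevant combinatorial objects are planar embedded trees with cyclic half-edge data, not bare rooted trees; there are $f(a,p)=a!/(a+2-p)!$ of them, and two can share the same underlying rooted tree.)

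The actual work, which your proposal skips, is to show that iterating these rigid displacements by $a_{1}$ or $a_{2}$ reaches every admissible position. This is where $b_{i}\geq 2$ enters, and for a different reason than your ``parity bookkeeping'': since each vertex carries $2b_{j}-2\geq 2$ half-edges, the moved vertex has at least two quadrants from which the move can be launched, and restarting the orbit from the next quadrant shifts the entire set of reachable positions by one. Repeating this, the paper shows every vertex can be brought adjacent to $V_{1}$ and then into a canonical cyclic order, which is what yields connectedness; when some $b_{i}=1$ this slack is absent and one instead gets the $\gcd$ obstructions of Lemmas~\ref{lem:MAIN5Simple} and~\ref{lem:MAIN51big}. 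To repair your argument you would need to replace ``every re-parenting is realizable'' with an analysis of the orbit of a vertex under displacement by $a_{1}$ and $a_{2}$ together with the one-quadrant shift, i.e.\ essentially the argument the paper gives.
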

\begin{proof}
We consider the case of a unique positive residue at a pole of lowest order and negative residues at all the other poles. The case with two poles is trivial since there is only one decorated graph. Now, suppose there are $p\geq3$ poles, and let $p_i$ denote the poles such that their orders satisfy $-b_{1}\geq -b_{2}\geq \dots \geq -b_{p}$. We will assume that the residue of $p_{1}$ is positive, and the residue of the other poles are negative.
\par
Now consider the graph associated with a differential in the generic isoresidual fiber of the stratum $\calH(a_{1}+a_{2},-b_{1},\dots,-b_{p})$. This graph has a special vertex $V_{1}$, such that all the other vertices point towards it. First, we deform the graph so that all vertices are directly adjacent to $V_{1}$. Consider a leaf of the graph. Move it by connecting its first quadrant to the rest of the graph, such that the bounded region corresponds to the zero of order $a_{1}$, and then delete the original edge (see the top of Figure~\ref{fig:operation2}). Repeat this operation until we return to the original place. If the vertex is attached to~$V_{1}$ at some step, we are done. Otherwise, we start from the second quadrant as pictured at the bottom of Figure~\ref{fig:operation2}  and repeat the operation until we return to the original place. Note there is a forbidden quadrant between the first and the second one due to the compatibility condition of the directions of the edges: this will always be the case and we will forget these quadrant in our counts.  With this operation, we can visit all the places of the previous case, shifted by one quadrant. We encounter the same dichotomy, and if we do not meet $V_{1}$, we can repeat the procedure to eventually reach this vertex.
\begin{figure}[ht]
\begin{tikzpicture}[scale=1,decoration={
    markings,
    mark=at position 0.45 with {\arrow[very thick]{>}}}]

\begin{scope}[xshift=0cm,yshift=0cm]
\node[circle,draw] (b) at  (0,0) {$p_{1}$};
 \node[circle,draw] (d) at  (-140:1.5) {$p_{2}$};
  \node[circle,draw] (e) at  (1.5,0) {$p_{3}$};
\draw[postaction={decorate}] (d) -- (b);
\draw[postaction={decorate},dotted] (e)  .. controls ++(120:.7) and ++(30:.7) .. (-.5,.7) .. controls ++(-150:.5) and
++(-150:1) .. (b);
\draw[postaction={decorate}] (e)  --  (b);
\draw (e) --++(60:.5);
\draw (e) --++(-60:.5);
   \foreach \i in {0,1,...,5}
\draw (b) --++(90+36*\i:.5);

\draw[->] (2.3,0) -- (3.6,0);
\end{scope}

\begin{scope}[xshift=5cm,yshift=0cm]
\node[circle,draw] (b) at  (0,0) {$p_{1}$};
 \node[circle,draw] (d) at  (-140:1.5) {$p_{2}$};
  \node[circle,draw] (e) at  (1.5,0) {$p_{3}$};
\draw[postaction={decorate}] (d) -- (b);
\draw[postaction={decorate}] (e)  .. controls ++(120:.7) and ++(30:.7) .. (-.5,.7) .. controls ++(-150:.5) and
++(-150:1) .. (b);
\draw (e) --++(60:.5);
\draw (e) --++(-60:.5);
   \foreach \i in {0,1,...,5}
\draw (b) --++(90+36*\i:.5);

\node at (2.5,0) {$\simeq$};
\end{scope}

\begin{scope}[xshift=10cm,yshift=0cm]
\node[circle,draw] (b) at  (0,0) {$p_{1}$};
 \node[circle,draw] (d) at  (-160:1.5) {$p_{2}$};
  \node[circle,draw] (e) at  (160:1.5) {$p_{3}$};
\draw[postaction={decorate}] (d) -- (b);
\draw[postaction={decorate}] (e)  -- (b);
\draw (e) --++(120:.5);
\draw (e) --++(-120:.5);
   \foreach \i in {0,1,...,5}
\draw (b) --++(-90+36*\i:.5);
\end{scope}
\begin{scope}[xshift=0cm,yshift=-3cm]
\node[circle,draw] (b) at  (0,0) {$p_{1}$};
 \node[circle,draw] (d) at  (-140:1.5) {$p_{2}$};
  \node[circle,draw] (e) at  (1.5,0) {$p_{3}$};
\draw[postaction={decorate}] (d) -- (b);
\draw[postaction={decorate},dotted] (e)  .. controls ++(-80:.9) and ++(-30:1.7) .. (1.5,.7)  .. controls ++(150:.4) and ++(0:.4) .. (.2,1).. controls ++(180:.5) and ++(140:.9) .. (b);
\draw[postaction={decorate}] (e)  --  (b);
\draw (e) --++(60:.5);
\draw (e) --++(-60:.5);
   \foreach \i in {0,1,...,5}
\draw (b) --++(90+36*\i:.5);

\draw[->] (2.3,0) -- (3.6,0);
\end{scope}

\begin{scope}[xshift=5cm,yshift=-3cm]
\node[circle,draw] (b) at  (0,0) {$p_{1}$};
 \node[circle,draw] (d) at  (-140:1.5) {$p_{2}$};
  \node[circle,draw] (e) at  (1.5,0) {$p_{3}$};
\draw[postaction={decorate}] (d) -- (b);
\draw[postaction={decorate}] (e)  .. controls ++(-80:.9) and ++(-30:1.7) .. (1.5,.7)  .. controls ++(150:.4) and ++(0:.4) .. (.2,1).. controls ++(180:.5) and ++(140:.9) .. (b);
\draw (e) --++(60:.5);
\draw (e) --++(-60:.5);
   \foreach \i in {0,1,...,5}
\draw (b) --++(90+36*\i:.5);

\node at (2.6,0) {$\simeq$};
\end{scope}

\begin{scope}[xshift=10cm,yshift=-3cm]
\node[circle,draw] (b) at  (0,0) {$p_{1}$};
 \node[circle,draw] (d) at  (-140:1.5) {$p_{2}$};
  \node[circle,draw] (e) at  (140:1.5) {$p_{3}$};
\draw[postaction={decorate}] (d) -- (b);
\draw[postaction={decorate}] (e) -- (b);
\draw (e) --++(120:.5);
\draw (e) --++(-120:.5);
   \foreach \i in {0,1,...,5}
\draw (b) --++(90+36*\i:.5);
\end{scope}
\end{tikzpicture}
 \caption{A move of a leaf from the first quadrant on the top and from the second quadrant on the bottom.} \label{fig:operation2}
\end{figure}
\smallskip
\par
So, the graph we consider is given by a root vertex  $V_{1}$ and $p-1$ leaves $V_{2},\dots,V_{p}$ pointing towards it. We show that this graph can be put into the following standard form: the vertices point towards the same quadrant of the vertex $V_{1}$ and are arranged in a cyclic increasing order. Take $V_{3}$ and move it along the graph by adding an edge starting from its first quadrant, as described in the preceding paragraph (and shown at the top of Figure~\ref{fig:operation2}). Repeating this operation, either it reaches the first quadrant of $V_{1}$ after $V_{2}$, or it returns to its original position. In the latter case, we proceed as in the preceding paragraph, and eventually, it reaches the desired position. We apply the same procedure to every edge: this is possible since all the vertices have half-edges and thus at least two quadrants. 
\end{proof}
\par 
Next, we prove Proposition~\ref{prop:MAIN5} for strata with only simple poles.
\par 
\begin{lem}\label{lem:MAIN5Simple}
In a stratum $\mathcal{H}(a_{1},a_{2},\rec[-1][t])$ of differentials in genus zero with only simple poles, the generic isoresidual fibers have:
\begin{itemize}
    \item two connected components, classified by the surgical Arf invariant, if $a_{1}$ and $a_{2}$ are even;
    \item only one connected component, otherwise.
\end{itemize}
\end{lem}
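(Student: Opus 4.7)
The plan is to translate the connectedness question into a combinatorial action on cyclic orderings and carry out a parity analysis. First I would apply Lemma~\ref{lem:Sec8SaddleConnection} to the simple-pole case: since every simple pole carries zero half-edges, the half-edge parity conditions of Definition~\ref{defn:abstractdecoratedtgraph} rule out loops with more than two vertices and trees of depth greater than one. Every saddle-connection graph must therefore consist of the positive-residue pole $V_1$ joined to a partner $V_j$ by two parallel loop edges, plus the remaining $t-2$ simple poles as leaves of $V_1$; and every decorated tree has all $t-1$ other poles as leaves of $V_1$. Consequently, the zeros of $\omega_\lambda$ are in bijection with cyclic orderings of $\{V_2,\dots,V_t\}$ around $V_1$, yielding $(t-2)!$ zeros (consistent with Theorem~\ref{thm:numerofibre}).

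Next I would determine the combinatorial effect of each saddle connection on trees. The conical singularity at a tree has angle $2\pi(t-1)$, so there are $2(t-1)$ outgoing horizontal rays and hence $2(t-1)$ outgoing saddle connections. Matching the cyclic order of the loop graph at $V_1$, namely $(L_1,\dots,L_{a_1},e_1,R_1,\dots,R_{a_2},e_2)$, to the cyclic order of the tree $T=(V_{i_1},\dots,V_{i_{t-1}})$, I expect to show that a positive opening at the corner between $V_{i_k}$ and $V_{i_{k+1}}$ forces $V_j=V_{i_{k+a_2+1}}$, and the tree $T'$ obtained by shrinking the opposite loop edge differs from $T$ by moving $V_j$ through $a_2$ positions in the cyclic order. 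This is a cycle of length $a_2+1$ on a window of consecutive positions, with parity $(-1)^{a_2}$. A negative opening, which swaps the two faces adjacent to the corner, produces by the symmetric analysis a cycle of length $a_1+1$ with parity $(-1)^{a_1}$; together these account for all $2(t-1)$ saddle connections at $T$.

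The proof concludes with an orbit analysis. If at least one of $a_1,a_2$ is odd, the corresponding cycle has even length and odd parity, and together with the other cycles the generated group acts transitively on cyclic orderings, producing a single component. If both $a_1,a_2$ are even, then $t-1$ is odd, the parity of a cyclic ordering (viewed as a coset of the rotation subgroup in $S_{t-1}$) is well-defined, and every move is an even permutation, so there are at least two components; a standard argument that odd-length cycles on every consecutive window generate the alternating group gives exactly two. To identify these two orbits with the two values of the surgical Arf invariant from Section~\ref{subsub:SecondSurgery}, I would use that $t=2g$ is even to perform the gluing of the $t$ simple poles into $g$ homothety-matched pairs of cylinders, producing a genus-$g$ dilation surface, and then verify by explicit computation that two trees of opposite parity (for instance, the identity cyclic order and one obtained by swapping two adjacent leaves) yield distinct Arf invariants.

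The hardest step will be the combinatorial derivation in the second paragraph, specifically pinning down which leaf plays the role of $V_j$ under each opening direction and verifying the precise cycle structure of the induced moves; once this is in place the parity analysis and generation arguments are routine. A secondary difficulty lies in the explicit evaluation of the surgical Arf invariant on the two chosen representatives, which requires tracking the rotation numbers of a symplectic basis through the surgery.
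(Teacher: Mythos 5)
Your proposal is correct and follows essentially the same route as the paper's proof: the zeros of $\omega_{\lambda}$ are identified with cyclic orderings of the $t-1$ negative-residue poles around the positive one, each saddle connection moves one pole by $a_{1}$ or $a_{2}$ positions (so $\sigma\circ\tau^{-1}$ is a cycle of length $a_{1}+1$ or $a_{2}+1$), and these cycles generate $\mathfrak{A}_{t-1}$ or $\mathfrak{S}_{t-1}$ according to the parities of $a_{1},a_{2}$. The extra care you plan to take with the well-definedness of the parity of a cyclic ordering (using that $t-1$ is odd when $a_1,a_2$ are even) and with exhibiting both values of the surgical Arf invariant are points the paper's proof passes over quickly, but they are refinements of the same argument rather than a different approach.
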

\par 
\begin{proof}
Since $\lambda$ is a configuration of real residues with a unique positive residue, the associated decorated trees are easy to describe. The $(t-2)!$ zeros of the translation structure $\omega_{\lambda}$ (see Proposition~\ref{prop:zeros}) correspond to trees with one vertex $V_{1}$ of valency $t-1$
and $t-1$ vertices
attached to it according to some cyclic order.
Each zero of $\omega_{\lambda}$ thus corresponds to a permutation $\sigma$ on $\lbrace{2, \dots, t \rbrace}$ such that $\sigma(j)$ follows $j$ in the cyclic order.
\par
Suppose there exists a saddle connection $\gamma$ on $(\overline{\mathcal{F}}_{\lambda}, \omega_\lambda)$ joining two zeros of $\omega_{\lambda}$, corresponding respectively to permutations $\sigma$ and $\tau$ of $\lbrace{ 2, \dots, t \rbrace}$. Then, $\sigma$ and $\tau$ differ only by the location of $j_{0}$ in the cyclic order, depending on which edge incident to $V_{j_{0}}$ is deleted, as explained in Lemma~\ref{lem:defengraph} and Figure~\ref{fig:operation}. The location of $j_{0}$ in the cyclic order changes either  by $a_{1}$ or by $a_{2}$.  In other words, $\sigma \circ \tau^{-1}$ is a cycle of order $a_{1}+1$ or $a_{2}+1$. 
Since each such cycle corresponds to a saddle connection of $\omega_\lambda$, these cycles generate the whole alternate group $\mathfrak{A}_{t-1}$ if $a_{1},a_{2}$ are even, and the whole symmetric group $\mathfrak{S}_{t-1}$ otherwise.
 \begin{figure}[ht]
 \centering
 \begin{tikzpicture}[,decoration={
    markings,
    mark=at position 0.4 with {\arrow[very thick]{>}}}]
\begin{scope}[xshift=0cm]
 \node[circle,draw] (a) at  (-2,1) {$2$};
\node[circle,draw] (b) at  (0,0) {$1$};
\node[circle,draw] (c) at  (-2,-1) {$3$};
 \node[circle,draw] (d) at  (1.5,0) {$4$};
  \node[circle,draw] (e) at  (3,0) {$j_{0}$};
\draw [postaction={decorate}]  (a) -- (b);
\draw[postaction={decorate}] (c) -- (b);
\draw[postaction={decorate}] (d) -- (b);
\draw[postaction={decorate},dotted] (e)  .. controls ++(120:1.4) and
++(60:1.4) ..  (b);
\draw[postaction={decorate}] (e)  .. controls ++(-120:1.4) and
++(-60:1.4) ..  (b);


\draw[->] (4,0) -- (6,0);
\end{scope}

\begin{scope}[xshift=8cm]
 \node[circle,draw] (a) at  (-2,1) {$2$};
\node[circle,draw] (b) at  (0,0) {$1$};
\node[circle,draw] (c) at  (-2,-1) {$3$};
 \node[circle,draw] (d) at  (1.5,0) {$4$};
  \node[circle,draw] (e) at  (3,0) {$j_{0}$};
\draw [postaction={decorate}]  (a) --  (b);
\draw[postaction={decorate}] (c) -- (b);
\draw[postaction={decorate}] (d) -- (b);
\draw[postaction={decorate}] (e)  .. controls ++(120:1.4) and
++(60:1.4) ..  (b);
\end{scope}
\end{tikzpicture}
    \caption{Going from one tree to another.}\label{fig:operation}
\end{figure}
\par
It follows that, provided $a_{1}$ and $a_{2}$ are even, the two zeros corresponding to the permutations $\sigma$ and $\tau$ belong to the same connected component of $(\overline{\mathcal{F}}_{\lambda}, \omega_\lambda)$ if and only if $\sigma$ and $\tau$ have the same signature. Indeed, the decomposition of $\sigma \circ \tau^{-1}$ into a product of cycles of length $a_{1}+1$ provides the chain of saddle connections joining the corresponding zeros. If either $a_{1}$ or $a_{2}$ is odd, the same argument shows that every pair of zeros can be joined by a chain of saddle connections. Therefore, the isoresidual fiber $\overline{\mathcal{F}}_{\lambda}$ is connected.
\end{proof}
\par 
We now prove the case in which there is a unique non-simple pole.
\par 
\begin{lem}\label{lem:MAIN51big}
In a stratum $\mathcal{H}(a_{1},a_{2},-b,\rec[-1][t])$ of differentials in genus zero with $b\geq2$, the generic isoresidual fibers have:
\begin{itemize}
    \item $k$ connected components, where $k=\gcd(a_{1},a_{2},b)$, if $t=2$;
    \item two connected components, if $a_{1},a_{2},b$ are even and $t \geq 4$;
    \item only one connected component, otherwise.
\end{itemize}
\end{lem}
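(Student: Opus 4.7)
The plan is to adapt the strategy of Lemma~\ref{lem:MAIN5Simple}, combined with the surgical invariants of Section~\ref{sub:ConnectedInvariant}. By Theorem~\ref{thm:MAIN1} the topology of $\overline{\mathcal F}_\lambda$ is independent of $\lambda$ outside the resonance arrangement, so I may choose a configuration of real residues adapted to each case. Using a variant of Lemma~\ref{lem:Sec8SaddleConnection}, saddle connections of $(\overline{\mathcal F}_\lambda, \omega_\lambda)$ correspond to decorated graphs with exactly one oriented loop on $t+1$ vertices and $t+1$ edges, carrying a distinguished vertex $V_b$ with $2(b-1)$ half-edges; zeros of $\omega_\lambda$ correspond to decorated trees obtained by collapsing one loop-edge, whose total count is $f(a_1+a_2, t+1) = \frac{(a_1+a_2)!}{(a_1+a_2-t+1)!}$ by Proposition~\ref{prop:zeros} (in particular $b$ when $t = 2$). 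Adjacent zeros of $\omega_\lambda$ are joined by an elementary move that slides a vertex along the cyclic order of edges at a neighbor; the connected components of $\overline{\mathcal F}_\lambda$ are precisely the orbits of the group generated by these moves on the set of decorated trees.

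For the lower bound on the number of components I would invoke the surgical invariants. When $t = 2$, choosing the two simple-pole residues to have opposite signs and applying the cylinder-gluing surgery of Section~\ref{subsub:FirstSurgery} yields a genus-one dilation surface whose homotopy data is detected by a surgical rotation number in $\mathbb Z / k\mathbb Z$ with $k = \gcd(a_1, a_2, b)$; since this invariant is preserved under deformation inside $\overline{\mathcal F}_\lambda$, there are at least $k$ components. When $t \geq 3$ and $a_1, a_2, b$ are all even, which forces $t$ to be even and $\geq 4$, pairing the simple poles into $g = t/2$ couples of opposite residue sign and applying the surgery of Section~\ref{subsub:SecondSurgery} produces a dilation surface of genus $g \geq 2$ whose surgical Arf invariant separates at least two components.

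The hard part is the matching combinatorial upper bound: showing that the orbit structure of the elementary moves has exactly $k$, $2$, or $1$ classes in the three cases of the statement. For $t \geq 3$, the argument generalizes that of Lemma~\ref{lem:MAIN5Simple}: the moves are cyclic shifts of length $a_1 + 1$, $a_2 + 1$, or shifts at $V_b$ involving the half-edge structure there; when all three orders are even these shifts are even permutations and generate the index-two subgroup matching the Arf invariant, whereas an odd order yields an odd permutation and promotes the generated group to the full symmetric group, giving a single orbit. The case $t = 2$ is the most delicate: $V_b$ has $2(b-1)$ half-edge sectors into which the two simple-pole vertices are placed, and one must verify by explicit enumeration of the $b$ admissible trees that the orbits of the elementary moves have cardinality exactly $b/k$, matching the $k$ classes of the surgical rotation number. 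The emergence of $\gcd(a_1, a_2, b)$ as the orbit count from shifts coming from $a_1+1$, $a_2+1$, and the rotational structure at $V_b$ is the combinatorial heart of the proof.
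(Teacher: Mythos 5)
Your architecture matches the paper's: choose a real residue configuration (the paper puts the positive residue at the order-$b$ pole and negative residues at the simple poles), identify zeros of $\omega_{\lambda}$ with decorated trees around the distinguished vertex carrying $2(b-1)$ half-edges, identify saddle connections with elementary moves that relocate one vertex by a shift of $a_{1}$ or $a_{2}$ quadrants, and reduce the component count to an orbit count. However, the decisive step --- the upper bound on the number of orbits --- is exactly what you defer as ``the hard part'' and ``the combinatorial heart of the proof,'' so the proposal is a plan rather than a proof. Moreover, the mechanism you sketch for $t\geq 3$ does not transfer directly from Lemma~\ref{lem:MAIN5Simple}: there the configuration space is genuinely a set of cyclic orders, i.e.\ a symmetric group, and the moves are cycles of length $a_{i}+1$, so signature arguments apply. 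Here the configuration space consists of placements of the simple-pole vertices among the $2(b-1)$ half-edge sectors of $V_{b}$ (interleaved with the edges), subject to corner-count constraints; it is not a symmetric group, and ``even permutation'' and ``index-two subgroup'' are not defined on it without further work. The paper's actual argument is a hands-on orbit analysis: first bring all vertices adjacent to $V_{1}$ in a standard arrangement up to the order of the last two, then show the residual transposition is achievable precisely when one of $a_{1},a_{2},b$ is odd, via auxiliary maneuvers (moving a vertex past another, moving triples) and a parity count of the total number $b+t-3$ of available quadrants. None of this is captured by your permutation-parity heuristic.

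Two smaller points. For $t=2$ the paper needs no surgical invariant at all: the $b$ admissible positions of the moving vertex form $\mathbb{Z}/b\mathbb{Z}$, the moves shift by $a_{1}$ or $a_{2}$, and the orbits are the $\gcd(a_{1},a_{2},b)$ cosets of the subgroup they generate --- an exact count giving both bounds at once. Conversely, your lower bound via the surgical rotation number is incomplete as stated: an invariant valued in $\mathbb{Z}/k\mathbb{Z}$ only forces $k$ components once you show all $k$ values are realized by zeros of $\omega_{\lambda}$, which you do not address (and which, once checked, is essentially the same coset computation). For the all-even case with $t\geq 3$, your appeal to the surgical Arf invariant for the lower bound of two does agree with the paper, since the combinatorial argument there only yields the upper bound.
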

\par 
\begin{proof}
We consider an isoresidual fiber $\mathcal{F}_{\lambda}$,  where the residues at the simple poles are negative real numbers and the residue at the pole of order $b$ is positive. The decorated graphs corresponding to the zeros of $(\overline{\mathcal{F}}_{\lambda}, \omega_\lambda)$ have a vertex $V_{1}$ with $2(b-1)$ half-edges and $t-1$ vertices pointing towards it. We analyze which zeros can be connected by a saddle connection.
\smallskip
\par 
Since the case with a unique simple pole is trivial (as there is a unique graph), we begin with the case of two simple poles. We fix the vertex $V_{2}$ corresponding to the first simple pole and move $V_{3}$ along the graph. The move consists of drawing a new edge that partitions the vertices and half-edges into two sets of cardinality $a_{1}$ and $a_{2}$, and then deleting the original edge (see Figure~\ref{fig:operation}). There are $b$ possible positions for $V_{3}$, and the distance between   consecutive positions is either $a_{1}$ or $a_{2}$. Therefore, there are $\gcd(a_{1},a_{2},b)$ possible orbits for the position of $V_{3}$. 
\smallskip
\par 
Now, suppose there are $t \geq 3$ simple poles. Again, we fix the position of $V_{2}$ and want to place the other vertices $V_{i}$ for $3\leq i\leq t-1$ in the same quadrant and in increasing order. Consider $V_{3}$ and move it along the graph using the operation shown in  Figure~\ref{fig:operation}. If it moves next to $V_{2}$, we are done. Otherwise, continue moving it until it jumps  above $V_t$. Then, we make one move of $V_{t}$ so that it jumps over $V_{3}$ and let $V_{3}$ jump over it again. In this way, the position that $V_{3}$ will visit will be shifted by one quadrant relative to the previous moves. Either it will reach the correct position, or we can repeat the operation until it does. Thus, we can place all the $V_{i}$ with $i \leq t-1$ in their correct positions. 
\par
Next, consider the last vertex $V_{t}$. We first place it after the vertices $V_{t-2}$ and $V_{t-1}$, possibly switching the order of these two vertices. After moving $V_{t}$ along the graph, if it reaches the correct quadrant, we are done. Otherwise, either we jump over both  $V_{t-2}$ and $V_{t-1}$, or we end up between them. In both cases, first move $V_{t-2}$, then $V_{t}$, and finally place $V_{t-2}$ back. This will switch the position of $V_{t}$ by one quadrant and swap the relative positions of $V_{t-2}$ and $V_{t-1}$. In the second case, we are done, and in the first case, we can repeat this operation after rotating $V_{t}$ along the graph. Eventually, the vertex $v_{t}$ can be placed in the correct quadrant.
\par
Finally, it remains to show that if one of $a_{1},a_{2}$ or $b$ is odd, then we can switch the positions of $V_{t-1}$ and $V_{t-2}$. In the case with three simple poles, this is achieved by first moving $V_{t-1}$, then $V_{t-2}$, and finally $V_{t}$ in the negative direction. If there are more simple poles, we first perform this move to switch the positions of $V_{t-1}$ and $V_{t-2}$. This, however, changes the position of the triple of vertices by $-a_{1}+1$ or $-a_{2}+1$ quadrants. Then, by going back, we see that this triple is shifted by one quadrant. Thus, by repeating this operation, we obtain the triple in the desired order in any quadrant with an odd distance from the original one. Since the total number of quadrants where this triple can be is $b+t-3$, if $b+t$ is odd, then we are done. If either $a_{1}$ or $a_{2}$ is odd, say $a_{1}$, we can first move the triple by $a_{1}$ quadrants and then perform the same operation as in the previous case to place it in the desired position. Now, suppose both $b$ and $t$ are odd, and $a_{1},a_{2}$ are even. In that case, we first move all the remaining $t-3$ simple poles by $-a_{1}$. Then, the triple lands in a new quadrant an odd distance from the desired quadrant. Thus, we can repeat the previous steps from this new position to reach the desired final configuration. 
\end{proof}
\par 
Now, we can prove the general case.
\par 
\begin{proof}[Proof of Proposition~\ref{prop:MAIN5}]
By the previous results, we can assume that there are at least two poles of higher orders and at least one simple pole. If there is a unique simple pole, we can assume that it has the unique positive real residue. Then, all the other poles can be moved freely, as shown in the proof of Lemma~\ref{lem:MAIN5nosimple}. Hence, the generic isoresidual fiber is connected. 
\par
Now, suppose there are $s= 2$ simple poles. In this case, we fix one simple pole to be the unique pole with a positive residue. Then, the non-simple poles can be placed freely, as in the proof of Lemma~\ref{lem:MAIN5nosimple}. We place them such that they all point towards the simple pole with the positive residue. Now, consider the second simple pole, called $q$. Note that there are are $\sum b_{i}$ possible positions, so by performing the operation of Figure~\ref{fig:operation}, we can visit all the quadrants at a distance of $a_{1}$ or $a_{2}$ from the previous one. Hence, there are $d:=\gcd(a_{1},a_{2},\sum b_{i})$ distinct classes of positions. If $d=k$ then we are done. Otherwise, take $b_{j}$ such that $\gcd(d,b_{j})<d$. Move the corresponding vertex $V_{j}$ parallel to $q$  such that they do not interact.  After $q$ passes the quadrant where $V_{j}$ was attached, stop it and place $b_{j}$ back in its original position. Then $q$ is switched by $b_{j}$, and we obtain all the quadrants with a distance of $\gcd(a_{1},a_{2},b_{j},\sum b_{i})$. We complete the result by considering all the $b_{i}$'s.
 \par
Finally, we consider the case where there are $s\geq3$ simple poles. We first place the graph without the three simple poles in the desired form, as in the proof of Lemma~\ref{lem:MAIN51big}. Then, the three simple poles can be placed in any position, up to order, if all the other orders are divisible by $2$, as in the proof of Lemma~\ref{lem:MAIN5nosimple}. Hence, we obtain the desired upper bound on the number of connected components.
\end{proof}

\subsection{Proof of Theorem~\ref{thm:MAIN5}}

By combining Proposition~\ref{prop:MAIN5} with Lemma~\ref{lem:incidence}, we can extend the classification of the connected components of the generic isoresidual fibers to strata in genus zero with an arbitrary number of zeros.
\par 
\begin{proof}[Proof of Theorem~\ref{thm:MAIN5}]
We first consider the case of an isoresidual fiber $\mathcal{F}_{\lambda}(a_{1},\dots,a_{n})$ for which there is a permutation $\sigma$ such that $\mathcal{F}_{\lambda}(a_{\sigma(1)}+\dots+a_{\sigma(a_{n-1})},a_{\sigma(n)})$ is connected. Then, any connected component $\mathcal{C}$ of $\mathcal{F}_{\lambda}(a_{1},\dots,a_{n})$ contains $\mathcal{F}_{\lambda}(a_{\sigma(1)}+\dots+a_{\sigma(a_{n-1})},a_{\sigma(n)})$ in its closure. Lemma~\ref{lem:incidence} then shows that $\mathcal{C}$ is the only connected component of $\mathcal{F}_{\lambda}(a_{1},\dots,a_{n})$.
\par
In the remaining cases, we consider isoresidual fibers  $\mathcal{F}_{\lambda}(a_{1},\dots,a_{n})$ with $n \geq 3$, such that for any permutation $\sigma$, the stratum $\mathcal{H}(a_{\sigma(1)}+\dots+a_{\sigma(a_{n-1})},a_{\sigma(n)},-b_{1},\dots,-b_{p})$ belongs to one of the two exceptional families described in 
Proposition~\ref{prop:MAIN5}. In particular, the orders of the poles are either $(-1,\dots,-1)$, or of the form $(-kd_{1},\dots,-kd_{p-2},-1,-1)$ with $k \geq 2$.
\par
We first consider the case where $b_{1}=\dots=b_{p}=1$. If, for every permutation $\sigma$, the isoresidual fiber $\mathcal{F}_{\lambda}(a_{\sigma(1)}+\dots+a_{\sigma(a_{n-1})},a_{\sigma(n)})$ fails to be connected, then, by Proposition~\ref{prop:MAIN5}, it follows that all the $a_{1},\dots,a_{n}$ are even. The surgical Arf invariant, described in Section~\ref{subsub:SecondSurgery}, is a well-defined topological invariant for the connected components of $\mathcal{F}_{\lambda}(a_{1},\dots,a_{n})$. The argument that works in the general case then proves that the subset of $\mathcal{F}_{\lambda}(a_{1},\dots,a_{n})$ consisting of differentials with an even (resp. odd) surgical Arf invariant (see Section~\ref{subsub:SecondSurgery}) is connected. Since every fiber of the form $\mathcal{F}_{\lambda}(a_{\sigma(1)}+\dots+a_{\sigma(a_{n-1})},a_{\sigma(n)})$ has exactly one (non-empty) connected component for each parity of the invariant, we deduce that $\mathcal{F}_{\lambda}(a_{1},\dots,a_{n})$ has exactly two connected components.
\par
Finally, we consider the case of a stratum $\mathcal{H}(a_{1},\dots,a_{n},-Ld_{1},\dots,-Ld_{p-2},-1,-1)$, where two poles are simple and the others have orders divisible by $L \geq 2$. Since for every permutation $\sigma$ and any $t$, the isoresidual fiber $\mathcal{F}_{\lambda}(\sum\limits_{1 \leq i \leq t} a_{i}, \sum\limits_{t< i\leq n} a_{i})$ fails to be connected, it follows from Proposition~\ref{prop:MAIN5} that for any $i$, we have $L_{\sigma,t} \neq 1$, where $L_{\sigma,t} = \gcd(\sum\limits_{1 \leq i \leq t} a_{i},L)$.
\par
For each stratum $\mathcal{H}(\sum\limits_{1 \leq i \leq t} a_{i}, \sum\limits_{t< i\leq n} a_{i})$, the generic isoresidual fibers have $L_{\sigma,t}$ connected components, which are classified by the surgical rotation number in $\mathbb{Z}/L_{\sigma,t}\mathbb{Z}$ (see Section~\ref{subsub:FirstSurgery}). We observe that the monodromy of the isoresidual fibration around the resonance hyperplane given by $\lambda_{p}=0$ changes the surgical rotation number by one. Since every connected component of $\mathcal{F}_{\lambda}(\sum\limits_{1 \leq i \leq t} a_{i}, \sum\limits_{t< i\leq n} a_{i})$ belongs to the closure of exactly one connected component of $\mathcal{F}_{\lambda}(a_{1},\dots,a_{n})$, it follows that the number of connected components of $\mathcal{F}_{\lambda}(a_{1},\dots,a_{n})$ is at most the greatest common divisor of the $L_{\sigma,t}$. In particular, $\mathcal{F}_{\lambda}(a_{1},\dots,a_{n})$ has at most $k$ connected components, where  $k=\gcd(a_{1},\dots,a_{n},L)$. 
\par
Conversely, we know that $\mathcal{F}_{\lambda}(a_{1},\dots,a_{n})$ has at least $k$ connected components because the surgical rotation number in $\mathbb{Z}/k\mathbb{Z}$ is a topological invariant (see Section~\ref{subsub:FirstSurgery}), and every number in $\mathbb{Z}/k\mathbb{Z}$ can be realized by some connected component of $\mathcal{F}_{\lambda}(a_{1},a_{2}+\dots+a_{n})$. For example, the connected components may be classified by a surgical rotation number in a larger cyclic group, but the class modulo $k$ remains unchanged by the splitting of the zero. Therefore, $\mathcal{F}_{\lambda}(a_{1},\dots,a_{n})$ has exactly $k$ connected components, and they are classified by their surgical rotation number in $\mathbb{Z}/k\mathbb{Z}$.
\end{proof}

\section{The orders of singularities in strata and isoresidual fibers}\label{sec:Global}

We know from Section~\ref{sub:CCClassification} that some strata of meromorphic differentials can be disconnected if the singularity pattern satisfies some arithmetic properties. The formulas in Section~\ref{sub:boundary} provide a way to compute the number and order of the zeros and poles of the translation structure of generic isoresidual fibers in terms of the singularity pattern of the stratum. In this section, we use these formulas to derive general combinatorial relations between these orders.
\par
In our notation, we distinguish between the singularities of a generic isoresidual fiber $\mathcal{F}$ and the orders of singularities in the pattern that define the stratum $\mathcal{H}$. By a slight abuse of notation, we will refer to the  singularities of $\mathcal{H}$ (which, in fact, are the singularities of the translation surfaces parametrized by $\mathcal{H}$).
\par 
\subsection{Simple poles in strata and  isoresidual fibers}\label{sub:7Simple}
\par 
We will prove that every pole of $(\overline{\mathcal{F}}_{\lambda},\omega_{\lambda})$ is a simple pole if and only if the stratum is of the form $\mathcal{H}(a_{1},a_{2},-1,\dots,-1)$.
\par 
\begin{proof}[Proof of the first part of Theorem~\ref{thm:MAIN3}]
We first consider a stratum $\mathcal{H}$ parametrizing differentials with only simple poles. Then, every pole of a generic isoresidual fiber of $(\overline{\mathcal{F}}_{\lambda},\omega_{\lambda})$ corresponds to a degeneration where $K$ is empty (see Section~\ref{subsub:SimplePoles}). It follows that every pole of $\omega_{\lambda}$ is a simple pole.
\par
Conversely, we assume that the partition $\mu$ contains a number $c \leq -2$. Suppose, by contradiction, that every pole of $\omega_{\lambda}$ is a simple pole. For such a simple pole of $\omega_{\lambda}$, there is a partition $I \sqcup J$ (where both $I$ and~$J$ are nonempty) of the set of poles of the parametrized differentials. Without loss of generality, we assume that the pole of order $c$ belongs to $J$.
\par
If $I$ contains a simple pole, then we can transfer this simple pole from $I$ to $J$, and the pole of order $c$ from~$J$ to $K$. This gives a degeneration corresponding to a nonsimple pole of $\omega_{\lambda}$.
\par
If $I$ does not contain any simple pole, then it contains a nonsimple pole (since we know by hypothesis that $I \neq \emptyset$). We then  transfer this pole from $I$ to $K$, while transferring the pole of order $c$ to $K$ at the same time. We obtain a degeneration corresponding to a nonsimple pole of $\omega_{\lambda}$. Thus, $\omega_{\lambda}$ must have a nonsimple pole.
\par
Notice that the case where $\omega_{\lambda}$ has no poles at all cannot occur because, when the configuration $\lambda$ is real, all the periods of $\omega_{\lambda}$ are real. This would be impossible for a translation surface of finite area (which corresponds to a holomorphic one-form).
\end{proof}
\par 
\subsection{Even singularities in strata and isoresidual fibers}\label{sub:7even}
\par 
The goal of this section is to prove the second part of Theorem~\ref{thm:MAIN3}. We first show  that when the singularity pattern of $\mathcal{H}(\mu)$ contains only singularities of even order, the translation structure $\omega_\lambda$ of generic isoresidual fibers also have only zeros and poles of even order.
\par 
\begin{prop}\label{prop:Sec7Direct}
For a stratum $\mathcal{H}(a_{1},a_{2},-b_{1},\dots,-b_{p})$ where $a_{1},a_{2},b_{1},\dots,b_{p}$ are even, every zero or pole of a generic isoresidual fiber $(\overline{\mathcal{F}}_{\lambda},\omega_{\lambda})$ has even order.
\end{prop}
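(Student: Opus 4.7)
The plan is to run through the explicit classification of the zeros and poles of $\omega_\lambda$ given in Propositions~\ref{prop:zeros}, \ref{prop:simple-pole}, and~\ref{prop:flat-boundary}, and to verify parity case by case, using the hypothesis that $a_1, a_2, b_1, \ldots, b_p$ are all even.

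First, by Proposition~\ref{prop:zeros}, every zero of $\omega_\lambda$ has order $a_1 + a_2$, which is even as a sum of two even integers. Next, I would show that $\omega_\lambda$ admits no simple poles at all under the parity hypothesis. By Proposition~\ref{prop:simple-pole}, a simple pole of $\omega_\lambda$ corresponds to a nonempty proper subset $I \subsetneq \{1, \ldots, p\}$ satisfying $a_1 + 1 = \sum_{i \in I} b_i$; the left hand side is odd while the right hand side is a sum of even integers and hence even, so no such $I$ exists, and the horizontal boundary of $\overline{\mathcal{F}}_{\lambda}$ contributes nothing.

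For the higher order poles, the plan is to invoke the cherry graph analysis of Proposition~\ref{prop:flat-boundary}, where the key observation is that the prong numbers $c_1 = a_1 + 1 - \sum_{i \in I} b_i$ and $c_2 = a_2 + 1 - \sum_{j \in J} b_j$ are both odd (odd minus even is odd), regardless of whether the subsets $I$ and $J$ are empty. In the cherry case with both $I$ and $J$ nonempty, the pole order $1 + \lcm(c_1, c_2)$ is even because the $\lcm$ of two odd integers is odd. When exactly one of $I, J$ is empty, the pole order equals $1 + c_i$ for the nonempty side, which is again even. Finally, when both $I$ and $J$ are empty, the corresponding pole is a double pole, which is also of even order.

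There is no real obstacle: the proposition reduces to elementary parity bookkeeping once the explicit structural description of Section~\ref{sec:Translation} is at hand, since every singularity-order formula is of the form ``$1 + (\text{odd integer})$'' under the parity hypothesis, or is the even integer $a_1 + a_2$ for zeros, or fails to occur at all in the case of simple poles. The only point that requires a moment of care is checking that the prong number parity argument survives the cases $I = \emptyset$ or $J = \emptyset$, where the relevant sum over $b_i$ is empty and $c_i = a_i + 1$ is still odd.
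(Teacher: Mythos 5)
Your proof is correct and follows essentially the same route as the paper: cite the structural description of the zeros and poles from Propositions~\ref{prop:zeros}, \ref{prop:simple-pole}, and~\ref{prop:flat-boundary} and check parity case by case, the key point being that $a_i+1-\sum b_j$ is always odd so every pole order is $1+(\text{odd})$. Your explicit verification that no simple poles (horizontal degenerations) can occur is a small point the paper's proof leaves implicit, but it is the same argument.
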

\par 
\begin{proof}
For the zeros of $\omega_{\lambda}$, the claim follows from Proposition~\ref{prop:zeros}. Any pole of $\omega_{\lambda}$ corresponds to a tripartition $I \sqcup J \sqcup K$ of $b_{1},\dots,b_{p}$. The order of the pole can be $a_{1}-b_{I}+2$ (if $J$ is empty), $a_{2}-b_{J}+2$ (if $I$ is empty), $2$ (if both $I$ and $J$ are empty) or $1+\lcm(a_{1}-b_{I}+1,a_{2}-b_{J}+1)$ otherwise. By hypothesis, $a_{1}-b_{I}+1$ and $a_{2}-b_{J}+1$ are odd, so a pole of $\omega_{\lambda}$ is of even order in all these cases.
\end{proof}
\par 
The converse of Proposition~\ref{prop:Sec7Direct} (which forms the second half of Theorem~\ref{thm:MAIN3}) does not hold for $p=2$. Using the results from  Section~\ref{sub:boundary}, we compute the following example.
\par 
\begin{ex}
Generic isoresidual fibers $(\overline{\mathcal{F}}_{\lambda},\omega_{\lambda})$ of the stratum $\mathcal{H}(3,3,-1,-7)$ are translation surfaces parametrized by $\mathcal{H}(6,-4,-4)$.
\end{ex}
\par 
In the following, we will focus on strata of differentials with at least three poles (i.e., $p\geq 3$). 
\par 
\begin{lem}\label{lem:nosimplepole}
Consider a stratum $\mathcal{H}(a_{1},a_{2},\rec[-1][s],-b_{1},\dots,-b_{t})$ with $p=s+t \geq 3$, $b_{1},\dots,b_{t} \geq 2$,  and $a_{1} \leq a_{2}$. If every singularity of $(\overline{\mathcal{F}}_{\lambda},\omega_{\lambda})$ has even order, then $s=0$, and additionally, $a_{1}$ and $a_{2}$ must have the same parity.
\end{lem}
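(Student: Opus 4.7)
The parity condition $a_1\equiv a_2\pmod 2$ is immediate: by Proposition~\ref{prop:zeros}, every zero of $\omega_\lambda$ has order $a=a_1+a_2$, so if every singularity of $(\overline{\mathcal F}_\lambda,\omega_\lambda)$ has even order, then $a_1+a_2$ must be even. For the main claim that $s=0$, I would argue by contradiction and assume $s\geq 1$. The sub-case $t=0$ is ruled out immediately by the first part of Theorem~\ref{thm:MAIN3}, established in Section~\ref{sub:7Simple}: a stratum with only simple poles forces $\omega_\lambda$ to have only simple poles, which are of odd order. So we may also assume $t\geq 1$.

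The strategy is then first to extract strong parity constraints on all $a_i$ and $b_j$, and then to exhibit a cherry partition producing an odd-order pole. For the parity constraints, the cherry partitions of Proposition~\ref{prop:flat-boundary} with $I=\emptyset$ and $J=\{j\}$ contribute poles of order $1+c_2=a_2+2-b_j$; the evenness hypothesis gives $b_j\equiv a_2\pmod 2$ whenever the corresponding count is nonzero. Taking $j$ to be a simple pole (possible since $s\geq 1$) yields $a_2$ odd, symmetrically $a_1$ odd, and then $b_j$ odd for every non-simple pole as well.

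Once all $a_i$ and $b_j$ are odd, a direct parity computation shows that $c_1\equiv|I|$ and $c_2\equiv|J|\pmod 2$, so the cherry pole order $1+\lcm(c_1,c_2)$ (and its degenerate analogs) is even exactly when both $|I|$ and $|J|$ are odd; any partition with $|I|$ or $|J|$ even produces an odd-order pole. The canonical candidate is $I=\{j_1,j_2\}$ consisting of two simple poles (available when $s\geq 2$) and $J=\emptyset$, which yields a pole of order $1+c_1=a_1$, odd. The contribution $f(a_1,3)\cdot\Xi(a_1-2,a_2;\{b_i\}_{i\in K})$ is positive as soon as $a_1\geq 3$ and the residueless stratum on top is nonempty; swapping $z_1$ and $z_2$ handles small $a_1$. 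For $s=1$, one chooses instead $I=\emptyset$ and $J=\{\text{the simple pole},\,j\}$ with $j$ a non-simple index satisfying $b_j<a_2$, which produces a pole of order $a_2+1-b_j$, again odd.

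The hardest part is verifying that one of these odd-order contributions is genuinely nonzero in the remaining small regimes, namely when $\Xi$ vanishes for arithmetic reasons, or when the inequalities $c_i\geq 1$ fail. Using the constraint $a_1+a_2+2=s+\sum_j b_j$ together with the parity conclusions (all $a_i,b_j$ odd), one can carry out an explicit bookkeeping: for instance, the borderline configuration $s=1$ with $\max(a_1,a_2)\leq\min_j b_j$ quickly collapses, since $b_1+b_2=2a+1$ cannot be a sum of two odd numbers. This keeps the residual case analysis finite. I expect the principal obstacle to be isolating a single uniform partition choice that avoids the case-by-case inspection of edge configurations, likely by combining the parity analysis above with the residueless counting asymptotics from Proposition~\ref{prop:residueless}.
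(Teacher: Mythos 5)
Your treatment of the easy parts matches the paper: the parity of $a_1+a_2$ read off from the zero orders, and the exclusion of $t=0$ via the first part of Theorem~\ref{thm:MAIN3}. The core of the argument, however, has a genuine gap. Throughout, you pass from ``a tripartition $I\sqcup J\sqcup K$ of the prescribed shape exists'' to ``a boundary point of $(\overline{\mathcal F}_{\lambda},\omega_{\lambda})$ of that type exists'', but by Proposition~\ref{prop:flat-boundary} the latter holds only when the corresponding count is positive, which in particular forces the top component indexed by $K$ to carry a residueless differential. A simple pole can never be residueless, so \emph{every} simple pole of the stratum must lie in $I\sqcup J$. Your very first constraint --- $I=\emptyset$, $J=\{j\}$ with $j$ a simple pole, from which you deduce that $a_2$ is odd --- therefore contributes nothing as soon as $s\geq 2$, since $K$ then contains another simple pole and the factor $\Xi$ vanishes; it can also fail for $s=1$ when the arithmetic conditions of Proposition~\ref{prop:residueless} are not met. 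The same defect affects your later candidates (the choice of $I$ equal to two simple poles with $J=\emptyset$ fails for $s\geq 3$, and so on). You flag this as ``the hardest part'' and defer it, but that deferred verification is precisely where the proof lives; as written, the parity conclusions ($a_1$, $a_2$ and all $b_j$ odd) are unjustified, and the final contradiction rests on them.

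The paper sidesteps the existence problem by arguing in the opposite direction. Since every pole of $\omega_{\lambda}$ has even (hence at least second) order and $\omega_{\lambda}$ must have poles, some cherry degeneration is \emph{known} to occur, and when $s\geq 1$ all simple poles of the stratum necessarily sit in $I\sqcup J$ of that degeneration; up to exchanging $z_1$ and $z_2$ one may assume $I$ contains one. Evenness of the order of that pole of $\omega_\lambda$ forces $c_1=a_1-b_I+1$ to be odd. Transferring a single simple pole from $I$ to $J$ produces another degeneration whose order involves $c_1+1$, which is even, hence yields an odd-order pole unless the new $I$ is empty; combining the resulting parity constraints pins down the shape of every admissible degeneration so tightly ($I$ or $J$ a single simple pole, the other empty) that a further unavoidable degeneration gives the contradiction. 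If you want to rescue your version, you would need to prove positivity of the relevant $\Xi$ counts for at least one odd-order configuration in every case --- which is essentially the case analysis that the paper's local-move argument is engineered to avoid.
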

\par 
\begin{proof}
The zeros of $\omega_{\lambda}$ are of even order, so $a_{1}+a_{2}$ is even. It follows that $a_{1}$ and $a_{2}$ must have the same parity. We also have $t \geq 1$, because otherwise,  Theorem~\ref{thm:MAIN3} would imply that every pole of $\omega_{\lambda}$ is simple.
\par
We first assume $s \geq 1$ and consider a degeneration corresponding to a nonsimple pole of $\omega_{\lambda}$. This degeneration  corresponds to a tripartition $I \sqcup J \sqcup K$. Without loss of generality, we assume that $I$ contains a simple pole (which cannot belong to $K$). The pole of $\omega_{\lambda}$ corresponding to this degeneration is of order $a_{1}-b_{I}+2$ or $1+\lcm(a_{1}-b_{I}+1,a_{2}-b_{J}+1)$, depending on whether $J$ is empty or not. Since every pole of $\omega_{\lambda}$ is of even order, it follows that $a_{1}-b_{I}+1$ is odd.
\par
By moving a simple pole from $I$ to $J$, we obtain a tripartition $I' \sqcup J' \sqcup K$ corresponding to another pole of $\omega_{\lambda}$. If $I'$ is not empty, the pole is of order $1+\lcm(a_{1}-b_{I}+2,a_{2}-b_{J})$, which is odd (since $a_{1}-b_{I}+1$ is odd), leading to a contradiction. Therefore, $I'$ must be empty, and the pole of $\omega_{\lambda}$ corresponding to this degeneration is of order $a_{2}-b_{J}+1$, which is thus even.
\par
Now we consider two cases depending on whether $J$ is empty or not. If $J$ is nonempty, then the pole corresponding to the tripartition $I \sqcup J \sqcup K$ is of order $1+\lcm(a_{1}-b_{I}+1,a_{2}-b_{J}+1)$, and thus both $a_{1}-b_{I}+1$ and $a_{2}-b_{J}+1$ are odd. However, we already know that $a_{2}-b_{J}+1$ is even, which leads to a contradiction. Consequently, the partitions corresponding to degenerations in the fiber $\mathcal{F}_{\lambda}$ are very specific. One subset of $I$ and $J$ contains a unique simple pole, while the other subset is empty, and every other pole belongs to the subset $K$. In particular, there is only one simple pole. However, since $p \geq 3$, at least one pole satisfies $b_{j} \leq a_{2}+1$. We can then find a tripartition $I \sqcup J \sqcup K$ where this pole belongs to $J$, which again leads to a contradiction.
\end{proof}
\par 
We treat separately the case in which one zero has a very small order.
\par 
\begin{prop}\label{prop:EvenEven1}
Consider a stratum $\mathcal{H}(a_{1},a_{2},-b_{1},\dots,-b_{p})$ with $a_{1} \leq p-2$. If every singularity of $(\overline{\mathcal{F}}_{\lambda},\omega_{\lambda})$ is of even order, then $a_{1},a_{2},b_{1},\dots,b_{p}$ are even.
\end{prop}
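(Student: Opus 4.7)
The strategy combines Lemma~\ref{lem:nosimplepole} with a parity analysis of certain cherry-type boundary points of $\overline{\mathcal{F}}_{\lambda}$ furnished by Proposition~\ref{prop:flat-boundary}. First, since $a_1 \geq 1$ together with $a_1 \leq p-2$ forces $p \geq 3$, Lemma~\ref{lem:nosimplepole} applies and yields $b_i \geq 2$ for every $i$ together with $a_1 \equiv a_2 \pmod{2}$.

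The key step is to fix $k := p-1-a_1 \geq 1$ and, for every subset $J \subset \{1,\dots,p\}$ of size $k$, apply Proposition~\ref{prop:flat-boundary} to the tripartition $(I,J,K) = (\emptyset,\, J,\, \{1,\dots,p\}\setminus J)$. This specific size is dictated by the requirement that the residueless count $\Xi(a_1,\,c_2-1;\,\{b_i\}_{i\in K})$ be positive: with $|K| = a_1 + 1$, the condition $\sum c_i = a_1+1$ with $c_i \geq 1$ admits the (unique, up to the combinatorial factor) solution $(1,\dots,1)$, and the bound $c_i \leq b_i-1$ holds because $b_i \geq 2$. The remaining non-vanishing requirements of Proposition~\ref{prop:flat-boundary} are readily checked: $\sum_K b_k \geq 2|K| = 2(a_1+1) \geq a_1+2$ forces $c_2 = a_2+1 - \sum_J b_j \geq 1$, and $a_2 \geq 2p-a_1-2 \geq k$ ensures $a_2!/(a_2-k+1)! \geq 1$. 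Every such $J$ thus contributes at least one pole of $\omega_\lambda$ of order $1 + c_2 = a_2 + 2 - \sum_{j\in J} b_j$.

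Under the hypothesis that every pole order is even, this gives $\sum_{j\in J} b_j \equiv a_2 \pmod{2}$ uniformly in $J$. Since $|J| \geq 1$ and $|K| \geq 2$, any pair of indices can be exchanged by a single swap between two choices of $J$, so $b_{j_1} \equiv b_{j_2} \pmod 2$ and all the $b_j$ share a common parity. I rule out the odd case as follows: if all $b_j$ were odd, the identity $\sum b_j = a_1+a_2+2$ combined with $a_1 \equiv a_2$ would force $p$ even, so $k = p-1-a_1$ would have parity opposite to $a_1$. Substituting into the congruence $\sum_J b_j \equiv k \equiv a_2 \pmod{2}$ would give $a_1 + 1 \equiv a_2 \pmod{2}$, contradicting Lemma~\ref{lem:nosimplepole}. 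Therefore all $b_j$ are even; the same congruence then forces $a_2$ even, and $a_1 \equiv a_2$ gives $a_1$ even.

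The main obstacle will be justifying the choice of partition size. The naive choice $|J| = 1$ would directly yield $b_j \equiv a_2 \pmod 2$, but it requires $|K| = p-1 \leq a_1 + 1$ and is therefore only available in the extremal case $a_1 = p-2$. The size $k = p-1-a_1$ is precisely the minimum that makes $\Xi$ nonzero while keeping the validity conditions (in particular $c_2 \geq a_1+1 \geq 2$ and the stability of the top-level component with signature $(a_1, c_2-1, -\{b_i\}_{i\in K})$) hold uniformly in $J$. This uniformity is exactly what powers the swap-based parity argument and upgrades the single congruence into the statement that all the $b_j$ have the same parity.
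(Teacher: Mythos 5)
Your proof is correct and follows essentially the same route as the paper's: the same cherry degenerations with $I=\emptyset$ and $|K|=a_1+1$, the same swap argument showing all the $b_j$ share a common parity, and the same odd/even dichotomy resolved by the identity $\sum b_j = a_1+a_2+2$ together with $a_1\equiv a_2 \pmod 2$ from Lemma~\ref{lem:nosimplepole}. The only difference is that you explicitly verify that these boundary points actually exist (positivity of $\Xi(a_1,c_2-1;\{b_i\}_{i\in K})$ via the unique tuple $(1,\dots,1)$, and $c_2\geq a_1+1\geq 2$), a check the paper leaves implicit under the phrase ``we can freely choose the orders of the poles in $J$ and $K$.''
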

\par 
\begin{proof}
Following Lemma~\ref{lem:nosimplepole}, we have $b_{1},\dots,b_{p} \geq 2$. We consider partitions where $K$ contains exactly $a_{1}+1$ poles, while $J$ contains the remaining $p-a_{1}-1$ poles (with $I$ being empty). We can freely choose the orders of the poles in $J$ and $K$. Such a partition corresponds to a pole of $\omega_{\lambda}$ of order $2+a_{2}-b_{J}$ (which is even by hypothesis). Thus, $b_{J}$ must have the same parity as $a_{2}$. Unless all the $b_{1},\dots,b_{p}$
have the same parity, this is not possible for every partition corresponding to a degeneration of $\mathcal{F}_{\lambda}$. Therefore, all the $b_{1},\dots,b_{p}$ must have the same parity.
\par
If all the $b_{1},\dots,b_{p}$ are odd, then $a_{2}$ must have the same parity as $p-a_{1}-1$. Since $a_{1}$ and $a_{2}$ have the same parity (see~Lemma~\ref{lem:nosimplepole}), we conclude that $p$ is odd. Thus, $a_{1}+a_{2}-\sum\limits_{j=1}^{p} b_{j}$ is odd, which is a contradiction.
\par
If all the $b_{1},\dots,b_{p}$ are even, then $a_{2}$ and $2+a_{2}-b_{J}$ must have the same parity. Since $2+a_{2}-b_{J}$ is even (it is the order of the corresponding pole of $\omega_{\lambda}$), we deduce that $a_{2}$, and therefore $a_{1}$, are even. Consequently, all singularities of $\omega_{\lambda}$ are of even order.
\end{proof}
\par 
We will now prove the general case.
\par 
\begin{proof}[Proof of the second part of Theorem~\ref{thm:MAIN3}]
Following Propositions~\ref{prop:Sec7Direct},~\ref{prop:EvenEven1}, and Lemma~\ref{lem:nosimplepole}, it remains to prove that in the case of a stratum $\mathcal{H}(a_{1},a_{2},-b_{1},\dots,-b_{p})$ with $p-1 \leq a_{1} \leq a_{2}$ and $b_{1},\dots,b_{p} \geq 2$, if every singularity of  $(\overline{\mathcal{F}}_{\lambda},\omega_{\lambda})$  is of even order, then $a_{1},a_{2},b_{1},\dots,b_{p}$ are even.
\par
We know from Lemma~\ref{lem:nosimplepole} that $a_{1}$ and $a_{2}$ have the same parity. We first assume that both $a_{1}$ and $a_{2}$ are odd. We then split the case based on whether all the $b_{1},\dots,b_{p}$ are odd.
\par
First, we assume that one of the pole orders $b_{j}$ is even. If $b_{j} \leq a_{2}+1$, then there is a degeneration where $J$ is a singleton formed by this pole. In this case, $a_{2}-b_{j}+1$ is even, so the corresponding pole of $\omega_{\lambda}$ would be of odd order. This is a contradiction, so we assume that every even order $b_{j}$ among $b_{1},\dots,b_{p}$ satisfies $b_{j} \geq a_{2}+3$. Consequently, there can be at most one such pole, $b_{1} $ (if there were two, their sum would exceed $a_{1}+a_{2}+2$). Thus, the sum of the orders of the poles of odd order,  $b_{2},\dots,b_{p}$, is at most $a_{1}-1$. Therefore, there exists a tripartition $I \sqcup J \sqcup K$ such that $I$  contains all the poles of odd order, while $K$ contains at least the pole of order $b_{1}$. Since the number $p-1$ of poles of odd order is even, the pole corresponding to this tripartition is automatically of odd order (because $a_{1}-b_{I}+2$ is odd).
\par
Now, we consider the case where all the $b_{1},\dots,b_{p}$ are odd. Since $a_{1}+a_{2}$ is even, $p$ is also even and satisfies $p \geq 4$. The two poles with the smallest orders have a total order of at most $1+a_{2}$. There is a partition where these two poles are the only poles in $J$. This partition also corresponds to a pole of odd order for  $\omega_{\lambda}$, which is impossible. Thus, we have eliminated all cases where the two zeros are of odd order.
\par
Now, we consider the case where both $a_{1}$ and $a_{2}$ are even. The number $\alpha$ of odd numbers among $b_{1},\dots,b_{p}$ is even. We need to prove that $\alpha=0$. Since $p \geq 3$, if there are poles of odd order among $b_{1},\dots,b_{p}$, the smallest of them has order at most $a_{2}$ (because we would have $\alpha \geq 2$). There is a partition where this pole is the only one in $J$. Such a partition corresponds to a pole of odd order. Therefore, all the $b_{1},\dots,b_{p}$ must be even when the singularities of $\omega_{\lambda}$ are of even order.
\end{proof}
\par 
\subsection{Isoresidual fibers lying in disconnected strata of translation surfaces}
\par 
It is possible for the connected components of isoresidual fibers to belong to strata that are not connected. In such cases, the connected components of $\mathcal F_\lambda$ are classified using two types of topological invariants: hyperellipticity and invariants derived from the winding numbers of loops in the translation structure (see Section~\ref{sub:CCClassification}).
\par
Recall that strata with hyperelliptic components have at most two zeros and two poles. We will prove the following: if an isoresidual fiber has so few singularities, then it is automatically of genus zero (and therefore belongs to a connected stratum). \par 
\begin{prop}\label{prop:AdditionalTwozeros}
The only strata $\mathcal{H}(a_{1},a_{2},-b_{1},\dots,-b_{p})$ such that every connected component of the generic isoresidual fiber $(\overline{\mathcal{F}}_{\lambda},\omega_{\lambda})$ has at most two zeros are:
\begin{itemize}
    \item strata for which $p=2$;
    \item stratum $\mathcal{H}(1,1,\rec[-1][4])$, for which  $(\overline{\mathcal{F}}_{\lambda},\omega_{\lambda})$ belongs to $\mathcal{H}(2,2,\rec[-1][6])$;
    \item strata $\mathcal{H}(k,k,-2k,-1,-1)$, where  $(\overline{\mathcal{F}}_{\lambda},\omega_{\lambda})$ consists of $k$ connected components, each belonging to $\mathcal{H}(2k,2k,-k,-k,-1-k,-1-k)$.
\end{itemize}
In each of these cases, every connected component of $\mathcal{F}_{\lambda}$ has genus zero.
\end{prop}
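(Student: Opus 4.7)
The plan is to combine the enumeration of zeros of $\omega_{\lambda}$ from Proposition~\ref{prop:zeros}, the classification of connected components from Theorem~\ref{thm:MAIN5}, and the topological uniformity of components granted by Theorem~\ref{thm:MAIN1}(6). Since every component of $\overline{\mathcal{F}}_{\lambda}$ lies in the same ambient stratum component and therefore contains the same number of zeros, the condition of at most two zeros per component translates into the Diophantine inequality $f(a,p) \leq 2N$, where $a = a_{1}+a_{2}$, the quantity $f(a,p) = a!/(a+2-p)!$ is the total number of zeros of $\omega_{\lambda}$, and $N$ is the number of connected components of $\overline{\mathcal{F}}_{\lambda}$. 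This is sharply restrictive.

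First, I would settle the connected case $N = 1$: the inequality $f(a,p) \leq 2$ leaves only $p=2$ (where $f=1$); $p=3$ with $a \leq 2$, forcing $(a_{1},a_{2})=(1,1)$ and $\sum b_{j}=4$, so $\mu=(1,1,-2,-1,-1)$; $p=4$ with $a=2$, forcing all $b_{j}=1$, so $\mu=(1,1,\rec[-1][4])$; and $p \geq 5$ is excluded since $f(a,p) \geq (p-2)! \geq 6$. Next, I would handle the disconnected case $N \geq 2$ using the two exceptional families from Theorem~\ref{thm:MAIN5}. For family~(1), with $N=k$ and $\mu=(ka_{1}',ka_{2}',-kb_{1}',\ldots,-kb_{p-2}',-1,-1)$, writing $a' = a_{1}'+a_{2}' \geq 2$, the bound $f(ka',p)/k \leq 2$ forces $p=3$ (since for $p \geq 4$ one gets $a'(ka'-1) \geq 6$), then $a'=2$ and $b_{1}'=2$, producing exactly $\mathcal{H}(k,k,-2k,-1,-1)$. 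Family~(2) requires $a \geq 4$ and $p \geq 4$, so $f(a,p)/2 \geq a(a-1)/2 \geq 6$, yielding no solutions. The $k=1$ instance $\mathcal{H}(1,1,-2,-1,-1)$ coming from the connected analysis above is absorbed into this family, reproducing the proposition's list.

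The second stage would be to confirm, for each stratum in the list, the identification of the ambient stratum of $\omega_{\lambda}$ and the genus-zero claim using Propositions~\ref{prop:zeros}, \ref{prop:simple-pole}, and~\ref{prop:flat-boundary}. For $p=2$, the projectivized stratum $\mathbb{P}\mathcal{H}(\mu)$ is a one-dimensional open subvariety of $\mathcal{M}_{0,4}$ whose closure in the multi-scale compactification is a smooth rational curve, giving genus zero with exactly one zero per component. For $\mathcal{H}(1,1,\rec[-1][4])$, only horizontal degenerations contribute: the $\binom{4}{2}=6$ subsets $I$ satisfying $\sum_{i\in I} b_{i} = 2$ give six simple poles of $\omega_{\lambda}$, and combined with two zeros of order two one obtains $\mathcal{H}(2,2,\rec[-1][6])$ with $2g-2 = 4-6 = -2$. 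For $\mathcal{H}(k,k,-2k,-1,-1)$, a careful enumeration of cherry graphs produces $2k$ poles of order $k+1$ (the only surviving case with both $I,J$ nonempty being $I,J$ each a single simple pole and $K=\{-2k\}$, contributing $\gcd(k,k)=k$ poles for each of the two labelings) and $2k$ poles of order $k$ (from $I=\emptyset$ or $J=\emptyset$ with both simple poles placed in the nonempty side); distributing over $k$ components yields per-component signature $(2k,2k,-k,-k,-k-1,-k-1)$ and $2g-2 = 4k - 2k - 2(k+1) = -2$.

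The primary obstacle will lie in the cherry enumeration for $\mathcal{H}(k,k,-2k,-1,-1)$: one must verify that all $\Xi$-contributions involving a simple pole on the top level vanish (since simple poles cannot be residueless), that the sole surviving cherry type with both $I,J$ nonempty contributes exactly through $\gcd(k,k)=k$ prong-matching classes, and that the resulting $2k$ poles of each type split evenly into two per component. The last divisibility is not automatic from the global count; it requires invoking Theorem~\ref{thm:MAIN1}(6) to infer that the monodromy of the isoresidual fibration permutes transitively the $k$ connected components of $\overline{\mathcal{F}}_{\lambda}$, so that they all share the same singularity pattern.
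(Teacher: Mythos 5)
Your proposal is correct and follows essentially the same route as the paper: reduce to the per-component zero count $f(a,p)/N\leq 2$ using Proposition~\ref{prop:zeros}, Theorem~\ref{thm:MAIN5}, and the uniformity of components from Theorem~\ref{thm:MAIN1}, then confirm each surviving stratum by explicit enumeration of the boundary degenerations. The only difference is one of detail: the paper merely asserts the pole enumerations (e.g., ``an explicit enumeration \dots shows that it consists of $k$ spheres''), whereas you carry them out, correctly observing that the $\Xi$-contributions vanish whenever a simple pole sits on the top level and that the even distribution of singularities among the $k$ components of $\overline{\mathcal F}_\lambda$ for $\mathcal H(k,k,-2k,-1,-1)$ follows from all components lying in the same stratum component.
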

\par 
\begin{proof}
In the case where $\mathcal{F}_{\lambda}$ is connected, the claim follows from Proposition~\ref{prop:zeros} and the enumeration of poles provided in Sections~\ref{subsub:SimplePoles} and~\ref{subsub:HigherPoles}.
\par
If $\mathcal{F}_{\lambda}$ is disconnected, then Theorem~\ref{thm:MAIN1} shows that all of them belong to the same stratum in the moduli space of translation surfaces. Theorem~\ref{thm:MAIN5} precisely characterizes the strata $\mathcal{H}(a_{1},a_{2},-b_{1},\dots,-b_{p})$ where this can occur.
\par
In the first family of exceptional strata, $a_{1}$ and $a_{2}$ are even so we have $a_{1}+a_{2} \geq 4$. The number of simple poles is even and positive so we have $p \geq 3$. It follows that the generic isoresidual fiber contains at least four zeros. The only case where it contains exactly four zeros implies $a_{1}+a_{2}=4$ and $p=3$. This characterizes stratum $\mathcal{H}(2,2,-4,-1,-1)$ which also belongs to the second exceptional family.
\par
In the second exceptional family, strata are of the form $\mathcal{H}(kc_{1},kc_{2},-kd_{1},\dots,-kd_{p-2},-1,-1)$ for some $k \geq 2$ and positive coprime integers $c_{1},c_{2},d_{1},\dots,d_{p-2}$. In this case, the generic isoresidual fiber has $k$ connected components and therefore contains at most $2k$ zeros. Since $p \geq 3$, we must have $c_{1}=c_{2}=1$. Such a stratum is either $\mathcal{H}(k,k,-k,-k,-1,-1)$ or $\mathcal{H}(k,k,-2k,-1,-1)$. In the first case, Proposition~\ref{prop:zeros} proves that the isoresidual fiber contains $2k(2k-1)$ zeros, which is impossible. In the second case, an explicit enumeration of the possible degenerations of the isoresidual fiber $\mathcal F_\lambda$ shows that it consists of $k$ spheres. 
\end{proof}
\par 
Recall that for strata of translation surfaces of genus one, the number of connected components is determined by the greatest common divisor of the orders of the singularities. In the following, we will prove that for a generic isoresidual fiber, this common divisor is either $1$ or $2$.
\par 
\begin{prop}\label{prop:Factor}
Consider a stratum $\mathcal{H}(a_{1},a_{2},-b_{1},\dots,-b_{p})$ with $p \geq 3$. If the orders of the singularities of $(\overline{\mathcal{F}}_{\lambda},\omega_{\lambda})$ are integer multiples of a positive integer $k$, then $k \in \lbrace{ 1,2 \rbrace}$.
\end{prop}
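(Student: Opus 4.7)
The plan is to exhibit a singularity of $\omega_\lambda$ of order $1$ or $2$, which immediately forces $k$ to divide $1$ or $2$. My approach is a case analysis based on the characterization of zeros, simple poles, and higher-order poles from Propositions~\ref{prop:zeros}, \ref{prop:simple-pole}, and~\ref{prop:flat-boundary}, together with the counting formula of Proposition~\ref{prop:residueless}.

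First I would dispose of the easy cases. If there is a proper nonempty subset $I \subsetneq \{1,\ldots,p\}$ with $\sum_{i \in I}b_i = a_1 + 1$, then Proposition~\ref{prop:simple-pole} produces a simple pole of $\omega_\lambda$ and $k = 1$. Otherwise, applying Proposition~\ref{prop:flat-boundary} to the tripartition $I = J = \emptyset$ yields $\Xi(a_1, a_2; b_1,\ldots,b_p)$ double poles; if this number is positive we conclude $k \mid 2$. By Proposition~\ref{prop:residueless}, the vanishing of $\Xi$ forces either some $b_i = 1$ or $\min(a_1,a_2) \leq p-2$.

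The next step is to harvest double poles from cherry degenerations with $c_1 = 1$ or $c_2 = 1$. If $b_\ell = a_1 + 2$ for some $\ell$, the cherry $K = \{\ell\}$, $I = \emptyset$, $J = \{1,\ldots,p\}\setminus\{\ell\}$ has $c_2 = 1$ and Proposition~\ref{prop:flat-boundary} gives a pole of order $2$; the symmetric case $b_\ell = a_2 + 2$ is analogous. More generally, whenever $a_2$ is realized as a subset sum $\sum_{j \in J}b_j$ through a subset $J$ whose complement contains no simple pole of the original stratum, the cherry with this $J$ and $I = \emptyset$ has $c_2 = 1$ and yields a double pole; symmetrically for $a_1$. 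In any of these cases $k \mid 2$.

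The main obstacle lies in showing that, once all the above sources are excluded, the hypothesis $p \geq 3$ still suffices to force $k \mid 2$. In this residual regime I would exploit cherries with $|K|=1$: for any $\ell$, the top stratum $\mathcal{H}(c_1-1, c_2-1, -b_\ell)$ has a single pole, so the residue condition is automatic and every achievable split $c_1 + c_2 = b_\ell$ gives a valid cherry. Varying $\ell$ and $(c_1, c_2)$ produces pole orders of the form $1 + \mathrm{lcm}(c_1, c_2)$; I would argue that the split $(c_1, c_2) = (1, b_\ell - 1)$ is achievable for sufficiently many $\ell$ (the condition being that $a_1$ be a subset sum of $\{b_i : i \neq \ell\}$), each producing a pole of order $b_\ell$ and hence $k \mid b_\ell$. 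Combined with $k \mid a_1 + a_2$ coming from the zero orders and the identity $a_1 + a_2 = \sum_i b_i - 2$, this forces $k \mid 2$. Verifying the required achievability in full generality, and handling the edge cases when some $b_i = 1$ or when $\min(a_1, a_2) < p-1$, is the technically demanding part of the argument.
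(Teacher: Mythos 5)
Your two opening reductions are correct: horizontal degenerations exist exactly when $a_1+1$ (equivalently $a_2+1$) is a subset sum of the $b_i$, giving $k=1$; and when $\Xi(a_1,a_2;b_1,\dots,b_p)>0$ the residueless locus supplies double poles, with $\Xi=0$ forcing some $b_i=1$ or $\min(a_1,a_2)\leq p-2$. But the core of the proof --- the residual regime --- has a genuine gap, and the mechanism you propose for it does not work. The split $(c_1,c_2)=(1,b_\ell-1)$ for a cherry with $K=\{\ell\}$ requires $a_1$ to be a subset sum of $\{b_i\}_{i\neq \ell}$, which fails whenever $a_1<\min_i b_i$. Take $\mathcal{H}(1,9,-4,-4,-4)$: there are no simple poles of $\omega_\lambda$ (neither $2$ nor $10$ is a subset sum of $\{4,4,4\}$), $\Xi=0$, and neither $a_1=1$ nor $a_2=9$ is a subset sum, so every source in your list is empty; yet the proposition holds because the fiber has zeros of order $10$ and poles of orders $7$ and $3$ from cherries with $I=\emptyset$ and $|J|=1,2$. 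So the route ``$k\mid b_\ell$ for enough $\ell$, plus $k\mid a_1+a_2$'' cannot be completed as stated. A smaller issue: your ``more general'' version of the $c_2=1$ trick collapses to the special case you already listed, since $\Xi(a_1,0;\{b_i\}_{i\in K})>0$ forces $\sum_{i\in K}(b_i-1)\geq a_1+1$ while $\sum_{i\in K}b_i=a_1+2$, hence $|K|=1$ and $b_\ell=a_1+2$.

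The paper closes the hard case with a different and more robust idea: instead of hunting for singularities of a prescribed small order, it compares the orders of poles of $\omega_\lambda$ attached to tripartitions that differ by moving a single pole between the parts. For $a_1\leq p-2$ and all $b_j\geq 2$, the degenerations with $I=\emptyset$ and $|K|=a_1+1$ always exist (take every $d_i=1$ in the count defining $\Xi$), and swapping a pole of $J$ with a pole of $K$ changes the pole order $a_2-b_J+2$ by $b_i-b_j$; divisibility by $k$ then forces all $b_j\equiv r\pmod{k}$ with $rp\equiv 2\pmod{k}$. A second swap, available once some $b_j\geq 3$, forces $r=0$, so $k\mid\sum b_j$ and $k\mid a_1+a_2=\sum b_j-2$ yield $k\mid 2$; the leftover case $b_1=\cdots=b_p=2$ is settled by exhibiting an explicit simple or double pole. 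The moral is that \emph{differences} of pole orders are controllable without any subset-sum achievability hypothesis, whereas the orders themselves are not; your argument needs this (or an equivalent) idea to go through.
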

\par 
\begin{proof}
In the following, we assume that $k \neq 1$, and we will then prove that $k = 2$. In particular, we have $b_{1},\dots,b_{p} \geq k$. By convention, we assume that $a_{1} \leq a_{2}$. If $a_{1} \geq p-1$, then there is a tripartition $I \sqcup J \sqcup K$,  where both $I$ and $J$ are trivial. In this case, formulas from  Section~\ref{subsub:HigherPoles} show that the corresponding pole of~$\omega_{\lambda}$ is a double pole, and thus $k=2$. In the rest of the proof, we will assume that $a_{1} \leq p-2$.
\par
We first prove that every pole order $b_{j}$ is of the form $r+km_{j}$, where $r$ is a constant number satisfying $rp \equiv 2\pmod{k}$. Since $b_{1},\dots,b_{p} \geq k$, there are degenerations in $\mathcal{F}_{\lambda}$ such that $a_{1}+1$ poles belong to $K$, while the others belong to $J$ (with $I$ being empty). Since $a_{1} \leq p-2$, $J$ is nonempty, and the order $a_{2}-b_{J}+2$ of the pole of $\omega_{\lambda}$ corresponding to this degeneration changes when we change the choice of the poles of~$J$. More precisely, $a_{1}-b_{J}+1$ changes by a multiple of~$k$ if we permute one pole of $J$ with a pole of $K$. Consequently, there exists a number $r$ such that every pole order $b_{j}$ is of the form $r+kb_{j}$. The sum $a_{1}+a_{2}$ is also an integer multiple of $k$. Since $a_{1}+a_{2} - \sum\limits_{j=1}^{p}=-2$, we obtain that $rp \equiv 2 \pmod{k}$.
\par
We first assume that some $b_{j}$ among $b_{1},\dots,b_{p}$ satisfies $b_{p} \geq 3$. We consider a tripartition where $K$ contains $a_{1}+1$ poles (including the pole of order $b_{p}$), while $I$ is empty. The inequality $a_{1} \leq p-2$ then implies that $J$ is nonempty, and the corresponding pole of $\omega_{\lambda}$ has order $a_{2}-b_{J}+2$. We construct a new tripartition by moving a pole of order $b_{i}$ (with $i \neq p$) from $K$ to $J$. This results in a degeneration corresponding to a pole of order $a_{2}-b_{J}-b_{i}+2$. The fact that the order of every pole of $\omega_{\lambda}$ is a multiple of $k$ implies that $b_{i}$ is a multiple of $k$. We already know that every order $b_{j}$ is of the form $r+km_{j}$, where $r$ is a constant, so $r=0$ and all the  $b_{1},\dots,b_{p}$ are integer multiples of $k$. Since $a_{1}+a_{2}$ is also an integer multiple of $k$, and $-2$ is a multiple of $k$, we conclude that $k=2$.
\par
In the last case, we have $b_{1}=\dots=b_{p}=2$, and therefore $a_{1}+a_{2}=2p-2$. If $a_{1}$ and $a_{2}$ are odd, then, because $a_{1} \leq p-2$, a bipartition $I \sqcup J$, where $I$ contains $\frac{a_{1}+1}{2}$ double poles, corresponds to a simple pole of~$\omega_{\lambda}$, which is impossible.
\par
We will assume that $a_{1}$ and $a_{2}$ are even. We consider the tripartition where $I$, $J$, and $K$ contain $\frac{a_{1}}{2}$, $\frac{a_{2}}{2}$, and one double poles, respectively. We have $a_{1}-b_{I}+1=1$ and $a_{2}-b_{J}+1=1$, so the corresponding pole of $\omega_{\lambda}$ is a double pole. It follows that $k=2$ in this case.
\end{proof}

It follows from Propositions~\ref{prop:AdditionalTwozeros},~\ref{prop:Factor}, and the results of Section~\ref{sub:CCClassification} that if the generic isoresidual fibers of some stratum belong to a disconnected stratum, then the latter has exactly two connected components, classified by the parity of the  spin structure. Following the second part of Theorem~\ref{thm:MAIN3}, this occurs for a generic isoresidual fiber $\mathcal{F}_{\lambda}$ of a stratum $\mathcal{H}(a_{1},a_{2},-b_{1},\dots,-b_{p})$ if and only if the following three conditions are satisfied:
\begin{enumerate}
    \item The number $p$ of poles satisfies $p \geq 3$;
    \item The orders $a_{1},a_{2},-b_{1},\dots,-b_{p}$ of the singularities of the stratum are even;
    \item The genus $g$ of the generic isoresidual fiber $\mathcal{F}_{\lambda}$ satisfies $g \geq 1$ (Theorem~\ref{thm:MAIN5} proves that for such $\mu$,   $\mathcal{F}_{\lambda}$ is always connected).
\end{enumerate}
\par 
\begin{opprob}
Show that the third condition regarding the genus of the fiber is redundant. Indeed, the simplest example of strata satisfying the first two conditions is $\mathcal{H}(2,2,-2,-2,-2)$, and we can check directly that its generic isoresidual fibers belong to $\mathcal{H}(4^{4},(-2)^{8})$ and are therefore elliptic curves.
\end{opprob}
\par 
\begin{opprob}
Provide a systematic computation of the parity of the spin structure for isoresidual fibers of strata satisfying the aforementioned three conditions.
\end{opprob}


\printbibliography

@article{ABW,
url = {https://doi.org/10.1515/crelle-2023-0005},
title = {Moduli spaces of complex affine and dilation surfaces},
author = {Paul Apisa and Matt Bainbridge and Jane Wang},
pages = {229--243},
volume = {2023},
number = {796},
journal = {J. Reine Angew. Math.},
doi = {doi:10.1515/crelle-2023-0005},
year = {2023}
}

@Article{BCGGM,
    Author = {Matt {Bainbridge} and Dawei {Chen} and Quentin {Gendron} and Samuel {Grushevsky} and Martin {M\"oller}},
    Title = {{Compactification of strata of abelian differentials.}},
    FJournal = {{Duke Mathematical Journal}},
    Journal = {{Duke Math. J.}},
    ISSN = {0012-7094; 1547-7398/e},
    Volume = {167},
    Number = {12},
    Pages = {2347--2416},
    Year = {2018},
    Publisher = {Duke University Press},
    MSC2010 = {14H10 32G15}
}

@Article{BCGGM3,
    Author = {{Bainbridge}, Matt and {Chen}, Dawei and {Gendron}, Quentin and {Grushevsky}, Samuel and {M\"oller}, Martin},
    Title = {Strata of $k$-differentials.},
    FJournal = {{Algebraic Geometry}},
    Journal = {{Algebr. Geom.}},
    Volume = {6},
    Number = {2},
    Pages = {196--233},
    Year = {2019},
    Publisher = {European Mathematical Society (EMS) Publishing House, Zurich}
}

@article {BDG,
    AUTHOR = {Benirschke, Frederik and Dozier, Benjamin and Grushevsky,
              Samuel},
     TITLE = {Equations of linear subvarieties of strata of differentials},
   JOURNAL = {Geom. Topol.},
  FJOURNAL = {Geometry \& Topology},
    VOLUME = {26},
      YEAR = {2022},
    NUMBER = {6},
     PAGES = {2773--2830},
      ISSN = {1465-3060,1364-0380},
   MRCLASS = {30F30 (14H15 32G15 37F34)},
  MRNUMBER = {4521253},
       DOI = {10.2140/gt.2022.26.2773},
       URL = {https://doi.org/10.2140/gt.2022.26.2773},
}

@Article{Bo,
    Author = {{Boissy}, Corentin},
    Title = {Connected components of the strata of the moduli space of meromorphic differentials.},
    FJournal = {{Commentarii Mathematici Helvetici}},
    Journal = {{Comment. Math. Helv.}},
    ISSN = {0010-2571; 1420-8946/e},
    Volume = {90},
    Number = {2},
    Pages = {255--286},
    Year = {2015},
    Publisher = {European Mathematical Society (EMS) Publishing House, Zurich},
    DOI = {10.4171/CMH/353},
    MSC2010 = {32G15 30F30 57R30}
}

@article {CMZ,
    AUTHOR = {{Costantini}, Matteo and {M\"oller}, Martin and {Zachhuber},
              Jonathan},
     TITLE = {The {C}hern classes and {E}uler characteristic of the moduli
              spaces of {A}belian differentials},
   JOURNAL = {Forum Math. Pi},
  FJOURNAL = {Forum of Mathematics. Pi},
    VOLUME = {10},
      YEAR = {2022},
     PAGES = {Paper No. e16, 55},
      ISSN = {2050-5086},
   MRCLASS = {14D23 (30F60 32G15 57R20)},
  MRNUMBER = {4448178},
       DOI = {10.1017/fmp.2022.10},
       URL = {https://doi.org/10.1017/fmp.2022.10},
}

@article {EMZ,
    AUTHOR = {{Eskin}, Alex and {Masur}, Howard and {Zorich}, Anton},
     TITLE = {Moduli spaces of {A}belian differentials: the principal
              boundary, counting problems, and the {S}iegel-{V}eech
              constants},
   JOURNAL = {Publ. Math. Inst. Hautes \'Etudes Sci.},
  FJOURNAL = {Publications Math\'ematiques. Institut de Hautes \'Etudes
              Scientifiques},
    VOLUME = {97},
      YEAR = {2003},
     PAGES = {61--179},
      ISSN = {0073-8301},
   MRCLASS = {32G15 (30F10 37D40 37D50 37F99)},
  MRNUMBER = {MR2010740 (2005b:32029)},
MRREVIEWER = {Serge L. Tabachnikov},
}

@misc{BCGGM2,
    title={The moduli space of multi-scale differentials},
        Author = {{Bainbridge}, Matt and {Chen}, Dawei and {Gendron}, Quentin and {Grushevsky}, Samuel and {M\"oller}, Martin},
    year={2019},
    eprint={1910.13492},
    archivePrefix={arXiv},
}

@ARTICLE{tahar,
   author = {{Tahar}, Guillaume},
    title = "{Counting saddle connections in flat surfaces with poles of higher order}",
  FJournal = {{Geometriae Dedicata}}, 
    Journal = {{Geom. Dedicata}},
    Volume = {196},
    number = {1},
    Pages = {145--186},
    Year = {2018},
 Publisher = {Springer Netherlands},
}

@InCollection{Zo,
    Author = {Anton {Zorich}},
    Title = {Flat surfaces.},
    BookTitle = {{Frontiers in number theory, physics, and geometry I. On random matrices, zeta functions, and dynamical systems. Papers from the meeting, Les Houches, France, March 9--21, 2003}},
    ISBN = {978-3-540-23189-9/hbk},
    Pages = {437--583},
    Year = {2006},
    Publisher = {Springer},
    MSC2010 = {32G15 57M50 30F30 37D50 37D40 30F60},
    Zbl = {1129.32012}
}

@Article{CMSZ,
    Author = {{Chen}, Dawei and {Möller} Martin and {Sauvaget}, Adrien and {Zagier}, Don},
    Title = {{Masur–Veech volumes and intersection theory on moduli spaces of Abelian differentials.}},
    FJournal = {{Inventiones Mathematicae}},
    Journal = {{Invent. Math.}},
    Volume = {222},
    Pages = {283--373},
    Year = {2020},
}

@article{getaab,
     author = {Quentin Gendron and Guillaume Tahar},
     title = {Diff\'erentielles ab\'eliennes \`a~singularit\'es~prescrites},
     journal = {J. Éc. Polytech., Math.},
     fjournal = {Journal de l{\textquoteright}\'Ecole polytechnique {\textemdash} Math\'ematiques},
     pages = {1397--1428},
     publisher = {\'Ecole polytechnique},
     volume = {8},
     year = {2021},
     doi = {10.5802/jep.174},
     url = {https://jep.centre-mersenne.org/articles/10.5802/jep.174/}
}

@Article{chge,
 Author = {Chen, Dawei and Gendron, Quentin},
 Title = {Towards a classification of connected components of the strata of {{\(k\)}}-differentials},
 FJournal = {Documenta Mathematica},
 Journal = {Doc. Math.},
 ISSN = {1431-0635},
 Volume = {27},
 Pages = {1031--1100},
 Year = {2022},
 DOI = {10.25537/dm.2022v27.1031-1100},
 Keywords = {32G15,14H10,14H15},
 zbMATH = {7554114}
}

@Article{GeTaIso,
 Author = {Gendron, Quentin and Tahar, Guillaume},
 Title = {Isoresidual fibration and resonance arrangements},
 FJournal = {Letters in Mathematical Physics},
 Journal = {Lett. Math. Phys.},
 ISSN = {0377-9017},
 Volume = {112},
 Number = {2},
 Note = {Id/No 33},
 Year = {2022},
 DOI = {10.1007/s11005-022-01528-z},
 Keywords = {14H15,14N20,30F30},
 zbMATH = {7513999}
}

@article{ChPrIso,
  title={Counting differentials with fixed residues},
  author={Chen, Dawei and Prado, Miguel},
  journal={Letters in Mathematical Physics},
  volume={115},
  number={3},
  pages={53},
  year={2025},
  publisher={Springer}
}

@article{SaBraid,
  title={Stratified braid groups: monodromy},
  author={Salter, Nick},
  journal={Mathematical Proceedings of the Cambridge Philosophical Society},
  volume={178},
  number={2},
  pages={259--292},
  year={2025},
}

@misc{FaTaZa,
      title={Isoperiodic foliation of the stratum $\mathcal{H}(1,1,-2)$}, 
      author={Gianluca Faraco and Guillaume Tahar and Yongquan Zhang},
      year={2023},
      eprint={2305.06761},
      archivePrefix={arXiv},
}

@article{BuRoCoun,
      title={Counting meromorphic differentials on $\mathbb{CP}^1$}, 
      author={Alexandr Buryak and Paolo Rossi},
      FJournal = {Letters in Mathematical Physics},
      Journal = {Lett. Math. Phys.},
      Volume = {114},
      Number = {97},
      Year = {2024}
}

@article{Kawazumi,
    author = {Kawazumi, Nariya},
    title = "{The mapping class group orbits in the framings of compact surfaces}",
    fjournal = {The Quarterly Journal of Mathematics},
    journal = {Q. J. Math.},
    volume = {69},
    number = {4},
    pages = {1287-1302},
    year = {2018},
    month = {05},
    issn = {0033-5606},
    doi = {10.1093/qmath/hay024},
    url = {https://doi.org/10.1093/qmath/hay024},
}

@article {Sugiyama,
    AUTHOR = {Sugiyama, Toshi},
     TITLE = {The moduli space of polynomial maps and their fixed-point
              multipliers},
   JOURNAL = {Adv. Math.},
  FJOURNAL = {Advances in Mathematics},
    VOLUME = {322},
      YEAR = {2017},
     PAGES = {132--185},
      ISSN = {0001-8708,1090-2082},
   MRCLASS = {37F10 (14C17 14D20 37C25)},
  MRNUMBER = {3720796},
MRREVIEWER = {Fabrizio\ Bianchi},
       DOI = {10.1016/j.aim.2017.10.013},
       URL = {https://doi.org/10.1016/j.aim.2017.10.013},
}

@article{Pan,
author = {Dmitri Panov},
title = {{Polyhedral Kähler manifolds}},
volume = {13},
journal = {Geom. Topol.},
number = {4},
publisher = {MSP},
pages = {2205 -- 2252},
keywords = {Kobayashi–Hitchin correspondence, line arrangement, polyhedral metric},
year = {2009},
doi = {10.2140/gt.2009.13.2205},
URL = {https://doi.org/10.2140/gt.2009.13.2205}
}

@Book{AMTransSurf,
 Author = {Athreya, Jayadev and Masur, Howard},
 Title = {Translation surfaces},
 FSeries = {Graduate Studies in Mathematics},
 Series = {Grad. Stud. Math.},
 ISSN = {1065-7338},
 Volume = {242},
 ISBN = {978-1-4704-7655-7; 978-1-4704-7677-9; 978-1-4704-7676-2},
 Year = {2024},
 Publisher = {American Mathematical Society},
 DOI = {10.1090/gsm/242},
 Keywords = {37-01,32-01,37A17,37A05,37A10,37A15,37A40,37F34,37D40,30F60,32G15,14J60,14H15,57K20},
 zbMATH = {7813234}
}

@Article{KLS,
 Author = {Krichever, Igor and Lando, Sergei and Skripchenko, Alexandra},
 Title = {Real-normalized differentials with a single order 2 pole},
 FJournal = {Letters in Mathematical Physics},
 Journal = {Lett. Math. Phys.},
 ISSN = {0377-9017},
 Volume = {111},
 Number = {2},
 Pages = {19},
 Note = {Id/No 36},
 Year = {2021},
 DOI = {10.1007/s11005-021-01379-0},
 Keywords = {14H10,37C86},
 zbMATH = {7345369},
 Zbl = {1462.14032}
}

@article{CMDisoper,
  title={Isoperiodic meromorphic forms: two simple poles},
  author={Calsamiglia, Gabriel and Deroin, Bertrand},
  journal={Groups, Geometry, and Dynamics},
  volume={20},
  number={1},
  pages={107--168},
  year={2025}
}

@article{BGPA,
  title={The space of solvable Pell--Abel equations},
  author={Bogatyr{\"e}v, Andrei and Gendron, Quentin},
  journal={Compositio Mathematica},
  volume={161},
  number={7},
  pages={1483--1511},
  year={2025},
  publisher={London Mathematical Society}
}

@Article{BenBound,
 Author = {Benirschke, Frederik},
 Title = {The boundary of linear subvarieties},
 FJournal = {Journal of the European Mathematical Society (JEMS)},
 Journal = {J. Eur. Math. Soc. (JEMS)},
 ISSN = {1435-9855},
 Volume = {25},
 Number = {11},
 Pages = {4521--4582},
 Year = {2023},
 DOI = {10.4171/JEMS/1287},
 Keywords = {32G15,37F34,30F35},
 zbMATH = {7774913},
 Zbl = {1537.32039}
}
\end{document}